\documentclass[11pt]{amsart}
\usepackage{amsmath}
\usepackage{amssymb}
\usepackage{amsthm}
\usepackage[all]{xy}
\usepackage{graphicx}
\usepackage[margin=1in]{geometry}
\usepackage{color}
\usepackage{hyperref}
\usepackage{verbatim}
\usepackage{pb-diagram,pb-xy}

\newcommand{\bpsi}{\boldsymbol\psi}
\newcommand{\bPsi}{\boldsymbol\Psi}
\newtheorem*{thmA}{Theorem A}
\newtheorem*{thmA1}{Theorem A*}
\newtheorem*{thmB}{Corollary B}
\newtheorem*{thmB1}{Corollary B*}
\newtheorem*{thmC}{Corollary C}
\newtheorem*{thmD}{Corollary D}
\newtheorem*{thmE}{Theorem E}
\numberwithin{equation}{section}

\setlength{\parindent}{1cm}
\setlength{\parskip}{0cm}
\linespread{1.2}








\newcommand{\R}{\ensuremath{\mathbb{R}}}
\newcommand{\N}{\ensuremath{\mathbb{N}}}

\newcommand{\Z}{\ensuremath{\mathbb{Z}}}
\newcommand{\Q}{\ensuremath{\mathbb{Q}}}




\newtheorem{theorem}{Theorem}[section]
\newtheorem{corollary}{Corollary}[theorem]
\newtheorem{proposition}[theorem]{Proposition}
\newtheorem{lemma}[theorem]{Lemma}
\newtheorem{claim}[theorem]{Claim}

\theoremstyle{definition} 
\newtheorem{defn}[theorem]{Definition}
\newtheorem{remark}[theorem]{Remark}

\newtheorem{Construction}{Construction}[section]
\newtheorem{example}{Example}[section]

\newcommand{\MT}{{\mathsf M}{\mathsf T}}

\newcommand{\mb}[1]{\mathbf{#1}}

\newcommand{\Cob}{{\mathbf{Cob}}}

\DeclareMathOperator*{\Cofibre}{Cofibre}

\DeclareMathOperator*{\colim}{colim\ }

\DeclareMathOperator*{\Int}{Int}

\DeclareMathOperator*{\Diff}{Diff}

\DeclareMathOperator*{\BDiff}{BDiff}

\DeclareMathOperator*{\Maps}{Maps}

\DeclareMathOperator*{\Image}{Im}

\DeclareMathOperator*{\proj}{pr}

\DeclareMathOperator*{\Id}{\text{Id}}
\DeclareMathOperator*{\ind}{\text{ind}}
\DeclareMathOperator*{\Bun}{\mathrm{Bun}}

\DeclareMathOperator*{\Ob}{\text{Ob}}
\DeclareMathOperator*{\Mor}{\mathrm{Mor}}

\DeclareMathOperator*{\Cl}{\text{Cl}}

\DeclareMathOperator*{\Ker}{Ker}


\textwidth=6.6truein
\mathsurround=1pt
\textheight=8.2truein
\topmargin -10pt \headheight 10pt 

\author{Boris Botvinnik}
\author{Nathan Perlmutter}

\address{Department of Mathematics, University of Oregon, Eugene, OR,
  97403, USA}
\email{botvinn@uoregon.edu}

\address{Stanford University Department of Mathematics, Building 380, Stanford, California,  94305, USA}

\email{nperlmut@stanford.edu}

\title[Stable Moduli Spaces of High Dimensional Handlebodies]{Stable
  Moduli Spaces of High Dimensional Handlebodies}

\begin{document}
\maketitle

\begin{abstract}
We study the moduli space of handlebodies diffeomorphic to
$(D^{n+1}\times S^n)^{\natural g}$, i.e. the classifying space
$\BDiff((D^{n+1}\times S^n)^{\natural g}, D^{2n})$ of the group of
diffeomorphisms that restrict to the identity near an embedded disk
$D^{2n} \subset \partial (D^{n+1}\times S^n)^{\natural g}$. We prove
that there is a natural map
$$\colim_{g\to\infty}\BDiff((D^{n+1}\times
S^n)^{\natural g}, D^{2n}) \;
\longrightarrow \; Q_{0}BO(2n+1)\langle n \rangle_{+}$$ 
which induces an isomorphism in integral homology when $n\geq 4$. Above, 
$BO(2n+1)\langle n \rangle$ denotes the $n$-connective cover of
$BO(2n+1)$. 
\end{abstract}

\setcounter{tocdepth}{1}
\tableofcontents
\vspace*{-5mm}
\section{Introduction}
\subsection{Motivation}
This work is motivated by two recent results, one by Galatius and
Randal-Williams \textcolor{black}{on the moduli} spaces of
high-dimensional manifolds, \cite{GRW 14}, and the second one by
Hatcher on \textcolor{black}{the moduli spaces} of three-dimensional
handlebodies, \cite{H 12}.  Allen Hatcher proves the following result:
there is an isomorphism $ H_{i}(\BDiff((D^2\times S^1)^{\natural g},
D^{2})) \cong H_{i}(Q_0BSO(3)_+) $ when $i<< g$, where $D^{2}$ is an
embedded disk in the boundary of $(D^2\times S^1)^{\natural g}$.  The
main result of this paper is a direct analogue of Hatcher's result for
higher-dimensional handlebodies $(D^{n+1}\times S^n)^{\natural g}$.
On the other hand, our technique is closely related to (and motivated
by) the work of Galatius and Randal-Williams from \cite{GRW 14}.  Let
$\theta^{n}_{2n}: BO(2n)\langle n \rangle \longrightarrow BO(2n)$
denote the $n$-connective cover.  Let $\MT\theta^{n}_{2n}$ denote the
Thom spectrum associated to the virtual vector bundle
$-(\theta^{n}_{2n})^{*}\gamma^{2n}$ over $BO(2n)\langle n \rangle$.
The main result from \cite{GRW 14} proves the isomorphism
$H_i(B\Diff((S^n\times S^n)^{\# g}, D^{2n})) \cong
H_{i}(\Omega^{\infty}_0 \MT\theta^n_{2n}),$ for $n \geq 3$, when $i<<
g$ and where $D^{2n}$ is an embedded disk in $(S^n\times S^n)^{\# g}$.
This result is proven by the techniques of cobordism categories and
parametrized surgery first introduced by Madsen and Weiss in their
proof of the Mumford Conjecture, \cite{MW 07}.
\subsection{Main Theorem}
We begin with some definitions and terminology.  Let $M$ be a smooth
manifold of dimension $m$, with non-empty boundary.  Let $\Diff(M)$
denote the group of diffeomorphisms of $M$,
topologized in the $C^{\infty}$ topology.  Fix an embedding of the
disk $i: D^{m-1} \hookrightarrow \partial M$.  Let $\Diff(M, D^{m-1})
\subset \Diff(M)$ be the subgroup consisting of all diffeomorphisms
that agree with the identity when restricted to some neighborhood of
$i(D^{m-1})$ in $M$.  Our goal is to determine the (co)homology of the
\textit{classifying space} $\BDiff(M, D^{m-1})$ for some particular
manifolds $M$.  If $X$ is a paracompact space, there is a well-known
one-to-one correspondence between the set of homotopy classes $[X,
  \BDiff(M, D^{m-1})]$ and the set of principal $\Diff(M,
D^{m-1})$-fibre bundles over $X$.  Consequently, the cohomology
$H^{*}(\BDiff(M, D^{m-1}))$ contains all characteristic classes for
principal $\Diff(M, D^{m-1})$-fibre bundles. 

Let $n, g \in \N$ and let $V^{2n+1}_{g}$ denote the $g$-fold \textit{boundary connected sum}
$(D^{n+1}\times S^{n})^{\natural g}$.  For each $g$ there is a natural
homomorphism
\begin{equation} \label{equation: stabilization map}
\xymatrix{
\Diff(V^{2n+1}_{g}, D^{2n}) \longrightarrow \Diff(V^{2n+1}_{g+1}, D^{2n})
}
\end{equation}
defined by extending a diffeomorphism identically over an attached
boundary connect-summand of $V^{2n+1}_{1} = D^{n+1}\times S^{n}$.  Our
main theorem identifies the (co)homology of the limiting space, $
\displaystyle{\colim_{g\to\infty}} \textstyle{\BDiff}(V^{2n+1}_{g},
D^{2n}).  $

To state our main theorem let us first fix some further notation.  For
any space $X$ and $k \in \N$, let $X\langle k \rangle \longrightarrow
X$ denote the \textit{$k$-connective cover}, i.e. \ the space determined uniquely up to homotopy by the condition:
$\pi_{i}(X\langle k \rangle) = 0$ for $i \leq k$ and $\pi_{i}(X\langle k \rangle) \cong \pi_{i}(X)$ for $i > k$.
Let $QX_{+}$ denote the
\textit{infinite loop-space} of the suspension spectrum associated to
$X_{+} = X\sqcup\text{pt.}$, i.e.\ $QX_{+} :=
\Omega^{\infty}\Sigma^{\infty}X_{+}$.  Finally, let $Q_{0}X_{+}
\subset QX_{+}$ denote the path component of the constant loop.  Below
we consider the space $Q_{0}BO(2n+1)\langle n \rangle_{+}$.
\begin{thmA} \label{theorem: main homology equivalence}
Let $2n+1 \geq 9$.  Then there is a map 
$$\colim_{g\to\infty}\BDiff(V^{2n+1}_{g}, D^{2n}) \; \longrightarrow
\; Q_{0}BO(2n+1)\langle n \rangle_{+}$$ 
that induces an isomorphism on
integral homology $H_{*}(\underline{\hspace{.3cm}}; \; \Z)$.
\end{thmA}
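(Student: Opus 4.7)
The strategy is to adapt the cobordism-category and parametrized surgery machinery of Galatius--Randal-Williams from \cite{GRW 14} to the setting of odd-dimensional handlebodies. Three ingredients are required: (i) a cobordism category of handlebodies whose classifying space is a known infinite loop space, (ii) a homological stability theorem for the stabilization maps (\ref{equation: stabilization map}), and (iii) a group-completion argument identifying the stable moduli space with a component of the delooped cobordism category.

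First, I would construct a topological cobordism category $\Ccal$ whose objects are closed $2n$-dimensional manifolds with $\theta$-structure, where $\theta\colon BO(2n+1)\langle n\rangle \to BO(2n+1)$ is the $n$-connective cover, and whose morphisms are $(2n+1)$-dimensional $\theta$-cobordisms admitting a handle decomposition, relative to the incoming boundary, using only handles of index at most $n+1$. A Galatius--Madsen--Tillmann--Weiss-type theorem for $\Ccal$ should then yield an equivalence
$$B\Ccal \; \simeq \; \Omega^{\infty-1}\Sigma^{\infty} BO(2n+1)\langle n\rangle_{+}.$$
This is the central step and the main obstacle: in the generic cobordism category of $(2n+1)$-manifolds the GMTW target is the Thom spectrum $\MT\theta$, and one must show that the handlebody restriction simplifies this all the way to a suspension spectrum. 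Geometrically, I expect this to work because every such cobordism is assembled from elementary $(n+1)$-handle attachments, and each such attachment contributes a labeled point in $BO(2n+1)\langle n\rangle$ recording the $\theta$-class of its core sphere's normal data; the moduli space of morphisms should then ``scan'' onto a labeled configuration space model for $\Sigma^{\infty}BO(2n+1)\langle n\rangle_{+}$.

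Second, I would relate morphism spaces in $\Ccal$ to classifying spaces of handlebody diffeomorphism groups, identifying $\Ccal(\emptyset, (S^n\times S^n)^{\# g})$ as a disjoint union over $\theta$-structures with each piece fibering over $\BDiff(V^{2n+1}_g, D^{2n})$ by a standard argument. A group-completion argument then produces the natural map
$$\colim_{g\to\infty}\BDiff(V^{2n+1}_g, D^{2n}) \; \longrightarrow \; \Omega_{0} B\Ccal \; \simeq \; Q_{0} BO(2n+1)\langle n\rangle_{+}.$$
To upgrade this to an integral homology isomorphism, I would establish homological stability for the stabilization maps (\ref{equation: stabilization map}), adapting the arc-complex method from \cite{GRW 14}: define a semi-simplicial set of suitably disjoint embedded $(n+1)$-disks or their attaching spheres in $V^{2n+1}_g$, prove its geometric realization becomes highly connected as $g\to\infty$ via general-position and surgery arguments, and deduce the required stability range from the associated spectral sequence.

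The dimensional hypothesis $2n+1 \geq 9$, i.e.\ $n \geq 4$, enters in the homological stability step to guarantee enough room for the Whitney trick and general-position moves on $n$-dimensional attaching spheres embedded in $(2n+1)$-manifolds. Beyond this, the chief technical difficulty is the identification in step one: carefully tracking how the handlebody structure trivializes the tangential twist so as to replace the Thom spectrum $\MT\theta$ with the suspension spectrum $\Sigma^{\infty}BO(2n+1)\langle n\rangle_{+}$.
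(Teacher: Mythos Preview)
Your high-level outline (cobordism category $\to$ infinite loop space $\to$ group completion) is correct, but two key points are misidentified and constitute genuine gaps.

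\textbf{Where the suspension spectrum comes from.} You propose to obtain $\Sigma^{\infty}BO(2n+1)\langle n\rangle_{+}$ by restricting the morphisms of a cobordism category of \emph{closed} $2n$-manifolds to those with handles of index $\leq n+1$, and you sketch a ``labeled points'' scanning heuristic. This is not how the identification works, and carrying it out along those lines would be very difficult. The paper instead starts from the cobordism category $\Cob^{\partial}_{\theta}$ of manifolds \emph{with boundary} (objects are compact $2n$-manifolds with boundary, morphisms are relative $\theta$-cobordisms). Genauer's theorem \cite{G 12} gives $B\Cob^{\partial}_{\theta}\simeq\Omega^{\infty-1}\Cofibre(\Sigma^{-1}\MT\theta_{d-1}\to\MT\theta)$, and by \cite[Proposition~3.1]{GMTW 08} this cofibre is $\Sigma^{\infty}B_{+}$. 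So the suspension spectrum is there from the outset, for the \emph{full} category. The parametrized surgery then shows that restricting to various subcategories (culminating in the endomorphism monoid $\mathcal{M}_{\theta}$ on a disk object, whose morphisms are precisely the handlebodies $V^{2n+1}_{g}$) does not change the homotopy type of the classifying space. The logic is opposite to yours: one does not restrict to handlebodies in order to produce the suspension spectrum; one has the suspension spectrum already and must prove the restriction is harmless.

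\textbf{Homological stability is not needed for Theorem~A.} You claim stability is required to upgrade the group-completion map to an integral homology isomorphism. It is not: the McDuff--Segal group-completion theorem \cite{MS 75} applied to the homotopy-commutative monoid $\mathcal{M}_{\theta}\simeq\coprod_{g}\BDiff_{\theta}(V^{2n+1}_{g},D^{2n})$ already gives $H_{*}((\mathcal{M}_{\theta})_{\infty})\cong H_{*}(\Omega B\mathcal{M}_{\theta})$, which is the full content of Theorem~A. Homological stability \cite{P 15} is used only to deduce Corollary~B (the finite-$g$ range). Consequently your placement of the hypothesis $n\geq 4$ is also off: it enters not in a stability argument but in the parametrized-surgery step $B\Cob^{\partial,\mathcal{H}}_{\theta}\simeq B\Cob^{\partial,\mb{c}}_{\theta}$, specifically in a higher-dimensional half-Whitney trick (Proposition~\ref{proposition: inductive disjunction} and Theorem~\ref{theorem: higher half whitney trick}) needed to show the space of surgery data is contractible.
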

\begin{remark}
The map in Theorem A is defined for all $n \in \N$ but our proof that
it induces an isomorphism on homology requires $n \geq 4$.
However, the authors believe that our methods
could be improved to yield the analogous theorem in
the cases where $n = 3$ and $n = 2$.  
Indeed, there
is a single technical step in our proof that requires $n \geq 4$, see
Proposition \ref{proposition: inductive disjunction}. 
If this can be upgraded, then
Theorem A can be upgraded as well.
\end{remark}
\begin{remark}
  There are interesting analogs of Theorem A in the cases $n = 1$
  and $n = 0$ that are worth discussing.  
  In the case that $n = 1$, we
  have $BO(3)\langle 1 \rangle = BSO(3)$.  
  Thus, by Hatcher's result
  (see \cite{H 12}) the above theorem holds for $n=1$.  
  Theorem A is also true when $n = 0$.  
  In this case we have $BO(1)\langle 0 \rangle = BO(1)$,
    and the manifold $V^{1}_{g} = (D^{1}\times S^{0})^{\natural g}$ is
    \textcolor{black}{the disjoint union} of $(g+1)$-many copies of the one dimensional
    disk $D^{1}$.  Using the homotopy equivalence $\BDiff(D^{1})
    \simeq BO(1)$, it follows that there is a weak homotopy
    equivalence, $ \BDiff(V^{1}_{g}, D^{0}) \simeq
    E\Sigma_{g}\times_{\Sigma_{g}}(BO(1)^{\times g}). $  
    By this weak homotopy equivalence, the version
    of Theorem A
with $n = 0$ follows from the \textit{Barrat-Priddy-Quillen-Segal
  Theorem} which proves that the \textit{scanning map}, $
\displaystyle{\colim_{g\to\infty}}E\Sigma_{g}\times_{\Sigma_{g}}(BO(1)^{\times
  g}) \; \longrightarrow \; Q_{0}BO(1)_{+} $ is a homological
equivalence.  We remark that these versions of Theorem A
for $n = 0$ and $n = 1$ are proven using
different methods than the ones that we employ.
\end{remark}
In \cite{P 15} the second author of this paper proves that the map on
homology induced by (\ref{equation: stabilization map}),
$$\xymatrix{
H_{k}(\BDiff(V^{2n+1}_{g}, D^{2n}); \; \Z) \longrightarrow H_{k}(\BDiff(V^{2n+1}_{g+1}, D^{2n}); \; \Z),
}$$
is an isomorphism when $k \leq \tfrac{1}{2}(g - 4)$ and $n \geq 4$. 
By combining Theorem A
with this homological stability result, we obtain the immediate
corollary:
\begin{thmB}
Let $n \geq 4$.
There is an isomorphism 
$$\xymatrix{
H_{k}(\BDiff(V^{2n+1}_{g}, D^{2n}); \; \Z) \; \cong \; H_{k}(Q_{0}BO(2n+1)\langle n \rangle_{+}; \; \Z)
}$$
when $k \leq \tfrac{1}{2}(g - 4)$. 
\end{thmB}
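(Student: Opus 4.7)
The plan is to obtain Corollary B as a purely formal consequence of Theorem A and the homological stability theorem of \cite{P 15}, with no additional geometric input required. The key observation is that integral singular homology commutes with sequential (filtered) colimits of topological spaces, so the natural map
$$\colim_{g\to\infty} H_{k}(\BDiff(V^{2n+1}_{g}, D^{2n}); \Z) \; \longrightarrow \; H_{k}\bigl(\colim_{g\to\infty} \BDiff(V^{2n+1}_{g}, D^{2n}); \Z\bigr)$$
is an isomorphism for every $k$.

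Next, I would invoke the stability result of \cite{P 15}: for $n \geq 4$, the stabilization homomorphism induced by (\ref{equation: stabilization map}) is an isomorphism on $H_k$ whenever $k \leq \tfrac{1}{2}(g-4)$. Since a sequential colimit of eventually-isomorphisms is attained at any sufficiently late stage, this implies that for such $k$ and $g$ the canonical map
$$H_{k}(\BDiff(V^{2n+1}_{g}, D^{2n}); \Z) \; \longrightarrow \; \colim_{g'\to\infty} H_{k}(\BDiff(V^{2n+1}_{g'}, D^{2n}); \Z)$$
is itself an isomorphism. Note that the stability bound is preserved as $g$ grows, so every term beyond the $g$ in question already agrees with the colimit in degree $k$.

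Finally, I would compose with the homology isomorphism supplied by Theorem A, which identifies $H_{*}\bigl(\colim_{g\to\infty} \BDiff(V^{2n+1}_{g}, D^{2n}); \Z\bigr)$ with $H_{*}(Q_{0}BO(2n+1)\langle n \rangle_{+}; \Z)$. Chaining these three isomorphisms yields the desired identification
$$H_{k}(\BDiff(V^{2n+1}_{g}, D^{2n}); \Z) \; \cong \; H_{k}(Q_{0}BO(2n+1)\langle n \rangle_{+}; \Z) \quad \text{for } k \leq \tfrac{1}{2}(g-4).$$
Since all the real work has been absorbed into Theorem A and \cite{P 15}, there is no genuine obstacle here; the only point requiring any care is simply to confirm that the stability bound is such that, for each fixed $(k,g)$ satisfying $k \leq \tfrac{1}{2}(g-4)$, all subsequent stabilization maps are also isomorphisms on $H_k$, which is immediate since the bound only becomes more permissive as $g$ increases.
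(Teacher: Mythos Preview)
Your proposal is correct and follows exactly the approach the paper takes: the authors state Corollary B as an ``immediate corollary'' obtained by combining Theorem A with the homological stability result of \cite{P 15}, and your argument simply spells out that combination carefully.
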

For any space $X$, the rational cohomology of the infinite loop-space
$Q_{0}X_{+}$ is easy to describe: it is the free commutative algebra
associated to the rational vector space that underlies the cohomology
ring $H^{*}(X; \Q)$.  Using the well known isomorphism
$H^{*}(BO(2n+1)\langle n \rangle; \Q) \cong \Q[p_{[\frac{n+1}{4}]},
  \dots, p_{n}]$, we obtain the following corollary.
\begin{thmC}
Let $n \geq 4$.
Let $\mathcal{B} \subset H^{*}(BO(2n+1); \Q)$ denote
the set of monomials in the Pontryagin classes $p_{n}, \dots,
p_{[\frac{n+1}{4}]}$.  There is an isomorphism
$
\displaystyle
\textcolor{black}{\lim_{\leftarrow}H^{*}(\textstyle{\BDiff}(V^{2n+1}_{g}, D^{2n}); \; \Q) \; \cong \; \Q[c \; ; c \in \mathcal{B}].}
$
\end{thmC}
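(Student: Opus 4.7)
The plan is to deduce Corollary C directly from Theorem A together with the homological stability result of \cite{P 15}, by a short rational computation.

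First, I would use homological stability to replace the inverse limit with the cohomology of the colimit. The stability theorem cited just before Corollary B asserts that the stabilization map induces an isomorphism on $H_k(\cdot;\Z)$ for $g\geq 2k+4$. Passing to rational coefficients and dualizing, the inverse system $\{H^k(\BDiff(V^{2n+1}_g,D^{2n});\Q)\}_g$ is eventually constant for every $k$, so $\lim^1$ vanishes and the Milnor sequence collapses to
$$\lim_{\leftarrow}H^{*}(\BDiff(V^{2n+1}_{g},D^{2n});\Q)\;\cong\;H^*\bigl(\colim_{g\to\infty}\BDiff(V^{2n+1}_g,D^{2n});\Q\bigr).$$
Combined with Theorem A (whose integral homology equivalence yields a rational cohomology equivalence via universal coefficients), the right-hand side is identified with $H^*(Q_0 BO(2n+1)\langle n\rangle_+;\Q)$.

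Second, I would compute this rational cohomology by the standard rational splitting of suspension spectra: for a connected space $Y$, $Q_0 Y_+$ is rationally equivalent to $\prod_{i\geq 1}K(H_i(Y;\Q),i)$, whence
$$H^*(Q_0 Y_+;\Q)\;\cong\;\mathrm{Sym}_\Q\bigl(H^{>0}(Y;\Q)\bigr),$$
the free graded-commutative $\Q$-algebra on the positive-degree rational cohomology.

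Third, I would identify $H^{>0}(BO(2n+1)\langle n\rangle;\Q)$. Since $BO(2n+1)$ is rationally equivalent to $\prod_{i=1}^{n}K(\Q,4i)$ with rational cohomology $\Q[p_1,\dots,p_n]$, and the $n$-connective cover kills exactly the rational homotopy groups in degrees $\leq n$, the surviving Pontryagin classes $p_i$ are those with $4i>n$, i.e.\ $i\geq[\tfrac{n+1}{4}]$. Hence
$$H^*(BO(2n+1)\langle n\rangle;\Q)\;\cong\;\Q[p_{[\frac{n+1}{4}]},\dots,p_n].$$
Because every generator sits in an even degree $4i$, the free graded-commutative algebra on a basis of the positive-degree part of this polynomial algebra is itself polynomial, with generators the monomials $c\in\mathcal{B}$. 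This yields the claimed isomorphism $\Q[c\;;\;c\in\mathcal{B}]$. The only nontrivial ingredient is Theorem A itself; the rest is standard rational-homotopy bookkeeping, and the one modest subtlety, $\lim^1$ vanishing, is immediate from stability.
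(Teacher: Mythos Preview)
Your proof is correct and follows essentially the same approach as the paper. The paper states the two key inputs immediately before Corollary~C---that $H^*(Q_0X_+;\Q)$ is the free commutative algebra on $H^{>0}(X;\Q)$, and the identification $H^*(BO(2n+1)\langle n\rangle;\Q)\cong\Q[p_{[\frac{n+1}{4}]},\dots,p_n]$---and then simply declares the corollary; you have supplied the routine details (the $\lim^1$ vanishing from eventual constancy, and the Eilenberg--MacLane splitting) that the paper leaves implicit.
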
 
\subsection{The boundary map}
We now describe a further application of Theorem A.
For $g, n \in \N$, let $W^{2n}_{g}$ denote the $g$-fold connected sum
$(S^{n}\times S^{n})^{\# g}$.  Notice that $\partial V^{2n+1}_{g} =
W^{2n}_{g}$.  There is a continuous homomorphism
\begin{equation} \label{equation: boundary map}
\xymatrix{
\Diff(V^{2n+1}_{g}, D^{2n}) \longrightarrow \Diff(W^{2n}_{g}, D^{2n}), \quad f \mapsto f|_{W^{2n}_{g}},
}
\end{equation} 
given by restricting a self-diffeomorphism of $V^{2n+1}_{g}$ to the
boundary.  For non-negative integers $k$ and $d$, let $\theta^{k}_{d}:
BO(d)\langle k \rangle \longrightarrow BO(d)$ denote the
\textit{$k$-connected cover}.  Let $\gamma^{d} \longrightarrow BO(d)$
denote the \textit{canonical $d$-dimensional vector bundle}.  Let
$\MT\theta^{k}_{d}$ denote the Thom spectrum associated to the virtual
vector bundle $-(\theta^{k}_{d})^{*}\gamma^{d}$ over $BO(d)\langle k
\rangle$ (when $k = n$ and $d = 2n$ this is the same spectrum from
\cite[Theorem 1.2]{GRW 14}).  From \cite[Proposition 3.1]{GMTW 08} it
follows that there is a fibre-sequence
\begin{equation} \label{equation: fibre sequence}
\xymatrix{
\Omega^{\infty}_{0}\MT\theta^{n}_{2n+1} \ar[r] & Q_{0}BO(2n+1)\langle n\rangle_{+} \ar[r] & \Omega^{\infty}_{0}\MT\theta^{n}_{2n}.
}
\end{equation}
There is a commutative diagram 
\begin{equation} \label{equation: commutative boundary diagram}
\xymatrix{
\displaystyle{\colim_{g\to\infty}\BDiff(V^{2n+1}_{g}, D^{2n})}  \ar[rr] \ar[d] &&   Q_{0}BO(2n+1)\langle n\rangle_{+} \ar[d] \\
\displaystyle{\colim_{g\to\infty}\BDiff(W^{2n}_{g}, D^{2n})} \ar[rr] && \Omega^{\infty}_{0}\MT\theta^{n}_{2n}.
}
\end{equation}
The map \textcolor{black}{$Q_{0}BO(2n+1)\langle n\rangle_{+}
  \longrightarrow \Omega^{\infty}\MT\theta^{n}_{2n}$} was studied in
\cite{GT 11}.  There it was proven that with rational coefficients,
the induced map on cohomology $
H^{*}(\Omega^{\infty}_{0}\MT\theta^{n}_{2n}; \Q) \longrightarrow
H^{*}(Q_{0}BO(2n+1)\langle n\rangle_{+}; \Q) $ is surjective (this
follows as a consequence of \cite[Proposition
  2.1]{GT 11}).  Combining this result from \cite{GT
  11} with the commutative diagram (\ref{equation: commutative
  boundary diagram}) we obtain the following result.
 \begin{thmD}
 Let $n \geq 4$. 
 The homomorphism 
 $$\xymatrix{ \displaystyle{\textcolor{black}{\lim_{\leftarrow}}
     \ }H^{*}(\BDiff(W^{2n}_{g}, D^{2n}); \; \Q) \; \longrightarrow \;
   \displaystyle{\textcolor{black}{\lim_{\leftarrow}}\ }H^{*}(\BDiff(V^{2n+1}_{g},
   D^{2n}); \; \Q) }$$
 induced by {\rm (\ref{equation: boundary map})} in the limit as $g \to \infty$
 is surjective.  Furthermore, the map
 $$
 \xymatrix{
H^{k}(\BDiff(W^{2n}_{g}, D^{2n}); \; \Q) \; \longrightarrow \; H^{k}(\BDiff(V^{2n+1}_{g}, D^{2n}); \; \Q)}
 $$
 is surjective when for $k \leq \frac{1}{2}(g - 4)$.
  \end{thmD}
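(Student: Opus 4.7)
The plan is a formal diagram chase in the commutative square (\ref{equation: commutative boundary diagram}), using Theorem A, the main theorem of \cite{GRW 14}, the surjectivity statement \cite[Proposition 2.1]{GT 11}, and (for the second assertion) the homological stability theorems for $V^{2n+1}_{g}$ and $W^{2n}_{g}$. The guiding observation is that both horizontal maps of (\ref{equation: commutative boundary diagram}) are integral homology equivalences---the top by Theorem A, the bottom by the main theorem of \cite{GRW 14} which applies for $n \geq 3$---so in rational cohomology they are isomorphisms, and any surjectivity on the right side of the square will transfer to surjectivity on the left side.

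For the first (limit) assertion, I would identify rational cohomology of the colimits with inverse limits via the standard fact that $H^{*}(\colim_{g} X_{g}; \Q) \cong \lim_{\leftarrow} H^{*}(X_{g}; \Q)$, which holds because rational cohomology is the $\Q$-linear dual of rational homology and homology commutes with directed colimits. This yields
$$
\lim_{\leftarrow} H^{*}(\BDiff(V^{2n+1}_{g}, D^{2n}); \Q) \cong H^{*}(Q_{0}BO(2n+1)\langle n \rangle_{+}; \Q)
$$
together with the analogous identification of $\lim_{\leftarrow} H^{*}(\BDiff(W^{2n}_{g}, D^{2n}); \Q)$ with $H^{*}(\Omega^{\infty}_{0}\MT\theta^{n}_{2n}; \Q)$. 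Since \cite[Proposition 2.1]{GT 11} asserts that the right vertical map of (\ref{equation: commutative boundary diagram}) is surjective on rational cohomology, commutativity forces the left vertical map to induce a surjection in the limit, which is exactly the first statement.

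For the finite-$g$ assertion I would combine the limit statement with homological stability. On the $V$-side, \cite{P 15} gives that the stabilization map is an integral homology isomorphism in degrees $k \leq \tfrac{1}{2}(g-4)$; on the $W$-side, the homological stability theorem of Galatius--Randal-Williams supplies a comparable or better range. Crucially, the stabilization maps on the two sides are compatible with the boundary homomorphism (\ref{equation: boundary map}): extending a diffeomorphism of $V^{2n+1}_{g}$ by the identity across an attached copy of $D^{n+1}\times S^{n}$ restricts on the boundary to the extension across the corresponding connect-summand $S^{n}\times S^{n}$ of $W^{2n}_{g}$. Hence one obtains a naturality square relating the map $\BDiff(V^{2n+1}_{g}, D^{2n}) \to \BDiff(W^{2n}_{g}, D^{2n})$ to the map of colimits, both of whose vertical arrows are rational cohomology isomorphisms in degree $k \leq \tfrac{1}{2}(g-4)$. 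A short diagram chase with the limit surjectivity then yields the desired finite-$g$ surjectivity.

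No genuine obstacle arises here: the statement is a formal consequence of inputs already cited in the introduction. The only point requiring any care is verifying the naturality of the boundary map with respect to the two stabilization maps, which is clear from the geometric construction described above.
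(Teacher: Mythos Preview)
Your proposal is correct and matches the paper's approach: the paper simply states Corollary D as following from the commutative diagram (\ref{equation: commutative boundary diagram}) together with \cite[Proposition 2.1]{GT 11}, and you have correctly filled in the formal diagram chase using Theorem A, \cite{GRW 14}, and the stability results. One minor simplification is available for the second assertion: since the maps to the infinite loop spaces in (\ref{equation: commutative boundary diagram}) are already defined at each finite $g$, you can run the chase directly in the finite-$g$ square and need only $V$-side stability from \cite{P 15} to identify $H^{k}(\BDiff(V^{2n+1}_{g}, D^{2n}); \Q)$ with the stable group---the $W$-side stability is not strictly required.
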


 \subsection{Tangential structures} \label{subsection: tangential structures}
Tangential structures on manifolds will play an important role in our constructions throughout used throughout the paper. 
Our methods will actually allow us to prove a generalization of Theorem A. 
Recall that 
a \textit{tangential structure} is a map $\theta: B \longrightarrow BO(d)$. 
A $\theta$-structure on a $d$-dimensional manifold $W$ is a bundle map $TW \longrightarrow \theta^{*}\gamma^{d}$ (i.e.\ a fibrewise linear isomorphism).
More generally, a $\theta$-structure on an $m$-dimensional manifold $M$ (with $m \leq d$) is a bundle map $TM\oplus\epsilon^{d-m} \longrightarrow \theta^{*}\gamma^{d}$. 

For what follows, fix a tangential structure 
$\theta: B \longrightarrow BO(d)$. 
Fix a $\theta$-structure on the $(d-1)$-dimensional disk, $\ell_{D}: TD^{d-1}\oplus\epsilon^{1} \longrightarrow \theta^{*}\gamma^{d}$.
\begin{defn}
Let $M$ be a smooth $d$-dimensional manifold with non-empty boundary, equipped with an embedding $D^{d-1} \hookrightarrow \partial M$.
Let $\Bun(TM, \theta^{*}\gamma^{d}; \ell_{D})$ denote the space of $\theta$-structures $\ell: TM \longrightarrow \theta^{*}\gamma^{d}$ that agree with $\ell_{D}$ when restricted to the embedded disk $D^{d-1} \hookrightarrow \partial M$.
The map 
$$
\Bun(TM, \theta^{*}\gamma^{d}; \ell_{D})\times\Diff(M, D^{2n}) \; \longrightarrow \; \Bun(TM, \theta^{*}\gamma^{d}; \ell_{D}), \quad (\ell, f) \; \mapsto \; \ell\circ Df,
$$
defines a continuous group action, where in the above formula $Df$ denotes the differential of the diffeomorphism $f$.
With this group action,
the space $\BDiff_{\theta}(M, D^{2n})$ is defined to be the \textit{homotopy quotient}, $\Bun(TM, \theta^{*}\gamma^{d}; \ell_{D})//\Diff(M, D^{2n})$. 
\end{defn}
Let $\theta$ be as above and let $d = 2n+1$. 
Consider the spaces $\BDiff_{\theta}(V^{2n+1}_{g}, D^{2n})$. 
In Section \ref{subsection: proof of theorem A} we construct a direct system
\begin{equation} \label{equation: direct system theta}
\xymatrix{
\cdots \ar[r] & \BDiff_{\theta}(V^{2n+1}_{g-1}, D^{2n}) \ar[r] & \BDiff_{\theta}(V^{2n+1}_{g}, D^{2n}) \ar[r] & \BDiff_{\theta}(V^{2n+1}_{g+1}, D^{2n}) \ar[r] & \cdots
}
\end{equation}
Let $\displaystyle{\colim_{g\to\infty}}\textstyle{\BDiff_{\theta}}\BDiff(V^{2n+1}_{g}, D^{2n})$ denote its colimit. 
We have the following theorem which generalizes Theorem A.
\begin{thmA1}
Let $n \geq 4$ and let $\theta: B \longrightarrow BO(2n+1)$ be such that $B$ is $n$-connected. 
Then there is a map 
$$\displaystyle{\colim_{g\to\infty}}\textstyle{\BDiff_{\theta}}(V^{2n+1}_{g}, D^{2n}) \; \longrightarrow \; Q_{0}B_{+}$$
that induces an isomorphism on
integral homology $H_{*}(\underline{\hspace{.3cm}}; \; \Z)$.
\end{thmA1}

In \cite{P 15} it is shown that the direct system (\ref{equation: direct system theta}) satisfies homological stability. 
As a result of this homological stability theorem we obtain the following corollary. 
\begin{thmB1}
Let $n \geq 4$
and let $\theta: B \longrightarrow BO(2n+1)$ be such that $B$ is $n$-connected. 
There is an isomorphism 
$\xymatrix{
H_{k}(\BDiff(V^{2n+1}_{g}, D^{2n}); \; \Z) \; \cong \; H_{k}(Q_{0}B_{+}; \; \Z)
}$
when $k \leq \tfrac{1}{2}(g - 4)$. 
\end{thmB1}

\begin{remark}
Theorem A is obtained from Theorem A* by setting the tangential structure $\theta$ equal to the $n$-connected cover, $\theta^{n}_{2n+1}: BO(2n+1)\langle n \rangle \longrightarrow BO(2n+1)$. 
Indeed, for all $g \in Z_{\geq 0}$ the space $\Bun(TV^{2n+1}_{g}, (\theta^{n}_{2n+1})^{*}\gamma^{2n+1}; \ell_{D})$ is weakly contractible (see Proposition \ref{proposition: theta-n structures}) and so it follows that there is a weak homotopy equivalence $\BDiff_{\theta^{n}_{2n+1}}(V^{2n+1}_{g}, D^{2n}) \simeq \BDiff(V^{2n+1}_{g}, D^{2n})$.
From this weak homotopy equivalence, the homology equivalence of Theorem A* yields Theorem A. 
\end{remark}

\subsection{Cobordism categories and outline of the proof} \label{section: cobordism categories intro}
The theorems stated in the previous sections are proven using cobordism
category methods similar to those employed by Galatius and
Randal-Williams from \cite{GRW 14}, \cite{GRW 09}, and \cite{GMTW 08}.
Below we give a description of the cobordism category techniques that
we employ.  
To begin, fix a tangential structure $\theta: B \longrightarrow BO(d)$.  
 We will need to consider relative cobordisms between manifolds with boundary equipped with a $\theta$-structure. 
\begin{defn} \label{defn: relative cobordism}
Let $(M, \partial M)$ and $(N, \partial N)$ be compact manifold pairs of dimension $d-1$. 
Let $W$ be a compact, $d$-dimensional manifold 
whose boundary is equipped with a decomposition 
$$\partial W = \partial_{0}W\cup\partial_{1}W$$
that satisfies the following conditions: 
\begin{enumerate}
\item[\textcolor{black}{(i)}] $\partial_{0}W, \partial_{1}W \subset \partial W$ are compact submanifolds of dimension $(d-1)$;
\item[\textcolor{black}{(ii)}] $\partial(\partial_{0}W) = \partial_{0}W\cap \partial_{1}W = \partial(\partial_{1}W)$ (we will denote $\partial_{0,1}W := \partial_{0}W\cap \partial_{1}W)$;
\item[\textcolor{black}{(iii)}] $\partial_{0}W = M\sqcup N$ and $\partial_{0,1}W = \partial M \sqcup \partial N$.
\end{enumerate}
The pair $(W, \partial_{1}W)$ is then said to be a \textit{relative cobordism} between the manifold pairs $(M, \partial M)$ and $(N, \partial N)$.
Now let the manifolds $M$ and $N$ be equipped with $\theta$-structures $\ell_{M}$ and $\ell_{N}$ respectively. 
Suppose that $W$ is equipped with a $\theta$-structure $\ell$ that restricts to $\ell_{M}\sqcup\ell_{N}$ on $\partial_{0}W = M\sqcup N$. 
Then the pair $((W, \partial_{1}W), \ell)$ is said to be a (relative) \textit{$\theta$-cobordism} between the pairs $((M, \partial M), \ell_{M})$ and $((N, \partial N), \ell_{N})$. 
\end{defn}
We will need to consider a topological category with objects given by $\theta$-manifold pairs and morphisms given by relative $\theta$-cobordisms.
We must fix some notation.
Let $\R^{\infty}_{+}$ denote the product $[0, \infty)\times\R^{\infty-1}$ and let $\partial\R^{\infty}_{+}$ denote the boundary $\{0\}\times\R^{\infty-1}$. 
In Section \ref{section: cobordism categories} we define a topological
category $\Cob^{\partial}_{\theta}$ whose object space is defined to
be the space of all pairs $(M, \ell)$ where $M \subset
\R^{\infty}_{+}$ is a $(d-1)$-dimensional compact submanifold with
$M\cap\partial\R^{\infty}_{+} = \partial M$, and $\ell$ is a
$\theta$-structure on $M$.  
A morphism between two objects $(M, \ell_{M})$ and $(N, \ell_{N})$ is defined to be a triple $(t, (W, \ell))$ with $t \in \R_{\geq 0}$, $W \subset [0, t]\times\R^{\infty}_{+}$ is a $d$-dimensional compact submanifold (with corners) with
$$W\cap(\{0\}\times\R^{\infty}_{+}) = \{0\}\times M \quad \text{and} \quad W\cap(\{t\}\times\R^{\infty}_{+}) = \{t\}\times N,$$ 
and $\ell$ is a $\theta$-structure on $W$ that restricts to $\ell_{M}$ and $\ell_{N}$ on $M$ and $N$ respectively. 
Furthermore, the intersection $W\cap([0, t]\times\partial\R^{\infty}_{+})$ is required to be a $(d-1)$-dimensional compact manifold such that there is a decomposition,
$
\partial W = [W\cap([0, t]\times\partial\R^{\infty}_{+})]\cup(\{0\}\times M)\cup(\{t\}\times N).
$ 
By setting 
$$\partial_{0}W = (\{0\}\times M)\sqcup(\{t\}\times N) \quad \text{and} \quad \partial_{1}W = W\cap([0, t]\times\partial\R^{\infty}_{+}),$$
it follows that $((W, \partial_{1}W), \ell)$ is a relative $\theta$-cobordism between $((M, \partial M), \ell_{M})$ and $((N, \partial N), \ell_{N})$.  
In \cite{G 12}, Genauer identified the weak homotopy type of the
classifying space
$B\Cob^{\partial}_{\theta}$. 
Namely, he proves the there is a weak
homotopy equivalence
\begin{equation} \label{equation: homotopy type of cob cat}
B\Cob^{\partial}_{\theta} \simeq \Omega^{\infty-1}\Sigma^{\infty}B_{+},
\end{equation}
 where recall that $B$ is the total-space of the fibration $\theta: B
 \longrightarrow BO(d)$.  
 
We will need to filter the cobordism category
  $\Cob^{\partial}_{\theta}$ by a sequence of certain ``smaller''
  subcategories whose objects and morphisms satisfy certain
  connectivity conditions.   
\begin{defn} \label{defn: cobordism cat sequence preliminary}
Fix a $(2n-1)$-dimensional disk $D \subset \partial\R^{\infty}_{+}$.
Fix a $\theta$-structure $\ell_{D}$ on $D$.  
Let $\ell_{\R\times D}$ denote the $\theta$-structure on $\R\times D$ induced by $\ell_{D}$
in the standard way.  
We define a sequence of subcategories of
$\Cob^{\partial}_{\theta}$ as follows:
\begin{enumerate} \itemsep.2cm
\item[(a)] The topological subcategory $\Cob^{\partial, D}_{\theta}
  \subset \Cob^{\partial}_{\theta}$ has as its space of objects those
  $(M, \ell)$ such that $\partial M \subset \partial\R^{\infty}_{+}$
  contains the disk $D$ and $\ell$ agrees with $\ell_{D}$ when
  restricted to $D$.  It has as its morphisms those $(t, (W, \ell))$
  such that the face $\partial_{1}W \subset [0,
    t]\times\partial\R^{\infty}_{+}$ contains $[0, t]\times D$, and $\ell$
  agrees with $\ell_{\R\times D}$ when restricted to $[0, t]\times D$.

\item[(b)] The topological subcategory $\Cob^{\partial,
  \mb{b}}_{\theta} \subset \Cob^{\partial, D}_{\theta}$ has as its
  space of objects those $(M, \ell)$ such that $\partial M$ is
  $(n-2)$-connected.  
  For $(M, \ell), (N, \ell) \in \Ob\Cob^{\partial, \mb{b}}_{\theta}$, the morphism space
  $\Cob^{\partial, \mb{b}}_{\theta}((M, \ell), (N, \ell))$ consists
  of those $(t, (W, \ell))$ such that the pair $(\partial_{1}W,
  \partial N)$ is $(n-1)$-connected.
  
\item[(c)] The topological subcategory $\Cob^{\partial,
  \mb{c}}_{\theta} \subset \Cob^{\partial, \mb{b}}_{\theta}$
  has the same space of objects.  
  Then for objects
  $(M, \ell), (N, \ell) \in \Ob\Cob^{\partial, \mb{c}}_{\theta}$, the
  morphism space $\Cob^{\partial, \mb{c}}_{\theta}((M, \ell), (N,
  \ell))$ consists of those $(t, (W, \ell))$ such that the triad $(W;
  \partial_{1}W, N)$ is $n$-connected, i.e.\ the homomorphism induced
  by inclusion $\pi_{i}(N, \partial N) \longrightarrow \pi_{i}(W,
  \partial_{1}W)$, is an isomorphism for $i < n$ and an epimorphism for
  $i = n$.

\item[(d)] The topological subcategory $\Cob^{\partial,
  \mathcal{H}}_{\theta} \subset \Cob^{\partial, \mb{c}}_{\theta}$ is defined to be the full-subcategory on those objects $(M,
  \ell)$ such that the submanifold $M \subset \R^{\infty}_{+}$ is
  diffeomorphic to the $2n$-dimensional disk $D^{2n}$.

\item[(e)] Fix once and for all an object $(P, \ell_{P}) \in
  \Ob\Cob^{\partial, \mathcal{H}}_{\theta}$.  We define
  $\mathcal{M}_{\theta} \subset \Cob^{\partial, \mathcal{H}}_{\theta}$ to be the endomorphism monoid on the object $(P, \ell_{P})$.
\end{enumerate}
\end{defn}

Consider the sequence of subcategories
\begin{equation} \label{equation: sequence of subcategories}
\mathcal{M}_{\theta} \; \hookrightarrow \; \Cob^{\partial,
  \mathcal{H}}_{\theta} \; \hookrightarrow \; \Cob^{\partial,
  \mb{c}}_{\theta} \; \hookrightarrow \; \Cob^{\partial,
  \mb{b}}_{\theta} \; \hookrightarrow \; \Cob^{\partial, D}_{\theta} \; \hookrightarrow\; \Cob^{\partial}_{\theta}.
\end{equation}
The majority of the technical work of this paper is geared toward
proving the following theorem.
\begin{thmE} \label{theorem: monoid reduction}
Let $d = 2n+1 \geq 9$.  Suppose that the tangential structure $\theta:
B \longrightarrow BO(2n+1)$ is such that the space $B$ is at least
$(n-1)$-connected.  Then the sequence of subcategories from
{\rm (\ref{equation: sequence of subcategories})}
induces a sequence of weak homotopy equivalences
$$
B\mathcal{M}_{\theta} \; \simeq \; B\Cob^{\partial, \mathcal{H}}_{\theta} \; \simeq \; B\Cob^{\partial, \mb{c}}_{\theta} \; \simeq \; B\Cob^{\partial, \mb{b}}_{\theta}  \; \simeq \; B\Cob^{\partial, D}_{\theta} \; \simeq \; B\Cob^{\partial}_{\theta}.
$$
Thus, there is a weak homotopy equivalence $\Omega B\mathcal{M}_{\theta} \simeq \Omega^{\infty}\Sigma^{\infty}B_{+}.$
\end{thmE}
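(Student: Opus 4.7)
The plan is to verify each of the five inclusions in (\ref{equation: sequence of subcategories}) induces a weak equivalence on classifying spaces, and then assemble these with Genauer's theorem (\ref{equation: homotopy type of cob cat}). For the rightmost inclusion $\Cob^{\partial, D}_{\theta} \hookrightarrow \Cob^{\partial}_{\theta}$, I would argue that it is a ``fixed-decoration'' refinement and that the space of decorations (a choice of embedded disk $D \hookrightarrow \partial M$ together with an agreement with $\ell_{D}$) is weakly contractible: the embedding space is connected in infinite ambient codimension, and the space of $\theta$-structures extending a prescribed one on $D$ is weakly contractible because $B$ is at least $(n-1)$-connected. A bisimplicial (nerve-level) argument then upgrades this pointwise contractibility to a weak equivalence of classifying spaces.

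For the surgery inclusions $\Cob^{\partial, \mb{c}}_{\theta} \hookrightarrow \Cob^{\partial, \mb{b}}_{\theta} \hookrightarrow \Cob^{\partial, D}_{\theta}$, the strategy is to adapt the Galatius--Randal-Williams method of parametrized surgery below the middle dimension. For each step one constructs a semi-simplicial resolution of the larger category whose $p$-simplices are morphisms (or objects) equipped with $(p+1)$ disjointly embedded, $\theta$-framed surgery spheres in $\partial_{1}W$ (respectively in $W$ relative to $\partial_{1}W \cup N$) of the appropriate dimension. By general position (valid because $2n+1 \geq 9$) and the connectivity of $B$, the augmentation from the resolution to the classifying space of the larger category is a weak equivalence; and since performing all the prescribed surgeries lands the morphism inside the smaller subcategory, the resolution also maps by weak equivalence to the classifying space of the smaller category. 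Composing these two equivalences yields the desired one.

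The inclusion $\Cob^{\partial, \mathcal{H}}_{\theta} \hookrightarrow \Cob^{\partial, \mb{c}}_{\theta}$ is the deepest: it demands that every object can be improved by a surgery in families to the standard disk $D^{2n}$. Since objects in $\Cob^{\partial, \mb{c}}_{\theta}$ already have $\partial M$ which is $(n-2)$-connected and containing the fixed disk $D$, the plan is to adjoin morphisms realizing interior $k$-surgeries for $k < n$, killing all relative homotopy of $(M, D)$ below the middle dimension. A semi-simplicial space of such interior surgery data augments over $B\Cob^{\partial, \mb{c}}_{\theta}$, and its fibers must be shown to be highly connected. This is precisely the step where Proposition \ref{proposition: inductive disjunction} (the inductive disjunction lemma) is needed, and it is what forces $n \geq 4$. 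I expect this to be the main obstacle: establishing that the space of lifts of a given configuration of surgery data in the presence of the boundary $\partial M$ is as connected as required does not follow from a direct general-position count and requires the delicate disjunction argument flagged in the introduction.

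Finally, $B\mathcal{M}_{\theta} \simeq B\Cob^{\partial, \mathcal{H}}_{\theta}$ is a standard reduction in the style of Quillen's Theorem A. The category $\Cob^{\partial, \mathcal{H}}_{\theta}$ is path-connected: any two objects have underlying manifold $D^{2n}$ whose $\theta$-structures agree on $D$, and they can be joined by a cylindrical morphism whose $\theta$-structure is interpolated using the $(n-1)$-connectedness of $B$. Hence the inclusion of the endomorphism monoid on the chosen object $(P, \ell_{P})$ induces a weak equivalence on classifying spaces. Combining all five weak equivalences with (\ref{equation: homotopy type of cob cat}) yields $\Omega B\mathcal{M}_{\theta} \simeq \Omega^{\infty}\Sigma^{\infty}B_{+}$, completing the proof.
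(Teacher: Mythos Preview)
Your overall outline matches the paper's architecture, and you correctly identify the inductive disjunction (Proposition~\ref{proposition: inductive disjunction}) as the technical bottleneck forcing $n\geq 4$. However, your description of the step $B\Cob^{\partial,\mathcal{H}}_{\theta}\simeq B\Cob^{\partial,\mb{c}}_{\theta}$ contains a genuine gap. You propose ``interior $k$-surgeries for $k<n$, killing all relative homotopy of $(M,D)$ below the middle dimension.'' Such surgeries would only make the object $M$ $(n-1)$-connected; this is the comparatively easy preliminary step $B\Cob^{\partial,\mb{c}'}_{\theta}\simeq B\Cob^{\partial,\mb{c}}_{\theta}$ (Theorem~\ref{theorem: surgery below middle dimension}), which follows directly from \cite{GRW 14}. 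But an $(n-1)$-connected $2n$-manifold with $(n-2)$-connected boundary is \emph{not} a disk: one must still kill $\pi_{n}(M,\partial M)\cong H_{n}(M,\partial M)$, and this cannot be done by closed interior surgery. The paper's main new contribution (Sections~\ref{section: surgery on objects}--\ref{section: contractibility of the spaces of surgery data}) is a \emph{handle subtraction} move in degree $n$: one embeds $(D^{n}\times D^{n},\,S^{n-1}\times D^{n})\hookrightarrow (M,\partial M)$ representing generators of $\pi_{n}(M,\partial M)$ and deletes it, implementing the $\pi$--$\pi$ theorem in families via a relative version of the Galatius--Randal-Williams surgery move (Proposition~\ref{proposition: basic properties of object surgery move}). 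It is precisely for showing that the space of such handle-subtraction data is contractible that one needs the inductive disjunction lemma, not for the below-middle-dimension surgeries you describe.

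Two minor points: the equivalence $B\Cob^{\partial,D}_{\theta}\simeq B\Cob^{\partial}_{\theta}$ holds for \emph{any} $\theta$ (it is a Serre-fibration/pullback argument, Lemma~\ref{lemma: restriction to boundary is a fibration}, not a $\theta$-structure extension argument), so invoking the connectivity of $B$ there is unnecessary. And the reduction $B\mathcal{M}_{\theta}\simeq B\Cob^{\partial,\mathcal{H}}_{\theta}$ is not a direct application of Quillen's Theorem~A; the paper instead builds a path-space resolution (Definition~\ref{defn: path resolution}) and runs the same surgery-homotopy template as in the other steps.
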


The above theorem is proven in stages using the parametrized surgery techniques developed by Madsen and Weiss in \cite{MW 07} and Galatius and Randal-Williams in \cite{GRW 14}.
The second to last stage, the weak homotopy equivalence $B\Cob^{\partial, \mathcal{H}}_{\theta} \simeq B\Cob^{\partial, \mb{c}}_{\theta}$, is by the far the most technical part of this paper. 
Proving this equivalence takes up all of Sections \ref{section: surgery on objects} and \ref{section: contractibility of the spaces of surgery data}.
Furthermore, the weak homotopy equivalence $B\Cob^{\partial, \mathcal{H}}_{\theta} \simeq B\Cob^{\partial, \mb{c}}_{\theta}$ can be viewed as a manifestation of the \textit{$\pi-\pi$-theorem} (see \cite[p. 39]{W 70}) for cobordism categories.

In the final section, we show how to deduce Theorem A from Theorem E. 
For this final step we specialize to the case where $\theta: B \longrightarrow BO(2n+1)$ is such that $B$ is $n$-connected. 
For any morphism $(t, (W, \ell)) \in
\Mor\Cob^{\partial, \mathcal{H}}_{\theta}$, Definition \ref{defn: cobordism cat sequence preliminary} implies 
that the full boundary $\partial W = \partial_{0}W\cup\partial_{1}W$
is $(n-1)$-connected and that the pair $(W, \partial W)$ is
$n$-connected.  
After smoothing corners, the manifold
$W$ is diffeomorphic to an \textit{index-$n$ handlebody} of dimension
$2n+1$ (see Definition \ref{defn: handlebody} and also \cite{W 67}).
Since the space $B$ is $n$-connected and $W$ is equipped with a $\theta$-structure, it follows that $W$ is parallelizable.
It then follows as a consequence of Wall's classification theorem (\cite{W 67}) that $W$ is diffeomorphic to $V^{2n+1}_{g}$ for some $g \in \Z_{\geq 0}$. 
By this observation together with how $\Cob^{\partial}_{\theta}$ is topologized (see Section \ref{subsection: cobordism categories}), we obtain the weak homotopy equivalence 
 \begin{equation} \label{equation: monoid with only V-g}
 \mathcal{M}_{\theta} \simeq \coprod^{\infty}_{g= 0}\textstyle{\BDiff_{\theta}}(V^{2n+1}_{g}, D^{2n}).
 \end{equation}
   Composition in this monoid can be described as being given by
boundary connected sum of handlebodies and thus it follows that the topological monoid
$\mathcal{M}_{\theta}$ is homotopy commutative.  Our next step then is
to apply the group completion theorem of McDuff and Segal.  
The weak equivalence (\ref{equation: monoid with only V-g}) lets us identify
the localized Pontryagin ring
$H_{*}(\mathcal{M}_{\theta})[\pi_{0}\mathcal{M}_{\theta}^{-1}]$ with
the integral homology of the space 
  $\Z\times\displaystyle{\colim_{g\to\infty}}\textstyle{\BDiff_{\theta}}(V^{2n+1}_{g}, D^{2n})$.  
  The group completion theorem \cite{MS 75} then yields
the isomorphism $
H_{*}(\mathcal{M}_{\theta})[\pi_{0}\mathcal{M}_{\theta}^{-1}] \; \cong
\; H_{*}(\Omega B\mathcal{M}_{\theta}). $ Combining this isomorphism
with Theorem E then implies Theorem A*.

\begin{remark}
Theorem A* requires that the tangential structure $\theta: B \longrightarrow BO(2n+1)$ be such that $B$ is $n$-connected, while Theorem E only requires $B$ to be $(n-1)$-connected. 
The place where $n$-connectivity is needed for Theorem A* is the weak homotopy equivalence of (\ref{equation: monoid with only V-g}).
If $B$ is only $(n-1)$-connected, then instead of (\ref{equation: monoid with only V-g}) we obtain the weak homotopy equivalence 
$$
\mathcal{M}_{\theta} \simeq \coprod_{V}\textstyle{\BDiff_{\theta}}(V, D^{2n}),
$$
where $V$ ranges over the diffeomorphism classes of all $(2n+1)$-dimensional, \textit{index $n$-handlebodies}, which is a strictly larger class of manifolds than $\{V^{2n+1}_{g} \; | \; g \in \Z_{\geq 0} \}$, see Proposition \ref{proposition: structure of the monoid}.
Forming the group completion of the monoid $\mathcal{M}_{\theta}$ may require the inversion of more generators than just $V^{2n+1}_{1}$, and the direct limit in the statement of Theorem A* will have to take a different form. 
For the sake of concreteness, we do not explicitly pursue these examples in detail. 
\end{remark}

\subsection{Organization}
The paper is organized as follows.  In Section \ref{section: Cobordism
  categories an spaces of manifolds} we we cover some preliminary
constructions regarding the spaces of submanifolds of Euclidean space.
Section \ref{section: cobordism categories} is devoted to constructing the cobordism categories
described above and covering some basic results regarding them.
Section \ref{section: relative surgery} is devoted to developing some basic results regarding surgery and cobordism on manifolds with boundary and corners. 
Sections \ref{section: surgery on objects}, \ref{section: contractibility of the spaces of surgery data} and the appendix are devoted to proving the weak homotopy equivalences, 
  $B\Cob^{\partial, \mathcal{H}}_{\theta} \simeq B\Cob^{\partial, \mb{c}}_{\theta} \simeq B\Cob^{\partial, \mb{b}}_{\theta} \simeq B\Cob^{\partial, D}_{\theta}.$
  The weak homotopy equivalence $B\Cob^{\partial, \mathcal{H}}_{\theta} \simeq B\Cob^{\partial, \mb{c}}_{\theta}$ is by far the most significant step and it is divided between
Sections \ref{section: surgery on objects} and \ref{section:
  contractibility of the spaces of surgery data}.  
  The proofs of the other two weak homotopy equivalences are very close to proofs from \cite{GRW 14} and so we merely provide outlines of their proofs in the appendix 
  (they are in the appendix so as not to distract from the main technical thread of the paper which is proving $B\Cob^{\partial, \mathcal{H}}_{\theta} \simeq B\Cob^{\partial, \mb{c}}_{\theta}$).
In Section \ref{section: group completion} we prove the final weak homotopy equivalence $B\mathcal{M}_{\theta} \simeq
B\Cob^{\partial, \mathcal{H}}_{\theta}$ and then we show how to
deduce Theorem A* from $B\mathcal{M}_{\theta} \simeq
\Omega^{\infty-1}\Sigma^{\infty}B_{+}$.  
\subsection{Acknowledgments}
The authors would like to thank both Soren Galatius and Oscar
Randal-Williams for very helpful conversations regarding this project
while attending PIMS Symposium on Geometry and Topology of Manifolds
during the Summer of 2015 in Vancouver, Canada. We also would like to
thank Ib Madsen for his wisdom and encouragement.  Nathan Perlmutter
was supported by an NSF Postdoctoral Fellowship, DMS-1502657.

\section{Spaces of Manifolds} \label{section: Cobordism categories an spaces of manifolds}
\subsection{Spaces of manifolds}
Fix a tangential structure $\theta: B \longrightarrow BO(d).$
We begin by recalling a basic definition from \cite{GRW 09}. 
\begin{defn} \label{defn: space of manifolds (absolute)}
For an open subset $U \subset \R^{n}$, we denote by $\bPsi_{\theta}(U)$ the space of pairs $(M, \ell)$ where $M \subset U$ is a smooth $d$-dimensional submanifold (with empty boundary) that is closed as a topological subspace, and $\ell$ is a $\theta$-structure on $M$.
More generally, if $l \leq d$ is a nonnegative integer, we define $\bPsi_{\theta_{l}}(U)$ to be the space of pairs $(M, \ell)$ where $M \subset \R^{n}$ is an $l$-dimensional submanifold without boundary and closed as a topological subspace, while $\ell$ is a $\theta$-structure on $M$. 
\end{defn}

\begin{remark}
In \cite{GRW 09} the spaces $\bPsi_{\theta_{l}}(U)$ were topologized
in such a way so that the assignment $U \mapsto \bPsi_{\theta_{l}}(U)$
defines a continuous sheaf \textcolor{black}{on $\R^{n}$}.  
We refer the
reader there for details on the construction.  We will ultimately have
to consider submanifolds of $\R^{\infty}$.  
\end{remark}

We now proceed to define spaces of manifolds with boundary analogous to the definition given above. 
Let us fix some notation. 
For
$k \in \N$, let $\R^{k}_{+}$ denote the product $[0, \infty)\times\R^{k-1}$ and let $\partial\R^{k}_{+}$ denote the boundary $\{0\}\times\R^{k-1}$.  We denote by $\R^{\infty}_{+}$ and
  $\partial\R^{\infty}_{+}$ the direct limits of those spaces taken as
  $k \to \infty$.
\begin{defn} \label{defn: space of manifolds with boundary}
For an open subset $U \subset \R^{k}\times\R^{n-k}_{+}$, we denote by $\bPsi^{\partial}_{\theta}(U)$ the space of pairs $(M, \ell)$ 
that satisfy the following conditions:
\begin{enumerate} \itemsep.2cm
\item[\textcolor{black}{(i)}] $M \subset \R^{k}\times\R^{n-k}_{+}$ is a $d$-dimensional submanifold and $\ell$ is a $\theta$-structure on $M$;
\item[\textcolor{black}{(ii)}] $M\cap(\R^{k}\times\partial\R^{n-k}_{+}) = \partial M$;
\item[(iii)] There exists $\varepsilon > 0$ such that $M\cap([0, \varepsilon)\times\R^{k}\times\partial\R^{n-k}_{+}) = [0, \varepsilon)\times\partial M$. 
Furthermore, the restriction of $\ell$ to $[0, \varepsilon)\times\partial M \subset M$ has the factorization 
$$ \xymatrix{ T([0, \varepsilon)\times\partial M) \ar[r] & T(\partial
      M)\oplus\epsilon^{1} \ar[rr]^{\ell_{\partial M}} &&
      \theta^{*}\gamma^{2n+1}, }
$$
where the first arrow is the standard bundle map covering the projection $[0, \varepsilon)\times\partial M \longrightarrow \partial M$. 
\end{enumerate}
For $l \leq d$, we then define $\bPsi^{\partial}_{\theta_{l}}(U)$ analogously to $\bPsi_{\theta_{l}}(U)$ in Definition \ref{defn: space of manifolds (absolute)} as the space of pairs $(M, \ell)$ satisfying the above three conditions but with $M$ now an $l$-dimensional manifold. 
We define $\bPsi^{\partial}_{\theta}(\R^{\infty}_{+})$ to be the direct limit of the spaces $\bPsi^{\partial}_{\theta}(\R^{n}_{+})$ taken as $n \to\infty$. 
\end{defn}

We topologize the space $\bPsi^{\partial}_{\theta}(U)$ by embedding it into 
$\bPsi_{\theta}(U_{\infty})$, where 
$$U_{\infty} \; = \; U\cup_{\partial U}(\partial U\times(-\infty, 0]) \; \subset \; \R^{k}\times\R^{n-k}$$
is the open subset obtained by attaching the cylinder 
$\partial U\times(-\infty, 0]$ to $U$ along the boundary. 
With this embedding constructed, we then topologize $\bPsi^{\partial}_{\theta}(U)$ using the subspace topology. 
Let
$(M, \ell) \in \bPsi^{\partial}_{\theta}(U).$ 
Condition (ii) of Definition \ref{defn: space of manifolds with boundary} implies that the pushout 
$$M_{\infty} := M\cup_{\partial M}(\partial M\times(-\infty, 0])$$
is a smooth submanifold of $U_{\infty} = U\cup_{\partial U}(U\times(-\infty, 0])$.
The composition
\begin{equation} \label{equation: cylinder structure}
\xymatrix{
T(\partial M\times(-\infty, 0]) \ar[rr]^{\proj^{*}_{\partial M}} && T(\partial M)\oplus\epsilon^{1}  \ar[r]^{\cong} & TM|_{\partial M} \ar[r]^{\ell} & \theta^{*}\gamma^{d+1},
}
\end{equation}
where the middle arrow is the bundle isomorphism induced by the
collar, yields a $\theta$-structure on $\partial M\times(-\infty, 0]$.
  The $\theta$-structure $\ell$ on $M$ together with the structure
  from (\ref{equation: cylinder structure}) on $\partial
  M\times(-\infty, 0]$ agree when restricted to $TM|_{\partial M}$,
    thus they glue together to yield a $\theta$-structure
    $\ell_{\infty}$ on the push-out
$$
M_{\infty} = M\cup_{\partial M}(\partial M\times(-\infty, 0]).
$$ 
We then obtain an embedding 
\begin{equation}
T_{U}: \bPsi^{\partial}_{\theta}(U) \longrightarrow \bPsi_{\theta}(U_{\infty}), \quad \quad (M, \ell) \; \mapsto \; (M_{\infty}, \;\ell_{\infty}).
\end{equation}
In this way, $\bPsi^{\partial}_{\theta}(U)$ is topologized as a
subspace of $\bPsi_{\theta}(U_{\infty})$. 

Next, we have to consider certain subspaces of
$\bPsi_{\theta}^{\partial}(\R^{k}\times\R^{n-k}_{+})$, analogous to \cite[Definition 3.5]{GRW 09}, consisting of manifolds that are ``open in $k$ directions''.
\begin{defn} \label{den: manifolds with boundary open in k directions}
We define $\bpsi^{\partial}_{\theta}(n, k) \subset
\bPsi^{\partial}_{\theta}(\R^{k}\times\R^{n-k}_{+})$ to be the
subspace consisting of those pairs $(M, \ell)$ such that
$M \; \subset \; \R^{k}\times[0, 1)\times(-1, 1)^{n-k-1}.$
For $l \leq d$, the spaces $\bpsi^{\partial}_{\theta_{l}}(n, k) \subset \bPsi^{\partial}_{\theta_{l}}(\R^{k}\times\R^{n-k}_{+})$ are defined similarly but with manifolds of dimension $l$.
For any integer $k \geq \Z_{\geq 0}$, we define $\bpsi^{\partial}_{\theta}(\infty, k)$ to be the direct limit of the spaces $\bpsi^{\partial}_{\theta}(n, k)$ taken as $n \to \infty$.
\end{defn}

The theorem below follows from \cite[Proposition 5.7]{G 12}.
\begin{theorem} \label{theorem: homotopy type of space of long manifolds}
There is a weak homotopy equivalence, 
$
\bpsi^{\partial}_{\theta}(\infty, 1) \; \simeq \; \Omega^{\infty-1}\Sigma^{\infty}B_{+}.
$
\end{theorem}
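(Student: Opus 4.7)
The plan is a scanning/delooping argument in the spirit of \cite{GRW 09}, adapted to manifolds with boundary by Genauer \cite{G 12}. First I would construct scanning maps
\[
\sigma_k\colon \bpsi^{\partial}_{\theta}(\infty, k) \longrightarrow \Omega\, \bpsi^{\partial}_{\theta}(\infty, k+1) \qquad (k \geq 1)
\]
and show each is a weak equivalence. The map reinterprets the extra $\R$-direction added when passing from $k$ to $k+1$ open directions as a loop parameter: a one-parameter family of $k$-open boundary manifolds assembles into a single $(k{+}1)$-open boundary manifold. The proof that $\sigma_k$ is a weak equivalence should be a standard parametrized transversality / microfibration argument, formally identical to the one in \cite[\S4]{GRW 09}; the boundary of each manifold sits in the separate factor $\partial \R^{\infty}_{+}$ and is left untouched by the scanning, which only operates in the $\R^{k}$ directions. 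Iterating gives $\bpsi^{\partial}_{\theta}(\infty, 1) \simeq \Omega^{k-1}\bpsi^{\partial}_{\theta}(\infty, k)$ for all $k \geq 1$, exhibiting $\bpsi^{\partial}_{\theta}(\infty, 1)$ as the $(\infty{-}1)$-fold loop space of a connective spectrum $\mathbf{E}$.

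Next I would identify $\mathbf{E}$. For \emph{closed} $d$-manifolds the analogous spectrum is the Madsen--Tillmann spectrum $\MT\theta$, obtained via Pontryagin--Thom collapse onto a tubular neighborhood of the tangent $d$-plane field. In the boundary case the essential observation is that every $d$-manifold $M$ with boundary admits, after enough stabilization, a canonical "push toward the collar" isotopy: $M$ can be deformed onto an $\varepsilon$-collar of $\partial M$, with the complementary material escaping to infinity in one of the extra open directions. Under this deformation the Pontryagin--Thom collapse loses all information about the interior of $M$ and retains only the $\theta$-data on $\partial M$. Because the normal bundle of the collar contains a canonical trivial summand (from the inward normal direction), the resulting Thom complex splits and reduces to $B_{+}$, with the disjoint basepoint recording the empty manifold. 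Hence $\mathbf{E} \simeq \Sigma^{\infty}B_{+}$, and combining with the first stage yields the desired weak equivalence.

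The main technical obstacle is the execution of the "push to the collar" deformation in the second stage: it must be carried out continuously in families parametrized by the sheaf topology on $\bpsi^{\partial}_{\theta}$, compatibly with the $\theta$-structure, and one must check that the resulting map of prespectra is a levelwise weak equivalence and not merely an equivalence of underlying infinite loop spaces. This is precisely the substantive content of Genauer's \cite[Proposition 5.7]{G 12}, which is why the theorem is stated here as a direct consequence of that result rather than reproved from scratch.
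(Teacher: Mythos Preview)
Your proposal is essentially correct and you rightly identify that the paper simply cites Genauer \cite[Proposition 5.7]{G 12} rather than reproving the result. Your scanning/delooping Stage~1 matches the standard argument precisely.

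There is, however, a small but worth-noting difference in Stage~2. The paper (in the Remark immediately following the theorem) makes explicit that the identification of the spectrum proceeds in two separate steps: Genauer's result actually shows
\[
\bpsi^{\partial}_{\theta}(\infty, 1) \;\simeq\; \Omega^{\infty-1}\Cofibre\bigl(\Sigma^{-1}\MT\theta_{d-1} \longrightarrow \MT\theta\bigr),
\]
and it is then a separate input from \cite[Proposition 3.1]{GMTW 08} that identifies this cofibre with $\Sigma^{\infty}B_{+}$. Your ``push toward the collar'' description is an appealing geometric heuristic for why the answer should be $\Sigma^{\infty}B_{+}$, but it is not quite how Genauer's argument runs: his scanning produces a Thom spectrum over a space of $d$-planes in a half-space (classifying both interior and boundary tangent data), and the cofibre sequence arises from filtering that classifying space by whether the plane lies in the boundary hyperplane. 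The collapse to $B_{+}$ is then a Thom-space identification, not an isotopy of manifolds. Your heuristic conflates these two steps and would need substantial unpacking to be made rigorous on its own; the paper's route, keeping the cofibre explicit, is the cleaner bookkeeping and also makes the relationship to the closed-manifold spectra $\MT\theta$ and $\MT\theta_{d-1}$ transparent (which is used later, e.g.\ in the fibre sequence~(\ref{equation: fibre sequence})).
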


\begin{remark}
Let $\theta_{d-1}: B_{d-1} \longrightarrow BO(d-1)$ denote the tangential structure obtained by pulling back $\theta: B \longrightarrow BO(d)$ over the standard embedding $BO(d-1) \hookrightarrow BO(d)$.
In \cite{G 12} it is actually proven that there is a weak homotopy equivalence $\bpsi^{\partial}_{\theta}(\infty, 1) \simeq \Cofibre(\Sigma^{-1}\MT\theta_{d-1} \rightarrow \MT\theta)$. 
However, by \cite[Proposition 3.1]{GMTW 08} it follows that there is a weak homotopy equivalence of spectra
$$\Cofibre(\Sigma^{-1}\MT\theta_{d-1} \rightarrow \MT\theta) \simeq
\Sigma^{\infty}B_{+}.$$ Theorem \ref{theorem: homotopy type of space
  of long manifolds} is obtained by combining \cite[Proposition 5.7]{G
  12} with \cite[Proposition 3.1]{GMTW 08}.
\end{remark}
\subsection{Smooth families and submersions} \label{subsection: smooth families of manifolds}
We now recall a construction from \cite[Section 2.4]{GRW 09}.  Let $f:
X \longrightarrow \bPsi_{\theta}^{\partial}(U)$ be a continuous map,
and write $f(x)=(M_{x}, \ell_{x})$.  Define the \textit{graph} of $f$
to be
$$
\Gamma(f) = \{\; (x, y) \in X\times U \; | \; y \in M_{x} \; \}
$$
and the \textit{vertical tangent bundle} to be 
$
T^{v}\Gamma(f) \; = \; \{ \; (x, (y, v)) \in X\times TU \; | \; (y, v) \in TM_{x} \; \}.
$
The $\theta$-structures $\ell_{x}$ determine a bundle map $\ell(f): T^{v}\Gamma(f) \longrightarrow \theta^{*}\gamma^{d}$. 
\begin{defn} \label{defn: smooth maps}
If $X$ is a smooth manifold (without boundary), a continuous map $f: X \longrightarrow \bPsi_{\theta}^{\partial}(U)$ is said to be \textit{smooth} if the following conditions are met:
\begin{enumerate} \itemsep.2cm
\item[(a)]  $\Gamma(f) \subset X\times U$ is a smooth submanifold with boundary given by $\partial \Gamma(f) \; = \; \Gamma(f)\cap(X\times\partial U)$.
\item[(b)] The projections $\Gamma(f) \longrightarrow X$ and $\partial\Gamma(f) \longrightarrow X$ are both submersions.  
\end{enumerate}
\end{defn}

The following proposition sums up the key properties regarding smooth maps into the space $\bPsi_{\theta}^{\partial}(U)$.
It is proven in the same way as \cite[Lemma 2.16 and 2.17]{GRW 09}.
\begin{proposition} \label{proposition: salient features of smooth maps}
Let $X$ be a smooth manifold of dimension $m$. 
The following statements are true about maps from $X$ into $\bPsi_{\theta}^{\partial}(U)$:
\begin{enumerate} \itemsep.3cm
\item[(a)] There is a one-to-one correspondence between smooth maps $X
  \longrightarrow \bPsi_{\theta}^{\partial}(U)$ and pairs $(\Gamma,
  \ell)$ such that $\Gamma \subset X\times U$ a $(m+ d)$-dimensional
  submanifold, such that the projection $\Gamma \longrightarrow X$ is
  a submersion, and $\ell: T^{v}\Gamma \longrightarrow
  \theta^{*}\gamma^{d}$ is $\theta$-structure on the vertical tangent
  bundle, $T^{v}\Gamma = \Ker\left(T\Gamma \rightarrow TX\right)$.

\item[(b)] Given any continuous map $f: X \longrightarrow
  \bPsi_{\theta}^{\partial}(U)$, there exists a homotopy $F:
  X\times[0,1] \longrightarrow \bPsi_{\theta}^{\partial}(U)$ with
  $F|_{X\times\{0\}} = f$ such that $F|_{X\times\{1\}}$ is a smooth
  map.  If the restriction of $f$ to some open subset $A \subset X$ is
  smooth, then the homotopy $F$ map can be chosen so
  as to be constant on $A$.
\end{enumerate}
\end{proposition}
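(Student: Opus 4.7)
The plan is to reduce both assertions to the corresponding statements for the boundaryless sheaf $\bPsi_\theta$ proved in \cite[Lemmas 2.16, 2.17]{GRW 09}, using the embedding $T_U \colon \bPsi^{\partial}_{\theta}(U) \hookrightarrow \bPsi_{\theta}(U_{\infty})$ that attaches the cylinder $\partial U\times(-\infty,0]$. The key observation is that condition (iii) of Definition~\ref{defn: space of manifolds with boundary} (the collared form of the $\theta$-structure near $\partial M$) means that a pair $(M,\ell)$ in the image of $T_U$ is determined by its cylindrical extension, and that its image in $\bPsi_\theta(U_\infty)$ is a closed subspace.

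For part (a), given a smooth map $f$ in the sense of Definition~\ref{defn: smooth maps}, the graph $\Gamma(f)$ together with the fibrewise $\theta$-structure $\ell(f)$ on $T^v\Gamma(f)$ is the required pair; the boundary condition $\partial\Gamma(f)=\Gamma(f)\cap(X\times\partial U)$ and the submersiveness of both $\Gamma(f)\to X$ and $\partial\Gamma(f)\to X$ are built into Definition~\ref{defn: smooth maps}. Conversely, given a pair $(\Gamma,\ell)$ with $\Gamma\subset X\times U$ of dimension $m+d$ such that the two projections (on $\Gamma$ and on $\partial\Gamma$) are submersions, Ehresmann's fibration theorem produces, locally on $X$, a product chart $\Gamma\cong X\times M_{x_0}$ respecting the collar from condition (iii). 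This local triviality guarantees that the assignment $x\mapsto(\Gamma_x,\ell_x)$ is continuous into $\bPsi_\theta^{\partial}(U)$ in the sheaf topology inherited from $\bPsi_\theta(U_\infty)$, and the two constructions are mutually inverse by inspection.

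For part (b), I would follow the mollification argument of \cite[Lemma 2.17]{GRW 09} adapted to the boundary case. First extend $f$ by cylinder over the collar to obtain a continuous map $T_U\circ f \colon X\to\bPsi_\theta(U_\infty)$; the GRW smoothing lemma, applied relative to the open set on which $T_U\circ f$ is already smooth, produces a homotopy of $T_U\circ f$ to a smooth map through maps into $\bPsi_\theta(U_\infty)$. To keep the homotopy within the image of $T_U$, I perform the smoothing in two stages: first smooth the restriction to a neighborhood $\partial U\times(-\varepsilon,\varepsilon)$ of the cylindrical part (where the map is already collared), then smooth the interior portion while holding the collar fixed by a cut-off. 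Using a partition of unity in local parameterizations of $M_x$ as graphs over approximating smooth submanifolds, one mollifies the defining functions and uses the exponential map of an auxiliary metric to close up everything into a smooth family, then interpolates linearly with the original on the parameter $t\in[0,1]$. The collar condition on the $\theta$-structure is preserved because we smooth separately in the two regions, and the relative assertion follows by choosing the cut-off to vanish on a neighborhood of $A$.

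The main obstacle will be the relative smoothing near $\partial U$: the collar condition (iii) is a nontrivial constraint on both $M$ and $\ell$, and the usual mollification of $\ell$ need not produce a bundle map factoring through the projection $T([0,\varepsilon)\times\partial M)\to T(\partial M)\oplus\epsilon^1$. I would handle this by first smoothing $\partial M$ inside $\partial U$ using the boundaryless case, then extending by the literal product $\partial M\times[0,\varepsilon)$ with the pulled-back $\theta$-structure, and only afterwards smoothing the interior of $M$ away from the collar. The homotopy constructed this way is constant wherever $f$ was already smooth, yielding the relative statement.
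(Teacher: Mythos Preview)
Your approach is essentially the same as the paper's: the paper does not give an independent proof but simply states that the proposition ``is proven in the same way as \cite[Lemma 2.16 and 2.17]{GRW 09}.'' Your proposal to reduce to (or adapt) those lemmas via the embedding $T_U$ into $\bPsi_\theta(U_\infty)$, taking extra care with the collar condition~(iii) of Definition~\ref{defn: space of manifolds with boundary}, is exactly the intended argument, just with more detail supplied than the paper provides.
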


\subsection{Cobordism category of $\theta$-manifolds} \label{subsection: cobordism categories}
We now proceed to construct the cobordism category
$\Cob^{\partial}_{\theta}$ discussed in Section \ref{section: cobordism categories intro}.  
We will first need to fix some terminology and notation.  
All of our constructions will take place inside the product
$\R\times\R^{\infty}_{+}$.  We denote by
$$x_{0}:  \R\times\R^{\infty}_{+} \longrightarrow \R$$
the projection onto the first factor.
Let 
$W \subset  \R\times\R^{\infty}_{+}$ be a submanifold with
$W\cap(\R\times\partial\R^{\infty}_{+}) = \partial W.$
For any subset $K \subset \R$, we write 
$$
W|_{K} = W\cap x_{0}^{-1}(K) \quad \text{and} \quad (\partial W)|_{K} = W|_{K}\cap(\R\times\partial\R^{\infty}_{+})
$$
if they are both manifolds. 
We will also denote
$$
(W, \partial W)|_{K} = (W|_{K}, (\partial W)|_{K}).
$$ 
If $\ell$ is a $\theta$-structure on $W$, then we 
 write $\ell|_{K}$ for the restriction of $\ell$ on $W|_{K}$. 
Consider an element $(M, \ell_{M}) \in
\bpsi^{\partial}_{\theta_{d-1}}(\infty, 0)$.  
The $\theta$-structure
$\ell$ on $M$ determines a $\theta$-structure on the product $\R\times
M$, which we denote by $\ell_{\R\times M}$, given by the composition
\begin{equation} \label{equation: product with R}
\xymatrix{
T(\R\times M) \ar[r] & TM\oplus\epsilon^{1} \ar[r]^{\ell} & \theta^{*}\gamma^{d},
}
\end{equation}
where the first arrow is the standard bundle map covering the projection $\R\times M \longrightarrow M$.

 \begin{defn}
We let the non-unital topological category $\Cob^{\partial}_{\theta}$ have $\bpsi^{\partial}_{\theta_{d-1}}(0, \infty)$ as its space of objects. 
The morphism space is the following subspace of $\R\times\bpsi^{\partial}_{\theta}(\infty+1, 1)$:
a pair $(t, (W, \ell))$ is a morphism if there exists an $\varepsilon > 0$ such that
$$
W|_{(-\infty, \varepsilon)} \; = \; (-\infty, \varepsilon)\times W|_{0} \quad \text{and} \quad W|_{(t-\varepsilon, \infty)} \; = \; (t-\varepsilon, \infty)\times W|_{t}
$$
as $\theta$-manifolds, where $W|_{0}$ and $W|_{t}$ are equipped with the $\theta$-structures induced from $\ell_{0}$ and $\ell_{t}$ on $W|_{0}$ and $W|_{t}$ as in (\ref{equation: product with R}). 
The source of such a morphism is $(W|_{0}, \ell|_{0})$ and the target is $(W|_{t}, \ell|_{t})$. 
\end{defn}

\begin{remark} \label{remark: topological description of morphism spaces}
Let $(M_{0}, \ell_{0}), (M_{1}, \ell_{1}) \in \Ob\Cob^{\partial}_{\theta}$ and let $((W, \ell), t)$ be a morphism 
from $(M_{0}, \ell_{0})$ to $(M_{1}, \ell_{1})$. 
Consider the manifold $W|_{[0, t]}$. 
Let us write $W' = W|_{[0, t]}$ and $\partial_{1}W' = (\partial W)|_{[0, t]}$. 
By Definition \ref{defn: relative cobordism}, the pair $(W', \partial_{1}W')$ is a relative cobordism between $(M_{0}, \partial M_{0})$ and $(M_{1}, \partial M_{1})$ and $((W', \partial_{1}W'), \ell|_{W'})$ is a relative $\theta$-cobordism between $((M_{0}, \partial M_{0}), \ell_{0})$ and $((M_{1}, \partial M_{1}), \ell_{1})$.
In this way, our model coincides with the model for the cobordism category used in \cite{G 12}. 
As in \cite{G 12}, there is a weak homotopy equivalence
\begin{equation} \label{equation: morphism space}
\Cob^{\partial}_{\theta}((M_{0}, \ell_{0}), (M_{1}, \ell_{1})) \;
\simeq \; \coprod_{W}\textstyle{\BDiff_{\theta}}(W,
\partial_{0}W),
\end{equation}
where the disjoint union on the right is taken over all diffeomorphism classes of 
relative cobordisms $(W, \partial_{1}W)$ between the manifold pairs $(M_{0}, \partial M_{0})$ and $(M_{1}, \partial M_{1})$. 
\end{remark}

The weak homotopy type of the classifying space $B\Cob^{\partial}_{\theta}$ was determined in \cite{G 12}. 
In \cite{G 12} it is shown that there is a weak homotopy equivalence $B\Cob^{\partial}_{\theta} \simeq \bpsi^{\partial}_{\theta}(\infty, 1)$. 
Combining this weak homotopy equivalence with Theorem \ref{theorem: homotopy type of space of long manifolds} yields:
\begin{corollary} \label{corollary: homotopy type of category of mfd pairs}
There is a weak homotopy equivalence,
$B\Cob^{\partial}_{\theta} \simeq  \Omega^{\infty-1}\Sigma^{\infty}B_{+}.$
\end{corollary}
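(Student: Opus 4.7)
The statement is a corollary whose content is really the composition of two previously cited weak equivalences, so the plan is correspondingly short: I would simply concatenate them and verify that the intermediate identification is the one Genauer actually produces.

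First I would invoke the main result of \cite{G 12}: Genauer's theorem identifies $B\Cob^{\partial}_{\theta}$ with the space $\bpsi^{\partial}_{\theta}(\infty,1)$ of ``long'' $\theta$-manifolds with boundary that are open in the first coordinate direction. The idea behind that identification (which I would not reprove, but would state carefully in the setup we use) is the standard scanning/Pontryagin--Thom argument: a morphism of $\Cob^{\partial}_{\theta}$ is already presented as a submanifold of $\R\times\R^\infty_+$ equipped with a $\theta$-structure and with cylindrical behavior near its source and target, so applying the $n$-simplex version of this construction to the nerve $N_\bullet\Cob^{\partial}_{\theta}$ and letting the time direction become open produces a natural map to $\bpsi^{\partial}_{\theta}(\infty,1)$. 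Genauer shows that after geometric realization this map is a weak equivalence, yielding
\[
B\Cob^{\partial}_{\theta}\;\simeq\;\bpsi^{\partial}_{\theta}(\infty,1).
\]

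Second, I would compose with Theorem \ref{theorem: homotopy type of space of long manifolds}, which records the weak homotopy equivalence
\[
\bpsi^{\partial}_{\theta}(\infty,1)\;\simeq\;\Omega^{\infty-1}\Sigma^{\infty}B_+.
\]
Concatenating the two weak equivalences proves the corollary.

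The only thing worth checking carefully, rather than deferring entirely to citations, is that our model of $\Cob^{\partial}_{\theta}$ (with the topology described via $\bpsi^{\partial}_{\theta}(\infty+1,1)$ and with relative $\theta$-cobordisms as morphisms, as in Remark \ref{remark: topological description of morphism spaces}) agrees with Genauer's; this was already flagged in the remark following the morphism-space description, so it amounts to noting that the tangential structure data, the collaring condition near source/target objects, and the condition on the $\partial_1 W$-face all match. Once this compatibility is recorded, no further work is needed, and there is no real obstacle beyond bookkeeping, since both ingredients are black-box results cited from \cite{G 12} and from Theorem \ref{theorem: homotopy type of space of long manifolds}.
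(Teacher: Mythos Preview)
Your proposal is correct and matches the paper's own argument essentially verbatim: the paper simply cites Genauer's weak equivalence $B\Cob^{\partial}_{\theta}\simeq \bpsi^{\partial}_{\theta}(\infty,1)$ and composes it with Theorem~\ref{theorem: homotopy type of space of long manifolds}. Your additional remark about checking compatibility of the cobordism-category models is sensible bookkeeping but adds nothing beyond what the paper already flags in Remark~\ref{remark: topological description of morphism spaces}.
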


\section{Cobordism Categories of highly connected manifolds} \label{section: cobordism categories}

\subsection{A sequence of subcategories} \label{section: cobordism category of highly-connected manifolds}
We now restrict to the dimensional case pertinent to the main theorems
stated in the introduction.  Fix a tangential structure $\theta: B
\longrightarrow BO(2n+1).$
Fix once and for all a $(2n-1)$-dimensional disk 
\begin{equation}
D \; \subset \; (-\tfrac{1}{2}, 0]\times(-1, 1)^{\infty-2} 
\end{equation}
which near $\{0\}\times\partial\R^{\infty}_{+}$ agrees with $(-1, 0]\times\partial D$.
Fix a $\theta$-structure $\ell_{D}: TD\oplus\epsilon^{2} \longrightarrow \theta^{*}\gamma^{2n+1}$. 
We let $\ell_{\R\times D}$ denote the $\theta$-structure on $\R\times D$ induced by $\ell_{D}$. 
\begin{defn} \label{defn: connected sub categories}
We define a sequence of subcategories 
$$
\Cob^{\partial, \mathcal{H}}_{\theta} \subset \Cob^{\partial, \mb{c}}_{\theta} \subset \Cob^{\partial, \mb{b}}_{\theta} \subset \Cob^{\partial, D}_{\theta} \subset \Cob^{\partial}_{\theta}
$$
as follows:
\begin{enumerate} \itemsep.3cm
\item[(i)] The topological subcategory $\Cob^{\partial, D}_{\theta} \subset \Cob_{\theta}^{\partial}$ has as its objects those $(M, \ell)$ such that 
$$
\partial M\cap[(-1, 0]\times(-1, 1)^{\infty-2}] \; = \; D,
$$
and such that the restriction of $\ell$ to $D$ agrees with $\ell|_{D}$. 
It has as its morphisms those $(t, (W, \ell))$ such that 
$$
\partial W\cap[\R\times(-1, 0]\times(-1, 1)^{\infty-2}] \; = \; \R\times D,
$$
and the restriction of $\ell$ to $\R\times D$ agrees with $\ell_{\R\times D}$. 
\item[(ii)] The topological subcategory $\Cob^{\partial, \mb{b}}_{\theta} \subset \Cob^{\partial, D}_{\theta}$ has as its objects those $(M, \ell)$ such that 
the boundary $\partial M$ is $(n-2)$-connected.  
 The morphisms consist of those $(t, (W, \ell))$
 such that the pair $((\partial W)|_{[0, t]}, \; (\partial W)|_{t})$
is $(n-1)$-connected.
\item[(iii)] The topological subcategory $\Cob^{\partial,
  \mb{c}}_{\theta} \subset \Cob^{\partial, \mb{b}}_{\theta}$ has the
  same space of objects.  Its space of morphisms consists of those
  pairs $(t, (W, \ell))$ such that the triad
  $(W|_{[0, t]}; \; (\partial W)|_{[0, t]}, \; W|_{t})$ is
  $n$-connected, i.e.\ the homomorphism $ \pi_{i}(W|_{t}, (\partial
  W)|_{t}) \; \longrightarrow \pi_{i}(W|_{[0, t]}, (\partial W)|_{[0,
      t]}) $ induced by inclusion is an isomorphism for $i < n$ and an
  epimorphism for $i = n$ (see Section \ref{subsection: preliminaries
    on triads} for a primer on triads).
\item[(iv)] 
The topological subcategory 
$\Cob^{\partial, \mathcal{H}}_{\theta} \subset \Cob^{\partial, \mb{c}}_{\theta}$ 
is the full subcategory on those objects $(M, \ell)$ such that manifold $M$ is diffeomorphic to the disk $D^{2n}$. 
\end{enumerate}
\end{defn}

We now state the main theorems regarding the subcategories defined above.
The proof of the following theorem is given in the appendix.   
\begin{theorem} \label{theorem: easy weak equivalences}
Let $2n+1 \geq 5$ and let $\theta: B \longrightarrow BO(2n+1)$ be such that $B$ is $(n-2)$-connected.  
Then the inclusions $B\Cob^{\partial, \mb{c}}_{\theta} \hookrightarrow B\Cob^{\partial, \mb{b}}_{\theta} \hookrightarrow B\Cob^{\partial, D}_{\theta} \hookrightarrow B\Cob^{\partial}_{\theta}$ are weak homotopy equivalences. 
\end{theorem}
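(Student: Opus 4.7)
The plan is to establish the three inclusions from right to left:
$$
\Cob^{\partial,\mb{c}}_{\theta} \;\hookrightarrow\; \Cob^{\partial,\mb{b}}_{\theta} \;\hookrightarrow\; \Cob^{\partial,D}_{\theta} \;\hookrightarrow\; \Cob^{\partial}_{\theta},
$$
using the parametrized-surgery technique of Madsen-Weiss \cite{MW 07} refined by Galatius-Randal-Williams \cite{GRW 14}. For each inclusion, I would construct an auxiliary semi-simplicial resolution of the nerve of the larger category whose $p$-simplices encode ``surgery data'' permitting a modification of a morphism into one lying in the smaller subcategory. I would then prove that (i) the realization of the resolution is weakly equivalent to the larger classifying space (because the space of surgery data in each simplicial degree is contractible), and (ii) the augmentation (or a comparison map) descends to a weak equivalence onto the smaller classifying space. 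Chaining the three resolutions then yields the desired string of weak homotopy equivalences.

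For the inclusion $B\Cob^{\partial,D}_{\theta} \hookrightarrow B\Cob^{\partial}_{\theta}$, the $p$-simplices of the resolution over a morphism $(t,(W,\ell))$ would consist of $(p+1)$-tuples of pairwise disjoint $\theta$-embeddings of $([0,t]\times D,\; \ell_{\R\times D})$ into $((\partial W)|_{[0,t]},\ell)$, properly cylindrical near the ends of $[0,t]$. Since $\partial\R^{\infty}_{+}$ has infinite codimension and $B$ is path-connected, the space of single such embeddings is non-empty and weakly contractible. A standard augmented semi-simplicial argument then identifies the realization of this resolution with $B\Cob^{\partial}_{\theta}$, and translating a chosen $0$-simplex into the ``standard position'' along $\R\times D$ produces the comparison with $B\Cob^{\partial,D}_{\theta}$.

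For the two remaining inclusions I would perform parametrized surgery. The inclusion $B\Cob^{\partial,\mb{b}}_{\theta}\hookrightarrow B\Cob^{\partial,D}_{\theta}$ is handled by surgery on the $(2n-1)$-dimensional boundary $\partial M$ of objects (and on $(\partial W)|_{[0,t]}$ for morphisms) along framed $\theta$-spheres $S^{k-1}\hookrightarrow (\partial W)|_{[0,t]}$ with $k\leq n-1$, carried out in the complement of the fixed cylindrical disk $\R\times D$. Because $B$ is $(n-2)$-connected and the surgery indices lie below $\tfrac{1}{2}\dim((\partial W)|_{[0,t]})$, general position and standard obstruction theory make the space of compatible surgery data contractible. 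Iterating produces a morphism in $\Cob^{\partial,\mb{b}}_{\theta}$ representing the same point of $B\Cob^{\partial,D}_{\theta}$. The inclusion $B\Cob^{\partial,\mb{c}}_{\theta} \hookrightarrow B\Cob^{\partial,\mb{b}}_{\theta}$ is treated analogously, but with surgery in the \emph{interior} of $W|_{[0,t]}$ along framed spheres $S^{k-1} \hookrightarrow W|_{t}$ of dimension $k-1\leq n-1$ in the $2n$-manifold $W|_{t}$; such surgeries extend to handle attachments of index $\leq n$ on the $(2n+1)$-manifold $W|_{[0,t]}$, which kill the relative homotopy classes controlling the triad connectivity condition of Definition \ref{defn: connected sub categories}(iii), all while staying away from $(\partial W)|_{[0,t]}$ so as to preserve the connectivity gained in the previous step.

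The main obstacle throughout is verifying that, in each simplicial degree of every resolution, the space of \emph{compatible} surgery data is sufficiently connected (in fact contractible) even under the extra constraint that surgery avoids a prescribed region: the cylindrical disk $\R\times D$ in the second step and the full boundary $\partial W$ in the third. The usual contractibility statements, which rely on general position and the Whitney trick, must be adapted to this constrained relative-boundary setting, and carrying this out rigorously is precisely the work that is deferred to the appendix. Given these contractibility results, the three weak equivalences follow from the standard semi-simplicial resolution formalism and the fact that weak equivalences on morphism spaces which are compatible with source, target and composition induce weak equivalences on classifying spaces.
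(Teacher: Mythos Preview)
Your overall strategy---parametrized surgery via augmented semi-simplicial resolutions in the style of \cite{GRW 14}---is the one the paper uses for the second and third inclusions, and your identification of the main technical burden (contractibility of the spaces of surgery data, in a setting relative to a prescribed boundary region) is accurate. There are, however, two points where the paper proceeds differently from your outline.

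First, for $B\Cob^{\partial,D}_{\theta}\simeq B\Cob^{\partial}_{\theta}$ the paper does \emph{not} build a semi-simplicial resolution by tuples of embedded cylinders. Instead it passes to the models $\bpsi^{\partial}_{\theta}(\infty,1)$ and $\bpsi^{\partial,D}_{\theta}(\infty,1)$, observes that the boundary-restriction map $\widehat{\partial}:\bPsi^{\partial}_{\theta}(U)\to\bPsi_{\theta_{d-1}}(\partial U)$ is a Serre fibration (a consequence of the micro-flexibility/$h$-principle for $\bPsi_{\theta}$), and then pulls back the already-known equivalence $\bpsi^{D}_{\theta_{d-1}}(\infty,1)\simeq\bpsi_{\theta_{d-1}}(\infty,1)$ from \cite[Proposition~2.16]{GRW 14} along this fibration. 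This is shorter and requires no hypothesis on $\theta$ at all, whereas your sketch invokes path-connectedness of $B$; your approach is plausible but does more work than necessary.

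Second, the paper decomposes the middle inclusion $B\Cob^{\partial,\mb{b}}_{\theta}\hookrightarrow B\Cob^{\partial,D}_{\theta}$ into two separate filtrations: one that first performs surgery on the boundary \emph{morphisms} $(\partial W)|_{[0,t]}$ to make the pair $((\partial W)|_{[0,t]},(\partial W)|_{t})$ $(n-1)$-connected, and a second that performs surgery on the boundary \emph{objects} $\partial M$ to make them $(n-2)$-connected. Both steps again reduce, via the boundary-restriction fibration, to lifting the corresponding surgery moves already constructed in \cite{GRW 14} for closed manifolds; this bypasses having to redevelop those moves from scratch in the relative setting. Your outline merges these into one step and does not exploit this lifting trick, so you would end up reproving more than the paper does. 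For the third inclusion $B\Cob^{\partial,\mb{c}}_{\theta}\hookrightarrow B\Cob^{\partial,\mb{b}}_{\theta}$, the paper's argument is essentially what you describe: surgery on the interior of morphisms along data of the type in \cite[Section~3]{GRW 14}, with the extra verification (requiring $2n+1\geq 5$, via a half-Whitney-trick argument) that deleting the surgery cores does not destroy the triad connectivity already present.
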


\begin{remark}
The proof the above theorem is broken up into three steps, one for
each weak homotopy equivalence asserted.  The weak homotopy
equivalence $B\Cob^{\partial, D}_{\theta} \simeq
B\Cob^{\partial}_{\theta}$ is proven in Appendix \ref{section: surgery
  on the boundary} and holds for all tangential structures $\theta$
with no conditions on the dimension.  The equivalence
$B\Cob^{\partial, \mb{b}}_{\theta} \simeq B\Cob^{\partial,
  D}_{\theta}$ is proven in Appendix \ref{section: surgery on the
  boundary}.  This weak homotopy equivalence requires the condition
that $B$ be $(n-2)$-connected, but requires no condition on the
integer $n$.  The weak homotopy equivalence $B\Cob^{\partial,
  \mb{c}}_{\theta} \simeq B\Cob^{\partial, \mb{b}}_{\theta}$ is proven
in Appendix \ref{section: surgery on morphisms} and
requires the condition that $2n+1 \geq 5$.  The proofs of these weak
homotopy equivalences are all very similar to proofs from \cite{GRW
  14} and require very little in the way of anything new, and thus we
have relegated their proofs to the appendices.
\end{remark}

The proof of the following theorem is split between Sections \ref{section: surgery on objects} and \ref{section: contractibility of the spaces of surgery data} (using techniques from Section \ref{section: relative surgery}) and constitutes the bulk of the technical work of the paper. 
\begin{theorem} \label{theorem: handlebody equivalence}
Let $d = 2n+1 \geq 9$ and let the tangential structure $\theta: B \longrightarrow BO(2n+1)$ be such that the space $B$ is $(n-1)$-connected. 
Then the inclusion
$B\Cob^{\partial, \mathcal{H}}_{\theta, D} \hookrightarrow B\Cob^{\partial, \mb{c}}_{\theta, D}$
is a weak homotopy equivalence. 
\end{theorem}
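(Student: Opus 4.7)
The plan follows the parametrized surgery strategy of Galatius--Randal-Williams, adapted to the relative setting of manifolds with boundary. The goal is to show that every object of $\Cob^{\partial, \mb{c}}_\theta$ can be connected to a disk object of $\Cob^{\partial, \mathcal{H}}_\theta$ by a morphism of $\Cob^{\partial, \mb{c}}_\theta$, in a coherent and parametric way. Once this is established, it upgrades to a weak equivalence of classifying spaces via a standard bi-semi-simplicial resolution argument.

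\textbf{Setup and main construction.} For each object $(M, \ell)$ in $\Cob^{\partial, \mb{c}}_\theta$ I would introduce a space $\Sigma(M, \ell)$ of ``surgery data''. Heuristically, a point of $\Sigma(M, \ell)$ is a finite collection of pairwise disjoint framed embeddings $\phi_j : S^{k_j} \times D^{2n-k_j} \hookrightarrow \Int(M)$, with $k_j \leq n-1$, compatible with the $\theta$-structure $\ell$, such that the trace cobordism $W_\phi$ obtained by attaching the corresponding handles to $M \times I$ produces a disk $D^{2n}$ at the other end (together with a $\theta$-structure on $W_\phi$ extending $\ell$). A point of $\Sigma(M, \ell)$ thus determines a morphism $D^{2n} \to M$ in $\Cob^{\partial, \mb{c}}_\theta$. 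Then I would build a semi-simplicial space $Z_\bullet$ whose $p$-simplices are pairs consisting of a $p$-chain of morphisms in $\Cob^{\partial, \mb{c}}_\theta$ starting from some object $M$, together with a point of $\Sigma(M, \ell)$; the augmentation $|Z_\bullet| \to B\Cob^{\partial, \mb{c}}_\theta$ forgets the surgery datum, while the map $|Z_\bullet| \to B\Cob^{\partial, \mathcal{H}}_\theta$ prepends the trace cobordism $W_\phi$ and yields a chain whose initial object is a disk. If $\Sigma(M, \ell)$ is weakly contractible uniformly in families (in the sense that both $\Sigma(M, \ell)$ and its parametric versions over arbitrary compact families of objects are contractible), then a standard bi-semi-simplicial diagonal argument forces the inclusion $B\Cob^{\partial, \mathcal{H}}_\theta \hookrightarrow B\Cob^{\partial, \mb{c}}_\theta$ to be a weak equivalence.

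\textbf{Main obstacle.} The technical core is therefore the parametric contractibility of $\Sigma(M, \ell)$. This I would prove by induction on increasing connectivity of $M$: one successively chooses framed embedded spheres $S^k \hookrightarrow M$ for $k = 0, 1, \dots, n-1$ and surgers them out parametrically. Two ingredients are needed at each stage: first, representability of a basis of $\pi_k(M)$ by embedded framed spheres, which holds in the stable range $2k+1 < 2n$ because $B$ is $(n-1)$-connected (yielding the framing) and because of Whitney's embedding/immersion dichotomy; and second, parametric disjunction, which asserts that a family of such embeddings over a $p$-parameter space can be made simultaneously disjoint whenever the expected dimension of intersections, $k + k + p - 2n$, is negative. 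To obtain weak contractibility of $\Sigma(M, \ell)$ one needs this disjunction to work for parameter dimensions $p$ large enough at every inductive step $k \leq n-1$; the dimension hypothesis $2n+1 \geq 9$, i.e.\ $n \geq 4$, provides exactly the margin $2n - 2k > 2$ required to run the induction. This is Proposition \ref{proposition: inductive disjunction} referenced in the introduction, whose proof is the central technical work in Sections \ref{section: surgery on objects} and \ref{section: contractibility of the spaces of surgery data}; everything else in the argument reduces to routine manipulations of semi-simplicial spaces once this parametric disjunction/contractibility has been secured.
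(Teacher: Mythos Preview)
Your proposal has a genuine gap: the surgery you describe cannot reach the disk. You propose to surger embedded framed spheres $S^{k}\hookrightarrow \Int(M)$ for $k\le n-1$; such interior surgeries kill $\pi_k(M)$ but leave $\partial M$ untouched. An object of $\Cob^{\partial,\mb{c}}_\theta$ only has $\partial M$ $(n-2)$-connected, so in general $\partial M\not\cong S^{2n-1}$ and the surgered manifold cannot be $D^{2n}$. Even when $M$ is already $(n-1)$-connected, one typically has $H_n(M,\partial M)\ne 0$ (e.g.\ $M=D^n\times S^n$), and no amount of interior surgery below the middle dimension removes this.

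The paper proceeds in two steps. First (Theorem~\ref{theorem: surgery below middle dimension}) one does exactly your interior surgeries to pass to the full subcategory $\Cob^{\partial,\mb{c}'}_\theta$ on $(n-1)$-connected objects; this is imported from \cite{GRW 14}. The new content is the second step: to pass from $(n-1)$-connected $M$ to a disk one performs \emph{handle subtraction} of index $n$, i.e.\ one deletes tubular neighbourhoods of embeddings $(D^n,S^{n-1})\hookrightarrow (M,\partial M)$ representing a basis of $\pi_n(M,\partial M)$. This changes $\partial M$ and, by the $\pi$--$\pi$ argument of Lemma~\ref{lemma: pi-pi theorem}, produces $D^{2n}$. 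The corresponding parametrized surgery move (Section~\ref{subsection: surgery move on objects}) is a genuinely relative construction in $\R^{n+1}\times\R^{n+1}_+$ whose height function has boundary-stable and boundary-unstable critical points; its level sets change by handle subtraction, not ordinary surgery. Finally, the role of Proposition~\ref{proposition: inductive disjunction} is not parametric disjunction of $k$-spheres in the $2n$-dimensional object, but disjunction of the $(n+1)$-dimensional cores $(D^n\times I,\,S^{n-1}\times I)\hookrightarrow (W|_{[a_i,a_{i+1}]},\,(\partial W)|_{[a_i,a_{i+1}]})$ of the handle-subtraction data inside the $(2n+1)$-dimensional morphism. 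This uses a higher half-Whitney trick on $1$-dimensional intersection loci (Appendix~\ref{subsection: high dimensional half whitney trick}), and it is precisely here that $2n+1\ge 9$ is required.
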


By combining the above theorems with Corollary \ref{corollary: homotopy type of category of mfd pairs}, we obtain the weak homotopy equivalence 
$$B\Cob^{\partial, \mathcal{H}}_{\theta} \; \simeq \; \Omega^{\infty-1}\Sigma^{\infty}B_{+}$$
in the case that $2n+1 \geq 9$ and $\theta: B \longrightarrow BO(2n+1)$ is such that $B$ is $(n-1)$-connected. 
In Section \ref{section: group completion} we show how the above corollary is used to derive the main theorems stated in the introduction as outlined in Section \ref{section: cobordism categories intro}.

\subsection{Flexible models for the nerve}
In order to actually prove the theorems stated above, we will need to
work with certain semi-simplicial spaces that provide more
``flexible'' models for the nerves of the cobordism categories defined
in the previous section.  Fix a tangential structure
$$\theta: B \longrightarrow BO(2n+1).$$
For the definition below we will need some notation. 
Let $\R^{k+1}_{\text{ord}} \subset \R^{k+1}$ denote the subspace consisting of those tuples $(x_{0}, \dots, x_{k})$ such that $x_{0} < x_{1} < \cdots < x_{k}$. 
Let $\R^{k+1}_{>0} \subset \R^{k+1}$ denote the subspace consisting of those tuples $(x_{0}, \dots, x_{k})$ such that $x_{i} > 0$ for all $i$.
\begin{defn} \label{defn: flexible model 1}
We define $\mb{D}^{\partial}_{\bullet}$ to be the semi-simplicial space with $p$-simplices consisting of those tuples 
$(a, \varepsilon, (W, \ell)) \; \in \; \R^{p+1}_{\text{ord}}\times\R^{p+1}_{>0}\times\bpsi^{\partial}_{\theta}(\infty, 1)$
such that:
\begin{enumerate} \itemsep.2cm
\item[(a)] $a_{i}+\varepsilon_{i} < a_{i+1} - \varepsilon_{i+1}$ for all $i = 0, \dots, p-1$;
\item[(b)] each interval $(a_{i}-\varepsilon_{i}, a_{i}+\varepsilon_{i})$ consists entirely of regular values for the height functions $W \longrightarrow \R$ and $\partial W \longrightarrow \R$.
\end{enumerate}
The face maps $d_{i}: \mb{D}^{\partial}_{p} \longrightarrow
\mb{D}^{\partial}_{p-1}$ are defined by the formula $ (a, \varepsilon,
(W, \ell)) \mapsto (a(i), \varepsilon(i), (W, \ell)) $ where $a(i) =
(a_{0}, \dots, a_{i-1}, a_{i+1}, \dots, a_{p})$ and $\varepsilon(i)$
is defined similarly.  We define a sequence of sub-semi-simplicial
spaces
$$
\mb{D}^{\partial, \mathcal{H}}_{\bullet} \subset \mb{D}^{\partial, \mb{c}}_{\bullet} \subset \mb{D}^{\partial, \mb{b}}_{\bullet} \subset \mb{D}^{\partial, D}_{\bullet} \subset \mb{D}^{\partial}_{\bullet}
$$
as follows:
\begin{enumerate} \itemsep.2cm
\item[(i)] $\mb{D}^{\partial, D}_{\bullet} \subset
  \mb{D}^{\partial}_{\bullet}$ is the sub-semi-simplicial space whose
  $p$-simplices consist of those $(a, \varepsilon, (W, \ell))$ such
  that
$$
\partial W\cap[\R\times(-1, 0]\times(-1, 1)^{\infty-2}] \; = \; \R\times D,
$$
and the restriction of $\ell$ to $\R\times D$ agrees with $\ell_{\R\times D}$. 
\item[(ii)] $\mb{D}^{\partial, \mb{c}}_{\bullet} \subset \mb{D}^{\partial, D}_{\bullet}$ is the sub-semi-simplicial space whose $p$-simplices consist of those $(a, \varepsilon, (W, \ell))$ such that:
\begin{itemize} \itemsep.2cm
\item the manifold $(\partial W)|_{c}$ is $(n-2)$-connected for all $c \in
  \bigcup_{i=0}^{p}(a_{i}-\varepsilon_{i}, a_{i}+\varepsilon_{i})$;
\item the pair $\left((\partial W)|_{[b, c]}, (\partial W)|_{c}\right)$ is $(n-1)$-connected for all $b < c \in  \bigcup_{i=0}^{p}(a_{i}-\varepsilon_{i}, a_{i}+\varepsilon_{i})$.
\end{itemize}
\item[(iii)] $\mb{D}^{\partial, \mb{c}}_{\bullet} \subset
  \mb{D}^{\partial, \mb{b}}_{\bullet}$ is the sub-semi-simplicial
  space whose $p$-simplices consist of those $(a, \varepsilon, (W,
  \ell))$ such that the triad $(W|_{[b, c]}; (\partial W)|_{[b, c]},
  W|_{c})$ is $n$-connected for all $b < c \in
  \bigcup_{i=0}^{p}(a_{i}-\varepsilon_{i}, a_{i}+\varepsilon_{i})$.
\item[(iv)] $\mb{D}^{\partial, \mathcal{H}}_{\bullet} \subset
  \mb{D}^{\partial, \mb{c}}_{\bullet}$ is the sub-semi-simplicial
  space whose $p$-simplices consist of those $(a, \varepsilon, (W,
  \ell))$ such that the manifold $W|_{b}$ is diffeomorphic to the disk
  $D^{2n}$ for all $b \in \bigcup_{i=0}^{p}(a_{i}-\varepsilon_{i},
  a_{i}+\varepsilon_{i})$.
\end{enumerate}
\end{defn}
The tangential structure $\theta$ is dropped from the notation of these semi-simplicial spaces to save space. 
In practice it will always be clear which tangential structure we are working with. 
The semi-simplicial spaces defined above are analogous to the nerves of the posets defined in \cite[Section 2.6]{GRW 14}.
We will need a second even more flexible model for the nerve of the cobordism category, analogous to \cite[Definition 2.18]{GRW 14}.

\begin{defn} \label{defn: more flexible model}
Define $\mb{X}^{\partial}_{\bullet}$ to be the semi-simplicial space with $p$-simplices consisting of tuples $(a, \varepsilon, (W, \ell))$ with that satisfy:
\begin{enumerate}
\item[(a)] $a \in \R^{p+1}_{\text{ord}}$ and $\varepsilon \in \R^{p+1}_{>0}$ are such that $a_{i}+\varepsilon_{i} < a_{i+1}-\varepsilon_{i+1}$ for all $i = 0, \dots, p-1$.
\item[(b)] $(W, \ell) \in \bPsi^{\partial}_{\theta}((a_{0}-\varepsilon_{0}, a_{p}+\varepsilon_{p})\times\R^{\infty}_{+})$ and is contained in the subspace 
$$(a_{0}-\varepsilon_{0}, a_{p}+\varepsilon_{p})\times[0, 1)\times(-1, 1)^{\infty-1}.$$
\end{enumerate}
The face maps are defined in the obvious way. 
We emphasize that unlike with $\mb{D}^{\partial}_{\bullet}$ the intervals $(a_{i}-\varepsilon_{i}, a_{i}+\varepsilon_{i})$ are not required to consist entirely of regular values for the height function $W \longrightarrow \R$. 
We define a sequence of semi-simplicial spaces 
$$
\mb{X}^{\partial, \mathcal{H}}_{\bullet} \subset \mb{X}^{\partial, \mb{c}}_{\bullet} \subset \mb{X}^{\partial, \mb{b}}_{\bullet} \subset \mb{X}^{\partial, D}_{\bullet} \subset \mb{X}^{\partial}_{\bullet}
$$
as follows:
\begin{enumerate} \itemsep.2cm
\item[(i)] $\mb{X}^{\partial, D}_{\bullet} \subset \mb{X}^{\partial}_{\bullet}$ is the sub-semi-simplicial space whose $p$-simplices consist of those $(a, \varepsilon, (W, \ell))$ such that 
$$
\partial W\cap[(a_{0}-\varepsilon_{0}, a_{p}+\varepsilon_{p})\times(-1, 0]\times(-1, 1)^{\infty-2}] \; = \; (a_{0}-\varepsilon_{0}, a_{p}+\varepsilon_{p})\times D,
  $$
  and the restriction of $\ell$ to $(a_{0}-\varepsilon_{0},
  a_{p}+\varepsilon_{p})\times D$ agrees with $\ell_{\R\times D}$.

\item[(ii)] $\mb{X}^{\partial, \mb{b}}_{\bullet} \subset
  \mb{X}^{\partial, D}_{\bullet}$ is the sub-semi-simplicial space
  whose $p$-simplices consist of those $(a, \varepsilon, (W, \ell))$
  that satisfy:
\begin{itemize}
\item for each regular value $c \in
  \bigcup_{i=0}^{p}(a_{i}-\varepsilon_{i}, a_{i}+\varepsilon_{i})$,
  the manifold $(\partial W)|_{c}$ is $(n-2)$-connected;
\item for pair of regular values $b < c \in  \bigcup_{i=0}^{p}(a_{i}-\varepsilon_{i}, a_{i}+\varepsilon_{i})$, the pair $\left((\partial W)|_{[b, c]}, (\partial W)|_{c}\right)$ is $(n-1)$-connected. 
\end{itemize}

\item[(iii)] $\mb{X}^{\partial, \mb{c}}_{\bullet} \subset
  \mb{X}^{\partial, \mb{b}}_{\bullet}$ is the sub-semi-simplicial
  space whose $p$-simplices consist of those $(a, \varepsilon, (W,
  \ell))$ such that for each pair of regular values $b < c \in
  \bigcup_{i=0}^{p}(a_{i}-\varepsilon_{i}, a_{i}+\varepsilon_{i})$,
  the triad
$$\left( W|_{[b, c]}; \; (\partial W)|_{[b, c]}, \; W|_{c}\right)$$ 
is $n$-connected. 

\item[(iv)] $\mb{X}^{\partial, \mb{c}}_{\bullet} \subset
  \mb{X}^{\partial, \mb{b}}_{\bullet}$ is the sub-semi-simplicial
  space whose $p$-simplices consist of those $(a, \varepsilon, (W,
  \ell))$ such that for every regular value $c \in
  \bigcup_{i=0}^{p}(a_{i}-\varepsilon_{i}, a_{i}+\varepsilon_{i})$, the
  manifold $W|_{c}$ is diffeomorphic to the disk $D^{2n}$.
\end{enumerate}
\end{defn}
The proposition below is proven by following the exact same arguments
used in \cite[Section 2]{GRW 14} and so we omit the proof.
\begin{proposition} \label{proposition: X-D equivalence}
There are weak homotopy equivalences:
$$\begin{aligned}
B\Cob^{\partial}_{\theta} \ &\simeq &\ \ |\mb{D}^{\partial}_{\bullet}| \ &\simeq & \ \ |\mb{X}^{\partial}_{\bullet}|, \\
B\Cob^{\partial, D}_{\theta} \ &\simeq &\ \ |\mb{D}^{\partial, D}_{\bullet}| \ &\simeq & \ \ |\mb{X}^{\partial, D}_{\bullet}|,\\
B\Cob^{\partial, \mb{b}}_{\theta} \ &\simeq & \ \ |\mb{D}^{\partial, \mb{b}}_{\bullet}| \ &\simeq& \ \ |\mb{X}^{\partial, \mb{b}}_{\bullet}|,\\ 
B\Cob^{\partial, \mb{c}}_{\theta} \ &
\simeq & \  |\mb{D}^{\partial, \mb{c}}_{\bullet}| \  &\simeq&\  \
|\mb{X}^{\partial, \mb{c}}_{\bullet}|, \\
B\Cob^{\partial, \mathcal{H}}_{\theta} \ &\simeq&  \
|\mb{D}^{\partial, \mathcal{H}}_{\bullet}| \
&\simeq & \ \ |\mb{X}^{\partial, \mathcal{H}}_{\bullet}|.
\end{aligned}$$
\end{proposition}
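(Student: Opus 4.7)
The plan is to follow the framework of \cite[Section 2]{GRW 14}, where analogous equivalences are established for the cobordism category without boundary. The proof divides into two comparison steps, each carried out uniformly for the ambient category and for its four subcategory refinements. For the first equivalence $B\Cob^{\partial}_{\theta} \simeq |\mb{D}^{\partial}_{\bullet}|$, note that a $p$-simplex of $\mb{D}^{\partial}_{\bullet}$ gives, by cutting $W$ at each height $a_{i}$, a chain of $p$ composable morphisms of $\Cob^{\partial}_{\theta}$; this defines a forgetful map $\mb{D}^{\partial}_{\bullet} \to N_{\bullet}\Cob^{\partial}_{\theta}$. To show that the induced map of realizations is a weak equivalence I would introduce an auxiliary bi-semi-simplicial space $Z_{p,q}$ parametrising a chain of $p$ composable morphisms together with a choice of thickened regular-value data $(a_{j}, \varepsilon_{j})_{j=0}^{q}$ compatible with the chain, and then verify that both projections are level-wise weak equivalences, the fibres being affine spaces of parameter choices (in particular contractible).

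For the second equivalence $|\mb{D}^{\partial}_{\bullet}| \simeq |\mb{X}^{\partial}_{\bullet}|$, the inclusion $\mb{D}^{\partial}_{\bullet} \subset \mb{X}^{\partial}_{\bullet}$ is a map of semi-simplicial spaces, which I would show is a level-wise weak equivalence by a parametrised transversality argument. Given a compact family $K \to \mb{X}^{\partial}_{p}$, I would first approximate the underlying family of manifolds by a smooth family using Proposition~\ref{proposition: salient features of smooth maps}, then apply Sard's theorem in families to conclude that after an arbitrarily small perturbation of the tuple $a \in \R^{p+1}_{\text{ord}}$, each $a_{i}$ becomes a regular value of the height function $W \to \R$ and of its restriction $\partial W \to \R$. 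A subsequent choice of small $\varepsilon_{i} > 0$ then yields a $\mb{D}^{\partial}_{\bullet}$-simplex; continuity in the family data produces a deformation retraction onto the subspace of $\mb{D}^{\partial}_{\bullet}$-simplices, and combined with Step~1 gives $B\Cob^{\partial}_{\theta} \simeq |\mb{D}^{\partial}_{\bullet}| \simeq |\mb{X}^{\partial}_{\bullet}|$.

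The subcategory refinements follow because each defining condition is invariant under the perturbations above: the boundary-incidence condition of $\Cob^{\partial, D}_{\theta}$ is preserved verbatim (it is imposed on the underlying manifold $W$, not on the parameters $a_{i}$), while the connectivity conditions used in the definitions of $\Cob^{\partial, \mb{b}}_{\theta}$ and $\Cob^{\partial, \mb{c}}_{\theta}$, as well as the diffeomorphism condition used in $\Cob^{\partial, \mathcal{H}}_{\theta}$, are locally constant on the set of regular values of the height function (since the diffeomorphism types of $W|_{c}$ and $(\partial W)|_{[b,c]}$ are locally constant there). Hence the constructions of both steps restrict cleanly to each subcategory. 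The main obstacle will be executing the parametrised transversality argument in Step~2 coherently, since one must continuously choose perturbations of the $(a_{i}, \varepsilon_{i})$ over a compact parameter space and compatibly with the semi-simplicial face maps (so that a tuple that is regular on the $p$-simplex remains regular after each face restriction); handling this Sard-theoretic argument while simultaneously preserving the connectivity and diffeomorphism-type conditions of the subcategories is the delicate point.
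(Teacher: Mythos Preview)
Your proposal is correct and follows the same approach as the paper: the paper explicitly omits the proof and refers to \cite[Section 2]{GRW 14}, whose argument you have accurately outlined (the two-step comparison via an auxiliary bi-semi-simplicial resolution and a parametrised transversality/regular-value argument, together with the observation that the various connectivity and diffeomorphism-type conditions are locally constant in the regular-value data).
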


\section{Relative Cobordism and Surgery} \label{section: relative surgery}
\subsection{Preliminaries on triads} \label{subsection: preliminaries on triads}
We begin by introducing some basic definitions and terminology
pertaining to triads of topological spaces.  A \textit{triad} of
topological spaces is a triple $(X; A, B)$ where $X$ is a topological
space and $A, B \subset X$ are subspaces.  A map of triads $(X; A, B)
\longrightarrow (X'; A', B')$ is defined to be a map $f: X
\longrightarrow X'$ that sends $A$ to $A'$ and $B$ to $B'$.
Homotopies between maps of triads are defined in the obvious way.

We will need to work with the homotopy groups of triads.
This requires us to work with the \textit{upper half disk}.
For $n \in \N$, let $D^{n}_{+} \subset \R^{n}_{+}$ denote the unit upper half-disk, i.e. the subset given by 
\begin{equation}\label{eq:half-disk}
  D^{n}_{+}  :=
  \{ \bar{x} \in \R^{n} \;  | \; |\bar{x}| \leq 1, \; x_{1} \geq 0 \;\}.
\end{equation} 
The boundary has the decomposition $\partial D^{n}_{+} =
\partial_{0}D^{n}_{+}\cup\partial_{1}D^{n}_{+}$ where
\begin{equation}\label{eq:half-disk01}
\partial_{0}D^{n}_{+} = \{\bar{x} \in D^{n}_{+} \; | \; x_{1} = 0 \} \quad \text{and} \quad \partial_{1}D^{n}_{+} = \{\bar{x} \in D^{n}_{+} \; | \; |\bar{x}| = 1\; \}.
\end{equation} 
We denote by $\partial_{0, 1}D^{n}_{+}$ the intersection
$\partial_{0}D^{n}_{+}\cap\partial_{1}D^{n}_{+}$.  In this way,
$(D^{n}_{+}; \partial_{0}D^{n}_{+}, \partial_{1}D^{n}_{+})$ forms a
triad of topological spaces.
\begin{defn} \label{defn: homotopy groups of triads}
Let $(X; A, B)$ be a triad of topological spaces.
For $n \in \N$, the set $\pi_{n}(X; A, B)$ is defined to be the set of homotopy classes of triad maps $(D^{n}_{+}; \partial_{0}D^{n}_{+}, \partial_{1}D^{n}_{+}) \longrightarrow (X; A, B)$.
\end{defn}
In the case that $n \geq 3$, the set $\pi_{n}(X; A, B)$ forms a group and when $n \geq 4$, this group $\pi_{n}(X; A, B)$ is abelian. 
For any such triad $(X; A, B)$, there is a long exact sequence 
\begin{equation} \label{equation: exact sequence of a triad}
\xymatrix{
\cdots \ar[r] & \pi_{n}(A, A\cap B) \ar[r] & \pi_{n}(X, B) \ar[r] & \pi_{n}(X; A, B) \ar[r] & \pi_{n-1}(A, A\cap B) \ar[r] & \cdots 
}
\end{equation}
A triad $(X; A, B)$ is said to be \textit{$n$-connected} if
$\pi_{i}(X; A, B) = 0$ for all $i \leq n$.  By the above exact
sequence, it follows that a triad $(X; A, B)$ is
\textit{$n$-connected} if and only if the map $\pi_{i}(A, A\cap B)
\longrightarrow \pi_{i}(X, B)$ is an isomorphism for $i < n$ and an
epimorphism when $i = n$.  One also has homology groups of a triad.
For $n \in \Z_{\geq 0}$, the group $H_{n}(X; A, B)$ is defined simply
to be the relative homology group $H_{n}(X, A\cup B)$.  There is an
exact sequence of homology groups analogous to the exact sequence of 
(\ref{equation: exact sequence of a triad}).

We will also need to work with triads of manifolds.  A
\textit{manifold triad} is a triad $(W; P, Q)$ where $W$ is a
manifold, $P, Q \subset \partial W$ are codimension-$0$ submanifolds such
that 
$$
\partial W = P\cup Q \quad \text{and} \quad \partial P = P\cap Q = \partial Q.
$$  
If $(W, \partial_{1}W)$ is a relative cobordism between the
manifold pairs $(M, \partial M)$ and $(N, \partial N)$, then $(W;
\partial_{0}W, \partial_{1}W)$ with $\partial_{0}W = M\sqcup N$, is a
manifold triad.  \textcolor{black}{For instance,} $(D^{n}_{+},
\partial_{0}D^{n}_{+}, \partial_{1}D^{n}_{+})$ is a manifold triad as
well.

\subsection{Relative cobordism and surgery} \label{subsection: relative coboridism morse theory}
In this section we review some basic results regarding relative surgery, relative cobordism, and Morse theory on manifolds with boundary that were developed in \cite{R 12}.
Most of the results that we state are basic and we refer the reader to \cite{R 12} for the proofs and examples. 
We begin with a definition.
\begin{defn}[Handle attachment and subtraction]
  \label{defn: handle attachment and subtraction}
Let $M$ be a manifold of dimension $m$ with non-empty boundary.  Let
$p, q \in \Z_{\geq 0}$ be such that $p+q = m$.  We have two basic
operations defined below:
\begin{itemize} \itemsep.2cm
\item
Let $\varphi: S^{p-1}\times D^{q} \longrightarrow \partial M$ be an
embedding.  Let $M'$ be the manifold defined by setting $M' =
M\cup_{\varphi}(D^{p}\times D^{q})$ and then smoothing corners.  The
manifold pair $(M', \partial M')$ is said to have been obtained from
$(M, \partial M)$ by \textit{handle attachment} of index $p$.

\item
Let $\phi: (D^{p}\times D^{q}, S^{p-1}\times D^{q})
\longrightarrow (M, \partial M)$ be an embedding.  Let $M''$ be the
manifold obtained by setting $M'' =
M\setminus\phi(D^{p}\times\Int(D^{q}))$ and smoothing corners.  The
manifold pair $(M'', \partial M'')$ is said to have been obtained from
$(M, \partial M)$ by \textit{handle subtraction} of index $p$.
\end{itemize}
 \end{defn}
In the above definition, both manifold pairs $(M', \partial M')$ and
$(M'', \partial M'')$ are relative cobordant to $(M, \partial M)$.
Below we will study the relative cobordisms that realize these surgery
operations of handle attachment and subtraction.

For what follows below, let $(M, \partial M)$ and $(N, \partial N)$ be
compact manifold pairs of dimension $d-1$ and let $(W, \partial_{1}W)$
be a relative cobordism between them.  We will refer to $((W,
\partial_{1}W), M, N)$ as a \textit{relative cobordism triple}.  Let
$p, q \in \N$ be such that $p + q = d$. 
Below we use
  the half-disk $D^n_+$ and the triad $(D^{n}_{+};
  \partial_{0}D^{n}_{+}, \partial_{1}D^{n}_{+})$ as above, see
  (\ref{eq:half-disk}) and (\ref{eq:half-disk01}).
\begin{defn}[Left half-handle attachment]
Let 
$$(\varphi_{1}, \varphi_{0, 1}):  (\partial_{1}D^{p}_{+}\times D^{q},\; \partial_{0, 1}D^{p}_{+}\times D^{q}) \longrightarrow (N, \partial N)$$
be an embedding. 
Let $((W', \partial_{1}W'), M, N')$ be the relative cobordism triple defined by setting:
\begin{itemize} \itemsep.2cm
\item $W' = W\cup_{\varphi_{1}}(D^{p}_{+}\times D^{q})$,
\item $\partial_{1}W' = \textcolor{black}{\partial_{1}W\cup_{\varphi_{0, 1}}(\partial_{0}D^{p}\times D^{q})}$, 
\item $N' = [N\setminus\varphi_{1}(\partial_{1}D^{p}_{+}\times D^{q})]\bigcup_{\varphi_{1}}(D^{p}_{+}\times\partial D^{q})$, 
\end{itemize}
Constructed in this way, $((W', \partial_{1}W'), M, N')$ is said to have been obtained from $((W, \partial_{1}W), M, N)$ by \textit{left half-handle attachment} of index $p$. 
 \end{defn}
 With the above definition, it follows that 
$$
\partial N' = [\partial N\setminus\varphi_{0,1}(\partial_{0,1}D^{p}_{+}\times D^{q})]\bigcup_{\varphi_{0,1}}(\partial_{0}D^{p}_{+}\times\partial D^{q}),
$$
and so the closed manifold $\partial N'$ is obtained from $\partial N$ by standard $(p-2)$-surgery. 
The following proposition follows from \cite[Corollary 2.20]{R 12}.
\begin{proposition}
Let $((W', \partial_{1}W'), M, N')$ be obtained from $((W, \partial_{1}W), M, N)$ by left-half-handle attachment of index $p$. 
Then the manifold pair $(N', \partial N')$ is obtained from $(N, \partial N)$ by handle-subtraction of index $p-1$. 
\end{proposition}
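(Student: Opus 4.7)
The plan is to identify the piece being attached with a collar, so that the left half-handle attachment modifies $N$ by first removing a closed tubular neighborhood of a subtracted handle and then gluing on a collar that (up to diffeomorphism of manifold pairs) restores the new boundary piece expected from handle subtraction.

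First I would identify the embedding data. Using the diffeomorphisms $\partial_1 D^p_+ \cong D^{p-1}$ and $\partial_{0,1} D^p_+ \cong S^{p-2}$, the attaching map $\varphi_1$ restricts to an embedding
\[
\varphi_1 : (D^{p-1} \times D^q,\; S^{p-2} \times D^q) \hookrightarrow (N, \partial N),
\]
with the $S^{p-2} \times D^q$-face landing in $\partial N$ via $\varphi_{0,1}$. This is precisely the data required for a handle subtraction of index $p-1$. Let $(N'', \partial N'')$ denote its output, so that after smoothing corners $N'' = N \setminus \varphi_1(D^{p-1} \times \Int(D^q))$, or equivalently $N''$ is $N$ with the closed handle $\varphi_1(D^{p-1} \times D^q)$ removed and the new boundary piece $\varphi_1(D^{p-1} \times S^{q-1})$ exposed.

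Next I would recognize the piece glued in the definition of $N'$ as a collar. As a manifold with corners, there is a canonical diffeomorphism $D^p_+ \cong D^{p-1} \times [0,1]$ carrying $\partial_1 D^p_+$ to $D^{p-1} \times \{1\}$ and $\partial_0 D^p_+$ to $D^{p-1} \times \{0\}$, and respecting the corner $\partial_{0,1} D^p_+$. Crossing with $\partial D^q = S^{q-1}$ yields
\[
D^p_+ \times S^{q-1} \;\cong\; (D^{p-1} \times S^{q-1}) \times [0,1],
\]
a collar on the $(d-2)$-dimensional new boundary piece $\varphi_1(D^{p-1} \times S^{q-1})$. Under this identification, the definition
\[
N' = [N \setminus \varphi_1(\partial_1 D^p_+ \times D^q)] \cup_{\varphi_1} (D^p_+ \times \partial D^q)
\]
amounts to removing the closed handle from $N$ and then attaching this collar along its $\{1\}$-end.

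Since attaching a collar to the boundary of a manifold is a diffeomorphism of manifold pairs, it follows that $(N', \partial N') \cong (N'', \partial N'')$. The new boundary piece $\partial_0 D^p_+ \times S^{q-1} \cong D^{p-1} \times S^{q-1}$ of $\partial N'$ is carried by the collar to the free end of the subtracted handle in $\partial N''$, giving the desired identification. The main technical obstacle will be the careful bookkeeping of the corner structure: the piece $D^p_+ \times D^q$ has corners of codimension up to three, and its attachment to $W$ along $\varphi_1$ must be smoothed compatibly with the collar identification above. Invoking standard uniqueness of collars together with the corner-smoothing procedures developed in the relative-surgery references should resolve this cleanly, and the proposition follows.
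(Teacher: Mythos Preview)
Your argument is correct: identifying $D^{p}_{+}\cong D^{p-1}\times[0,1]$ shows that forming $N'$ amounts to deleting the embedded $(p-1)$-handle $\varphi_{1}(D^{p-1}\times D^{q})$ from $N$ and then attaching a collar $(D^{p-1}\times S^{q-1})\times[0,1]$ along the newly exposed face, which does not change the diffeomorphism type of the pair; the bookkeeping of corners is routine. Note, however, that the paper does not give its own proof of this proposition at all---it simply records that the statement follows from \cite[Corollary 2.20]{R 12}. So rather than reproducing or paralleling an argument from the paper, you have supplied a short self-contained proof where the authors chose to defer to the reference.
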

We may consider $W$ and $\partial_{1}W$ as subspaces of $W'$ and $\partial_{1}W'$. 
Furthermore, we have a triad $(W'; \partial_{1}W', W)$ with $\partial_{1}W'\cap W = \partial_{1}W$. 
The following proposition computes some of the homotopy groups of the triad $(W'; \partial_{1}W', W)$ and of the pairs $(W', W)$ and $(\partial_{1}W', \partial_{1}W)$.
\begin{proposition} \label{proposition: left handle elementary}
Let $((W', \partial_{1}W'), M, N')$ be a relative cobordism obtained from $((W, \partial_{1}W), M, N)$ by left half-handle attachment of index $p$.
Then 
$\pi_{i}(W', W) = 0$ for all $i \in \Z$.
\end{proposition}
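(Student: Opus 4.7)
The plan is to show that the inclusion $W \hookrightarrow W'$ is a homotopy equivalence, from which the vanishing of $\pi_i(W',W)$ for all $i$ follows by the long exact sequence of the pair (and trivially for $i<0$ or $i=0$ from path-connectedness considerations). This in turn reduces to a completely elementary observation about the triad $(D^p_+; \partial_0 D^p_+, \partial_1 D^p_+)$.

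First I would record the topological model. Unpacking the definitions in \eqref{eq:half-disk} and \eqref{eq:half-disk01}, the half-disk $D^p_+$ is a compact manifold with corners that is homeomorphic to $D^{p-1}\times[0,1]$ in such a way that $\partial_0 D^p_+$ corresponds to $D^{p-1}\times\{0\}$ and $\partial_1 D^p_+$ corresponds to $D^{p-1}\times\{1\}\cup \partial D^{p-1}\times[0,1]$ (an ``L-shaped'' piece). The key point is that under this identification, $\partial_1 D^p_+$ is a strong deformation retract of $D^p_+$: one can slide the $\{0\}$-end of the cylinder down and around the corner onto the L-shape.

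Next I would promote this to the handle. Taking the product with $D^q$, we obtain a strong deformation retraction of $D^p_+\times D^q$ onto $\partial_1 D^p_+\times D^q$, rel the latter subspace. Since $W'$ is formed from $W$ by gluing $D^p_+\times D^q$ to $W$ along the embedding $\varphi_1\colon \partial_1 D^p_+\times D^q \hookrightarrow N\subset \partial W$, this deformation retraction extends by the identity on $W$ to a strong deformation retraction of $W'$ onto $W$. Hence $W\hookrightarrow W'$ is a homotopy equivalence.

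Finally, the long exact sequence of the pair $(W',W)$ gives $\pi_i(W',W)=0$ for all $i\geq 1$, and $\pi_0(W',W)=0$ since $W$ already meets every path-component of $W'$. For $i<0$ the statement is vacuous. I do not foresee any technical obstacle: the only subtlety is the explicit construction of the deformation retract of $D^p_+$ onto $\partial_1 D^p_+$, which, once the homeomorphism $D^p_+\cong D^{p-1}\times[0,1]$ sending $\partial_1 D^p_+$ to the L-shaped subset is written down, is just the standard retraction of a cylinder onto one end plus a piece of the lateral boundary.
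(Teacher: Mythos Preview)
Your proposal is correct and follows essentially the same approach as the paper's proof: both arguments observe that $D^{p}_{+}\times D^{q}$ deformation retracts onto $\partial_{1}D^{p}_{+}\times D^{q}$, and then extend this by the identity on $W$ to a deformation retraction of $W'$ onto $W$, yielding $\pi_i(W',W)=0$. Your write-up simply adds more detail (the explicit cylinder model for $D^p_+$ and the treatment of $i\le 0$) than the paper's terse version.
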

\begin{proof}
The space $D^{p}_{+}\times D^{q}$ deformation retracts onto
$\partial_{1}D^{p}_{+}\times D^{q}$.  Choosing such a deformation
retraction $D^{p}_{+}\times D^{q} \stackrel{\simeq} \longrightarrow
\partial_{1}D^{p}_{+}\times D^{q}$ yields a deformation retraction of
$W'$ onto $W \subset W'$.  It follows that the inclusion $W
\hookrightarrow W'$ is a homotopy equivalence and thus $\pi_{i}(W', W)
= 0$ for all $i \in \Z$.  This concludes the proof of the proposition.
\end{proof}
We now consider the operation dual to left-half-handle attachment,
that is \textit{right-half-handle} attachment.
\begin{defn}[Right half-handle attachment]
Consider the relative cobordism $((W, \partial_{1}W), M, N)$.  Let
$(\varphi, \varphi_{0}): (\partial D^{p}\times D^{q}_{+}, \partial
D^{p}\times\partial_{0}D^{q}_{+}) \longrightarrow (N, \partial N)$ be
an embedding and let $((W'', \partial_{1}W''), M, N'')$ be the
relative cobordism defined by setting:
\begin{itemize} \itemsep.2cm
\item $W'' = W\bigcup_{\varphi}(D^{p}\times D^{q}_{+})$, 
\item $\partial_{1}W'' = \partial_{1}W\bigcup_{\varphi_{0}}(D^{p}\times\partial_{0}D^{q})$, 
\item $N'' = [N\setminus\varphi(\partial D^{p}\times D^{q}_{+})]\bigcup_{\varphi}(D^{p}\times\partial_{1}D^{q}_{+})$, 
\end{itemize}
Constructed as above, the relative cobordism $((W'', \partial_{1}W''),
M, N'')$ is said to be obtained from $((W, \partial_{1}W), M, N)$ by
\textit{right-half-handle attachment} of index $p$.
\end{defn}
With the above definition, it follows that
$$\partial N'' = [\partial N\setminus\varphi_{0}(\partial D^{p}\times\partial_{0}D^{q}_{+})]\bigcup_{\varphi_{0}}(D^{p}\times\partial_{0,1}D^{q}_{+})$$
and thus the closed manifold $\partial N''$ is obtained from $\partial N$ by $(p-1)$-surgery. 
The next proposition follows from \cite[Corollary 2.20]{R 12}.
\begin{proposition}
Let $((W'', \partial_{1}W''), M, N'')$ be obtained from $((W, \partial_{1}W), M, N)$ by \textit{right half-handle attachment} of index $p$. 
Then it follows that $(N'', \partial N'')$ is obtained from $(N, \partial N)$ by handle attachment of index $p$. 
\end{proposition}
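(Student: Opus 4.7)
The plan is to directly identify $N''$ with the result of attaching a $p$-handle to $N$ along $\varphi_0 \colon S^{p-1}\times D^{q-1}\to \partial N$, using the identifications $\partial D^p = S^{p-1}$ and $\partial_0 D^q_+ \cong D^{q-1}$.

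First I would verify that the boundaries match. Unwinding the construction of $N''$ and chasing through the boundary decomposition $\partial D^q_+ = \partial_0 D^q_+ \cup \partial_1 D^q_+$, one obtains
$$\partial N'' \ = \ \bigl[\partial N \setminus \varphi_0(\partial D^p \times \partial_0 D^q_+)\bigr] \ \cup_{\varphi_0|_{\partial D^p \times \partial_{0,1} D^q_+}} \ \bigl(D^p \times \partial_{0,1} D^q_+\bigr).$$
Under the identifications $\partial_0 D^q_+ \cong D^{q-1}$ and $\partial_{0,1} D^q_+ \cong S^{q-2}$, this is exactly the formula for $\partial \bigl(N \cup_{\varphi_0}(D^p \times D^{q-1})\bigr)$ produced by a standard $p$-handle attachment. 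This already gives the assertion on $\partial N''$ (and in fact matches the formula stated just before the proposition, under $p-1 \leftrightarrow q-2$).

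Next I would upgrade this to a diffeomorphism $N'' \cong N \cup_{\varphi_0}(D^p \times D^{q-1})$. The key observation is that $D^q_+$, viewed as a manifold with corners, is diffeomorphic (after appropriately smoothing $\partial_{0,1} D^q_+$) to $D^{q-1} \times [0,1]$ in such a way that $\partial_0 D^q_+$ corresponds to $D^{q-1} \times \{0\}$ and $\partial_1 D^q_+$ corresponds to $D^{q-1} \times \{1\}$. Under this identification, the image $\varphi(\partial D^p \times D^q_+) \subset N$ becomes an internal collar of the embedded attaching region $\varphi_0(S^{p-1} \times D^{q-1}) \subset \partial N$, sitting in $N$ with its $\{0\}$-end along $\partial N$. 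By the uniqueness of collar neighborhoods, removing this closed collar leaves a manifold diffeomorphic to $N$, but with the attaching region relocated by an isotopy to the newly-exposed face $\varphi(\partial D^p \times \partial_1 D^q_+) \cong S^{p-1}\times D^{q-1}$. Gluing $D^p \times \partial_1 D^q_+ \cong D^p \times D^{q-1}$ along this face via $\varphi$ is then, tautologically, the standard $p$-handle attachment along (an isotopic image of) $\varphi_0$.

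The main obstacle will be carrying out the corner-smoothing and the collar-absorption compatibly, so that the resulting identification $N'' \cong N \cup_{\varphi_0}(D^p \times D^{q-1})$ is smooth across the gluing region rather than merely a homeomorphism. This is precisely the sort of corner calculus that the framework developed in \cite[Section 2]{R 12} handles; indeed the proposition follows from \cite[Corollary 2.20]{R 12} by the same mechanism used to establish the preceding proposition on left half-handle attachment, with handle attachment of index $p$ now playing the role dual to handle subtraction of index $p-1$.
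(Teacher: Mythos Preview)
Your proposal is correct and ultimately rests on the same input as the paper: the paper does not give an independent argument here but simply records that the proposition follows from \cite[Corollary 2.20]{R 12}, exactly as you invoke at the end. Your additional unwinding of the collar-absorption and boundary identification is more than the paper provides, but it is sound and leads to the same citation; there is no substantive difference in approach.
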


We now consider the triad $(W''; \partial W'', W)$ and the pairs $(W'', W)$ and $(\partial_{1}W'', \partial_{1}W)$.
The following proposition identifies the homotopy groups of the triad $(W''; \partial_{1}W'', W)$.
\begin{proposition} \label{proposition: right half handle elementary}
Let $((W'', \partial_{1}W''), M, N'')$ be a relative cobordism obtained from $((W, \partial_{1}W), M, N)$ 
by a right-half-handle attachment of index $p$. 
Then both $(W'', W)$ and $(\partial_{1}W'', \partial_{1}W)$ are $(p-1)$-connected and
$H_{i}(W''; \partial_{1}W'', W) = 0$ for all $i \in \Z$.
Furthermore if both $W$ and $\partial_{1}W$ are simply connected then $\pi_{i}(W''; \partial_{1}W'', W) = 0$ for $i \leq p$. 
\end{proposition}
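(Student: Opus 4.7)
The plan is to treat the four assertions separately, with the unifying observation that $D^{q}_{+}$ and $\partial_{0}D^{q}_{+}$ are both contractible, so attaching a right half-handle $D^{p}\times D^{q}_{+}$ behaves, up to homotopy, like attaching an ordinary $p$-cell along $S^{p-1}$. Concretely, the pair $(D^{p}\times D^{q}_{+}, \partial D^{p}\times D^{q}_{+})$ is homotopy equivalent to $(D^{p}, S^{p-1})$ by contracting the $D^{q}_{+}$ factor. Since $W''$ is the cofibration pushout $W \cup_{\partial D^{p}\times D^{q}_{+}} (D^{p}\times D^{q}_{+})$, the inclusion $W \hookrightarrow W''$ has the homotopy type of an honest $p$-cell attachment $W \hookrightarrow W \cup_{\varphi'} D^{p}$, and standard cell-attachment theory gives the $(p-1)$-connectivity of $(W'', W)$. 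The analogous argument for $(\partial_{1}W'', \partial_{1}W)$, using the contractibility of $\partial_{0}D^{q}_{+}$ to identify $(D^{p}\times\partial_{0}D^{q}_{+}, \partial D^{p}\times\partial_{0}D^{q}_{+}) \simeq (D^{p}, S^{p-1})$, establishes the second connectivity claim.

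For the vanishing of $H_{i}(W''; \partial_{1}W'', W)$, the pushout structure gives a homeomorphism
\begin{equation*}
W''/(W \cup \partial_{1}W'') \;\cong\; (D^{p}\times D^{q}_{+})/A, \qquad A := (\partial D^{p}\times D^{q}_{+})\cup(D^{p}\times\partial_{0}D^{q}_{+}).
\end{equation*}
Thus by the good-pair condition, $H_{i}(W''; \partial_{1}W'', W) = H_{i}(W'', W \cup \partial_{1}W'') \cong H_{i}(D^{p}\times D^{q}_{+}, A)$. Since $D^{p}\times D^{q}_{+}$ is contractible, it suffices to show that $A$ is contractible. Writing $A$ as the pushout of $\partial D^{p}\times D^{q}_{+} \hookleftarrow \partial D^{p}\times\partial_{0}D^{q}_{+} \hookrightarrow D^{p}\times\partial_{0}D^{q}_{+}$ and using the contractions of $D^{q}_{+}$ and $\partial_{0}D^{q}_{+}$, this reduces to the homotopy pushout of $S^{p-1} \xleftarrow{\mathrm{id}} S^{p-1} \hookrightarrow D^{p}$, which is $D^{p}$. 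Hence $A$ is contractible.

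For the triad homotopy groups, I would apply the exact sequence (\ref{equation: exact sequence of a triad}):
\begin{equation*}
\cdots \to \pi_{i}(\partial_{1}W'', \partial_{1}W) \to \pi_{i}(W'', W) \to \pi_{i}(W''; \partial_{1}W'', W) \to \pi_{i-1}(\partial_{1}W'', \partial_{1}W) \to \cdots
\end{equation*}
For $i \leq p-1$, the flanking groups vanish by the $(p-1)$-connectivity already established, so $\pi_{i}(W''; \partial_{1}W'', W) = 0$. For $i = p$, the simply-connected hypothesis on $W$ and $\partial_{1}W$ makes both $\pi_{p}(W'', W) \cong \Z$ and $\pi_{p}(\partial_{1}W'', \partial_{1}W) \cong \Z$, each generated by a core $D^{p}\times\{y\}$ of the attached handle, with $y$ in the interior of $D^{q}_{+}$ or $\partial_{0}D^{q}_{+}$ respectively. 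The straight-line homotopy in the convex set $D^{q}_{+}$ between these two choices of $y$ keeps $\partial D^{p}\times\{y\} \subset W$ throughout, so the inclusion-induced map $\Z \to \Z$ sends generator to generator; hence its cokernel $\pi_{p}(W''; \partial_{1}W'', W)$ vanishes. The main subtlety I anticipate is this last identification of generators and their compatibility under inclusion; the rest reduces to routine excision and cell-attachment.
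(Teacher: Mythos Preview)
Your proof is correct and follows essentially the same approach as the paper. The only cosmetic difference is in the homology step: the paper uses the deformation retraction $D^{p}\times D^{q}_{+} \simeq (\partial D^{p}\times D^{q}_{+})\cup(D^{p}\times\partial_{0}D^{q}_{+}) = A$ to produce a global deformation retraction $W'' \simeq \partial_{1}W''\cup W$ fixing $W$, whence $H_{*}(W''; \partial_{1}W'', W) \cong H_{*}(\partial_{1}W''\cup W, \partial_{1}W''\cup W) = 0$, whereas you excise down to the pair $(D^{p}\times D^{q}_{+}, A)$ and argue both pieces are contractible --- but this is the same observation phrased locally rather than globally. Your treatment of the $\pi_{p}$ generator matching via the straight-line homotopy in $D^{q}_{+}$ is in fact slightly more explicit than the paper's, which simply asserts the inclusion induces an isomorphism on $\pi_{p}$.
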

\begin{proof}
Using a deformation retraction 
$D^{p}\times D^{q}_{+} \stackrel{\simeq} \longrightarrow (D^{p}\times\partial_{0}D^{q}_{+})\cup(\partial D^{p}\times D^{q}_{+}),$ 
we can construct a deformation retraction $W'' \stackrel{\simeq} \longrightarrow \partial_{1}W''\cup W$ that fixes the subspace $W \subset \partial_{1}W''\cup W$. 
We obtain 
a homotopy equivalence of triads $(W''; \partial_{1}W'', W) \simeq (\partial_{1}W''\cup W; \partial_{1}W'', W)$
 and isomorphisms
$$H_{*}(W''; \partial_{1}W'', W) \; \cong \; H_{*}(\partial_{1}W''\cup W; \partial_{1}W'', W) \; = \; H_{*}((\partial_{1}W''\cup W,  \partial_{1}W'' \cup W) \; = \; 0.$$
This proves the first part of the proposition. 
For the second proposition suppose that $W$ and $\partial_{1}W$ are simply connected.
The manifold $W''$ is homotopy equivalent to a a CW complex obtained from $W$ by attaching a single $p$-dimensional cell along the subspace $\partial_{1}W \subset W$. 
It follows that $(W'', W)$ and $(\partial_{1}W'', \partial_{1}W)$ are $(p-1)$-connected.
Since $W$ and $\partial_{1}W$ are both simply connected the Hurewicz theorem implies that there are isomorphisms 
$$
\pi_{p}(W'', W) \cong \Z \quad \text{and} \quad \pi_{p}(\partial_{1}W'', \partial_{1}W) \cong \Z.
$$
Furthermore the inclusion map $(\partial_{1}W'', \partial_{1}W) \hookrightarrow (W'', W)$ induces an isomorphism on $\pi_{p}$. 
The exact sequence on homotopy groups associated to the triad $(W''; \partial_{1}W'', W)$ reads as 
$$
\xymatrix{
\cdots \ar[r] & \pi_{p}(\partial_{1}W'', \partial_{1}W) \ar[r]^{\cong} &  \pi_{p}(W'', W) \ar[r]  & \pi_{p}(W''; \partial_{1}W'', W) \ar[r] & 0 \ar[r] & \cdots 
}
$$ and thus it follows that $\pi_{i}(W''; \partial_{1}W'', W) = 0$ for
all $i \leq p$.  This concludes the proof of the proposition.
\end{proof}
In the section to follow we will need to use Morse theory to analyze
the homotopical properties of relative cobordisms.  Let $((W, \partial
W_{1}), M, N)$ be a relative cobordism triple.  We will need to
consider functions $f: W \longrightarrow \R$ that contain only
singularities of Morse type.
\begin{remark}
We assume that $W$ is given a Riemannian metric which is consistent
with a decomposition of $W$ as a triad, i.e., this metric is a
product-metric near the boundary and corresponding corners.  We use
this convention throughout the paper.
\end{remark}
\begin{defn} \label{defn: Morse function}
A function $f: W \longrightarrow [0, 1]$ is said to be a
\textit{relative Morse function} with respect to
$((W, \partial_{1}W); M, N)$ if the following conditions are met:
\begin{enumerate} \itemsep.2cm
\item[(i)]  $f(M) = 0$ and $f(N) = 1$. 
\item[(ii)] $f$ has only Morse critical points, and all critical
  points occur on $W \setminus\partial_{0}W$, where recall
  $\partial_{0}W = M\sqcup N$ (we do allow for critical points to
  occur on $\partial_{1}W$ however).
\item[(iii)] the gradient $\nabla f$ is everywhere
  tangent to $\partial_{1}W$.
\end{enumerate}
\end{defn} 
Let $f: W \longrightarrow [0, 1]$ be a relative Morse function for
$((W, \partial_{1}W), M, N)$ as in the above definition.  Let $a \in
[0, 1]$ be a regular value for $f$ and denote by $W|_{a}$ the
\textit{regular level-set} $f^{-1}(a)$.  It follows from the above
definition that $W|_{a}$ is a compact manifold with boundary given
by$$W|_{a}\cap\partial_{1}W = \partial(W|_{a}).$$ We will need to
study critical points of $f$ that occur on the face $\partial_{1}W
\subset \partial W$.
\begin{defn} \label{defn: boundary critical points}
A critical point $z \in \partial_{1}W \subset W$ is said to be \textit{boundary stable} if the tangent space to the unstable manifold of $z$ lies entirely in $T_{z}(\partial_{1}W)$, otherwise the critical point $z$ is called \textit{boundary unstable}. 
The index of a critical point $z \in \partial_{1}W$ is defined as the dimension of the stable manifold of $z$.
If $z$ is boundary unstable, this index is the same as the index of $z$ regarded as a critical point of the restriction $f|_{\partial_{1}W}: \partial_{1}W \longrightarrow [0, 1]$. 
If $z$ is boundary stable, we have $\ind_{f}(z) = \ind_{f|_{\partial_{1}W}}(z) + 1$. 
\end{defn}

\begin{example} \label{example: local form morse}
Recall $\R^{n}_{+} = [0, \infty)\times\R^{n-1}$. 
Consider the functions 
$$h, f: \R^{p}_{+}\times\R^{q} \longrightarrow \R$$
given by the formulas
$$\begin{aligned}
h(x_{1}, \dots, x_{p}, y_{1}, \dots, y_{q}) &= 
-(x_{1}^{2} + \cdots + x_{p}^{2}) + y_{1}^{2} + \cdots y_{q}^{2}, \\
f(x_{1}, \dots, x_{p}, y_{1}, \dots, y_{q}) &= x_{1}^{2} + \cdots + x_{p}^{2} - (y_{1}^{2} + \cdots + y_{q}^{2}).
\end{aligned}$$
Directly from the definition it follows that 
the origin $(0, \dots, 0) \in \R^{p}_{+}\times\R^{q}$ is a boundary-stable critical point of index $p$ for the function $h$, while it is a boundary-unstable critical point of index $q$ for the function $f$. 
It is proven in \cite{R 12} that any Morse function (as defined in Definition \ref{defn: Morse function}) is locally equivalent to one of the above two examples on a neighborhood of a boundary-stable or boundary-unstable critical point. 
\end{example}

The next lemma describes the homotopical properties of a relative cobordism possessing a relative Morse function with a single critical point.
This lemma is proven in \cite{R 12}.
\begin{lemma} \label{lemma: relative homotopy group morse function}
Let $((W, \partial_{1}W), M, N)$ be a relative cobordism equipped with a relative Morse function $f: W \longrightarrow [0, 1]$ with just one critical point $z \in \Int(\partial_{1}W) \subset \partial W$. 
\begin{enumerate} \itemsep.2cm
\item[(a)] Suppose that the critical point $z \in  \Int(\partial_{1}W)$ is boundary stable of index $p$.
Then the cobordism $((W, \partial_{1}W), M, N)$ is obtained from 
$\left((M\times I, \partial M\times I), \; M\times\{0\}, M\times\{1\}\right)$
by a a left-half handle attachment of index $p$. 
\item[(b)]  Suppose that the critical point $z \in  \Int(\partial_{1}W)$ is boundary unstable of index $p$.
Then the cobordism $((W, \partial_{1}W), M, N)$ is obtained from 
$\left((M\times I, \partial M\times I), \; M\times\{0\}, M\times\{1\}\right)$
by a right-half handle attachment of index $p$. 
\end{enumerate}
\end{lemma}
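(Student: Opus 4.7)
The plan is to imitate the standard Morse-theoretic identification of an elementary cobordism as a handle attachment, carefully adapted to the boundary setting so that the gradient flow respects $\partial_{1}W$. The two main ingredients are a boundary Morse lemma producing the local normal forms described in Example \ref{example: local form morse}, and condition (iii) of Definition \ref{defn: Morse function}, which guarantees that $\nabla f$ is everywhere tangent to $\partial_{1}W$. Consequently, the (downward) gradient flow of $f$ on $W$ preserves $\partial_{1}W$ and its restriction to $\partial_{1}W$ is the gradient flow of $f|_{\partial_{1}W}$, which is essential for keeping track of the stratification $\partial W = \partial_{0}W \cup \partial_{1}W$ throughout the argument.

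First I would apply a Morse-with-boundary normal-form result at $z$ to produce coordinates $(x,y) \in \R^{p}_{+} \times \R^{q}$ on a neighborhood $U$ of $z$, in which the face $\partial_{1}W$ corresponds to $\{x_{1}=0\}$ and $f$ agrees, up to an additive constant, with the model $h$ of Example \ref{example: local form morse} in the boundary-stable case (a), or with the model $f$ of that same example in the boundary-unstable case (b). After shrinking $U$, one can identify it, as a triad with its corner structure, with the standard bi-disk $D^{p}_{+}\times D^{q}$ in case (a), or with $D^{p}\times D^{q}_{+}$ in case (b). A direct computation with the explicit quadratic formula then identifies the passage from the sub-level set $f^{-1}[0, c-\epsilon]$ to $f^{-1}[0, c+\epsilon]$, where $c = f(z)$, with the elementary left half-handle attachment of index $p$ in case (a), and with the elementary right half-handle attachment of index $p$ in case (b).

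Next I would dispose of the complement of a shrunken neighborhood of $z$. Off $U$ the function $f$ has no critical points on $W$, and, by the tangency condition, none on $\partial_{1}W$ either, so by rescaling $\nabla f/\|\nabla f\|^{2}$ with a bump function one produces a complete downward flow whose time-one map yields a triad-diffeomorphism between the closure of $W \setminus U$ and a trivial cylinder of the form $(M' \times I, \partial M' \times I)$ for the evident $(d-1)$-manifold $M'$ obtained by removing the footprint of the half-handle from $M$. Gluing back the local model along this footprint then exhibits $((W,\partial_{1}W), M, N)$ as the trivial cobordism $((M \times I, \partial M \times I), M \times \{0\}, M \times \{1\})$ with a left or right half-handle of index $p$ attached to $M \times \{1\}$, which is precisely the statement of (a) or (b) respectively.

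The main technical obstacle is control near the corner locus, both the original corner $\partial_{0}W \cap \partial_{1}W$ and the new corners created along the attaching sphere of the half-handle. One must verify that the normal-form coordinates at $z$ can be chosen so as to be product-like near the corner (which is where the product Riemannian metric stipulated in the Remark preceding Definition \ref{defn: Morse function} becomes important), and that the rescaled gradient flow can be arranged to be a product flow in a collar of $\partial_{0}W \cap \partial_{1}W$ so that the gluing respects the smooth structure after corner-smoothing. With these technical points in place the identification becomes a direct parallel of \cite[Proposition 2.19, Corollary 2.20]{R 12}, from which the result is essentially cited.
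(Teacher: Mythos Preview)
Your outline is correct and matches the standard approach from \cite{R 12}, but you should be aware that the paper itself gives no proof of this lemma at all: it simply states ``This lemma is proven in \cite{R 12}'' and moves on. So there is no paper-proof to compare against; what you have written is essentially a sketch of the argument in the cited reference (indeed you yourself point to \cite[Proposition 2.19, Corollary 2.20]{R 12} at the end), and it is an accurate one.
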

Combining this lemma with the above results about left and right half-handle attachments yields the following corollary. 
\begin{corollary} \label{lemma: morse level sets}
Let $((W, \partial_{1}W), M, N)$ be a relative cobordism equipped with a relative Morse function $f: W \longrightarrow [0, 1]$ with just one critical point $z \in \Int(\partial_{1}W) \subset \partial W$. 
\begin{enumerate} \itemsep.2cm
\item[(a)] Suppose that the critical point $z \in  \Int(\partial_{1}W)$ is boundary stable of index $p$.
Then the manifold pair $(N, \partial N)$ is obtained from $(M, \partial M)$ by handle-subtraction of index $p-1$.
Furthermore,  
$\pi_{i}(W, M) = 0$ for all $i \in \Z$ while the triad $(W; \partial_{1}W, M)$ is $(p-1)$-connected and the pair $(\partial_{1}W, \partial M)$ is $(p-2)$-connected. 
\item[(b)] Suppose that the critical point $z \in  \Int(\partial_{1}W)$ is boundary unstable of index $p$.
Then the manifold pair $(N, \partial N)$ is obtained from $(M, \partial M)$ by a handle-attachment of index $p$.
Furthermore, $H_{i}(W; \partial_{1}W, M) = 0$ for all $i \in \Z$ while the pairs $(\partial_{1}W, \partial M)$ and $(W, M)$ are both $(p-1)$-connected. 
If both $M$ and $\partial M$ are simply connected then it follows further that $\pi_{i}(W; \partial_{1}W, M) = 0$ for all $i \leq p$. 
\end{enumerate}
\end{corollary}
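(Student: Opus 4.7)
The plan is to deduce both parts of the corollary as direct bookkeeping consequences of Lemma \ref{lemma: relative homotopy group morse function} combined with Propositions \ref{proposition: left handle elementary} and \ref{proposition: right half handle elementary}. The key observation throughout is that the trivial cylinder $((M\times I,\partial M\times I),M\times\{0\},M\times\{1\})$ deformation retracts onto $(M,\partial M)$, so the invariants of the pair $(W,M\times I)$, the pair $(\partial_1 W,\partial M\times I)$, and the triad $(W;\partial_1 W, M\times I)$ provided by those propositions agree with those of $(W,M)$, $(\partial_1 W,\partial M)$, and $(W;\partial_1 W,M)$, respectively.

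For case (a), Lemma \ref{lemma: relative homotopy group morse function}(a) identifies $((W,\partial_1 W),M,N)$ with the trivial cylinder with an index-$p$ left half-handle attached along $M\times\{1\}$. The handle-subtraction description of $(N,\partial N)$ and the vanishing $\pi_i(W,M)=0$ then follow directly from the propositions governing left half-handle attachment (the latter being Proposition \ref{proposition: left handle elementary} composed with the cylinder retraction). The $(p-2)$-connectivity of $(\partial_1 W,\partial M)$ is obtained by noting that the left-half-handle attachment adjoins $\partial_0 D^p_+\times D^q \cong D^{p-1}\times D^q$ to $\partial M\times I$ along $\partial_{0,1}D^p_+\times D^q \cong S^{p-2}\times D^q$, i.e., a standard $(p-1)$-handle is attached to the boundary face. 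The $(p-1)$-connectivity of the triad $(W;\partial_1 W,M)$ then falls out of the triad long exact sequence
\[
\cdots \to \pi_i(\partial_1 W,\partial M) \to \pi_i(W,M) \to \pi_i(W;\partial_1 W,M) \to \pi_{i-1}(\partial_1 W,\partial M) \to \cdots
\]
using that $\pi_*(W,M)$ vanishes identically and that $(\partial_1 W,\partial M)$ is $(p-2)$-connected.

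For case (b), Lemma \ref{lemma: relative homotopy group morse function}(b) presents $((W,\partial_1 W),M,N)$ as the trivial cylinder with a right half-handle of index $p$ attached. The handle-attachment statement for $(N,\partial N)$ is immediate from the corresponding proposition, and the $(p-1)$-connectivity of $(W,M)$ and $(\partial_1 W,\partial M)$, together with the vanishing $H_i(W;\partial_1 W,M)=0$, are the assertions of Proposition \ref{proposition: right half handle elementary} transported across the cylinder retraction. Under the additional hypothesis that $M$ and $\partial M$ are simply connected, the cylinders $M\times I$ and $\partial M\times I$ are simply connected as well, so the final clause of Proposition \ref{proposition: right half handle elementary} produces $\pi_i(W;\partial_1 W,M)=0$ for $i\le p$.

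No new geometric input is required; the argument is essentially a dictionary between the elementary-handle propositions and the one-critical-point Morse setting, and the only step doing any nontrivial work is the triad long exact sequence invoked at the end of case (a).
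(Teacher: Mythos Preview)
Your proposal is correct and follows exactly the route the paper intends: the paper does not give an explicit proof but simply states that the corollary follows by ``combining this lemma with the above results about left and right half-handle attachments,'' and your write-up supplies precisely those details (Lemma \ref{lemma: relative homotopy group morse function} together with Propositions \ref{proposition: left handle elementary} and \ref{proposition: right half handle elementary}, transported across the cylinder retraction, plus the triad long exact sequence for the final connectivity claim in (a)).
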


\section{Surgery on Objects} \label{section: surgery on objects}
The task of this section is to prove Theorem \ref{theorem: handlebody equivalence} which states that the inclusion $B\Cob^{\partial, \mathcal{H}}_{\theta} \hookrightarrow B\Cob^{\partial, \mb{c}}_{\theta}$ is a weak homotopy equivalence 
whenever $d = 2n + 1 \geq 9$ and the tangential structure $\theta: B \longrightarrow BO(2n+1)$ is such that the space $B$ is $(n-1)$-connected.  
This theorem will be proven in stages.
The first stage to proving Theorem \ref{theorem: handlebody equivalence} is to do surgery on the interior of the objects of $\Cob^{\partial, \mb{c}}_{\theta}$, below the middle dimension. 
We begin by making a new definition. 
Recall that $d = 2n+1$ is the dimension of the morphisms in the cobordism category $\Cob^{\partial, \mb{c}}_{\theta}$.
\begin{defn} \label{defn: category c prime}
The topological subcategory $\Cob^{\partial, \mb{c}'}_{\theta} \subset \Cob^{\partial, \mb{c}}_{\theta}$ is the full subcategory on those objects $(M, \ell)$ such that the manifold $M$ is $(n-1)$-connected. 
\end{defn}

The theorem below is the first intermediate result on the way to proving Theorem \ref{theorem: handlebody equivalence}.
This theorem follows from \cite[Theorem 4.1]{GRW 14} and the constructions from \cite[Section 4]{GRW 14}.
\begin{theorem} \label{theorem: surgery below middle dimension}
Let the tangential structure $\theta: B \longrightarrow BO(2n+1)$ be such that the space $B$ is $(n-1)$-connected. 
Then the inclusion $B\Cob^{\partial, \mb{c}'}_{\theta} \hookrightarrow B\Cob^{\partial, \mb{c}}_{\theta}$ is a weak homotopy equivalence. 
\end{theorem}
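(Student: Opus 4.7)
The plan is to reduce to the semi-simplicial models of Proposition \ref{proposition: X-D equivalence} and then apply the surgery-on-objects machinery of Galatius--Randal-Williams in \cite[\S 4]{GRW 14}, adapted to the relative setting. Define a sub-semi-simplicial space $\mb{X}^{\partial,\mb{c}'}_{\bullet}\subset\mb{X}^{\partial,\mb{c}}_{\bullet}$ by imposing, on each $p$-simplex $(a,\varepsilon,(W,\ell))$, the additional condition that for every regular value $c\in\bigcup_i(a_i-\varepsilon_i,a_i+\varepsilon_i)$ the level set $W|_c$ be $(n-1)$-connected. By the proof of Proposition \ref{proposition: X-D equivalence}, $|\mb{X}^{\partial,\mb{c}'}_\bullet|\simeq B\Cob^{\partial,\mb{c}'}_\theta$, so it suffices to show that the inclusion $|\mb{X}^{\partial,\mb{c}'}_\bullet|\hookrightarrow|\mb{X}^{\partial,\mb{c}}_\bullet|$ is a weak homotopy equivalence.

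First I would augment $\mb{X}^{\partial,\mb{c}}_\bullet$ by an auxiliary semi-simplicial direction that encodes surgery data on the interiors of the level sets $W|_c$. More precisely, for each $p$-simplex of $\mb{X}^{\partial,\mb{c}}_\bullet$ and each regular value $c$, consider the space of finite families of pairwise disjoint, $\theta$-framed embeddings $\phi_j\colon S^{k_j}\times D^{2n-k_j}\hookrightarrow\Int(W|_c)$ with $k_j\leq n-1$ whose cores generate $\pi_*(W|_c)$ in degrees $\leq n-1$, together with the trace cobordisms which run these framed surgeries over intervals $[c-\delta,c+\delta]$ and glue back into $W$. Taking these as an extra semi-simplicial direction, one obtains a bi-semi-simplicial space $\mb{Y}_{\bullet,\bullet}$ with augmentation $\mb{Y}_{\bullet,-1}=\mb{X}^{\partial,\mb{c}}_\bullet$, and the resolved semi-simplicial space $\mb{Y}_{\bullet,0},\mb{Y}_{\bullet,1},\dots$ naturally maps into $\mb{X}^{\partial,\mb{c}'}_\bullet$ after performing the surgeries.

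Next I would verify two facts. First, surgery on classes of dimension $k\leq n-1$ in a $2n$-dimensional level set, realized by attaching $(k+1)$-handles in the trace, preserves membership in $\mb{c}$: the boundary is unchanged (surgery is done in the interior), and attaching a $(k+1)$-handle with $k+1\leq n$ keeps the triad $n$-connected and does not affect the $(n-1)$-connectedness of the pair $((\partial W)|_{[b,c]},(\partial W)|_c)$. Second, I would show the space of surgery data over a given level set is highly connected, ultimately contractible as $n\to\infty$, by a standard general-position argument: since $2(n-1)+1<2n$, every continuous family of generating spheres admits a perturbation to a disjoint embedded family, and the $(n-1)$-connectedness of $B$ together with $d=2n+1\geq 9$ provides the required $\theta$-framings (this is exactly where the hypotheses on $n$ and on $B$ enter, matching the analogous step in \cite[\S 4]{GRW 14}). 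Consequently each augmentation map $|\mb{Y}_{\bullet,q}|\to|\mb{X}^{\partial,\mb{c}}_\bullet|$ is a weak equivalence.

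Finally, the realizations of the resolved levels $\mb{Y}_{\bullet,q}$ factor through $|\mb{X}^{\partial,\mb{c}'}_\bullet|$ after surgery, and a standard argument with the bi-semi-simplicial spectral sequence (or the colimit over $q$) yields that $|\mb{X}^{\partial,\mb{c}'}_\bullet|\hookrightarrow|\mb{X}^{\partial,\mb{c}}_\bullet|$ is a weak homotopy equivalence. The main obstacle is the contractibility of the space of surgery data: one must carefully set up the topology so that the augmentation is a microfibration with contractible fibres, and one must check that the surgery traces remain in the $\mb{c}$-category, which uses Corollary \ref{lemma: morse level sets}(b) together with the dimension bound $2n+1\geq 9$ to perform handle attachments without destroying the $(n-1)$-connectedness of the level-set boundary pairs. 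Modulo these verifications, the argument is essentially identical to \cite[Theorem 4.1]{GRW 14}, and indeed the authors cite that result directly.
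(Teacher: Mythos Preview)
Your overall approach is on the right track---the theorem really does reduce to the surgery-on-objects machinery of \cite[\S 4]{GRW 14}, and you correctly observe that all surgeries take place in the interior of the level sets so that the boundary data is untouched. However, the paper's proof is organized differently and more simply: rather than attempting all surgery dimensions $k_j\leq n-1$ at once, one filters $\Cob^{\partial,\mb{c}}_\theta$ by the full subcategories $\Cob^{\partial,\mb{c},l}_\theta$ on objects $(M,\ell)$ with $M$ $l$-connected, and then applies \cite[Theorem 4.1]{GRW 14} verbatim at each step $l-1\rightsquigarrow l$ for $l<n$. Each such step involves surgery in a \emph{single} dimension $l$, which is exactly the setup of \cite[\S 4]{GRW 14}; iterating gives $\Cob^{\partial,\mb{c}'}_\theta=\Cob^{\partial,\mb{c},n-1}_\theta$. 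Your proposed bi-semi-simplicial resolution mixing all $k_j\leq n-1$ simultaneously is not what \cite{GRW 14} does and would require additional bookkeeping you have not supplied.

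Two further points. First, you repeatedly invoke the hypothesis $2n+1\geq 9$, but this theorem does not use it: since the surgeries are in the interior, the boundary pairs $((\partial W)|_{[b,c]},(\partial W)|_c)$ are literally unchanged, and preservation of the triad $n$-connectivity follows from the standard connectivity estimate for the surgery move (the pair $(\mathcal{P}_t|_{[a,b]},\mathcal{P}_t|_b)$ is $(d-l-2)$-connected, which for $d=2n+1$ and $l\leq n-1$ is at least $n$). The bound $2n+1\geq 9$ enters only later, in the handle-subtraction step of Theorem~\ref{theorem: contractibility of space of surgery data}. Second, the phrase ``ultimately contractible as $n\to\infty$'' is misplaced: $n$ is fixed, and what one needs is that the augmented semi-simplicial space of surgery data has weakly contractible fibres, which is what \cite[Theorem 4.5]{GRW 14} establishes.
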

\begin{proof}[Proof Sketch]
We first filter $\Cob^{\partial, \mb{c}}_{\theta}$ by a sequence of intermediate subcategories contained in $\Cob^{\partial, \mb{c}'}_{\theta}$.
For each $l \in \Z_{\geq -1}$, we let $\Cob^{\partial, \mb{c}, l}_{\theta} \subset \Cob^{\partial, \mb{c}}_{\theta}$ denote the full subcategory on those objects $(M, \ell)$ such that the manifold $M$ is $l$-connected. 
By \cite[Theorem 4.1]{GRW 14}, the inclusion $\Cob^{\partial, \mb{c}, l}_{\theta} \hookrightarrow \Cob^{\partial, \mb{c}, l-1}_{\theta}$ induces a weak homotopy equivalence 
$B\Cob^{\partial, \mb{c}, l}_{\theta} \simeq B\Cob^{\partial, \mb{c}, l-1}_{\theta}$
 whenever $l < n$ and the space $B$ is $l$-connected. 
By the definition we have $\Cob^{\partial, \mb{c}, n-1}_{\theta} = \Cob^{\partial, \mb{c}'}_{\theta}$ and by iterating this result, we obtain the weak homotopy equivalence $B\Cob^{\partial, \mb{c}}_{\theta} \simeq B\Cob^{\partial, \mb{c}'}_{\theta}$ whenever $B$ is $(n-1)$-connected. 
\end{proof}

\begin{remark}
The key result \cite[Theorem 4.1]{GRW 14} which was used in the above proof, is not stated in terms of the category $\Cob^{\partial}_{\theta}$. 
The result applies to a cobordism category $\Cob_{\theta}$ whose objects consist of closed $\theta$-manifolds and morphisms are given by cobordisms between them. 
It can be checked however that all constructions used in the proof of \cite[Theorem 4.1]{GRW 14} carry through to our situation; one just applies each construction to the interior of the manifolds involved (objects and morphisms) and the boundaries can be ignored entirely. 
\end{remark}

In order to prove Theorem \ref{theorem: handlebody equivalence} it will suffice to prove the following result. 
\begin{theorem} \label{theorem: disk object equivalence}
Suppose that $d = 2n + 1 \geq 9$. 
Then the inclusion $B\Cob^{\partial, \mathcal{H}}_{\theta} \hookrightarrow B\Cob^{\partial, \mb{c}'}_{\theta}$ is a weak homotopy equivalence. 
\end{theorem}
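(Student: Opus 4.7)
The plan is to adapt the parametrized surgery framework developed by Galatius--Randal-Williams in \cite{GRW 14} to the setting of manifolds with boundary, working with the flexible semi-simplicial model $\mb{X}^{\partial,\mb{c}'}_{\bullet}$ in place of $B\Cob^{\partial,\mb{c}'}_{\theta}$. The objects of $\Cob^{\partial,\mb{c}'}_{\theta}$ are $(n-1)$-connected compact $2n$-manifolds with boundary (containing the disk $D$), while the objects of $\Cob^{\partial,\mathcal{H}}_{\theta}$ are disks $D^{2n}$. To pass from the former to the latter we must, in a family, perform surgery in the middle dimension $n$ on each regular level-set that appears as a vertex of a simplex, killing the remaining homotopy $H_{n}$ so as to make the resulting manifolds into disks. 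Such surgery on a $2n$-manifold $M$ is realized, on the level of the ambient $(2n+1)$-dimensional cobordism $W$, by right half-handle attachments of index $n+1$ along framed embedded $(D^{n+1},\partial D^{n+1})\hookrightarrow(W, M)$ sitting transverse to the height function; this is precisely the surgery framework developed in Section \ref{section: relative surgery}.

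With this in mind I would proceed in three stages. First, for each $p$-simplex $x = (a,\varepsilon,(W,\ell))\in \mb{X}^{\partial,\mb{c}'}_{p}$, I would introduce a space $K_{p}(x)$ of \emph{surgery data}: tuples, one for each index $i=0,\dots,p$, of disjoint framed $(n+1)$-dimensional right half-disks in $W$ supported in the neighborhood $(a_i-\varepsilon_i,a_i+\varepsilon_i)\times\R^{\infty}_{+}$, disjoint from the boundary datum $\R\times D$, whose boundary $n$-spheres lie in $W|_{a_i}$ and together generate $H_n(W|_{a_i})$ as a module over $\pi_0$ of the structure space. Assembling these gives an augmented bi-semi-simplicial space $K_{\bullet,\bullet}\to \mb{X}^{\partial,\mb{c}'}_{\bullet}$. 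Second, I would perform the surgeries dictated by such data: cutting out the interior of each half-handle and re-gluing produces a new cobordism $W'$ with the level-set $W'|_{a_i}$ diffeomorphic to $D^{2n}$, and (thanks to the $n$-connectivity of the triads in $\mb{c}'$) still satisfying the $\mb{c}$-conditions; this yields a map $|K_{\bullet}|\to|\mb{X}^{\partial,\mathcal{H}}_{\bullet}|$.

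Third, the weak equivalence would be extracted by the standard two-out-of-three argument: one must show that the augmentation $|K_{\bullet}|\to|\mb{X}^{\partial,\mb{c}'}_{\bullet}|$ is a weak equivalence, and that the comparison map $|K_{\bullet}|\to|\mb{X}^{\partial,\mathcal{H}}_{\bullet}|$ is as well. The latter follows by running the same surgery-data construction starting from an object of $\Cob^{\partial,\mathcal{H}}_{\theta}$ (where trivial surgery data suffices) and noting that the result is a bi-semi-simplicial map whose vertical fibres are contractible by an obvious concordance-through-the-identity argument, since $H_n$ of a disk is already zero. The former reduces, by the usual semi-simplicial resolution argument, to showing that $K_{p}(x)$ is highly connected for every simplex $x$.

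The main obstacle, and where essentially all the work goes, is precisely this last connectivity statement for the space of surgery data. The difficulty is that the relevant embeddings are in the middle dimension $n+1$ inside a $(2n+1)$-manifold, so two copies cannot be made disjoint by generic perturbation alone; instead one must carry out a Whitney-type disjunction argument, inductively making families of surgery data disjoint by trading intersections. This is the role of the ``inductive disjunction'' proposition flagged in the paper, and it is the single step that forces the dimensional hypothesis $d=2n+1\geq 9$, i.e.\ $n\geq 4$: below this range the Whitney trick fails for families and the required parametrised general position is no longer available. Given the disjunction lemma, the contractibility of $K_{p}(x)$ reduces to a routine induction on $p$ combined with the $n$-connectivity of the triads $(W|_{[b,c]};(\partial W)|_{[b,c]},W|_{c})$ supplied by membership in $\mb{c}'$, plus the $(n-1)$-connectivity of $B$ which guarantees the existence of framed surgery data compatible with $\theta$-structures.
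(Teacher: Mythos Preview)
Your overall architecture is right and matches the paper's: a bi-semi-simplicial resolution by surgery data over $\mb{D}^{\partial,\mb{c}'}_\bullet$, a one-parameter surgery move implementing the surgery, and the contractibility of the surgery data proved by a Whitney-type disjunction (this is where $n\geq 4$ enters). You also correctly locate the hard step.

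However, the specific surgery you describe is the wrong one. You propose to kill $H_n(W|_{a_i})$ via embedded $(D^{n+1},S^n)\hookrightarrow (W,M_i)$, i.e.\ interior $n$-surgery on the $2n$-manifold $M_i$. But an $(n-1)$-connected $M_i$ with $(n-2)$-connected boundary is made into a disk by killing $H_n(M_i,\partial M_i)$, not $H_n(M_i)$: the long exact sequence shows $H_n(M_i,\partial M_i)$ surjects onto $H_{n-1}(\partial M_i)$, which need not vanish, so your surgery leaves $M_i$ non-contractible in general. The paper instead performs \emph{handle subtraction of index $n$} along embeddings $(D^n\times\R^n,\,S^{n-1}\times\R^n)\hookrightarrow (M_i,\partial M_i)$ (Lemma~\ref{lemma: pi-pi theorem}, the $\pi$--$\pi$ step). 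At the level of the ambient $(2n+1)$-cobordism this is realised by a \emph{left} half-handle of index $n+1$ --- equivalently, sliding a boundary-stable critical point of index $n+1$ across the level --- not a right one. Concretely, the surgery data (Definition~\ref{defn: objects surgery data}) is an embedding of $\R\times(-6,-2)\times\R^n\times D^{n+1}_+$ into ambient space with prescribed intersection $\R^n\times\partial_1 D^{n+1}_+$ with $W$ and $\R^n\times\partial_{0,1}D^{n+1}_+$ with $\partial W$; the half-disk $D^{n+1}_+$ is essential because the operation genuinely uses $\partial M_i$.

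A smaller structural point: the paper does not produce a separate map $|K_\bullet|\to|\mb{X}^{\partial,\mathcal{H}}_\bullet|$ and run two-out-of-three. Rather, the surgery move is a \emph{one-parameter family} $\mathcal{K}^t$ of $\theta$-manifolds (Construction~\ref{constriction: implementation of surgery move}) giving a homotopy $\mathcal{F}:[0,1]\times|\mb{D}^{\partial,\mathcal{H}}_{\bullet,0}|\to|\mb{X}^{\partial,\mb{c}'}_\bullet|$ whose time-$0$ map is the augmentation (a weak equivalence) and whose time-$1$ map factors through $|\mb{X}^{\partial,\mathcal{H}}_\bullet|$; the empty surgery data over $\mb{D}^{\partial,\mathcal{H}}_\bullet$ then supplies the section that finishes the argument. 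Verifying that $\mathcal{K}^t$ stays in $\mb{X}^{\partial,\mb{c}'}_p$ for all $t$ (Lemma~\ref{lemma: surgery implimentation}) is itself nontrivial and uses Proposition~\ref{proposition: cylinder deletion connectivity}; your ``cut and re-glue'' description skips this.
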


Let $(M, \ell) \in \Ob\Cob^{\partial, \mb{c}'}_{\theta}$.  
The
manifold $M$ is $(n-1)$-connected and the boundary $\partial M$ is $(n-2)$-connected.  
It follows that $H_{i}(M, \partial M) = 0$ whenever $i \neq n, 2n$.  
Our goal is to perform a
sequence of surgeries on $(M, \ell)$ so as to obtain a new object
$(M', \ell')$ with $M' \cong D^{2n}$ and thus $(M', \ell') \in
\Ob\Cob^{\partial, \mathcal{H}}_{\theta}$.  
To do this it will
suffice to kill the group $H_{n}(M, \partial M) \cong \pi_{n}(M, \partial M)$ using a sequence of \textit{handle subtraction surgeries}
in degree $n$ (this is the same surgery procedure used in the proof of
the \textit{$\pi$--$\pi$-theorem} \cite[p. 39]{W 70}).  
We will construct a new surgery move, one that implements the
operation of handle subtraction on objects in the category
$\Cob^{\partial, \mb{c}'}_{\theta}$.  
This move can be thought of as a relative version of the surgery move from \cite[Section 5]{GRW 14}.  
The construction of this surgery move (which will be carried
out in Section \ref{subsection: surgery move on objects}) will require
the use of techniques covered in the following subsection regarding
relative cobordism and surgery.

\subsection{The surgery move} \label{subsection: surgery move on objects}
We construct a relative version of the \emph{surgery
    move} from \cite[Section 5.2]{GRW 14}
that will be used to
implement the operation of handle subtraction on the objects of the
cobordism category $\Cob^{\partial, \mb{c}'}_{\theta}$. Choose a
smooth function $\sigma: \R \longrightarrow \R$ which is the identity
on $(-\infty, \tfrac{1}{2})$, has nowhere negative derivative, and
$\sigma(t) = 1$ for all $t \geq 1$.
We set
\begin{equation} \label{equation: base manifold K}
K = \left\{(x, y) \in \R^{n+1}\times\R^{n+1}_{+} \; \; \left| \; \; |y|^{2} = \sigma(|x|^{2} - 1),  \; x_{1} \in (-6, -2) \; \right.\right\},
\end{equation}
where $x_{1}$ denotes the first coordinate of $x \in \R^{n+1}$. 
We have
$$\partial K = K \cap(\R^{n+1}\times\partial\R^{n+1}_{+}).$$
Furthermore we have the containments,
$$
K \subset \R^{n+1}\times D^{n+1}_{+} \quad \text{and} \quad \partial K \subset  \R^{n+1}\times\partial_{0}D^{n+1}_{+}.
$$
Before carrying out our main construction we make several basic
observations about the pair $(K, \partial K)$ that can easily be
checked by hand.  For what follows, let
$$K_{|x|^{2} \leq 2} \subset K \quad \text{and} \quad  (\partial K)_{|x|^{2} \leq 2} \subset \partial K$$ 
be the subsets defined by 
\begin{equation} \label{equation: critical region}
\begin{aligned}
K_{|x|^{2}\leq 2} \; &= \; \ \ \textcolor{black}{K\cap\left\{(x, y) \in
  \R^{n+1}\times\R^{n+1}_{+} \ \ \left| \
  |x|^{2} \leq 2 \ \right. \right\}}, \\ (\partial
K)_{|x|^{2} \leq 2} \; &= \; \textcolor{black}{\partial K\cap\left\{(x, y) \in
\R^{n+1}\times\partial\R^{n+1}_{+} \ \left| \ |x|^{2} \leq 2\ \right. \right\}}.
\end{aligned}
\end{equation}
Similarly we define
\begin{equation} \label{equation: x bigger than 2}
K_{|x|^{2}> 2} = K\setminus K_{|x|^{2} \leq 2} \quad \text{and} \quad
(\partial K)_{|x|^{2}> 2} = \partial K\setminus(\partial K)_{|x|^{2}
  \leq 2}.
\end{equation}
\begin{proposition} \label{proposition: observation 1}
The pair $(K, \partial K)$ has the following properties:
\begin{itemize} \itemsep.2cm
\item
There are equalities
$$\begin{aligned}
\textcolor{black}{K_{|x|^2> 2}} \; &= \; (\R^{n+1}\setminus B_{\sqrt{2}}(0))\times \partial_{1}D^{n+1}_{+},\\
\textcolor{black}{(\partial K)_{|x|^2> 2}} \; &= \; (\R^{n+1}\setminus B_{\sqrt{2}}(0))\times\partial_{0, 1}D^{n+1}_{+},
\end{aligned}$$
where $B_{\sqrt{2}}(0) \subset \R^{n+1}$ is the closed ball of radius $\sqrt{2}$.
\item
There are diffeomorphisms 
$$
K|_{|x|^2 \leq 2} \; \cong \; S^{n}\times D^{n+1}_{+}, \quad
(\partial K)|_{|x|^{2} \leq 2} \; \cong S^{n}\times\partial_{0}D^{n+1}_{+}.
$$

\end{itemize}
\end{proposition}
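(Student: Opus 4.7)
Both identifications are read off directly from the defining equation $|y|^{2} = \sigma(|x|^{2} - 1)$ on each piece, using the piecewise description of $\sigma$.

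For the cylindrical part, on the region $\{|x|^{2} > 2\}$ we have $|x|^{2} - 1 > 1$, so $\sigma(|x|^{2}-1)=1$ by construction, and the defining equation simplifies to $|y|=1$. Combined with $y \in \R^{n+1}_{+}$ this picks out exactly the closed upper hemisphere $\partial_{1}D^{n+1}_{+}$, and since the resulting constraint on $y$ is independent of $x$, the product description
$$K_{|x|^{2}>2} = (\R^{n+1}\setminus B_{\sqrt{2}}(0))\times\partial_{1}D^{n+1}_{+}$$
follows immediately. Intersecting additionally with $\partial\R^{n+1}_{+} = \{y_{1}=0\}$ cuts the hemisphere down to its equator $\partial_{0,1}D^{n+1}_{+}$, giving the second product description.

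For the handle core, note that the equation $|y|^{2}=\sigma(|x|^{2}-1)$ forces $|x|^{2} \in [1,2]$ on $\{|x|^{2}\leq 2\}$, since $\sigma$ coincides with the identity in a neighborhood of $0$ and hence takes strictly negative values on $(-\infty,0)$. Refining the choice of $\sigma$ so that it is strictly increasing on $[0,1]$ (permissible within the stated axioms), the function $\tau(r) := \sqrt{\sigma(r^{2}-1)}$ is a diffeomorphism $[1,\sqrt{2}]\to[0,1]$ that agrees with $r\mapsto\sqrt{r^{2}-1}$ on a neighborhood of $r=1$. I would then check that the radial projection
$$\Phi: K|_{|x|^{2}\leq 2} \longrightarrow S^{n}\times D^{n+1}_{+}, \qquad (x,y)\longmapsto (x/|x|, \, y),$$
is a well-defined diffeomorphism with inverse $(\omega,v)\mapsto(\tau^{-1}(|v|)\,\omega,\,v)$; well-definedness of $\Phi$ uses $|y|=\tau(|x|)\leq 1$ in this range. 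The boundary identification then follows from $\partial K = K\cap\{y_{1}=0\}$ together with $\partial_{0}D^{n+1}_{+} = D^{n+1}_{+}\cap\{y_{1}=0\}$.

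\textbf{Main obstacle.} The only genuinely subtle step is the smoothness of $\Phi^{-1}$ at the fiber-collapse locus $|x|^{2}=1$ (equivalently $v=0$), where the $y$-hemispheres degenerate to a single point. This is handled by exploiting that $\sigma$ agrees with the identity near $0$: for small $s$ one has $\tau^{-1}(s) = \sqrt{1+s^{2}}$, and the right-hand side is evidently smooth as a function of $|v|^{2}$, hence smooth in $v$ across the collapse locus.
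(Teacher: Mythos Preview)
The paper offers no proof of this proposition beyond the remark that it ``can easily be checked by hand,'' so there is no argument to compare against directly. Your treatment of the cylindrical piece $K_{|x|^{2}>2}$ is correct and is exactly the kind of direct verification intended.

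For the handle core, however, your explicit map $\Phi(x,y)=(x/|x|,y)$ is \emph{not} a diffeomorphism. You correctly flag and resolve the smoothness issue at the collapse locus $v=0$, but there is a second degeneracy at the outer face $|x|^{2}=2$ that you do not address. Because $\sigma$ is smooth and identically $1$ on $[1,\infty)$, one necessarily has $\sigma'(1)=0$; your refinement to a strictly increasing $\sigma$ on $[0,1)$ cannot change this. Hence at a point $p=(x,y)$ with $|x|^{2}=2$ the tangent space is $T_{p}K=\{(\xi,\eta): y\cdot\eta=0\}$, which contains the radial vector $(x,0)$, and $d\Phi_{p}(x,0)=(0,0)$. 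Equivalently, $\tau'(\sqrt{2})=0$, so your inverse $\tau^{-1}$ is not differentiable at $1$. Thus $\Phi$ drops rank along the entire face $\{|x|^{2}=2\}$ and is only a smooth bijection, not a diffeomorphism.

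A simple repair is to let the second coordinate carry the radial $x$-information directly: set
\[
\tilde\Phi(x,y)\;=\;\bigl(x/|x|,\ \sqrt{|x|^{2}-1}\cdot y/|y|\bigr),
\]
extended by $(x/|x|,0)$ at $y=0$. Near $y=0$ this agrees with your $\Phi$ (since $\sigma$ is the identity there, $\sqrt{|x|^{2}-1}=|y|$), so your analysis of the collapse locus still applies. The inverse is $(\omega,v)\mapsto\bigl(\sqrt{1+|v|^{2}}\,\omega,\ \sqrt{\sigma(|v|^{2})}\,v/|v|\bigr)$, which is visibly smooth near $|v|=1$ because $\sqrt{\sigma(s)}$ is smooth and nonvanishing near $s=1$; one checks that $d\tilde\Phi$ has full rank at $|x|^{2}=2$ since the radial direction $(x,0)$ now maps to a nonzero multiple of $y$.
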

Let $h: K \longrightarrow \R$ denote the projection of $K$ onto the
first coordinate of the ambient space $\R^{n+1}\times\R^{n+1}_{+}$.
Given regular values $a, b \in \R$ of the height function $h$ we will
need to analyze the structure of the relative cobordism $(K|_{[a, b]},
(\partial K)|_{[a, b]})$ between the pairs $(K|_{a}, (\partial
K)|_{a})$ and $(K|_{b}, (\partial K)|_{b})$.  The next proposition
enables us to do this.  Recall from Section \ref{subsection: relative coboridism morse theory} the definitions of \textit{boundary stable}
and \textit{boundary unstable critical points} of a Morse function on
a manifold with boundary.
\begin{proposition} \label{proposition: height-morse function}
The height function $h$ is a Morse function with
exactly two critical points: $(-1, 0, \dots, 0; 0)$ and $(+1, 0,
\dots, 0; 0)$, both of which lie on $\partial K$.  The critical point
$(-1, 0, \dots, 0; 0)$ is boundary stable of index $n+1$.  The
critical point $(+1, 0, \dots, 0; 0)$ is boundary unstable of index
$n$.
\end{proposition}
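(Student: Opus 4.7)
The plan is to locate the critical points of $h$ on $K$ by Lagrange multipliers, verify the gradient condition of Definition \ref{defn: Morse function}(iii) along $\partial K$, and identify each critical point with one of the two models of Example \ref{example: local form morse}.

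I would first realize $K$ as the zero set of $F(x,y) := |y|^2 - \sigma(|x|^2 - 1)$ in $\R^{n+1}\times\R^{n+1}_+$, with normal $\nabla F = (-2\sigma'(|x|^2-1)\,x,\, 2y)$. A point $(x,y)\in K$ is critical for $h|_K$ precisely when the ambient gradient $(e_1,0)$ is parallel to $\nabla F$. Since $\sigma'(0) = 1 \neq 0$, the Lagrange multiplier must be non-zero, and the $y$-component of the parallelism then forces $y = 0$; the defining equation of $K$ combined with the fact that $\sigma$ vanishes only at $0$ on $[0,\infty)$ forces $|x|=1$, and the $x$-component then yields $x = \pm e_1$. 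Thus the only critical points of $h|_K$ are $(\mp 1, 0,\dots,0;\,0)$, both on $\partial K = K\cap\{y_1 = 0\}$, with no interior critical points. Either a direct computation of the orthogonal projection of $(e_1,0)$ onto $TK$ gives a $y_1$-component proportional to $y_1$ (so vanishing on $\partial K$), or one may choose a metric on $K$ that is a product in a collar of $\partial K$; since $h$ is independent of the transverse direction $y_1$, its gradient automatically lies in $T(\partial K)$ along $\partial K$, yielding condition (iii) of Definition \ref{defn: Morse function}.

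The main step is the local normal form calculation. Near $(-1, 0, \dots, 0;\, 0)$, substitute $x_1 = -1 + s$ and $x' = (x_2, \dots, x_{n+1})$. Then $|x|^2 - 1 = -2s + s^2 + |x'|^2$ lies near zero, where $\sigma$ is the identity, and the defining equation becomes $|y|^2 = -2s + s^2 + |x'|^2$. Solving for $s$ on the branch through the origin yields $s = 1 - \sqrt{1 + |y|^2 - |x'|^2}$, which expands as
\begin{equation*}
h(x,y) \;=\; -1 + s \;=\; -1 + \tfrac{1}{2}\bigl(|x'|^2 - |y|^2\bigr) + O\bigl((|x'|^2 + |y|^2)^2\bigr).
\end{equation*}
In local coordinates $(y_1,\dots,y_{n+1};\, x_2,\dots,x_{n+1})$ on $K$, with half-space direction $y_1 \geq 0$, this realizes the neighborhood as $\R^{n+1}_+\times\R^n$ on which $h + 1$ agrees, up to the factor $\tfrac{1}{2}$, with the function $-|X|^2 + |Y|^2$ of Example \ref{example: local form morse} (with $p = n+1$, $q = n$), making the critical point boundary stable of index $n+1$. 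The parallel computation at $(+1, 0, \dots, 0;\, 0)$, substituting $x_1 = 1 + s$, gives $h = 1 + \tfrac{1}{2}(|y|^2 - |x'|^2) + \cdots$, matching the boundary-unstable model $|X|^2 - |Y|^2$ with the same $p,q$, and hence giving a boundary-unstable critical point of index $n$.

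There is no serious obstacle; the only subtle bookkeeping is correctly identifying which coordinates form the half-space factor in the Example \ref{example: local form morse} models. Since the transverse direction to $\partial K$ inside $K$ is the single coordinate $y_1$, it is the full $(n+1)$-tuple $(y_1,\dots,y_{n+1})$ that plays the role of the $\R^{n+1}_+$-factor, so the sign of the Hessian in the $y$-directions dictates whether each critical point is boundary stable or boundary unstable.
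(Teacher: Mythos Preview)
Your proof is correct and is precisely the detailed execution of what the paper's one-sentence proof gestures at: the paper simply says the result ``follows by examination of the formula used to define the manifold $K$ and applying Example \ref{example: local form morse},'' and your Lagrange-multiplier localization of the critical points followed by the explicit Taylor expansion in the chart $(y_1,\dots,y_{n+1};x_2,\dots,x_{n+1})$ is exactly that examination. Your identification of the half-space coordinate $y_1$ as the transverse direction and the resulting match with the two model functions of Example~\ref{example: local form morse} is the key bookkeeping, and you handle it correctly.
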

\begin{proof}
This follows by examination of the formula used to define the manifold $K$ and applying Example \ref{example: local form morse}. 
\end{proof}

Using the results from Section \ref{subsection: relative coboridism
  morse theory} (particularly Lemma \ref{lemma: relative homotopy group morse function}) about the homotopical properties of Morse functions on relative cobordisms, we obtain the
following corollary.
\begin{corollary} \label{corollary: relative connectivity}
Let $a, b \in \R$ be any two regular values of the height function $h: K \longrightarrow \R$. 
Then the triad $\left(K|_{[a, b]}; \; (\partial K)|_{[a, b]}, \; K|_{b}\right)$ is $n$-connected and the pair $\left((\partial K)|_{[a, b]}, \; (\partial K)|_{b}\right)$ is $(n-1)$-connected. 
\end{corollary}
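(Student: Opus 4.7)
The plan is to apply Corollary \ref{lemma: morse level sets} to the Morse function $-h$ restricted to the relative cobordism $(K|_{[a,b]}, (\partial K)|_{[a,b]})$, viewing $K|_b$ as the ``incoming'' face so that the asserted connectivities become connectivities relative to $K|_b$ rather than $K|_a$. By Proposition \ref{proposition: height-morse function}, $h$ has exactly two critical points, both on $\partial K$: one boundary-stable of index $n+1$ at $(-1,0,\ldots,0;0)$ and one boundary-unstable of index $n$ at $(+1,0,\ldots,0;0)$. A direct inspection of the local normal forms in Example \ref{example: local form morse} will show that reversing the height function interchanges the two types of boundary critical point and replaces the index $p$ by $(2n+1)-p$; thus for $-h$, the critical point at $x_1=-1$ is boundary-unstable of index $n$, and the critical point at $x_1=+1$ is boundary-stable of index $n+1$.

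The argument will then proceed by case analysis on which of the critical values $\pm 1$ lie in $[a,b]$. If neither lies in $[a,b]$, the gradient flow of $h$ trivializes the cobordism as a product and both claims are immediate. If exactly one lies in $[a,b]$, I would apply Corollary \ref{lemma: morse level sets} to $-h$: part (a) handles the case of a boundary-stable critical point of index $n+1$ for $-h$ (directly yielding that the triad is $n$-connected and the pair is $(n-1)$-connected), and part (b) handles the boundary-unstable critical point of index $n$ for $-h$ (yielding the same conclusions, once one observes that $K|_b$ and $(\partial K)|_b$ are simply connected). The simple-connectivity required for part (b) can be read off from the explicit description of $K|_b$ coming from the defining equation $|y|^2=\sigma(|x|^2-1)$ together with Proposition \ref{proposition: observation 1}, which exhibits $K|_b$ and $(\partial K)|_b$ as sphere-bundle-like spaces that are $(n-1)$-connected for $n\geq 2$.

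When both critical values lie in $[a,b]$, I would pick any regular value $c\in(-1,+1)$ of $h$ and split
\[
K|_{[a,b]} \;=\; K|_{[a,c]}\cup_{K|_c}K|_{[c,b]}
\]
into two subcobordisms each containing a single critical value, both already handled above. The desired connectivity statements for $(K|_{[a,b]};(\partial K)|_{[a,b]},K|_b)$ and $((\partial K)|_{[a,b]},(\partial K)|_b)$ would then be assembled from those of the two pieces using the long exact sequences of the triples $K|_b\subset K|_{[c,b]}\subset K|_{[a,b]}$ and $(\partial K)|_b\subset (\partial K)|_{[c,b]}\subset (\partial K)|_{[a,b]}$, together with excision to identify the ``top'' relative pair in each triple with a relative pair already computed on the lower subcobordism.

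The hard part will be verifying the simple-connectivity hypothesis required to apply Corollary \ref{lemma: morse level sets}(b), especially for regular level sets lying between the two critical values; beyond this verification, the assembly step in the two-critical-value case amounts to a routine five-lemma argument.
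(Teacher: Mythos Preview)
Your approach is essentially the same as the paper's: both switch to $-h$, identify that under this reversal the critical point at $x_1=-1$ becomes boundary-unstable of index $n$ and the one at $x_1=+1$ becomes boundary-stable of index $n+1$, and then invoke Lemma~\ref{lemma: relative homotopy group morse function} and Corollary~\ref{lemma: morse level sets}. The paper's proof is much terser and leaves the case analysis, the two-critical-value splitting, and the simple-connectivity verification for part~(b) implicit; your proposal simply makes these routine steps explicit.
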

\begin{proof}
We need to consider the negative of the height function, i.e.\ the function $-h$. 
For the function $-h$, the point $(-1, 0, \dots, 0; 0)$ is a boundary unstable critical point of index $n$ and $(+1, 0,
\dots, 0; 0)$ is a boundary stable critical point of index $n+1$ (compare to Proposition \ref{proposition: height-morse function}). 
The corollary then follows by applying Lemma \ref{lemma: relative homotopy group morse function} Corollary \ref{lemma: morse level sets} to the relative cobordism $\left((K|_{[a, b]}, (\partial K)|_{[a, b]}), \; K|_{a}, K|_{b}\right)$ with respect to the Morse function $-h: K|_{[a, b]} \longrightarrow [-b, -a]$.
\end{proof}

Using the manifold $K \subset \R^{n+1}\times\R^{n+1}_{+}$, we begin
our main construction.  We proceed to construct a one-parameter family
of manifolds
$$\mathcal{P}_{t} \subset \R\times(-6, -2)\times\R^{n}\times\R^{n+1}_{+}$$
with boundary given by
$$\partial\mathcal{P}_{t} \; = \; \mathcal{P}_{t}\cap
\textcolor{black}{\left( \R\times(-6,
  -2)\times\R^{n}\times\partial\R^{n+1}_{+}\right)},
$$
and with initial manifold given by
$$\mathcal{P}_{0} = \{0\}\times K \quad \text{and} \quad \partial\mathcal{P}_{0} = \{0\}\times\partial K.$$ 
The family is constructed by moving both critical points of $h$ (which is the projection onto $(-6, -2)$) down as $t \in [0, 1]$ increases, and varying the distance between the corresponding critical values. 
\begin{remark}
In this construction, the boundary family 
$\partial\mathcal{P}_{t} \; \subset \; \R\times(-6, -2)\times\R^{n}\times\partial\R^{n+1}_{+}$
will correspond exactly to the family of manifolds used in \cite[Section 5.2]{GRW 14}.
\end{remark}
Our first step along the way to constructing $\mathcal{P}_{t}$ is to
construct a one-parameter family of submanifolds
$$K_{\omega} \; \subset \; \R\times\R^{n+1}\times D^{n+1}_{+} \quad \text{with $\omega \in [0, 1]$,}$$ 
such that $K_{1} = \{0\}\times K$. 
Let 
\begin{equation}  \label{equation: real function mu}
\mu: \R \longrightarrow [0, 1]
\end{equation}
be a smooth function that satisfies the conditions:
\begin{itemize} \itemsep.2cm
\item  $\mu^{-1}(0) = [2, \infty)$,
\item $\mu^{-1}(1) = (-\infty, \sqrt{2}]$, 
\item $\mu' < 0$ on $(\sqrt{2}, 2)$.
\end{itemize}
Using $\mu$ we define a one-parameter family of embeddings 
$$
\varphi_{\omega}: \R^{n+1}\times D^{n+1}_{+} \longrightarrow \R\times\R^{n+1}\times D^{n+1}_{+}, \quad \quad \omega \in [0, 1],
$$
$$
(x, y) \mapsto \left(x_{1}(1 - \omega)\mu(|x|), \; \; x_{1}(1 - (1 - \omega)\mu(|x|)), \; \; x_{2}, \dots, x_{n+1},\; \; y\right).
$$
We define $K_{\omega}$ by setting,
\begin{equation}
K_{\omega} = \varphi_{\omega}(K).
\end{equation}
Observe that for each $\omega$ we have 
$$\partial K_{\omega} = \varphi_{\omega}(\partial K) \; \subset \; \R\times\R^{n+1}\times\partial_{0}D^{n+1}_{+}.$$

We make the following observation about the family $K_{\omega}$.  The
next proposition can be checked by hand (it is almost identical to the
observations made in \cite[p. 321]{GRW 14}).  Recall from Proposition
\ref{proposition: observation 1} the submanifold $ K_{|x|^{2}\leq 2}
\; = \; \ \ \textcolor{black}{K\cap\left\{(x, y) \in
  \R^{n+1}\times\R^{n+1}_{+} \ \ \left| \ |x|^{2} \leq 2
  \ \right. \right\}} \; \subset \; K.  $
\begin{proposition} \label{proposition: observation 2}
When $\omega > 0$, 
the critical points of the height function $h: K_{\omega} \longrightarrow \R$ are $\varphi_{\omega}(-1, 0, \dots, 0; 0)$ and $\varphi_{\omega}(+1, 0, \dots, 0; 0)$.
Both critical points are Morse and lie on the boundary; the first is boundary-stable of index $n+1$ and the second is boundary unstable of index $n$.
When $\omega = 0$, the function $h: K_{0} \longrightarrow \R$ is constantly $0$ on $\varphi_{0}(\textcolor{black}{K|_{|x|^2\leq 2}})$
but has no other critical values. 
\end{proposition}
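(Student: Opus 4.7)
The plan is to pull the height function back along $\varphi_\omega$ to a function on $K$. Writing $\tilde h_\omega := h\circ\varphi_\omega\colon K\to\R$, direct substitution gives
\[
\tilde h_\omega(x,y) \;=\; x_1\bigl(1 - (1-\omega)\mu(|x|)\bigr).
\]
Since $\varphi_\omega$ is a diffeomorphism onto $K_\omega$, critical points of $h|_{K_\omega}$ biject with those of $\tilde h_\omega|_K$, with Hessians corresponding under $d\varphi_\omega$; consequently the Morse index and the boundary-stability type are preserved. The key structural observation is that $\tilde h_\omega$ depends only on $x$, while $K\subset\R^{n+1}\times\R^{n+1}_+$ is cut out by the single smooth equation $|y|^2=\sigma(|x|^2-1)$. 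Hence at every point of $K$ with $y\neq 0$ the projection $T_{(x,y)}K\to\R^{n+1}$ onto the $x$-direction is surjective, and likewise for $T_{(x,y)}\partial K$ whenever some $y_j$ with $j\geq 2$ is nonzero. The locus $\{y=0\}$ on $K$ forces $\sigma(|x|^2-1)=0$ and so $|x|=1$, confining it to the core region.

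Next, I would split $K$ according to $|x|$ and analyze each piece. On the core $\{|x|\leq\sqrt 2\}$ one has $\mu\equiv 1$, so $\tilde h_\omega = \omega x_1$. For $\omega>0$ this is a positive multiple of the original height function $h|_K=x_1$ analyzed in Proposition \ref{proposition: height-morse function}, so it has exactly the same critical points $(\pm 1,0,\dots,0;0)$, with Hessians merely rescaled by $\omega$; the signature and the position of each eigenspace relative to $\partial K$ are preserved, so $(-1,0,\dots,0;0)$ remains boundary-stable of index $n+1$ and $(+1,0,\dots,0;0)$ remains boundary-unstable of index $n$. For $\omega=0$, the function $\tilde h_0$ is identically zero on the core, as claimed. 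On the outer region $\{|x|\geq 2\}$, where $\mu\equiv 0$, one has $\tilde h_\omega = x_1$ for every $\omega$, whose $x$-gradient is the constant nonzero vector $(1,0,\dots,0)$; the surjectivity of the $x$-projection rules out any critical points here.

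The remaining step is to eliminate critical points in the transition annulus $\{\sqrt 2<|x|<2\}$. Computing,
\[
\partial_{x_i}\tilde h_\omega = -(1-\omega)\,\mu'(|x|)\,\tfrac{x_1 x_i}{|x|}\ \ (i\geq 2), \qquad
\partial_{x_1}\tilde h_\omega = \bigl(1-(1-\omega)\mu(|x|)\bigr) - (1-\omega)\mu'(|x|)\tfrac{x_1^2}{|x|}.
\]
In the annulus $\mu'(|x|)\neq 0$, and in the range used to define $K$ one has $x_1\neq 0$, so vanishing of the first batch of partials forces $x_i=0$ for $i\geq 2$, hence $|x|=|x_1|$. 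Setting $\phi(r):=r\mu(r)$, the surviving equation reads $(1-\omega)\phi'(|x_1|)=1$; but $\phi'(r)=\mu(r)+r\mu'(r)<\mu(r)\leq 1$ throughout the open annulus (since $r\mu'(r)<0$ there), so for $\omega\in[0,1)$ this equation has no solution. Combined with the surjectivity of the $x$-projection on $T\partial K$ off $\{y=0\}$, which extends the same conclusion to boundary critical points, this excludes any critical point in the annulus and completes the proof.

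The main obstacle is the transition-annulus bookkeeping, in particular simultaneously ruling out interior and boundary critical points without introducing spurious ones at the $y=0$ locus. What makes it tractable is precisely that $\tilde h_\omega$ is pulled back from a function on $\R^{n+1}$ alone, so the critical-point condition off $\{y=0\}$ reduces to ordinary vanishing of $\nabla_x \tilde h_\omega$, which in turn is a transparent monotonicity statement about $\phi(r)=r\mu(r)$.
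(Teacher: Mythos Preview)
Your argument is correct and is precisely the direct verification the paper alludes to; the paper itself gives no proof beyond the remark that this ``can be checked by hand (it is almost identical to the observations made in \cite[p.~321]{GRW 14}).'' One small point: your appeal to ``$x_1\neq 0$ in the range used to define $K$'' leans on the constraint $x_1\in(-6,-2)$, which is in tension with the critical points lying at $x_1=\pm 1$; it is cleaner simply to note that at $x_1=0$ one has $\partial_{x_1}\tilde h_\omega = 1-(1-\omega)\mu(|x|)>0$ on the open annulus, so that case is excluded directly.
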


In our next step along the way to building $\mathcal{P}_{t}$,
we construct a two-parameter family
$$\mathcal{P}_{t, \omega} \; \subset \; \R\times(-6, 2)\times\R^{n}\times D^{n+1}_{+} \quad \text{with $(t, \omega) \in [0, 1]^{2}.$}$$ 
Pick a smooth one-parameter family of embeddings 
$$\lambda_{s}: (-6, -2) \longrightarrow (-6, 2), \quad s \in [0,1],$$ 
that satisfies:
\begin{itemize} \itemsep.2cm
\item $\lambda_{0} = \Id$,
\item $\lambda_{s}|_{(-6, -5)} = \Id$ for all $s$,
\item  $\lambda_{1}(-4) = -1$,
\item $\lambda_{1}(-3) = 1$.
\end{itemize}
We then get a family of embeddings
$$
\zeta_{t}: \R\times(-6, -2)\times\R^{n}\times\R^{n+1}_{+} \longrightarrow \R\times(-6, 2)\times\R^{n}\times\R^{n+1}_{+}, \quad t \in [0, 1],
$$
by setting 
$$
\zeta_{t} = {\textstyle \Id_{\R}\times\lambda_{t}\times\Id_{\R^{n}}\times\Id_{\R^{n+1}_{+}}}.
$$
We define \textcolor{black}{a two parameter family of manifolds}
\begin{equation} \label{equation: definition of 2nd parameter}
\mathcal{P}_{t, \omega} \; := \; \zeta_{t}^{-1}(K_{\omega}), \quad \textcolor{black}{(t, \omega) \in [0, 1]^{2}.}
\end{equation}

We obtain our final one-parameter family $\mathcal{P}_{t}$ as follows. 
Let $P: [0, 1] \longrightarrow [0, 1]^{2}$ be a piecewise linear path with 
$P(0) = (0, 0),$ $P(\tfrac{1}{2}) = (1, 0),$ and $P(1) = (1, 1).$
We then define:
\begin{equation} \label{equation: piecewise linear path}
\mathcal{P}_{t} \; := \; \mathcal{P}_{P(t)}.
\end{equation}
Now suppose that the initial manifold pair $\mathcal{P}_{0}$ is endowed with a $\theta$-structure $\ell$. 
We will need to extend this structure $\ell$ over the whole family so as to obtain a family of $\theta$-manifolds $(\mathcal{P}_{t}, \ell_{t})$ with $\ell_{0} = \ell$. 
\begin{proposition}
Suppose that the initial manifold $\mathcal{P}_{0}$ is equipped with $\theta$-structure $\ell$. 
Then we can upgrade the construction, continuously in $\ell$, to obtain a one-parameter family 
$$
(\mathcal{P}_{t}, \ell_{t}) \; \in \; \bPsi^{\partial}_{\theta}(\R\times(-6, -2)\times\R^{n}\times\R^{n+1}_{+})
$$
with $\ell_{0} = \ell$.
\end{proposition}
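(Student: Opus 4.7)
The plan is to realize the family $(\mathcal{P}_{t})_{t\in[0,1]}$ as the fibers of a smooth submersion from a total space onto $[0,1]$ and then transport the initial $\theta$-structure $\ell$ across the family along an Ehresmann connection. This will make the assignment $\ell \mapsto (\ell_{t})_{t\in[0,1]}$ natural, and in particular continuous, in $\ell$.

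First I would verify that the two-parameter assignment $(t,\omega) \mapsto \mathcal{P}_{t,\omega} = \zeta_{t}^{-1}(K_{\omega})$ is smooth in both parameters: $\zeta_{t}$ is a smooth family of diffeomorphisms of the ambient space, and $K_{\omega} = \varphi_{\omega}(K)$ is a smooth family of submanifolds (even at $\omega=0$, since $\varphi_{0}$ remains a smooth immersion of $K$). Consequently the total space
$$\mathcal{P}^{[2]} \;=\; \bigsqcup_{(t,\omega) \in [0,1]^{2}} \{(t,\omega)\} \times \mathcal{P}_{t,\omega}$$
is a smooth manifold with corners, and the projection to $[0,1]^{2}$ is a submersion compatible with the decomposition of $\partial\mathcal{P}^{[2]}$ into the fiber ends and the face $\partial_{1}\mathcal{P}^{[2]} = \mathcal{P}^{[2]}\cap(\R\times(-6,-2)\times\R^{n}\times\partial\R^{n+1}_{+})$. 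Pulling back along the path $P:[0,1]\to[0,1]^{2}$ (after smoothing its corner at $t=\tfrac{1}{2}$, or equivalently working separately on $[0,\tfrac{1}{2}]$ and $[\tfrac{1}{2},1]$ and observing that $P$ is momentarily stationary at $P(\tfrac{1}{2})=(1,0)$) yields a smooth submersion $\pi:\mathcal{P}^{\mathrm{tot}}\to[0,1]$ whose fiber over $t$ is $\mathcal{P}_{t}$.

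Next, I would choose an Ehresmann connection on $\pi$ whose horizontal distribution is tangent to the face $\partial_{1}\mathcal{P}^{\mathrm{tot}}$ and respects the product collar structure required by condition (iii) of Definition \ref{defn: space of manifolds with boundary}. Parallel transport along $[0,t]$ then provides smooth diffeomorphisms $\Phi_{t}:\mathcal{P}_{0} \xrightarrow{\cong} \mathcal{P}_{t}$ sending $\partial\mathcal{P}_{0}$ to $\partial\mathcal{P}_{t}$ and preserving the collar. Given the initial structure $\ell:T\mathcal{P}_{0} \to \theta^{*}\gamma^{2n+1}$, I would set
$$\ell_{t} \;:=\; \ell \circ (D\Phi_{t})^{-1} \;:\; T\mathcal{P}_{t} \longrightarrow \theta^{*}\gamma^{2n+1},$$
obtaining a smooth family of $\theta$-structures with $\ell_{0} = \ell$. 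Continuity in $\ell$ is automatic because $\Phi_{t}$ and $D\Phi_{t}$ depend only on the fixed geometry of $\mathcal{P}^{\mathrm{tot}}$, so composition on the right by $(D\Phi_{t})^{-1}$ is a continuous operation on the space of bundle maps.

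The principal technical obstacle is confirming smoothness of $\mathcal{P}^{[2]}$ near $\omega=0$, where Proposition \ref{proposition: observation 2} records a degeneracy of the height function $h$ on $K_{0}$. That statement is about the Morse structure of $h$, not about smoothness of the underlying submanifold: the map $(\omega,x,y)\mapsto\varphi_{\omega}(x,y)$ is smooth throughout $[0,1]\times K$, so $K_{\omega}$ varies smoothly even where the height function loses regular values. Once smoothness of $\mathcal{P}^{\mathrm{tot}}$ is established, the remainder is the standard procedure of transporting tangential structures along a submersion and presents no further difficulty.
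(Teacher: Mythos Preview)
Your approach is correct, but it is considerably more elaborate than what the paper does. The paper exploits the fact that the family $\mathcal{P}_{t}$ is \emph{defined} via explicit embeddings: $K_{\omega} = \varphi_{\omega}(K)$ and $\mathcal{P}_{t,\omega} = \zeta_{t}^{-1}(K_{\omega})$, where both $\varphi_{\omega}$ and $\zeta_{t}$ are smooth families of embeddings of the ambient space. So one simply composes the given structure $\ell$ on $\mathcal{P}_{0}$ with the differentials of $\varphi_{\omega}$ and $\zeta_{t}$ to obtain $\ell_{t,\omega}$ on $\mathcal{P}_{t,\omega}$, and then restricts along the path $P$. No total space, no connection, no parallel transport is needed; the diffeomorphisms $\Phi_{t}$ you would extract from an Ehresmann connection are already sitting there as $\zeta_{t}^{-1}\circ\varphi_{\omega}\circ\varphi_{1}^{-1}$ (suitably restricted). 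This also makes continuity in $\ell$ immediate without any further argument. Your route has the virtue of generality (it would apply to any smooth submersion, not just one built from explicit ambient isotopies), but it carries a technical cost you did not address: the fibres $\mathcal{P}_{t}$ are non-compact, so parallel transport along an arbitrary Ehresmann connection need not exist. This is fixable here because Proposition~\ref{proposition: basic properties of object surgery move}(ii) guarantees the family is constant outside a fixed compact region, so the horizontal lift can be chosen compactly supported; but you should say so. The worry you raised about smoothness at $\omega=0$ is, as you correctly diagnosed, a non-issue: the degeneracy in Proposition~\ref{proposition: observation 2} concerns only the height function, not the embedding $\varphi_{0}$.
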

\begin{proof}
First observe that the initial manifold
$\mathcal{P}_{0}$ is equal to 
$$K_{1}|_{(-6, -2)} = \{0\}\times K|_{(-6, -2)} \; \; \subset \; \; \R\times(-6, -2)\times\R^{n}\times\R^{n+1}_{+}.$$
Since 
$
K_{\omega} = \varphi_{\omega}(K),
$
by composing $\ell$ with the differential of the embedding $\varphi_{\omega}$, 
the structure $\ell$ determines a 
structure $\ell_{\omega}$ on $K_{\omega}$ for each $\omega \in [0, 1]$. 
These structures vary continuously in $\omega$ making $(K_{\omega}, \ell_{\omega})$ into a family of $\theta$-manifolds. 
We then define a family of $\theta$-structures on the two parameter family 
$$\mathcal{P}_{t, \omega} \; = \; \zeta_{t}^{-1}(K_{\omega}),$$
(see (\ref{equation: definition of 2nd parameter}))
 by pulling-back $\ell_{\omega}$ along $\zeta_{t}$.  
This gives a continuous, two parameter family of $\theta$-manifolds denoted by 
$(\mathcal{P}_{t, \omega}, \; \ell_{t, \omega}).$
By setting $(t, \omega)$ equal to $P_{s}$ with $s \in [0, 1]$, where $P: [0, 1] \longrightarrow [0, 1]^{2}$ is the path from (\ref{equation: piecewise linear path}), 
we obtain a the desired one-parameter family of elements 
$$
(\mathcal{P}_{t}, \ell_{t}) \; \in \; \bPsi^{\partial}_{\theta}(\R\times(-6, -2)\times\R^{n}\times\R^{n+1}_{+}).
$$
This concludes the proof of the proposition. 
\end{proof}

The following proposition sums up the basic properties of this one-parameter family. 
\begin{proposition} \label{proposition: basic properties of object surgery move}
There exists a one-parameter family 
$$
\mathcal{P}_{t} \; \in \; \bPsi^{\partial}_{d}(\R\times(-6, -2)\times\R^{n}\times\R^{n+1}_{+})
$$
with the following properties:
\begin{enumerate} \itemsep.3cm
\item[(i)]
The initial manifold $\mathcal{P}_{0}$ is equal to the submanifold
$$K_{1}|_{(-6, -2)} \; = \; \{0\}\times K|_{(-6, -2)} \; \; \subset \; \; \R\times(-6, -2)\times\R^{n}\times\R^{n+1}_{+}.$$ 

\item[(ii)] For all $t \in [0, 1]$, $\mathcal{P}_{t}$ agrees with $\mathcal{P}_{0}$ outside of the set
$\textcolor{black}{(-2, 2)\times(-5, -2)\times B_{\sqrt{2}}(0)\times D^{n+1}_{+}.}$

\item[(iii)] For all $t \in [0, 1]$ and each pair of regular values $-6 < a < b < -2$ of the height function 
$h: \mathcal{P}_{t} \longrightarrow (-6, -2),$
the triad
$
(\mathcal{P}_{t}|_{[a, b]}; \; (\partial\mathcal{P}_{t})|_{[a, b]}, \; \mathcal{P}_{t}|_{b})
$
is $n$-connected and the pair $((\partial\mathcal{P}_{t})|_{[a, b]}, \; (\partial\mathcal{P}_{t})|_{b})$ is $(n-1)$-connected. 

\item[(iv)] If $a$ is outside of $(-4, -3)$ and is a regular value of $h: \mathcal{P}_{t} \longrightarrow (-6, -2)$, then the manifold pair  $(\mathcal{P}_{t}|_{a}, \; (\partial\mathcal{P}_{t})|_{a})$ is diffeomorphic to 
$
(\mathcal{P}_{0}|_{a}, \; (\partial\mathcal{P}_{0})|_{a}).
$
If $a$ is inside $(-4, -3)$ and is a regular value of $h$ then 
$(\mathcal{P}_{t}|_{a}, \; (\partial\mathcal{P}_{t})|_{a})$ is either diffeomorphic to $(\mathcal{P}_{0}|_{a}, \; (\partial\mathcal{P}_{0})|_{a})$, or is obtained from it by handle-subtraction of index $n$. 

\item[(v)] The critical values of $h: \mathcal{P}_{1} \longrightarrow (-6, -2)$ are $-4$ and $-3$. 
For $a \in (-4, -3)$, the manifold pair $(\mathcal{P}_{1}|_{a}, \; \partial\mathcal{P}_{1}|_{a})$ is is diffeomorphic to the manifold pair obtained from 
$$(\mathcal{P}_{0}|_{a},  \; \partial\mathcal{P}_{0}|_{a}) \; = \; (\{0\}\times\R^{n}\times\partial_{1}D^{n+1}_{+}, \; \{0\}\times\R^{n}\times\partial_{0, 1}D^{n+1}_{+})$$
by handle-subtraction of index $n$
along the standard embedding. 
\end{enumerate}
Furthermore, suppose that the initial manifold $\mathcal{P}_{0}$ is equipped with a $\theta$-structure $\ell$. 
Then we can upgrade the construction, continuously in $\ell$, to obtain a one-parameter family 
$$
\mathcal{P}_{t}(\ell) \; \in \; \bPsi^{\partial}_{\theta}(\R\times(-6, -2)\times\R^{n}\times\R^{n+1}_{+})
$$
with $\mathcal{P}_{0}(\ell) = (\mathcal{P}_{0}, \ell)$. 
\end{proposition}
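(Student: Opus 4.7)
The plan is to take $\mathcal{P}_t = \zeta_{P(t)_1}^{-1}(K_{P(t)_2})$ as constructed in the text, and verify the five properties using the Morse-theoretic results of Section~\ref{section: relative surgery} together with the facts about $K$ from Propositions~\ref{proposition: observation 1}, \ref{proposition: height-morse function}, \ref{proposition: observation 2}, and Corollary~\ref{corollary: relative connectivity}. The piecewise linear path $P$ is helpful because it splits the deformation into two cleanly separated stages: first only the isotopy $\zeta_t$ shifts things (with $\omega = 0$, so no Morse critical points appear), and then only the embedding $\varphi_\omega$ bends the family (with $t = 1$, so the critical values stay inside $(-4, -3)$). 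Properties (i) and (ii) then reduce to direct inspection of the formulas: for (i), any $(x,y) \in K$ with $x_1 \in (-6,-2)$ has $|x| \geq 2$ and hence $\mu(|x|) = 0$, so $\varphi_0$ agrees with $\varphi_1$ there, and combined with $\zeta_0 = \Id$ this yields $\mathcal{P}_0 = K_1|_{(-6,-2)}$; property (ii) follows from the support conditions on $\mu$ and $\lambda_s$, which confine all variation to the asserted neighborhood.

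For (v), I would observe that the height function on $\mathcal{P}_1 = \zeta_1^{-1}(K_1)$ pulls back the first-coordinate projection on $K_1 = \{0\} \times K$; by Proposition~\ref{proposition: height-morse function} the latter has critical points at $x_1 = \pm 1$, and since $\lambda_1(-4) = -1$ and $\lambda_1(-3) = +1$, the critical values on $\mathcal{P}_1$ are exactly $-4$ and $-3$. The handle-subtraction description at the index-$n$ critical level then follows from Lemma~\ref{lemma: relative homotopy group morse function} applied at the boundary-unstable critical point. Property (iv) is handled analogously: gradient flow gives diffeomorphic level sets outside the critical values, and Corollary~\ref{lemma: morse level sets} describes the crossing; for $t < 1$ any Morse critical values lie in a proper subinterval of $(-4,-3)$, so the diffeomorphism persists in particular outside $(-4,-3)$.

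Property (iii) is the main obstacle and the technical heart of the verification. The strategy is to transfer Corollary~\ref{corollary: relative connectivity} from $K$ to $\mathcal{P}_t$. For $s \in [1/2, 1]$ (with $t = 1$ and $\omega > 0$) the composition $\zeta_1^{-1} \circ \varphi_\omega$ provides a diffeomorphism from the relevant portion of $K$ onto $\mathcal{P}_s|_{[a,b]}$ preserving the triad decomposition, so the $n$-connectivity of the triad and the $(n-1)$-connectivity of the boundary pair follow directly from Corollary~\ref{corollary: relative connectivity}. For $s \in [0, 1/2]$ (with $\omega = 0$) Proposition~\ref{proposition: observation 2} tells us $K_0$ has no Morse critical points, so $\mathcal{P}_s$ is a product cobordism over its level sets and the required connectivity is trivial. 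Finally, the $\theta$-structure extension is obtained by pulling $\ell$ back first along the differential of $\varphi_\omega$ and then along the differential of $\zeta_t$, yielding a continuous family $\ell_t$ with $\ell_0 = \ell$ that depends continuously on the initial $\ell$.
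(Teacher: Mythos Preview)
Your argument follows the paper's approach exactly: (i) and (ii) by direct inspection of the construction, (iii) via Corollary~\ref{corollary: relative connectivity}, and (iv)--(v) via Proposition~\ref{proposition: height-morse function} together with Corollary~\ref{lemma: morse level sets}. Two small points deserve tightening. First, for $s$ near $\tfrac12$ on the first leg of $P$ (so $\omega=0$ but $t$ close to $1$), the degenerate critical value $0$ of $h\colon K_0\to\R$ described in Proposition~\ref{proposition: observation 2} can lie inside $\lambda_{2s}((-6,-2))$, so $\mathcal P_s$ is not literally a product cobordism over all pairs of regular values; property (iii) still holds there, but the clean justification is to note that for every $\omega>0$ Proposition~\ref{proposition: observation 2} places two Morse boundary critical points of the same types and indices as in Proposition~\ref{proposition: height-morse function}, so the proof of Corollary~\ref{corollary: relative connectivity} applies verbatim to each $K_\omega$ and hence to each $\mathcal P_{t,\omega}$. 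Second, the handle subtraction in (v) for $a\in(-4,-3)$ arises when crossing the boundary-\emph{stable} critical point of index $n{+}1$ at height $-4$ (Corollary~\ref{lemma: morse level sets}(a) with $p=n{+}1$ gives handle subtraction of index $n$), not the boundary-unstable one at $-3$, which instead gives a handle \emph{attachment} of index $n$ when crossed upward.
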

\begin{proof}
Properties (i) and (ii) follow immediately from the construction. 
Property (iii) follows from Corollary \ref{corollary: relative connectivity}. 
Using Proposition \ref{proposition: height-morse function}, properties (iv) and (v) follow from Lemma \ref{lemma: morse level sets} which describes how the level sets of a relative Morse function change when passing through a critical point on the boundary. 
\end{proof}

There is one result regarding $\theta$-structures on the manifold $K$ that will prove useful latter on in Section \ref{section: contractibility of the spaces of surgery data} (see the proof of Proposition \ref{proposition: nonempty surgery data}). 
Recall from (\ref{equation: critical region}) and (\ref{equation: x bigger than 2}) the submanifolds $K_{|x|^{2}\leq 2}, K_{|x|^{2}>2} \; \subset \; K$ where $ K_{|x|^{2}>2} = K\setminus K_{|x|^{2}\leq 2}$.
\begin{proposition} \label{proposition: extension of theta-structure}
Let $\ell$ be any $\theta$-structure on $K_{|x|^{2}>2}$. 
Then there exists a $\theta$-structure $\ell'$ on $K$ such that $\ell'$ agrees with $\ell$ when restricted to $K_{|x|^{2}>2} \subset K$.
\end{proposition}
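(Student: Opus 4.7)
The key observation is that the inclusion $K_{|x|^{2}>2}\hookrightarrow K$ is a homotopy equivalence, and the plan is to combine this with the standard fact that $\theta$-structures correspond to sections of a fibration, so that the extension becomes a formal consequence of obstruction theory. I would verify the homotopy equivalence using Proposition \ref{proposition: observation 1}, which gives diffeomorphisms $K_{|x|^{2}\leq 2}\cong S^{n}\times D^{n+1}_{+}$ and $K_{|x|^{2}>2}\cong(\R^{n+1}\setminus B_{\sqrt{2}}(0))\times\partial_{1}D^{n+1}_{+}$, with the two pieces meeting along the frontier $K_{|x|^{2}=2}\cong S^{n}\times\partial_{1}D^{n+1}_{+}$. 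A smooth strong deformation retraction of the half-disk $D^{n+1}_{+}$ onto its round face $\partial_{1}D^{n+1}_{+}$ (rel the edge $\partial_{0,1}D^{n+1}_{+}$), crossed with $S^{n}$ and extended by the identity on $K_{|x|^{2}>2}$, produces a smooth deformation retraction of $K$ onto the closure $K_{|x|^{2}\geq 2}$ of $K_{|x|^{2}>2}$. A subsequent radial deformation retraction of $\R^{n+1}\setminus B_{\sqrt{2}}(0)$ onto $\{x:|x|\geq\sqrt{3}\}$ retracts $K$ further onto the closed submanifold $L:=K_{|x|^{2}\geq 3}\subset K_{|x|^{2}>2}$. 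Since the same radial retraction also deformation retracts $K_{|x|^{2}>2}$ onto $L$, both $K$ and $K_{|x|^{2}>2}$ deformation retract onto the common closed submanifold $L$, so the inclusion $K_{|x|^{2}>2}\hookrightarrow K$ is a homotopy equivalence.

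Next I would recast the extension problem as a section-extension problem and apply obstruction theory. For any smooth $d$-manifold $M$, the space $\Bun(TM,\theta^{*}\gamma^{d})$ is naturally weakly equivalent to the space of sections of the pullback fibration $\tau_{M}^{*}B\to M$, where $\tau_{M}\colon M\to BO(d)$ is a classifying map for $TM$. Under this equivalence, $\ell$ corresponds to a section $s$ of $\tau_{K}^{*}B\to K$ defined over the open subset $K_{|x|^{2}>2}$, and we seek to extend $s$ to a section over $K$. The closed submanifold $L\hookrightarrow K$ is a cofibration with smooth collar and, by the first paragraph, a deformation retract of $K$; hence $s|_{L}$ extends to a section $\widetilde{s}$ of $\tau_{K}^{*}B$ over all of $K$.

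Finally, to promote agreement from $L$ to all of $K_{|x|^{2}>2}$, I would interpolate between $\widetilde{s}$ and $s$ over the shell $K_{2<|x|^{2}<3}$ via a bump-function gluing. Since fiberwise linear isomorphisms form an open subset of the space of fiberwise linear maps, a sufficiently controlled interpolation remains a $\theta$-structure, yielding the desired $\ell'$ on $K$ that strictly extends $\ell$. The main obstacle is precisely this last interpolation step: the formal section-extension result only produces an extension agreeing with $\ell$ on the closed deformation retract $L$ rather than on all of $K_{|x|^{2}>2}$, so strict agreement on the open set requires the collar gluing and relies on the openness of the fiberwise-isomorphism condition.
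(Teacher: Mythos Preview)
Your approach and the paper's share the same key observation---the deformation retraction of $D^{n+1}_{+}$ onto $\partial_{1}D^{n+1}_{+}$, transported via Proposition~\ref{proposition: observation 1}---but the paper's execution is more direct. Rather than showing $K_{|x|^{2}>2}\hookrightarrow K$ is a homotopy equivalence and then extending a section from a closed retract $L$ inside the open set, the paper restricts $\ell$ to the closed frontier $K_{|x|^{2}=2}\cong S^{n}\times\partial_{1}D^{n+1}_{+}$ and observes that $K_{|x|^{2}\leq 2}\cong S^{n}\times D^{n+1}_{+}$ deformation retracts onto this frontier, so the restricted structure extends over $K_{|x|^{2}\leq 2}$ with no obstruction. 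Gluing this extension to the given $\ell$ on $K_{|x|^{2}>2}$ along the frontier yields $\ell'$ immediately, with strict agreement on $K_{|x|^{2}>2}$ built in.

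Your final interpolation step is the weak point. A bump-function convex combination of two bundle maps $TK\to\theta^{*}\gamma^{d}$ need not be a fibrewise isomorphism unless the two maps are pointwise close, and there is no reason your $\widetilde{s}$ and $s$ are close on the shell $2<|x|^{2}<3$; the openness of the isomorphism condition does not help without a smallness hypothesis. The fix is either to argue via the homotopy extension property (since $s$ and $\widetilde{s}|_{K_{|x|^{2}>2}}$ agree on the deformation retract $L$, they are homotopic rel $L$ through sections, and one can push this homotopy across a collar), or---more simply---to adopt the paper's viewpoint and extend from the closed frontier $K_{|x|^{2}=2}$ inward, which sidesteps the issue entirely.
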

\begin{proof}
Let $K_{|x|^{2} = 2}$ denote the intersection $\Cl(K_{|x|^{2}> 2})\cap K_{|x|^{2}\leq 2}$, where $\Cl(K_{|x|^{2}> 2})$ is the closure of $K_{|x|^{2}\leq 2}$ in $K$.
Let $\bar{\ell}$ denote the restriction of the structure $\ell$ to $K_{|x|^{2} = 2}$. 
To prove the proposition we will need to show that $\bar{\ell}$ can be extended over $K_{|x|^{2}\leq 2}$.
Recall from Proposition \ref{proposition: observation 1} that 
$$K_{|x|^{2}> 2} \; = \; (\R^{n+1}\setminus B_{\sqrt{2}}(0))\times \partial_{1}D^{n+1}_{+}$$ 
and that 
$$K_{|x|^{2}\leq 2} \cong S^{n}\times D^{n+1}_{+}.$$ 
It follows that
$K_{|x|^{2} = 2}$ is diffeomorphic to
$S^{n}\times\partial_{1}D^{n+1}_{+}$ and that $K_{|x|^{2} \leq 2}$
deformation retracts onto $K_{|x|^{2} = 2}$.
By this homotopy
equivalence $K_{|x|^{2} = 2} \simeq K_{|x|^{2}\leq 2}$, we see that
there is no obstruction to the extension of $\bar{\ell}$ over
$K_{|x|^{2}\leq 2}$.  
This completes the proof of Proposition \ref{proposition: extension of theta-structure}.
\end{proof}
\subsection{Surgery data}
We construct a semi-simplicial space of surgery data similar the one
constructed in \cite[Section 5.3]{GRW 14}.  First we must choose once
and for all, smoothly in the data $(a_{i}, \varepsilon_{i}, a_{p},
\varepsilon_{p})$, increasing diffeomorphisms
\begin{equation} \label{equation: reparametrisation}
\psi = \psi(a_{i}, \varepsilon_{i}, a_{p}, \varepsilon_{p}): (-6, -2) \stackrel{\cong} \longrightarrow (a_{i} - \varepsilon_{i}, a_{p} + \varepsilon_{p}),
\end{equation}
sending $[-4, -3]$ linearly onto $[a_{i}-\tfrac{1}{2}\varepsilon_{i},
  a_{i} + \tfrac{1}{2}\varepsilon_{i}]$.  

\begin{defn} \label{defn: objects surgery data}
Fix once and for all an infinite set $\Omega$.
Let $x = (a, \varepsilon, (W, \ell_{W})) \in \mb{D}^{\partial,\mb{c}'}_{p}$. 
With $a = (a_{0}, \dots, a_{p})$, for each $i$ we write $M_{i} = W|_{a_{i}}$ and $\partial M_{i} = (\partial W)|_{a_{i}}$.
Define a set $\mb{Y}_{0}(x)$ to consist of tuples $(\Lambda, \delta, e, \ell)$, where:
\begin{itemize} \itemsep.3cm
\item $\Lambda \subset \Omega$ is a finite subset;
\item $\delta: \Lambda \longrightarrow [p]$ is a function;
\item  
$
e: \Lambda\times\R\times(-6, -2)\times\R^{n}\times D^{n+1}_{+} \longrightarrow \R\times [0, 1)\times(-1, 1)^{\infty-1}
$
is an embedding that maps $\Lambda\times\R\times(-6, -2)\times\R^{n}\times\partial_{0}D^{n+1}_{+}$ into the subspace  $\R\times\{0\}\times(-1, 1)^{\infty-1}$;
\item $\ell$ is a $\theta$-structure on $\Lambda\times K$ (where $K$ is the manifold from (\ref{equation: base manifold K})).
\end{itemize}
For $i \in [p]$, we write $\Lambda_{i} = \delta^{-1}(i) \subset \Lambda$ and 
$$
e_{i}: \Lambda_{i}\times(a_{i} - \varepsilon_{i}, a_{p}+\varepsilon_{p})\times\R^{n}\times D^{n+1}_{+} \longrightarrow \R\times[0, 1)\times(-1, 1)^{\infty-1}
$$
for the embedding obtained by restricting $e$ and reparametrising using (\ref{equation: reparametrisation}). 
We let $\ell_{i}$ denote the restriction of $\ell$ to $\Lambda_{i}\times K|_{(-6, -2)}$.  
This data is required to satisfy the following conditions:
\begin{enumerate} \itemsep.2cm

\item[(i)] 
The embedding $e$ satisfies:
$$\begin{aligned}
e^{-1}(W) &= \Lambda\times\{0\}\times(-6, -2)\times\R^{n}\times \partial_{1}D^{n+1}_{+}, \\
e^{-1}(\partial W) &= \Lambda\times\{0\}\times(-6, -2)\times\R^{n}\times \partial_{0, 1}D^{n+1}_{+}.
\end{aligned}$$
We denote by $\partial_{1}e$ and $\partial_{0,1}e$ the embeddings obtained by restricting $e$ to the subspaces $\Lambda\times\{0\}\times(-6, -2)\times\R^{n}\times \partial_{1}D^{n+1}_{+}$ and $\Lambda\times\{0\}\times(-6, -2)\times\R^{n}\times \partial_{0, 1}D^{n+1}_{+}$ respectively. 
\item[(ii)]
Let $i \in [p]$. 
For 
$t \in \bigcup_{k = i}^{p}(a_{k}-\varepsilon_{k}, a_{k}+\varepsilon_{k})$, we have
$$(x_{0}\circ e_{i})^{-1}(t) = \Lambda_{i}\times\R\times\{t\}\times\R^{n}\times D^{n+1}_{+}.$$
In other words, the embedding $e_{i}$ is height-preserving over the intervals $(a_{k}-\varepsilon_{k}, a_{k}+\varepsilon_{k})$ for $k = i, \dots, p$. 

\item[(iii)] 
Recall from (\ref{equation: x bigger than 2}) the submanifold $(K_{|x|^{2} >2})|_{(-6, -2)} \subset K|_{(-6, -2)}$. 
Condition (i) together with Proposition \ref{proposition: observation 1} implies that 
$$e^{-1}(W)\cap(\Lambda\times K|_{(-6, -2)}) = \Lambda\times(K_{|x|^{2} >2})|_{(-6, -2)}.$$ 
We require that the restriction of $\ell_{W}$ to the image 
$$e(\Lambda\times(K_{|x|^{2} >2})|_{(-6, -2)}) \subset W$$ 
agrees with the $\theta$-structure $\ell$.
\end{enumerate}
For each $i \in [p]$, the restriction of $e$ to $\Lambda_{i}\times\{0\}\times\{a_{i}\}\times \R^{n}\times \partial_{1}D^{n+1}_{+}$
yields the embedding of manifold pairs
\begin{equation} \label{equation: induced slice embedding}
\left(\Lambda_{i}\times\{0\}\times\{a_{i}\}\times \R^{n}\times \partial_{1}D^{n+1}_{+}, \; \;\Lambda_{i}\times\{0\}\times\{a_{i}\}\times \R^{n}\times \partial_{0, 1}D^{n+1}_{+}\right) \; \longrightarrow \; (M_{i}, \partial M_{i}).
\end{equation}
Denote by $M'_{i}$ the manifold obtained by setting 
$$
M'_{i} \; = \; M_{i}\setminus e_{i}(\Lambda_{i}\times\{0\}\times\{a_{i}\}\times \R^{n}\times \partial_{1}D^{n+1}_{+}).$$
Hence, $M'_{i}$ is obtained from $M_{i}$ by a collection of disjoint handle subtractions of index-$n$ via (\ref{equation: induced slice embedding}). 
For our final condition, we insist that: 
\begin{enumerate}
\item[(iv)] for each $i \in [p]$ the resulting manifold $M'_{i}$ is diffeomorphic to the disk $D^{2n}$. 
\end{enumerate}
\vspace{.3cm}
\end{defn}

We now proceed to use the construction from the above definition to define an augmented bi-semi-simplicial space. 
\begin{defn} \label{defn: second simplicial coordinate n-1}
Let $x = (a, \varepsilon, (W, \ell_{W})) \in \mb{D}^{\partial,\mb{c}'}_{p}$ and let $q \in \Z_{\geq 0}$. 
We define $\mb{Y}_{q}(x) \subset (\mb{Y}_{0}(x))^{\times(q+1)}$ to be the subset consisting of those $(q+1)$-tuples 
$$
\left((\Lambda^{0}, \delta^{0}, e^{0}, \ell^{0}), \dots, (\Lambda^{q}, \delta^{q}, e^{q}, \ell^{q})\right) \; \in \; (\mb{Y}_{0}(x))^{\times(q+1)}
$$
such that 
$\Image(e^{i})\cap\Image(e^{j}) = \emptyset$ whenever $i \neq j$,
where $\Image(e^{i})$ denotes the image of $e^{i}$.
For each $x$, the assignment $q \mapsto \mb{Y}_{q}(x)$ defines a semi-simplicial set $\mb{Y}_{\bullet}(x)$.
We obtain a bi-semi-simplicial set $\mb{D}^{\partial,\mathcal{H}}_{\bullet, \bullet}$ by setting,
$
\mb{D}^{\partial,\mathcal{H}}_{p, q} = \{(x, y) \; | \; x \in \mb{D}^{\mathcal{L}, l-1}_{p}, \; y \in \mb{Y}^{l}_{q}(x) \; \}.
$
We topologize $\mb{D}^{\partial,\mathcal{H}}_{p, q}$ as a subspace of 
$$
\mb{D}^{\partial,\mb{c}'}_{p}\times\left(\prod_{\Lambda \subset \Omega}C^{\infty}\left(\Lambda\times\R\times(-6, -2)\times\R^{n}\times D^{n+1}_{+}, \; \; \R\times [0, 1)\times(-1, 1)^{\infty-1}\right)\right)^{\times(p+1)(q+1)}
$$
and thus the assignment $[p]\times[q] \mapsto \mb{D}^{\partial,\mathcal{H}}_{p, q}$ defines a bi-semi-simplicial space.
\end{defn}
For each $q$ we have a projection
$\mb{D}^{\partial, \mathcal{H}}_{p, q} \longrightarrow \mb{D}^{\partial, \mb{c}'}_{p},$  $(x, y) \mapsto x.$
Setting $\mb{D}^{\partial, \mathcal{H}}_{\bullet, -1} = \mb{D}^{\partial, \mb{c}'}_{\bullet}$, these projections yield an augmented bi-semi-simplicial space 
$
\mb{D}^{\partial, \mathcal{H}}_{\bullet, \bullet} \; \longrightarrow \;  \mb{D}^{\partial, \mathcal{H}}_{\bullet, -1} = \mb{D}^{\partial, \mb{c}'}_{\bullet}.
$
The proof of the following theorem is given in Section \ref{section: contractibility of the spaces of surgery data} and is the most technical part of this paper. 
\begin{theorem} \label{theorem: contractibility of space of surgery data}
Suppose that $d = 2n+1 \geq 9$.  
Then the augmentation $\mb{D}^{\partial, \mathcal{H}}_{\bullet, \bullet} \; \longrightarrow \;   \mb{D}^{\partial, \mb{c}'}_{\bullet}$ induces the weak homotopy equivalence 
$
|\mb{D}^{\partial, \mathcal{H}}_{\bullet, \bullet}| \; \simeq \; |\mb{D}^{\partial, \mb{c}'}_{\bullet}|.
$
\end{theorem}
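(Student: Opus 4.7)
\medskip

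\noindent\textbf{Proof proposal.}
The plan is to follow the standard Galatius--Randal--Williams strategy: reduce the statement about the augmented bi-semi-simplicial space to the contractibility, for each $x \in \mb{D}^{\partial,\mb{c}'}_{p}$, of the geometric realization of the fibre semi-simplicial space $\mb{Y}_{\bullet}(x)$. Concretely, I would first observe that by construction $\mb{Y}_{\bullet}(x)$ is a \emph{topological flag complex}: a $q$-tuple of elements of $\mb{Y}_{0}(x)$ spans a $q$-simplex precisely when their images under the embeddings $e^{0},\dots,e^{q}$ are pairwise disjoint, a condition that involves only pairs. By the standard criterion for topological flag complexes (as used in \cite[Section 6]{GRW 14}), it then suffices to show that the augmentation map $\mb{Y}_{0}(x)\to \ast$ is a Serre fibration with \emph{weakly contractible} total space, and moreover that any finite collection $(y_{0},\dots,y_{q})$ of $0$-simplices can be ``completed'' by a further disjoint $0$-simplex $y_{q+1}$. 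Together these imply $|\mb{Y}_{\bullet}(x)|\simeq\ast$, and then a straightforward spectral-sequence (or bisemi-simplicial Quillen A) argument gives the weak equivalence $|\mb{D}^{\partial,\mathcal{H}}_{\bullet,\bullet}| \simeq |\mb{D}^{\partial,\mb{c}'}_{\bullet}|$ promised by the theorem.

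The heart of the argument is therefore to exhibit, for each $x=(a,\varepsilon,(W,\ell_{W}))\in\mb{D}^{\partial,\mb{c}'}_{p}$, a non-empty and in fact highly connected space $\mb{Y}_{0}(x)$. My plan is to build an element $(\Lambda,\delta,e,\ell)\in\mb{Y}_{0}(x)$ slice by slice. Fix $i\in[p]$ and consider the regular level pair $(M_{i},\partial M_{i})=(W|_{a_{i}},(\partial W)|_{a_{i}})$. By Definition \ref{defn: category c prime} the manifold $M_{i}$ is $(n-1)$-connected and $\partial M_{i}$ is $(n-2)$-connected, so $H_{k}(M_{i},\partial M_{i})$ is concentrated in degrees $n$ and $2n$. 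To kill it and obtain a disk after handle-subtraction, I would choose a generating set for $\pi_{n}(M_{i},\partial M_{i})$ and realise each generator by an embedding of the pair $(D^{n}, \partial D^{n})\hookrightarrow (M_{i},\partial M_{i})$; thickening in the $\R^{n}\times D^{n+1}_{+}$ directions then gives the required local model for the surgery data $e_{i}$, exactly matching the half-handle geometry encoded by the model manifold $K$ of (\ref{equation: base manifold K}) (cf.\ Proposition \ref{proposition: observation 1} and the identification $K_{|x|^{2}\leq 2}\cong S^{n}\times D^{n+1}_{+}$). The $\theta$-structure on $K$ can then be produced using Proposition \ref{proposition: extension of theta-structure} to extend $\ell_{W}|_{\mathrm{image}(e)}$ over the core handles. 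A parametric transversality / general position argument makes these choices depend continuously on $x$, yielding a surjection from a contractible space of ``abstract surgery data'' to $\pi_{0}\mb{Y}_{0}(x)$, and more generally gives the higher connectivity of $\mb{Y}_{0}(x)$ needed to complete the flag-complex argument.

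The main obstacle is the following disjointness/realisation problem: given finitely many previously chosen $y_{0},\dots,y_{q}\in\mb{Y}_{0}(x)$, one must realise an additional family of classes in $\pi_{n}(M_{i},\partial M_{i})$ by embeddings $D^{n}\hookrightarrow M_{i}$ whose images (and whose thickenings in the remaining normal directions) are disjoint from the already occupied regions. The dimension count $\dim M_{i}=2n$ puts the cores right at the middle dimension, so generic disjunction fails; one needs a Whitney-type disjunction theorem to push the new $n$-cores off the old ones. This is exactly the step that will use $n\geq 4$ (i.e.\ $2n+1\geq 9$), providing the metastable range in which the relative Whitney trick applies to the embedded $n$-cores inside the $2n$-manifold $M_{i}$ with $(n-2)$-connected boundary; the parametric version required to conclude higher connectivity of $\mb{Y}_{0}(x)$ is the technical heart of the paper, presumably packaged as Proposition \ref{proposition: inductive disjunction} referenced earlier. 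Once this disjunction input is available, the surgery-move family of Proposition \ref{proposition: basic properties of object surgery move} provides the interpolation from $x$ to an element of $\mb{D}^{\partial,\mathcal{H}}_{p}$ and closes the argument.
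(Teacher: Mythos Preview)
Your overall strategy is correct in outline, but there is a genuine gap at the key step. You propose to verify the standard flag-complex criterion (Theorem~\ref{theorem: flag complex equivalence}): given \emph{any} finite collection $y_{0},\dots,y_{q}\in\mb{Y}_{0}(x)$, produce a further $y\in\mb{Y}_{0}(x)$ whose core is disjoint from all of theirs. The paper explicitly remarks (after Theorem~\ref{theorem: improved flag complex theorem}) that this condition~(iii) \emph{cannot} be verified for the flag complex at hand. The reason is that the disjunction input, Proposition~\ref{proposition: inductive disjunction}, proceeds by induction: one deletes the images $g_{1}(P_{1}),\dots,g_{k-1}(P_{k-1})$ and then pushes $g_{0}(P_{0})$ off the remainder of $g_{k}(P_{k})$ inside the complement. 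For the complement and the truncated $P_{k}$ to retain the connectivity required by the Whitney moves, the $g_{i}$ must be pairwise \emph{transverse} (hypothesis~(c) there). If the given $y_{j}$ are not in general position---for instance if two of them coincide---the induction breaks, and there is no evident way to produce the new orthogonal vertex. Your appeal to a ``parametric version'' giving higher connectivity of $\mb{Y}_{0}(x)$ does not help here and is not what the paper does.

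The paper's remedy is a strengthened flag-complex criterion, Theorem~\ref{theorem: improved flag complex theorem}, in which condition~(iii) is replaced by a weaker condition~(iii)${}^{*}$: one only needs to complete collections $\{v_{1},\dots,v_{m}\}$ that lie in general position with respect to an open, dense, symmetric relation $\mathcal{T}\subset X_{0}\times_{X_{-1}}X_{0}$ (the transversality relation of Definition~\ref{defn: transversality relation}), together with an arbitrary side-collection $\{w_{1},\dots,w_{k}\}$ already orthogonal to the $v_{i}$. Proving this abstract criterion requires an auxiliary bi-semi-simplicial construction and is itself part of the work. Condition~(iii)${}^{*}$ is then exactly what Proposition~\ref{proposition: inductive disjunction} and Lemma~\ref{proposition: disjunction condition iii} deliver. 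Two further points: first, the Whitney move takes place not in the $2n$-dimensional slice $M_{i}$ but in the $(2n+1)$-dimensional cobordism $W|_{[a_{l},a_{l+1}]}$, where the cores $\partial_{1}C$ are $(n+1)$-dimensional triads and their one-dimensional intersections can be pushed to $\partial_{0}W$ via the half- and higher half-Whitney tricks of Appendix~\ref{section: A generalization of the Whitney trick} without algebraic obstruction; working in $M_{i}$ as you suggest would face intersection-number obstructions. Second, the paper first passes to a flexible model $\widetilde{\mb{D}}^{\partial,\mathcal{H}}_{\bullet,\bullet}$ (Definition~\ref{defn: flexible verison}, Lemma~\ref{lemma: flexible version equivalence}) in which $e$ is only required to be an embedding near the core; this is what makes it possible to re-extend to a valid $0$-simplex after isotoping the core alone.
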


\begin{remark}
The above theorem, which is proven in Section \ref{section: contractibility of the spaces of surgery data}, is the only place that uses the condition $d = 2n+1 \geq 9$.
\end{remark}

\subsection{Implementation of the surgery move}
In this section we show how to implement the surgery move that was constructed in Section \ref{subsection: surgery move on objects}.
For $p \in \Z_{\geq 0}$, we will need to work with the space 
$\mb{D}^{\partial, \mathcal{H}}_{p, 0}$.
\begin{Construction} \label{constriction: implementation of surgery move}
For what follows let 
$$x = (a, \varepsilon, (W, \ell_{W})) \in \mb{D}^{\partial, \mathcal{H}}_{p} \quad \text{and}
\quad y = (\Lambda, \delta, e, \ell) \in \mb{Y}_{0}(x).$$
In this way $(x, y) \in \mb{D}^{\partial, \mathcal{H}}_{p, 0}$.
For each $i = 0, \dots, p$, we have an embedding 
$e_{i}$ and a $\theta$-structure $\ell_{i}$ on $\Lambda_{i}\times K|_{(-6, -2)}$, where $\Lambda_{i} = \delta^{-1}(i)$.  
From this data we may construct a one-parameter family of $\theta$-manifolds 
\begin{equation} \label{defn: glued in surgery move}
\mathcal{K}^{t}_{e_{i}, \ell_{i}}(W, \ell_{W}) \in \bPsi^{\partial}_{\theta}((a_{0}-\varepsilon_{0}, a_{p}+\varepsilon_{p})\times\R_{+}^{\infty})
\end{equation}
as follows.
First, using the reparametrisation (\ref{equation: reparametrisation}) we may reparametrise the first coordinate of the family $\Lambda_{i}\times\mathcal{P}_{t}(\ell_{i})$ to obtain a new family 
$$\Lambda_{i}\times\bar{\mathcal{P}_{t}}(\ell_{i}) \in \bPsi_{d}^{\partial}(\Lambda_{i}\times (a_{i}-\varepsilon_{i}, a_{p}+\varepsilon_{p})\times\R^{n}\times\R^{n+1}_{+}).$$
This one-parameter family has all of the same properties as in Proposition \ref{proposition: basic properties of object surgery move}, except now properties (iv) and (v) apply to the interval $(a_{i}-\tfrac{1}{2}\varepsilon_{i}, a_{i}+ \tfrac{1}{2}\varepsilon_{i})$ (recall that the reparametrisation (\ref{equation: reparametrisation}) sends $(-4, -3)$ to $(a_{i}-\tfrac{1}{2}\varepsilon_{i}, a_{i}+ \tfrac{1}{2}\varepsilon_{i})$).
We set $\mathcal{K}^{t}_{e_{i}, \ell_{i}}(W, \ell_{W})$ equal to $W|_{(a_{0}-\varepsilon_{0}, a_{p}+\varepsilon_{p})}$ outside of the image of $e_{i}$, and we set it equal to equal to $e_{i}(\Lambda_{i}\times\bar{\mathcal{P}_{t}}(\ell_{i}))$ on the set
$$
\textcolor{black}{e_{i}(\Lambda_{i}\times\{0\}\times(a_{i}-\varepsilon_{i}, a_{p}+\varepsilon_{p})\times\R^{n}\times D^{n+1}_{+}).}
$$
\end{Construction}
We make the following observation about the family $\mathcal{K}^{t}_{e_{i}, \ell_{i}}(W, \ell_{W})$, which follows directly from Proposition \ref{proposition: basic properties of object surgery move} (v) and (vi). 
\begin{proposition} \label{proposition: observation 1 about glued in move}
Let $(a, \varepsilon, (W, \ell_{W}), e, \ell) \in \mb{D}^{\partial, \mathcal{H}}_{p, 0}$. 
For $i = 0, \dots, p$, let $W_{t}$ denote the family $\mathcal{K}^{t}_{e_{i}, \ell_{i}}(W, \ell_{W})$.
The pair $(W_{t}, \partial W_{t})$ has the following properties:
\begin{enumerate} \itemsep.2cm
\item[(i)] 
Let $j \neq i$, and let $b \in (a_{j}-\tfrac{1}{2}\varepsilon_{j}, a_{j}+\tfrac{1}{2}\varepsilon_{j})$ be a regular value for $x_{0}: W_{t} \longrightarrow \R$.
Then $b$ is a regular value for $x_{0}: W \longrightarrow \R$ as well and 
$(W_{t}|_{b}, (\partial W_{t})|_{b})$ is diffeomorphic to $(W|_{b}, (\partial W)|_{b})$. 
\item[(ii)] 
If $c \in (a_{i}-\tfrac{1}{2}\varepsilon_{i}, a_{i}+\tfrac{1}{2}\varepsilon_{i})$ is a regular value for the height function $x_{0}: W_{t} \longrightarrow \R$, then $(W_{t}|_{c}, (\partial W_{t})|_{c})$ is either diffeomorphic to $(W|_{c}, (\partial W)|_{c})$ or to the disk $(D^{2n}, \partial D^{2n})$. 
Furthermore if $c$ is regular value for $x_{0}: W_{1} \longrightarrow \R$, then $(W_{1}|_{c}, (\partial W_{1})|_{c})$ is diffeomorphic to the disk $(D^{2n}, \partial D^{2n})$. 
\end{enumerate}
\end{proposition}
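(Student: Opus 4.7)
The plan is to deduce both claims by translating the properties of the local family $\mathcal{P}_t$ recorded in Proposition \ref{proposition: basic properties of object surgery move} back through the embedding $e_i$ and the reparametrization $\psi$, and combining these with the conditions imposed on $(\Lambda, \delta, e, \ell) \in \mb{Y}_0(x)$ in Definition \ref{defn: objects surgery data}. By Construction \ref{constriction: implementation of surgery move} the modified manifold $W_t$ coincides with $W$ outside the image of $e_i$, and inside that image the slice structure is governed entirely by the reparametrized family $\bar{\mathcal{P}}_t$. So throughout the argument I will analyze slices of $\bar{\mathcal{P}}_t$ at a given height and then glue back with the unchanged complement.

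For part (i), the simplex condition $a_i + \varepsilon_i < a_{i+1} - \varepsilon_{i+1}$ forces any $b \in (a_j - \tfrac{1}{2}\varepsilon_j, a_j + \tfrac{1}{2}\varepsilon_j)$ with $j \neq i$ to lie outside the reparametrized critical interval $\psi(-4, -3) = (a_i - \tfrac{1}{2}\varepsilon_i, a_i + \tfrac{1}{2}\varepsilon_i)$. I will apply property (iv) of Proposition \ref{proposition: basic properties of object surgery move} to obtain that $(\bar{\mathcal{P}}_t|_b, (\partial\bar{\mathcal{P}}_t)|_b) \cong (\bar{\mathcal{P}}_0|_b, (\partial\bar{\mathcal{P}}_0)|_b)$; moreover, since all critical points of $h$ on $\mathcal{P}_t$ lie inside $(-4, -3)$, the regular values of $\bar{\mathcal{P}}_t$'s height function outside this interval coincide with those of $\bar{\mathcal{P}}_0$. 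Under $e_i$ the initial slice $\bar{\mathcal{P}}_0|_b$ identifies with the portion of $(W|_b, (\partial W)|_b)$ lying in the image of $e_i$, so gluing with the unaltered complement simultaneously shows that $b$ is regular for $x_0 \colon W \to \R$ and that $(W_t|_b, (\partial W_t)|_b) \cong (W|_b, (\partial W)|_b)$.

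For part (ii), fix a regular value $c$ of $x_0 \colon W_t \to \R$ inside $(a_i - \tfrac{1}{2}\varepsilon_i, a_i + \tfrac{1}{2}\varepsilon_i)$. By property (iv) of Proposition \ref{proposition: basic properties of object surgery move}, one of two cases holds for the slice of $\bar{\mathcal{P}}_t$ at the corresponding $\psi^{-1}(c) \in (-4, -3)$: either it is diffeomorphic to the initial slice, or it is obtained from the initial slice by handle subtraction of index $n$ along the standard embedding. Transferring via $e_i$ and using the height-preserving condition (ii) of Definition \ref{defn: objects surgery data} on the interval $(a_i - \varepsilon_i, a_i + \varepsilon_i)$, the standard embedding at height $c$ is isotopic to the handle-subtraction embedding (\ref{equation: induced slice embedding}) at height $a_i$; hence the two cases become $(W_t|_c, (\partial W_t)|_c) \cong (W|_c, (\partial W)|_c)$, or $(W_t|_c, (\partial W_t)|_c) \cong (M'_i, \partial M'_i)$. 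Condition (iv) of Definition \ref{defn: objects surgery data} then identifies $M'_i$ with $D^{2n}$. For the ``furthermore'' assertion, I will invoke property (v) of Proposition \ref{proposition: basic properties of object surgery move}: at $t = 1$ the critical values of $h$ on $\mathcal{P}_1$ are exactly $-4$ and $-3$, and every slice over $(-4, -3)$ is obtained from the initial slice by precisely that handle subtraction; hence only the second case can occur when $t = 1$.

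I do not expect any serious obstacle; the whole argument is essentially bookkeeping. The only subtle point is ensuring that the ``standard embedding'' appearing in the local statement about $\mathcal{P}_t$ matches the surgery embedding of Definition \ref{defn: objects surgery data} at every level $c$ in the critical interval, and this is exactly what the height-preserving clause (ii) of that definition provides.
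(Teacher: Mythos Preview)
Your proposal is correct and follows essentially the same approach as the paper: the paper simply records that the proposition ``follows directly from Proposition \ref{proposition: basic properties of object surgery move}'' (citing items (iv) and (v), modulo an apparent misnumbering in the text), and you have spelled out exactly how those properties, together with the conditions in Definition \ref{defn: objects surgery data}, yield the two claims. The only place where you could be marginally more careful is the case $j < i$ in part (i), where $b$ lies entirely outside the domain $(a_i - \varepsilon_i, a_p + \varepsilon_p)$ of the reparametrized embedding $e_i$ and hence outside the region modified by the surgery move; but this is a trivial subcase and your conclusion is unaffected.
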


We will now need to iterate the construction from (\ref{defn: glued in surgery move}). 
For what follows let  
$$x = (a, \varepsilon, (W, \ell_{W})) \in \mb{D}^{\partial, \mathcal{H}}_{p} \quad \text{and} \quad
y = (\Lambda, \delta, e, \ell) \in \mb{Y}_{0}(x).$$
For each tuple $t = (t_{0}, \dots t_{p}) \in [0, 1]^{p+1}$, we form the element 
\begin{equation} \label{equation: iterated surgery move}
\mathcal{K}^{t}_{e, \ell}(W, \ell_{W}) = \mathcal{K}^{t_{p}}_{e_{p}, \ell_{p}}\circ\cdots\circ\mathcal{K}^{t_{0}}_{e_{0}, \ell_{0}}(W, \ell_{W}) \; \in \; \bPsi^{\partial}_{\theta}((a_{0}-\varepsilon_{0}, a_{p}+\varepsilon_{p})\times\R_{+}^{\infty})
\end{equation}
by iterating Construction \ref{constriction: implementation of surgery move}. 
Since the embeddings $e_{1}, \dots, e_{p}$ are pairwise disjoint, the above formula does indeed make sense. 
It also follows that one could permute the order of the above composition and still obtain the same resulting family of manifolds.
We will need the following lemma.
\begin{lemma} \label{lemma: surgery implimentation}
The tuple $(a, \tfrac{1}{2}\varepsilon, \mathcal{K}^{t}_{e, \ell}(W, \ell_{W}))$ is an element of $\mb{X}^{\partial, \mb{c}'}_{p}$. 
If $t_{i} = 1$, then for each regular value $b \in (a_{i} - \tfrac{1}{2}\varepsilon_{i}, a_{i}+\tfrac{1}{2}\varepsilon_{i})$ of $x_{0}: \mathcal{K}^{t}_{e, \ell}(W, \ell_{W}) \longrightarrow \R$, the manifold $\mathcal{K}^{t}_{e, \ell}(W, \ell_{W})|_{b}$ is diffeomorphic to the $2n$-dimensional disk $D^{2n}$. 
\end{lemma}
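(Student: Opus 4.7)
The plan is to verify the defining conditions of $\mb{X}^{\partial, \mb{c}'}_{p}$ piece by piece, exploiting that the modification $\mathcal{K}^{t}_{e, \ell}$ is supported inside the image of the embedding $e$ and, by property (ii) of Proposition \ref{proposition: basic properties of object surgery move}, is further concentrated at heights lying in the intervals $(a_{i} - \tfrac{1}{2}\varepsilon_{i}, a_{i} + \tfrac{1}{2}\varepsilon_{i})$. Once this locality is used to reduce each condition to a statement either about the unchanged manifold $W$ or about the model family $\mathcal{P}_{t}$, the final clause about $t_{i} = 1$ will fall out by combining Proposition \ref{proposition: basic properties of object surgery move}(v) with condition (iv) of Definition \ref{defn: objects surgery data}.

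First I will check the ambient hypotheses of Definition \ref{defn: more flexible model}: the tuple $(a, \tfrac{1}{2}\varepsilon)$ still satisfies the separation $a_{i}+\tfrac{1}{2}\varepsilon_{i} < a_{i+1}-\tfrac{1}{2}\varepsilon_{i+1}$ because the original $\varepsilon$ did, and the reparametrized family $\bar{\mathcal{P}}_{t}(\ell_{i})$ remains confined to $[0,1) \times (-1,1)^{\infty-1}$ since $\mathcal{P}_{t}$ was. The $D$-condition is immediate because the fixed disk $D$ sits in $(-\tfrac{1}{2}, 0]\times(-1,1)^{\infty-2}$ whereas the image of $e$ is contained in $\R\times[0,1)\times(-1,1)^{\infty-1}$, so $\R\times D$ is untouched and the tangential data inherited from $W$ survives verbatim.

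The substance of the argument is the verification of the $\mb{b}$-, $\mb{c}$-, and slice-connectivity conditions, which I will split according to the location of regular values $b < c$. By Proposition \ref{proposition: observation 1 about glued in move}(i)--(ii) each regular slice of $\mathcal{K}^{t}_{e,\ell}(W,\ell_{W})$ lying over $(a_{j}-\tfrac{1}{2}\varepsilon_{j}, a_{j}+\tfrac{1}{2}\varepsilon_{j})$ is diffeomorphic either to the original $M_{j}$, to $D^{2n}$, or to an intermediate manifold obtained from $M_{j}$ by a partial set of index-$n$ handle subtractions; in every case the slice remains $(n-1)$-connected and its boundary $(n-2)$-connected, since index-$n$ handle subtraction on a $(n-1)$-connected $2n$-manifold does not destroy connectivity below the middle dimension. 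For the triad condition, the slab $\mathcal{K}^{t}_{e,\ell}(W,\ell_W)|_{[b,c]}$ decomposes as the original slab $W|_{[b,c]}$ glued along a $\Lambda$-indexed disjoint union of cylinders with the reparametrized copies of $\mathcal{P}_{t}|_{[b,c]}$. Since the original triad $(W|_{[b,c]}; (\partial W)|_{[b,c]}, W|_{c})$ is $n$-connected (as $x \in \mb{D}^{\partial,\mb{c}'}_{p}$) and each model triad $(\mathcal{P}_{t}|_{[b,c]}; (\partial \mathcal{P}_{t})|_{[b,c]}, \mathcal{P}_{t}|_{c})$ is $n$-connected by Proposition \ref{proposition: basic properties of object surgery move}(iii), a gluing argument along the cylindrical interfaces yields $n$-connectivity of the glued triad, and the analogous statement on boundaries gives $(n-1)$-connectivity of $((\partial W_{t})|_{[b,c]}, (\partial W_{t})|_{c})$.

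The final clause is the payoff: when $t_{i} = 1$, Proposition \ref{proposition: basic properties of object surgery move}(v) implies that every regular level set of $\bar{\mathcal{P}}_{1}$ over $(a_{i}-\tfrac{1}{2}\varepsilon_{i}, a_{i}+\tfrac{1}{2}\varepsilon_{i})$ is obtained from the standard model level set by index-$n$ handle subtraction along the standard embedding, which through $e_{i}$ corresponds precisely to the $\Lambda_{i}$-indexed family of handle subtractions producing the manifold $M'_{i}$ in Definition \ref{defn: objects surgery data}. Condition (iv) of that definition then forces this level set to be diffeomorphic to $D^{2n}$. The main technical obstacle I anticipate is the triad-gluing step: one must use the pairwise disjointness of the embeddings $e_{i}$ (built into Definition \ref{defn: second simplicial coordinate n-1}) and the cylindrical structure outside the critical regions to guarantee that the glued Mayer--Vietoris/van Kampen estimates assemble as expected, and in particular do not interfere across the different indices in $\Lambda$ or across different simplicial coordinates $i \in [p]$.
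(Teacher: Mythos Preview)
Your overall outline is right and matches the paper's strategy, but there is a genuine gap in the triad step, and it is exactly where you flag the ``main technical obstacle.''

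First, your decomposition is not quite correct: $\mathcal{K}^{t}_{e,\ell}(W,\ell_W)|_{[b,c]}$ is \emph{not} the original slab $W|_{[b,c]}$ glued with copies of $\mathcal{P}_{t}|_{[b,c]}$. The common subspace is $X := W|_{[b,c]}\setminus e(\Lambda\times\mathcal{P}_{0}|_{[b,c]})$; the modified slab is $X$ glued with copies of $\mathcal{P}_{t}|_{[b,c]}$ along the cylindrical interface. So to run any Mayer--Vietoris or pushout argument you need connectivity information about the triad $(X;\partial X, X|_{c})$, not about the original triad $(W|_{[b,c]}; (\partial W)|_{[b,c]}, W|_{c})$.

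Here is the real issue: the cylinders you delete to form $X$ are copies of $(D^{n}\times I, \partial D^{n}\times I)$, which have codimension $n$ in the $(2n+1)$-dimensional slab. General position therefore only tells you that $\pi_{i}(X|_{c},(\partial X)|_{c}) \to \pi_{i}(X,\partial X)$ agrees with $\pi_{i}(W|_{c},(\partial W)|_{c}) \to \pi_{i}(W|_{[b,c]},(\partial W)|_{[b,c]})$ for $i \leq n-1$; you lose exactly the critical degree $i = n$. The paper supplies this missing degree via Proposition \ref{proposition: cylinder deletion connectivity}, whose proof is a nontrivial application of the half Whitney trick and Hudson's embedding theorem (Theorems \ref{theorem: half whitney trick}, \ref{theorem: higher half whitney trick}, \ref{theorem: hudsen embedding theorem} from the appendix). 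With that proposition in hand the paper then assembles two explicit homotopy pushout squares of pairs and reads off the $n$-connectivity. Your appeal to ``Mayer--Vietoris/van Kampen estimates'' does not account for this, and without it the argument only yields $(n-1)$-connectivity of the triad, which falls short of $\mb{X}^{\partial,\mb{c}'}_{p}$.

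A smaller point: your claim that index-$n$ handle subtraction on an $(n-1)$-connected $2n$-manifold preserves $(n-1)$-connectivity is not automatic (codimension is $n$, so general position gives only $(n-2)$-connectivity). You do not actually need this here, since Proposition \ref{proposition: observation 1 about glued in move} already says each regular slice is diffeomorphic either to the original $M_{j}$ or to $D^{2n}$, with no intermediate stages; the modifications indexed by $\Lambda_{j}$ for $j\neq i$ do not affect slices over $(a_{i}-\tfrac{1}{2}\varepsilon_{i}, a_{i}+\tfrac{1}{2}\varepsilon_{i})$.
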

Proving Lemma \ref{lemma: surgery implimentation} will require a
preliminary proposition.  For what follows, let $((W, \partial_{1}W), M, N)$ be a $(2n+1)$-dimensional
relative cobordism triple.  Let
\begin{equation} \label{equation: fixed cylinder}
\varphi: (D^{n}\times I, \; \partial D^{n}\times I) \; \longrightarrow \; (W, \; \partial_{1}W)
\end{equation}
be an embedding with 
$$\varphi(D^{n}\times\{0\}) \subset M \quad \text{and} \quad \varphi(D^{n}\times\{1\}) \subset N.$$
Let 
$(W'; (\partial_{1}W)', N')$ be the triad obtained by setting 
$$
W' = W\setminus\varphi(D^{n}\times I),  \quad (\partial_{1}W)' = \partial_{1}W\setminus\varphi(\partial D^{n}\times I), \quad
N' = N\setminus\varphi(D^{n}\times\{1\}).
$$
 \begin{proposition} \label{proposition: cylinder deletion connectivity}
Let $2n+1 \geq 9$ and let $((W, \partial_{1}W), M, N)$ and the
embedding $\varphi$ be exactly as above.  Suppose that
$(\partial_{1}W, \partial N)$ is $1$-connected and that $(W, N)$ is
$2$-connected.  Then the map $ \pi_{i}(W'; (\partial_{1}W)', N')
\longrightarrow \pi_{i}(W; \partial_{1}W, N) $ is an isomorphism for
$i \leq n$.
\end{proposition}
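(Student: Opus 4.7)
By the long exact sequences of the two triads and the five-lemma, it suffices to prove that the inclusion-induced maps
$$\pi_i(W', N') \to \pi_i(W, N) \quad \text{and} \quad \pi_i((\partial_1 W)', \partial N') \to \pi_i(\partial_1 W, \partial N)$$
are isomorphisms for $i \leq n$. The second is the analogue of the first applied to $(\partial_1 W, \partial N)$ (one dimension lower) with the cylinder $C := \varphi(D^n \times I)$ replaced by $\varphi(\partial D^n \times I)$; the hypothesis $(W, N)$ $2$-connected enters the first sublemma while $(\partial_1 W, \partial N)$ $1$-connected enters the second, so I describe only the first.

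The main tool is general position. Given $f : (D^i, \partial D^i) \to (W, N)$ with $i \leq n$, I put $f$ in position transverse to $C$ and to the face $\varphi(D^n \times \{1\}) \subset N$. The generic dimension counts are $\dim f^{-1}(C) = i - n$ and $\dim (f|_{\partial D^i})^{-1}(\varphi(D^n \times \{1\})) = i - n - 1$. For $i \leq n - 1$ both intersection loci are empty and $f$ already takes values in $(W', N')$, giving surjectivity at this level automatically. Similarly, for a relative homotopy $F : D^i \times I \to W$ the set $F^{-1}(C)$ has generic dimension $i + 1 - n$, yielding injectivity for $i \leq n - 2$ by transversality alone.

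The delicate cases are surjectivity at $i = n$ and injectivity at $i = n - 1, n$. For surjectivity at $i = n$, the finite set $f^{-1}(C)$ of isolated transverse interior intersection points is eliminated by a piping argument: each intersection $f(p) = \varphi(x, t)$ is removed by a modification of $f$ supported on a tubular neighborhood of an arc in $D^n$ from $p$ to a point of $\partial D^n$, whose image under the modified map first traces the arc $s \mapsto \varphi(x, s) \subset C$ from $s = t$ to $s = 1$, reaching $\varphi(x, 1) \in \varphi(D^n \times \{1\}) \subset N$, and then exits $C$ along a normal direction of $\varphi(D^n \times \{1\})$ in $N$ into $N'$. Embeddedness of the pipe requires the ambient dimension $2n + 1 \geq 9$ and the $2$-connectivity of $(W, N)$. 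For injectivity at $i = n$, the $1$-dimensional locus $F^{-1}(C)$ is a union of arcs (handled by analogous piping along the homotopy parameter) and circles (eliminated by a Whitney trick, available since $2n + 1 \geq 5$ and the connectivity hypotheses allow the required Whitney disks to be embedded). The main obstacle is executing these piping and Whitney moves precisely at the critical dimension $i = n$, where standard transversality no longer suffices; the two connectivity hypotheses and the dimensional restriction $2n + 1 \geq 9$ are precisely what is needed to carry them out.
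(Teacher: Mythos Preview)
Your reduction via the five-lemma to the two pair-level statements matches the paper's opening move exactly, and your treatment of the range $i \leq n-1$ by pure general position is fine. The divergence is in how the critical cases $i = n-1, n$ are handled.

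The paper does \emph{not} work with transverse maps and piping. Instead, for surjectivity of $\pi_n(W',N') \to \pi_n(W,N)$ it first makes $f:(D^n,\partial D^n)\to(W,N)$ an \emph{embedding} (possible since $n < \dim(W)/2$), and then applies the half-Whitney trick (Theorem~\ref{theorem: half whitney trick}) to isotope $f(D^n)$ off the embedded cylinder $\varphi(D^n\times I)$. For injectivity it takes a null-homotopy $F:(D^{n+1}_+,\partial_0 D^{n+1}_+)\to(W,N)$, uses Hudson's embedding theorem (Theorem~\ref{theorem: hudsen embedding theorem}) together with the $2$-connectivity of $(W,N)$ to replace $F$ by an embedding rel $\partial_1 D^{n+1}_+$, and then applies the higher half-Whitney trick (Theorem~\ref{theorem: higher half whitney trick}) to two embedded $(n+1)$-manifolds in a $(2n+1)$-manifold. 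The boundary case is handled analogously via Theorem~\ref{theorem: hudsen embedding theorem}.

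Your surjectivity argument via piping along $s\mapsto\varphi(x,s)$ is in the spirit of Wall's $\pi$--$\pi$ argument and can be made to work. The gap is in your injectivity argument at $i=n$: you assert that circle components of $F^{-1}(C)$ are ``eliminated by a Whitney trick,'' but the Whitney trick cancels pairs of transverse intersection \emph{points} between two \emph{embedded} half-dimensional submanifolds. Here $F$ is only a map, and a circle in $F^{-1}(C)$ is not a pair of intersection points of anything. To make sense of this you would first need to promote $F$ to an embedding---which is exactly the Hudson step the paper carries out, and which is where the hypothesis that $(W,N)$ is $2$-connected is genuinely used. Without that step (or an equally precise substitute, e.g.\ a Wells-type modification-of-intersections argument spelled out in detail), your injectivity claim at $i=n$ is not established.
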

\begin{proof}
Let $(\partial N)'$ equal the intersection $N'\cap\partial_{1}W$. 
We will prove that 
$\pi_{i}(W', N') \longrightarrow \pi_{i}(W, N)$ 
is an isomorphism for $i \leq n$ and that 
$\pi_{i}((\partial_{1}W)', (\partial N)') \longrightarrow \pi_{i}(\partial_{1}W, \partial N)$
is an isomorphism for $i \leq n-1$ and epimorphism for $i = n$. 
Using the five-lemma, 
these two facts together imply the statement of the proposition. 

Let us first focus on the map $\pi_{i}(W', N') \longrightarrow
\pi_{i}(W, N)$.  We will show that the homomorphism $\pi_{n}(W', N')
\longrightarrow \pi_{n}(W, N)$ is an isomorphism.  The proof for other
values of $i \leq n$ is similar (and actually easier).  To prove
surjectivity let $f: (D^{n}, \partial D^{n}) \longrightarrow (W, N)$
represent an element of $\pi_{n}(W, N)$.  Since $n < \dim(W)/2$, by
putting the map into general position we may assume that $f$ is an
embedding.  By applying Theorem \ref{theorem: half whitney trick}, we
may construct an isotopy $f_{t}: (D^{n}, \partial D^{n})
\longrightarrow (W, N)$ with $f_{0} = f$ such that
$f_{1}(D^{n})\cap\varphi(D^{n}\times[0,1]) = \emptyset$ and thus,
$f_{1}$ represents an element of $\pi_{n}(W', N')$.  This proves that
the map $\pi_{n}(W', N') \longrightarrow \pi_{n}(W, N)$ is surjective.

We now prove that $\pi_{n}(W', N') \longrightarrow \pi_{n}(W, N)$ is injective. 
Let 
$$
F: (D^{n+1}_{+}, \partial_{0}D^{n+1}_{+}) \; \longrightarrow \; (W, N)
$$
be a map with $F(\partial_{1}D^{n+1}_{+})\cap\varphi(D^{n}\times[0,1]) = \emptyset$.
It will suffice to construct a homotopy 
$$F_{t}: (D^{n+1}_{+}, \partial_{0}D^{n+1}_{+}) \; \longrightarrow \; (W, N),$$ 
fixed on $(\partial_{1}D^{n+1}_{+}, \partial_{0,1}D^{n+1}_{+})$, with $F_{0} = F$ such that $F_{1}(D^{n+1}_{+})\cap\varphi(D^{n}\times[0,1]) = \emptyset$. 
By general position we may first assume that the restriction $F|_{\partial_{1}D^{n+1}_{+}}:  (\partial_{1}D^{n+1}_{+}, \partial_{0,1}D^{n+1}_{+}) \longrightarrow (W', N')$ is an embedding and then by applying \cite[Theorem 1]{H 69}, we deform $F$, through a deformation that fixes $\partial_{1}D^{n+1}_{+}$, so as to make $F$ into an embedding. 
Since the pair $(W, N)$ is $2$-connected, and $(D^{n+1}_{+}, \partial_{0}D^{n+1}_{+})$ and $(D^{n}\times[0,1], D^{n}\times\{1\})$ are both $1$-connected, 
we may apply Theorem \ref{theorem: higher half whitney trick} to obtain an isotopy 
$$F_{t}: (D^{n+1}_{+}, \partial_{0}D^{n+1}_{+}) \; \longrightarrow \; (W, N),$$ 
that is constant on $(\partial_{1}D^{n+1}_{+}, \partial_{0,1}D^{n+1}_{+})$, with $F_{0} = F$, such that $F_{1}(D^{n+1}_{+})\cap\varphi(D^{n}\times[0,1]) = \emptyset$.
This finishes the proof that $\pi_{n}(W', N') \longrightarrow \pi_{n}(W, N)$ is an isomorphism. 

We now prove that $\pi_{i}((\partial_{1}W)', (\partial N)') \longrightarrow \pi_{i}(\partial_{1}W, \partial N)$ is an isomorphism for $i \leq n-1$ and epimorphism for $i = n$. 
In the case that $i < n-1$ the proof follows by general position. 
Similarly, the proof that 
$$\pi_{n-1}((\partial_{1}W)', (\partial N)') \longrightarrow \pi_{n-1}(\partial_{1}W, \partial N)$$ 
is surjective follows by general position as well. 
To prove that this map is injective, let 
$$H: (D^{n}_{+}, \partial_{0}D^{n}_{+}) \longrightarrow (\partial_{1}W, \partial N)$$ 
be a map with 
$$\left(H(\partial_{1}D^{n}_{+}), \; H(\partial_{0,1}D^{n}_{+})\right)
\; \subset \; ((\partial_{1}W)', \; (\partial N)').$$ As above it will
suffice to construct a homotopy $H_{t}: (D^{n}_{+},
\partial_{0}D^{n}_{+}) \longrightarrow (\partial_{1}W, \partial N)$,
which is fixed on $(\partial_{1}D^{n}_{+},
\partial_{0,1}D^{n}_{+})$ and such that
$H(D^{n}_{+})\cap\varphi(\partial D^{n}\times[0,1]) = \emptyset$.  The
existence of such a homotopy follows from Theorem \ref{theorem: hudsen embedding theorem}.  
  This proves that $\pi_{n-1}((\partial_{1}W)', (\partial N)') \longrightarrow \pi_{n-1}(\partial_{1}W, \partial N)$
is an isomorphism. 
The proof that $\pi_{n}((\partial_{1}W)', (\partial N)') \longrightarrow \pi_{n}(\partial_{1}W, \partial N)$ is surjective follows by a similar argument applied to a map 
$(D^{n}, \partial D^{n-1}) \longrightarrow (\partial_{1}W, \partial N)$. 
One can use Theorem \ref{theorem: hudsen embedding theorem} again to deform any such map so that it factors through $((\partial_{1}W)', (\partial N)') \hookrightarrow (\partial_{1}W, \partial N)$.
This concludes the proof of the proposition. 
\end{proof}
\begin{proof}[Proof of Lemma \ref{lemma: surgery implimentation}]
Let us denote $W_{t}:=\mathcal{K}^{t}_{e, \ell}(W, \ell_{W})$.
  We will first verify that for any two regular values $a < b \in
  \bigcup_{i=0}^{p}(a_{i}-\tfrac{1}{2}\varepsilon_{i},
  a_{i}+\tfrac{1}{2}\varepsilon_{i})$ of the height function $x_{0}:
  W_{t} \longrightarrow \R$, the triad \ \ $ \left(W_{t}|_{[a, b]}; \;
  (\partial W_{t})|_{[a, b]}, \; W_{t}|_{b}\right) $ \ \ is
  $n$-connected and that the pair \ $ \left((\partial W_{t})|_{[a,
      b]}, \; (\partial W_{t})|_{b}\right) $ \ is $(n-1)$-connected,
  and thus $W_{t} \in \mb{X}^{\partial, \mb{c}'}_{p}$
  for all $t \in [0,1]$.  We will focus on proving that the triad
  $\left(W_{t}|_{[a, b]}; \; (\partial W_{t})|_{[a, b]}, \;
  W_{t}|_{b}\right)$ is $n$-connected. 
  A proof that the pair $\left((\partial W_{t})|_{[a, b]}, \; (\partial W_{t})|_{b}\right)$
  is $(n-1)$-connected is similar.
  
  Using $\psi$ from (\ref{equation:
  reparametrisation}), we may consider the interval
$(a_{i}-\tfrac{1}{2}\varepsilon_{i},
a_{p}+\tfrac{1}{2}\varepsilon_{p})$ to be the range of the height
function on $\mathcal{P}_{t}$.  In this way we may write
$\mathcal{P}_{t}|_{[a, b]}$.  For each $t$, the pair $(W_{t}|_{[a, b]}, \; (\partial W)_{t}|_{[a, b]})$ is obtained from $(W|_{[a, b]}, \; W|_{b})$ by removing embedded copies of
$(\mathcal{P}_{0}|_{[a, b]}, \; (\partial\mathcal{P}_{0})|_{[a, b]})$
and gluing in copies of $(\mathcal{P}_{t}|_{[a, b]}, \; (\partial\mathcal{P}_{t})|_{[a, b]})$.  
Let $(X, \partial X)$ denote
the pair obtained from $(W|_{[a, b]}, \; (\partial W)|_{[a, b]})$ by deleting the copies of $(\mathcal{P}_{0}|_{[a, b]}, \; (\partial\mathcal{P}_{0})|_{[a, b]})$.  
Similarly we denote by
$(X|_{b}, \; (\partial X)|_{b})$ the pair obtained from $(W|_{b},
(\partial W)|_{b})$ by removing $(\mathcal{P}_{0}|_{b}, \;
(\partial\mathcal{P}_{0})|_{b})$.  
We have pushout squares (of pairs of spaces),
\begin{equation} \label{equation: pushout square 1b}
\xymatrix{ \textcolor{black}{\left(X|_{b}, \; (\partial X)|_{b}\right)}
  \ar[rr] \ar[d] && \textcolor{black}{\left(W_{t}|_{b}, \; (\partial
    W)|_{b}\right)} \ar[d] \\ \textcolor{black}{\left(X, \; \partial
    X\right)} \ar[rr] && \textcolor{black}{\left(W_{t}|_{b}\cup X, \;
    (\partial W_{t})|_{b}\cup\partial X\right)}, }
\end{equation}
and 
\begin{equation} \label{equation: pushout square 2b}
\xymatrix{
  \textcolor{black}{\left(\coprod_{\lambda \in \Lambda}\mathcal{P}_{t}|_{b_{\lambda}}, \; \coprod_{\lambda \in \Lambda}(\partial\mathcal{P}_{t})|_{b_{\lambda}}\right)} \ar[d] \ar[rr] && \textcolor{black}{\left(W_{t}|_{b}\cup X, \;
    (\partial W_{t})|_{b}\cup\partial X\right)}  \ar[d] \\
  \textcolor{black}{\left(\coprod_{\lambda \in \Lambda}\mathcal{P}_{t}|_{[a_{\lambda}, b_{\lambda}]}, \;
    \coprod_{\lambda \in \Lambda}(\partial\mathcal{P}_{t})|_{[a_{\lambda}, b_{\lambda}]}\right)}\ar[rr] && \textcolor{black}{\left(W_{t}|_{[a, b]}, \; (\partial W_{t})|_{[a, b]}\right)}.
}
\end{equation}
By Proposition \ref{proposition: cylinder deletion connectivity} implies that the left-vertical map of (\ref{equation: pushout square 1b}) is $n$-connected. 
Since the diagram is a homotopy-pushout (of space pairs) it follows that the right-vertical map 
\begin{equation} \label{equation: vertical map 1}
\textcolor{black}{\left(W_{t}|_{b}, \; (\partial W)|_{b}\right)} \; \longrightarrow \; \left(W_{t}|_{b}\cup X, \; (\partial W_{t})|_{b}\cup\partial X\right)
\end{equation}
is $n$-connected as well. 
Proposition \ref{proposition: basic properties of object surgery move} (condition (iii)) implies that the left-vertical map of (\ref{equation: pushout square 2b}) is $n$-connected. 
Since the diagram is a homotopy-pushout it follows that the right-vertical map 
\begin{equation} \label{equation: vertical map 2}
\textcolor{black}{\left(W_{t}|_{b}\cup X, \; (\partial W_{t})|_{b}\cup\partial X\right)}  \longrightarrow 
\textcolor{black}{\left(W_{t}|_{[a, b]}, \; (\partial W_{t})|_{[a, b]}\right)}
\end{equation}
is $n$-connected. 
By composing (\ref{equation: vertical map 1}) and (\ref{equation: vertical map 2}) we see that the inclusion 
$$
\textcolor{black}{\left(W_{t}|_{b}, \; (\partial W)|_{b}\right)} \longrightarrow
\textcolor{black}{\left(W_{t}|_{[a, b]}, \; (\partial W_{t})|_{[a, b]}\right)}
$$ is $n$-connected, and thus the triad
$\textcolor{black}{\left(W_{t}|_{[a, b]}; \; (\partial W_{t})|_{[a,
      b]}, \; W_{t}|_{b} \right)}$ is $n$-connected.  The fact that
the pair $\textcolor{black}{\left((\partial W_{t})|_{[a, b]}, (\partial
  W_{t})|_{b}\right)}$ is $(n-1)$-connected follows essentially the
same argument using the second part of Proposition \ref{proposition:
  cylinder deletion connectivity} together with Proposition
\ref{proposition: basic properties of object surgery move}.

The other statements in the Lemma then follow from Proposition \ref{proposition: observation 1 about glued in move}. 
\end{proof}
\subsection{Proof of Theorem \ref{theorem: disk object equivalence}} \label{subsection: disk equivalence}
With the results in the previous sections established, the proof of Theorem \ref{theorem: disk object equivalence} now is formally the same as the proof of \cite[Theorem 5.3]{GRW 14}.
We provide an outline.
As a first step we need the following result which follows from \cite[Section 6.1]{GRW 14} (see the proof on page 327).
\begin{proposition} \label{proposition: inclusion of 0 simplices}
The map induced by inclusion of $0$-simplices
$
|\mb{D}^{\partial, \mathcal{H}}_{\bullet, 0}| \longrightarrow |\mb{D}^{\partial, \mathcal{H}}_{\bullet, \bullet}|
$
is a weak homotopy equivalence. 
\end{proposition}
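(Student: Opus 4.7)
Following the template of \cite[Section 6.1]{GRW 14}, the plan is to reduce to a fiberwise statement over $\mb{D}^{\partial, \mb{c}'}_p$ and then exploit the infinite-dimensional nature of the embedding space. The first step is to apply the standard bi-semi-simplicial realization lemma: a level-wise-in-$p$ weak homotopy equivalence of semi-simplicial spaces induces a weak equivalence on realizations, provided the semi-simplicial spaces are ``good'' (e.g.\ Reedy cofibrant). So the claim reduces to showing that for each fixed $p$, the inclusion $\mb{D}^{\partial, \mathcal{H}}_{p, 0} \hookrightarrow |\mb{D}^{\partial, \mathcal{H}}_{p, \bullet}|$ is a weak homotopy equivalence.

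For fixed $p$, the forgetful map $(x, y) \mapsto x$ exhibits $\mb{D}^{\partial, \mathcal{H}}_{p, \bullet}$ as a semi-simplicial space over $\mb{D}^{\partial, \mb{c}'}_p$, with fiber over $x$ the semi-simplicial set $\mb{Y}_\bullet(x)$ whose $q$-simplices are ordered $(q+1)$-tuples of pairwise-disjoint surgery data. I would verify that this projection is a quasi-fibration so that the equivalence can be checked fiberwise, and then carry out the following ``flasque'' argument. The target of the embedding $e$ lies in $\R \times [0,1) \times (-1,1)^{\infty-1}$, which has infinitely many unused coordinates. Given any compact family of $q$-simplices in $|\mb{Y}_\bullet(x)|$, one may continuously translate all the embeddings into a bounded range of these coordinates and then place new ``shifted'' copies of the data into the unused coordinates, producing simplices that are automatically disjoint from the original ones. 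Using such a shift operator $\sigma$, one constructs a simplicial contracting homotopy that deformation-retracts $|\mb{Y}_\bullet(x)|$ onto $\mb{Y}_0(x)$, exactly analogously to the swindle used on page 327 of \cite{GRW 14}.

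The principal technical obstacle is to ensure that the shift operator $\sigma$ preserves condition (iv) of Definition \ref{defn: objects surgery data}: after the shift, the manifolds $M'_i$ obtained from $M_i$ by the combined family of handle subtractions must still be diffeomorphic to $D^{2n}$. Because inserting additional surgery data generally changes the resulting manifold, the shift $\sigma$ must produce ``trivial'' surgery data whose attaching $(n{-}1)$-spheres bound embedded disks in $M_i$, so that the corresponding handle subtractions do not alter the diffeomorphism type of $M'_i$. Such trivial embedded spheres can be chosen with room to spare inside the interior of the $2n$-dimensional disk $M'_i$, and this choice can be made continuously in $x$. Once these technicalities are in place, the remaining bookkeeping---continuity in $x$, compatibility with the $p$-direction face maps, and the passage from the fiberwise equivalence to the statement of the proposition---transfers verbatim from the proof of the analogous result in \cite[Section 6.1]{GRW 14}.
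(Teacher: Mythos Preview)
Your swindle has a genuine gap, and it is not the one you flag. The ``shift into unused $\R^\infty$--coordinates'' cannot be carried out as you describe: condition (i) of Definition \ref{defn: objects surgery data} pins down exactly how the image of $e$ meets the fixed manifold $W$, so any two elements of $\mb{Y}_0(x)$ share the same intersection pattern with $W$ and cannot be made disjoint merely by translating the ambient part of the embedding. You seem to recognise this and propose instead that $\sigma$ should manufacture new ``trivial'' surgery data. But here you have addressed the wrong condition. For $(\sigma(y), y_0, \dots, y_q)$ to be a $(q{+}1)$-simplex, the new vertex $\sigma(y)$ must itself lie in $\mb{Y}_0(x)$, and condition (iv) then demands that the handle subtractions prescribed by $\sigma(y)$ \emph{alone} turn $M_i$ into $D^{2n}$. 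Data whose cores are boundary-parallel in $M'_i$ (so that adding them to $y$ does not change $M'_i$) does \emph{not} satisfy this: performing only the $\sigma(y)$-subtractions leaves $M_i$ essentially unchanged, hence not a disk. So your $\sigma(y)$ is not a valid $0$-simplex, and the contracting homotopy cannot be built this way.

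The mechanism on page 327 of \cite{GRW 14} that the paper invokes is not an ambient swindle but the ``disjoint union'' trick: one sends a $q$-simplex $(y^0,\dots,y^q)$ to the single datum $y^0\cup\cdots\cup y^q$ and uses this map $\mb{D}^{\partial,\mathcal{H}}_{p,q}\to \mb{D}^{\partial,\mathcal{H}}_{p,0}$ to produce the simplicial contraction. The point that actually needs checking when transporting that argument to the present setting is whether the union again satisfies condition (iv), i.e.\ whether performing the handle subtractions from \emph{all} of the $y^j$ simultaneously still yields $D^{2n}$; in \cite{GRW 14} the analogous closure property (extra $l$-surgeries preserve $l$-connectivity below the middle dimension) is immediate, whereas here it is the substantive step. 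That, rather than the construction of trivial data, is where your effort should go.
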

Combining the above proposition with Theorem \ref{theorem: contractibility of space of surgery data} it follows that the composition
$$
|\mb{D}^{\partial, \mathcal{H}}_{\bullet, 0}| \longrightarrow |\mb{D}^{\partial, \mathcal{H}}_{\bullet, \bullet}| \longrightarrow |\mb{D}^{\partial, \mb{c}'}_{\bullet}| = |\mb{D}^{\partial, \mathcal{H}}_{\bullet, -1}|
$$
is a weak homotopy equivalence, where the second map is the induced by the augmentation.

Using Definition \ref{defn: glued in surgery move} and its iterated version from (\ref{equation: iterated surgery move}), we define a map 
$$
\mathcal{F}_{p}: [0, 1]^{p+1}\times\mb{D}^{\partial, \mathcal{H}}_{p, 0} \; \longrightarrow \; \mb{X}^{\partial, \mb{c}'}_{p}, \quad \quad
(t, \; (a, \varepsilon, (W, \ell_{W}), e, \ell)) \; \mapsto \; (a, \tfrac{1}{2}\varepsilon, \mathcal{K}^{t}_{e, \ell}(W, \ell_{W})),
$$
which has the desired range by Lemma \ref{lemma: surgery implimentation}. 
Lemma \ref{lemma: surgery implimentation} also implies that $\mathcal{F}_{p}$ sends $(1, \dots, 1)\times\mb{D}^{\partial, \mathcal{H}}_{p, 0}$ to 
$\mb{X}^{\partial, \mathcal{H}}_{p} \; \subset \; \mb{X}^{\partial, \mb{c}'}_{p}$.
Furthermore, by Proposition \ref{proposition: basic properties of object surgery move} we have equality $\mathcal{K}^{0}_{e_{i}, \ell_{i}}(W', \ell_{W'}) = (W', \ell_{W'})$.
We thus obtain the formula 
\begin{equation}
d_{i}\mathcal{F}_{p}(d^{i}t, \; x) \; = \; \mathcal{F}_{p-1}(t, \; d_{i}x)
\end{equation}
where $d^{i}: [0, 1]^{p} \longrightarrow [0, 1]^{p+1}$ is the embedding that adds a zero in the $i$th coordinate and $d_{i}$ is the $i$th face map associated to the semi-simplicial space $\mb{X}^{\partial, \mb{c}'}_{\bullet}$.

We wish to assemble the maps $\mathcal{F}_{p}$ to a homotopy $\mathcal{F}: [0, 1]\times|\mb{D}^{\partial, \mathcal{H}}_{p, 0}| \longrightarrow |\mb{X}^{\partial, \mb{c}'}_{p}|$.
By following \cite[p. 312 -313]{GRW 14} verbatim, we obtain the following proposition.
\begin{proposition} \label{proposition: assembled homotopy}
The maps $\mathcal{F}_{p}$ induce a homotopy 
$\mathcal{F}: [0, 1]\times|\mb{D}^{\partial, \mathcal{H}}_{\bullet, 0}| \longrightarrow |\mb{X}^{\partial, \mb{c}'}_{\bullet}|$
with the following properties:
\begin{enumerate} \itemsep.2cm
\item[\textcolor{black}{(i)}] The map $\mathcal{F}(0, \; \underline{\hspace{.2cm}})$ is equal to the composition 
$$
|\mb{D}^{\partial, \mathcal{H}}_{\bullet, 0}| \longrightarrow |\mb{D}^{\partial, \mathcal{H}}_{\bullet, \bullet}|\longrightarrow |\mb{D}^{\partial, \mb{c'}}_{\bullet}| \longrightarrow |\mb{X}^{\partial, \mb{c}'}_{\bullet}|,
$$
which is a weak homotopy equivalence by Theorem \ref{theorem: contractibility of space of surgery data}, and Propositions \ref{proposition: X-D equivalence} and \ref{proposition: inclusion of 0 simplices}; 
\item[\textcolor{black}{(ii)}] 
The map $\mathcal{F}(1, \; \underline{\hspace{.2cm}})$ factors through the subspace 
$|\mb{X}^{\partial, \mathcal{H}}_{\bullet}| \; \subset \; |\mb{X}^{\partial, \mb{c}'}_{\bullet}|.$
\end{enumerate}
\end{proposition}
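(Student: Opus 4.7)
The plan is to follow the assembly template of \cite[pp.~312--313]{GRW 14}. The work is purely organizational: to stitch together the maps $\mathcal{F}_p$, which are parametrized over cubes $[0,1]^{p+1}$ compatibly with the face maps of $\mb{D}^{\partial,\mathcal{H}}_{\bullet,0}$, into a single homotopy parametrized over $[0,1]$ times the geometric realization.

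First I would construct a face-compatible family of continuous maps
$$\phi_p: [0,1]\times\Delta^p \; \longrightarrow \; [0,1]^{p+1}$$
satisfying $\phi_p(0,t) = \mathbf{0}$ for every $t$, and
$$\phi_p(s, d^i \tau) \; = \; d^i\bigl(\phi_{p-1}(s,\tau)\bigr),$$
where on the right $d^i: [0,1]^p \hookrightarrow [0,1]^{p+1}$ is the inclusion inserting a $0$ in the $i$-th slot. In addition I would arrange that at $s=1$ the $i$-th coordinate $\phi_p(1,t)_i$ equals $1$ whenever $t_i$ is bounded below by a small threshold depending only on $p$. Such $\phi_p$ can be written coordinatewise as $\phi_p(s,t)_i = \rho_p(s\cdot t_i)$ for an appropriate non-decreasing function $\rho_p: [0,\infty)\to[0,1]$ that vanishes near $0$ and is eventually identically $1$.

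Next I would define
$$\mathcal{F}\bigl(s,\,[t,x]\bigr) \; = \; \bigl[\,t,\; \mathcal{F}_p(\phi_p(s,t),\, x)\,\bigr]$$
for $[t,x]\in |\mb{D}^{\partial,\mathcal{H}}_{\bullet,0}|$ with $t\in \Delta^p$ and $x\in\mb{D}^{\partial,\mathcal{H}}_{p,0}$. Well-definedness on the realization is the combination of the face-compatibility of the $\phi_p$ with the identity $d_i\mathcal{F}_p(d^i\tau,x) = \mathcal{F}_{p-1}(\tau,d_i x)$ recalled just before the proposition. Property (i) is then immediate: at $s=0$ Proposition \ref{proposition: basic properties of object surgery move}(i) gives $\mathcal{K}^{\mathbf{0}}_{e,\ell}(W,\ell_W) = (W,\ell_W)$, so $\mathcal{F}(0,[t,x]) = [t,(a,\tfrac12\varepsilon,(W,\ell_W))]$, which agrees with the stated composition through $|\mb{D}^{\partial,\mb{c}'}_\bullet|$. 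Property (ii) follows because for each $[t,x]$ the class $[t,\mathcal{F}_p(\phi_p(1,t),x)]$ is, by repeated use of the face relation above, equivalent to a class $[\overline{t},y]$ on a face of $\Delta^p$ on which every surviving coordinate of $\phi_p(1,\cdot)$ equals $1$; Lemma \ref{lemma: surgery implimentation} then places $y$ in $\mb{X}^{\partial,\mathcal{H}}_{\bullet}$.

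The main obstacle is the choice of the $\phi_p$ together with the threshold data, done coherently across all $p$: continuity forces the transition between ``$\phi_p(1,t)_i=0$'' (where $t_i=0$) and ``$\phi_p(1,t)_i=1$'' to be spread over a neighborhood of the $i$-th face of $\Delta^p$, and one must verify that on this transition region the repeated face-relation rewriting really does land in $|\mb{X}^{\partial,\mathcal{H}}_\bullet|$. This is precisely the bookkeeping carried out in \cite[pp.~312--313]{GRW 14}; nothing in that argument is sensitive to boundaries or corners, so it transfers to our relative setting with no modification.
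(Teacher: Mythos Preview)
Your proposal is correct and is precisely the approach the paper takes: the paper does not give an independent argument but simply states that the proposition is obtained ``by following \cite[p.\ 312--313]{GRW 14} verbatim,'' and your sketch accurately reconstructs that assembly procedure. The only cosmetic point is that your coordinatewise formula $\phi_p(s,t)_i=\rho_p(s\cdot t_i)$ forces $\rho_p$ to be independent of $p$ for the face compatibility to hold exactly, but this is harmless and does not affect the argument.
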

Observe that if $x = (a, \varepsilon, (W, \ell_{W})) \in \mb{D}^{\partial, \mb{c}'}_{p}$ is contained in the subspace $\mb{D}^{\partial, \mathcal{H}}_{p} \subset \mb{D}^{\partial, \mb{c}'}_{p}$, then the \textit{empty surgery data} determined by setting $\Lambda = \emptyset$ yields a valid element of $\mb{Y}_{0}(x)$.
Thus, we obtain a simplicial map 
$\mb{D}^{\partial, \mathcal{H}}_{p}  \longrightarrow \mb{D}^{\partial, \mathcal{H}}_{p, 0},$ $x \mapsto (x, \emptyset),$
and an embedding 
$|\mb{D}^{\partial, \mathcal{H}}_{\bullet}| \hookrightarrow |\mb{D}^{\partial, \mathcal{H}}_{\bullet, 0}|.$
Using the homotopy $\mathcal{F}$ we obtain the diagram
$$
\xymatrix{
|\mb{D}^{\partial, \mathcal{H}}_{\bullet}| \ar[d]_{\simeq} \ar@{^{(}->}[rr] && |\mb{D}^{\partial, \mathcal{H}}_{\bullet, 0}| \ar[dll]_{\mathcal{F}(1, \; \underline{\hspace{.2cm}})} \ar[d]^{\simeq}_{\mathcal{F}(0, \; \underline{\hspace{.2cm}})} \\
|\mb{X}^{\partial, \mathcal{H}}_{\bullet}| \ar@{^{(}->}[rr] && |\mb{X}^{\partial, \mb{c}'}_{\bullet}|
}
$$
where the upper-triangle is commutative and the lower-triangle is homotopy commutative. 
Homotopy commutativity implies that the inclusion $|\mb{X}^{\partial, \mathcal{H}}_{\bullet}| \hookrightarrow  |\mb{X}^{\partial, \mb{c}'}_{\bullet}|$ is a weak homotopy equivalence. 
This completes the proof of Theorem \ref{theorem: disk object equivalence}. 
\begin{remark}
The proof of Theorem \ref{theorem: disk object equivalence} given above assumes Theorem \ref{theorem: contractibility of space of surgery data}. 
This theorem is the most technical component of the paper and 
we give a proof in the following section.
\end{remark}

\section{Contractibility of the Space of Surgery Data} \label{section: contractibility of the spaces of surgery data}
In this section we prove Theorem \ref{theorem: contractibility of
  space of surgery data}.  
  The next subsection is devoted to developing a certain simplicial technique used to prove these theorems. 
 
\subsection{Topological flag complexes} \label{subsection: topological flag complexes}
   We will need to use the following definition. 
\begin{defn} \label{defn: topolgical flag complexes}
Let $X_{\bullet} \longrightarrow X_{-1}$ be an augmented
semi-simplicial space.  We say that it is an \text{augmented topological flag complex} if for each $p \in \Z_{\geq 0}$:
\begin{enumerate}
\item[(i)] 
the map $X_{p} \longrightarrow X_{0}\times_{X_{-1}}\cdots\times_{X_{-1}}X_{0}$ to the $(p+1)$-fold fibred product is a homeomorphism onto its image, which is an open subset; 
\item[(ii)] a tuple $(v_{0}, \dots, v_{p}) \in X_{0}\times_{X_{-1}}\cdots\times_{X_{-1}}X_{0}$ is in $X_{p}$ if and only if $(v_{i}, v_{j}) \in X_{1}$ for all $i < j$. 
\end{enumerate}
If elements $v, w \in X_{0}$ lie in the same fibre over $X_{-1}$ and
$(v, w) \in X_{1}$, we say that $w$ is orthogonal to $v$.  If $X_{-1}$
is a single point, we omit the adjective augmented.
\end{defn}
A key theorem that we will need to use is the following from \cite[Theorem 6.2]{GRW 14}.
\begin{theorem}[Galatius, Randal-Williams\textcolor{black}{,} 2014] \label{theorem: flag complex equivalence}
Let $X_{\bullet} \longrightarrow X_{-1}$ be an augmented topological flag complex. 
Suppose that the following are true:
\begin{enumerate}
\item[(i)] The map $\varepsilon: X_{0} \longrightarrow X_{-1}$ has local lifts of any map from a disk. 
\item[(ii)] $\varepsilon: X_{0} \longrightarrow X_{-1}$ is surjective. 
\item[(iii)] For any $x \in X_{-1}$ and any (non-empty) finite set $\{v_{1}, \dots, v_{m}\} \subset \varepsilon^{-1}(x)$, there exists an element $v \in \varepsilon^{-1}(x)$ such that $(v_{i}, v) \in X_{1}$ for all $i$.  
\end{enumerate}
Then the map $|X_{\bullet}| \longrightarrow X_{-1}$ is a weak homotopy
equivalence.
\end{theorem}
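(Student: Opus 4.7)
The plan is to follow the standard two-step strategy for flag-complex arguments: first establish that the augmentation $|X_\bullet| \longrightarrow X_{-1}$ is a Serre microfibration, and then show that each point-preimage is weakly contractible. A general lemma (a Serre microfibration with weakly contractible fibers is a weak homotopy equivalence, cf.\ Weiss) then combines these to give the conclusion.

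For the microfibration property, the key input is hypothesis (i). Given a map $f : D^k \longrightarrow |X_\bullet|$ and a homotopy $H : D^k \times [0, 1] \longrightarrow X_{-1}$ extending $\varepsilon \circ f$, compactness of $D^k$ lets me reduce to a local problem: $f$ factors through a finite union of open simplices of the realization, each of which is parametrized by tuples in $X_p$. Over small neighborhoods in $D^k$ I apply (i) to each vertex coordinate (viewed as a map into $X_0$) to obtain local lifts of the homotopy to $X_0$. Condition (i) of the topological flag complex definition (that $X_p$ is open in the fibred product of copies of $X_0$) ensures that these local vertex-lifts assemble to a simplex of $X_p$ for short time, and the barycentric coordinates of the realization interpolate over overlapping patches. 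After shrinking the time parameter to some $\delta > 0$, this produces a lift $\tilde H : D^k \times [0, \delta) \longrightarrow |X_\bullet|$, which is precisely the microfibration condition.

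For weak contractibility of the fiber over $x \in X_{-1}$, let $X_\bullet^x$ denote the sub-semi-simplicial space of simplices mapping to $x$. Condition (i) of the topological flag complex definition identifies $|X_\bullet^x|$ with the point-preimage of $x$ in $|X_\bullet|$. Any map $\phi : S^k \longrightarrow |X_\bullet^x|$ factors through a finite subcomplex involving finitely many vertices $v_1, \dots, v_m \in \varepsilon^{-1}(x)$. Hypothesis (iii) supplies a vertex $v \in \varepsilon^{-1}(x)$ with $(v_i, v) \in X_1$ for every $i$, and condition (ii) of the flag complex definition then guarantees that $v$ can be adjoined to any simplex formed by a subset of $\{v_1, \dots, v_m\}$. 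Consequently the image of $\phi$ lies in the star of $v$ inside $|X_\bullet^x|$, which is a cone and therefore contractible; hence $\phi$ is nullhomotopic. Hypothesis (ii) ensures fibers are non-empty, so the fibers are genuinely weakly contractible rather than empty.

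The main obstacle will be the microfibration step: one must thread compatible lifts through the combinatorially intricate geometric realization $|X_\bullet|$, using only the vertex-level lifting from (i), and the openness-in-fibred-product part of the flag complex definition must be exploited carefully to ensure that vertex-wise lifts actually assemble to higher simplices. The contractibility step, by contrast, is a clean consequence of the star-of-a-vertex argument once the flag axiom is available.
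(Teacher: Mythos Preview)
The paper does not prove this theorem; it is quoted verbatim from \cite[Theorem 6.2]{GRW 14} and used as a black box. Your overall strategy (Serre microfibration plus weakly contractible fibres, concluded via Weiss's lemma) is exactly the strategy employed in \cite{GRW 14}, and your microfibration sketch is on the right track.

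However, your fibre-contractibility argument has a genuine gap. You assert that any map $\phi\colon S^{k}\to |X^{x}_{\bullet}|$ ``factors through a finite subcomplex involving finitely many vertices $v_{1},\dots,v_{m}\in \varepsilon^{-1}(x)$''. This is false in general: $X^{x}_{\bullet}$ is a semi-simplicial \emph{space}, not a simplicial set, and $\varepsilon^{-1}(x)$ is typically a non-discrete topological space. A continuous map from $S^{k}$ into $|X^{x}_{\bullet}|$ will sweep through a continuum of vertices, not finitely many, so you cannot directly feed them into hypothesis~(iii), which is stated only for finite subsets. The star-of-a-vertex argument you give is the correct one for \emph{discrete} flag complexes but does not apply as written here.

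The actual argument in \cite{GRW 14} bridges this gap by a discretisation step: one triangulates $S^{k}$ finely enough that each simplex of the triangulation lands in a single cell $\Delta^{p}\times X^{x}_{p}$ of the realisation, then to each vertex of the triangulation one associates a single point of $X^{x}_{0}$ (via a ``last-vertex'' type projection). Only now does one have a \emph{finite} collection of points in $\varepsilon^{-1}(x)$ to which hypothesis~(iii) can be applied, producing a common orthogonal $v$. One then uses the flag axiom and an explicit homotopy in barycentric coordinates to cone off $\phi$ toward $v$. This discretisation step is the missing idea in your proposal, and without it the argument does not go through.
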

Let $X_{\bullet} \longrightarrow X_{-1}$ be an augmented topological flag complex.
To prove that the induced map $|X_{\bullet}| \longrightarrow X_{-1}$ is a weak homotopy equivalence, it will suffice to verify conditions (i), (ii), and (iii) from the statement of the above theorem. 
For certain augmented topological flag complexes, it is difficult to verify condition (iii) for an arbitrary set of vertices $\{v_{1}, \dots, v_{m}\} \subset \varepsilon^{-1}(x)$.
We will need to use a strengthened version of Theorem \ref{theorem: flag complex equivalence} in which condition (iii) is slightly weakened.  
We must first introduce some new terminology. 

\begin{defn} \label{notation: relation on the zero simplices}
Let $X_{\bullet} \rightarrow X_{-1}$ be an augmented topological flag complex. 
A subset 
$$\mathcal{R} \subset X_{0}\times_{X_{-1}}X_{0}$$ 
is said to be a \textit{symmetric relation} (on the zero-simplices) if it is invariant under the ``coordinate permutation map'' $(v, w) \mapsto (w, v)$. 
For any $x \in X_{-1}$, a subset $\{v_{1}, \dots, v_{m}\} \subset \varepsilon^{-1}(x)$ is said to be in \textit{general position} (with respect to $\mathcal{R}$) if $(v_{i}, v_{j}) \in \mathcal{R}$ for all $i \neq j$.
\end{defn}
For our main result we will need to consider a symmetric relation $\mathcal{R}$ that is open and dense as a subset of $X_{0}\times_{X_{-1}}X_{0}$.
The following result is a strengthening of Theorem \ref{theorem: flag complex equivalence}. 
\begin{theorem} \label{theorem: improved flag complex theorem}
Let $X_{\bullet} \longrightarrow X_{-1}$ be an augmented topological flag complex that satisfies conditions (i) and (ii) of Theorem \ref{theorem: flag complex equivalence}.
Let $\mathcal{R} \subset X_{0}\times_{X_{-1}}X_{0}$ be an open and dense symmetric relation with the property that $X_{1} \subset \mathcal{R}$.
Suppose that $X_{\bullet} \longrightarrow X_{-1}$ satisfies the following further condition:
\begin{enumerate} \itemsep.3cm
\item[(iii)*] Let $x \in X_{-1}$. 
Let $\{v_{1}, \dots, v_{m}\} \subset \varepsilon^{-1}(x)$ be a non-empty subset in general position 
and let $\{w_{1}, \dots, w_{k}\} \subset \varepsilon^{-1}(x)$ be an arbitrary subset (possibly empty) such that $(v_{i}, w_{j}) \in X_{1}$ for all $i, j$. 
Given any such $x$, $\{v_{1}, \dots, v_{m}\}$, and $\{w_{1}, \dots, w_{k}\}$, there exists $v \in \varepsilon^{-1}(x)$ such that $(v, v_{i}) \in X_{1}$ and $(v, w_{j}) \in X_{1}$ for all $i, j$.
\end{enumerate}
Then the induced map $|X_{\bullet}|  \longrightarrow X_{-1}$ is a weak homotopy equivalence. 
\end{theorem}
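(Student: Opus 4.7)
The plan is to reduce to Theorem \ref{theorem: flag complex equivalence} by constructing a bi-semi-simplicial interpolation between $X_\bullet$ and an auxiliary flag complex built from the relation $\mathcal{R}$. I first introduce the topological flag complex $Y_\bullet \to X_{-1}$ with $Y_0 = X_0$ and $Y_1 = \mathcal{R}$ (higher simplices defined by the flag condition), which is well-defined because $\mathcal{R}$ is open, and which receives a simplicial inclusion from $X_\bullet$ because $X_1 \subset \mathcal{R}$. Next I build a bi-augmented bi-semi-simplicial space $Z_{\bullet,\bullet}$ in which $Z_{p,q}$ consists of tuples $(v_0,\ldots,v_p; w_0,\ldots,w_q)$ over a common point of $X_{-1}$, with the $v_i$'s pairwise in $\mathcal{R}$, the $w_j$'s pairwise in $X_1$, and each cross-pair $(v_i,w_j) \in X_1$. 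The two augmentations give $Z_{-1,\bullet} = X_\bullet$ and $Z_{\bullet,-1} = Y_\bullet$.

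The crucial use of condition (iii)* is to verify condition (iii) of Theorem \ref{theorem: flag complex equivalence} for the slices $Z_{\bullet,q} \to X_q$ and $Z_{p,\bullet} \to Y_p$. Over a simplex $\sigma = (w_0,\ldots,w_q) \in X_q$, whose vertices are automatically in general position since $X_1 \subset \mathcal{R}$, an arbitrary finite set $\{v_1,\ldots,v_m\}$ of $Z_{0,q}$-vertices above $\sigma$ satisfies the compatibility hypothesis of (iii)*; that hypothesis then supplies a common neighbor $v \in X_0$ with $(v,w_j), (v,v_i) \in X_1 \subset \mathcal{R}$, which is the required witness. Symmetrically, over a $\tau = (v_0,\ldots,v_p) \in Y_p$ (pairwise in $\mathcal{R}$, hence already in general position), an arbitrary finite set $\{w_1,\ldots,w_m\}$ of $Z_{p,0}$-vertices above $\tau$ falls under (iii)* and yields a common neighbor $w$ with $(w,v_i), (w,w_j) \in X_1$. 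Conditions (i) and (ii) for both slices follow from the corresponding conditions for $X_\bullet$ together with the openness of $X_1$ and $\mathcal{R}$. Applying Theorem \ref{theorem: flag complex equivalence} to each slice, and combining with the standard bisimplicial realization argument, produces weak homotopy equivalences $|X_\bullet| \simeq |Z_{\bullet,\bullet}| \simeq |Y_\bullet|$.

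It remains to show that $|Y_\bullet| \to X_{-1}$ is a weak homotopy equivalence by verifying the three hypotheses of Theorem \ref{theorem: flag complex equivalence} for $Y_\bullet$. Conditions (i) and (ii) transfer directly since $Y_0 = X_0$, so the main obstacle is condition (iii): producing a common $\mathcal{R}$-neighbor of an arbitrary finite set $\{u_1,\ldots,u_m\} \subset \varepsilon^{-1}(x)$, which is \emph{not} assumed to be in general position. I expect to establish this by a combined induction-and-perturbation argument: starting from a singleton (trivially in general position) and iteratively applying (iii)* to produce successive $X_1$-neighbors, while exploiting the density and openness of $\mathcal{R}$ in $X_0 \times_{X_{-1}} X_0$ together with the local-lift property (i) to adjust vertices within their fibers so that the general-position hypothesis of (iii)* becomes applicable at each step of the iteration. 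This density-based argument, which bridges the gap between the $X_1$-compatibility guaranteed by (iii)* and the $\mathcal{R}$-compatibility with \emph{arbitrary} vertices demanded by condition (iii), is the principal technical hurdle, and is precisely where the hypothesis that $\mathcal{R}$ is dense (rather than merely open) enters in an essential way.
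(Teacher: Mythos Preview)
Your bi-semi-simplicial space $Z_{\bullet,\bullet}$ is exactly the paper's $E_{\bullet,\bullet}$, and your $Y_\bullet$ is the paper's $E_{\bullet,-1}$, so the overall architecture is identical. Your verification of condition~(iii) for the slices $Z_{p,\bullet}\to Y_p$ and $Z_{\bullet,q}\to X_q$ ($q\ge 0$) via direct appeal to~(iii)* is correct and matches the paper's ``Proof of (a)'' (the paper treats the $q\ge 0$ part of (b) by a density argument instead, but your route through (iii)* works equally well there).

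Where you go astray is in the final paragraph. The step $|Y_\bullet|\to X_{-1}$ is not the ``principal technical hurdle''; it is the \emph{easiest} step, and your proposed induction-and-perturbation scheme is both vague and unnecessary. The paper simply observes: given an arbitrary finite set $\{u_1,\dots,u_m\}\subset\varepsilon^{-1}(x)$, each $U_i=\{y:(y,u_i)\in\mathcal{R}\}$ is open and dense, hence so is $\bigcap_i U_i$, and any point of this intersection is the required common $\mathcal{R}$-neighbour. That is the entire argument for $t=-1$ in the paper's part~(b). For $t\ge 0$ one adds the observation that, since $X_1$ is open and $(v_1,w_j)\in X_1$ for some already-present $v_1$, there is an open neighbourhood $V$ of $v_1$ in which the $X_1$-conditions with the $w_j$ persist; then $V\cap\bigcap_i U_i\ne\emptyset$ by density. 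No iteration or adjustment of vertices is needed. In short: replace your last paragraph with this one-line density argument and the proof is complete and essentially identical to the paper's.
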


\begin{remark}
The above theorem is based on \cite{GRW 14b}, \textit{Erratum to: stable moduli spaces of high-dimensional manifolds}; its proof is essentially an abstraction of the techniques from this erratum.
The above theorem will be used to prove that the augmented topological flag complex $\mb{D}^{\partial, \mathcal{H}}_{p, \bullet} \longrightarrow \mb{D}^{\partial, \mathcal{H}}_{p, -1}$ 
induces a weak homotopy equivalence $|\mb{D}^{\partial, \mathcal{H}}_{p, \bullet}| \simeq \mb{D}^{\partial, \mathcal{H}}_{p, -1}$ for all $p \in \Z_{\geq 0}$.
For this particular topological flag complex, it is not possible to verify condition of (iii) of Theorem \ref{theorem: flag complex equivalence}. 
However, we can prove that it does satisfy (iii)* (see Lemma \ref{proposition: disjunction condition iii}) with respect to a certain open-dense symmetric relation defined in Definition \ref{defn: transversality relation}.
\end{remark}

\begin{proof}[Proof of Theorem \ref{theorem: improved flag complex theorem}]
We will need a new construction. 
Let $X_{\bullet} \longrightarrow X_{-1}$ and $\mathcal{R} \subset X_{0}\times_{X_{-1}}X_{0}$ be as in the statement of Theorem \ref{theorem: improved flag complex theorem}.
We let $E_{\bullet, \bullet} \longrightarrow E_{-1}$ be the augmented bi-semi-simplicial space defined by setting $E_{p,q}$ equal to the space of lists $(v_{0}, \dots, v_{p}, w_{0}, \dots, w_{q}) \in X_{0}\times_{X_{-1}}\cdots\times_{X_{-1}}X_{0}$ that satisfy the following conditions:
\begin{itemize} \itemsep.2cm
\item $\{v_{0}, \dots, v_{p}\} \subset X_{0}$ is in general position with respect to $\mathcal{R}$;
\item $(w_{0}, \dots, w_{q}) \in X_{q}$;
\item $(v_{i}, w_{j}) \in X_{1}$ for all $i, j$. 
\end{itemize}
With this definition, the augmentation map $X_{\bullet} \longrightarrow X_{-1}$ may be identified with $E_{-1, \bullet} \longrightarrow E_{-1, -1}$. 
To prove that $|E_{-1, \bullet}| \longrightarrow E_{-1, -1}$ is a weak homotopy equivalence it will suffice to prove the following:
\begin{enumerate} \itemsep.2cm
\item[(a)] $|E_{s, \bullet}| \longrightarrow E_{s, -1}$ is a weak homotopy equivalence for all $s \geq 0$,
\item[(b)] $|E_{\bullet, t}| \longrightarrow E_{-1, t}$ is a weak homotopy equivalence for $t \geq -1$.

\end{enumerate}
Indeed, (a) geometrically realizes to the weak homotopy equivalence $|E_{\bullet, \bullet}| \longrightarrow |E_{\bullet, -1}|$.
The second map, (b), specializes to the weak homotopy equivalence $|E_{\bullet, -1}| \longrightarrow E_{-1, -1}$ and geometrically realizes to the weak homotopy equivalence $|E_{\bullet, \bullet}| \longrightarrow E_{-1, -1}$. 
These three weak homotopy equivalences fit together into the commutative diagram
$$
\xymatrix{
|E_{\bullet, \bullet}| \ar[rr]^{\simeq} \ar[d]^{\simeq} && |E_{\bullet, -1}| \ar[d]^{\simeq} \\
|E_{-1, \bullet}| \ar[rr] && |E_{-1, -1}|,
}
$$
from which it follows that the bottom horizontal map is also a weak homotopy equivalence which proves the theorem. 
To complete the theorem we will need to verify statements (a) and (b) from above. 

\textbf{Proof of (a):} It will suffice to apply Theorem \ref{theorem:
  flag complex equivalence} to the augmented topological flag complex
$E_{s, \bullet} \longrightarrow E_{s, -1}$ for each $s \geq 0$.  By
assumption, condition (i) of Theorem \ref{theorem: flag complex
  equivalence} holds for $X_{\bullet} \longrightarrow X_{-1}$ and it
follows by the definition of $E_{\bullet, \bullet} \longrightarrow
E_{-1}$ that this condition holds for $E_{s,\bullet} \longrightarrow
E_{s, -1}$ for each $s \geq 0$.  We need to verify conditions (ii) and
(iii).  Let $s \geq 0$ and let $x := (v_{0}, \dots, v_{s}) \in E_{s,
  -1}$.  To verify (ii) we need to find $w \in X_{0}$ such that
$(v_{i}, w) \in X_{1}$, which implies that $(v_{0}, \dots, v_{s}, w)
\in E_{s, 0}$.  Since the set $\{v_{0}, \dots, v_{s}\} \subset X_{0}$
is in general position with respect to $\mathcal{R}$, the existence of
such an element $w$ follows from the fact that $X_{\bullet}
\longrightarrow X_{-1}$ satisfies condition (iii)*.  This establishes
condition (ii) for $E_{s,\bullet} \longrightarrow E_{s, -1}$ for $s
\geq 0$.

Now to verify condition (iii). 
Let $x := (v_{0}, \dots, v_{s}) \in E_{s, -1}$ and let $\{w_{1}, \dots, w_{k}\} \subset X_{0}$ be a non-empty subset with the property that $(v_{0}, \dots, v_{s}, w_{j}) \in E_{s, 0}$ for all $j = 1, \dots, k$. 
To prove that condition (iii) holds we will need to find $w \in X_{0}$ such that $(w, v_{i}) \in X_{1}$ and $(w, w_{j}) \in X_{1}$ for all $i, j$. 
Since $\{v_{0}, \dots, v_{s}\}$ is non-empty and in general position with respect to $\mathcal{R}$, the existence of such an element $w \in X_{0}$ follows again from the fact that $X_{\bullet} \longrightarrow X_{-1}$ satisfies condition (iii)*.
This proves that $E_{s, \bullet} \longrightarrow E_{s, -1}$ satisfies condition (iii) whenever $s \geq 0$ and
thus $|E_{s,\bullet}| \longrightarrow E_{s, -1}$  is a weak homotopy equivalence when $s \geq 0$. 
This concludes the proof of (a). 

\textbf{Proof of (b):} 
We apply Theorem \ref{theorem: flag complex equivalence} to the augmented topological flag complex $E_{\bullet, t} \longrightarrow E_{-1, t}$ for $t \geq -1$. 
As before, condition (i) follows by hypothesis. 
We need to verify conditions (ii) and (iii).
Let $t \geq -1$ and let $(w_{0}, \dots, w_{t}) \in E_{-1, t}$.
To prove that condition (ii) holds we need to find $v \in X_{0}$ such that $(v, w_{0}, \dots, w_{t}) \in E_{0, t}$.
By definition, $(w_{i}, w_{j}) \in X_{1}$ for $i < j$ and since $X_{1} \subset \mathcal{R}$ by assumption, it follows that the set $\{w_{0}, \dots, w_{t}\} \subset X_{0}$ is in general position with respect to $\mathcal{R}$. 
The existence of $v \in X_{0}$ with $(v, w_{0}, \dots, w_{t}) \in E_{0, t}$ follows from the fact that $X_{\bullet} \longrightarrow X_{-1}$ satisfies condition (iii)*.
In the case that $t = -1$ (and thus the list $(w_{0}, \dots, w_{t})$ is empty) follows from the fact that $X_{\bullet} \longrightarrow X_{-1}$ satisfies condition (ii).
These observations establish condition (ii) for $E_{\bullet, t} \longrightarrow E_{-1, t}$ for $t \geq -1$.

We now proceed to verify condition (iii).
Let $t \geq -1$.
Let $(w_{0}, \dots, w_{t}) \in E_{-1, t}$ and let $\{v_{1}, \dots, v_{k}\} \subset X_{0}$
be a non-empty subset with the property that $(v_{i}, w_{0}, \dots, w_{t}) \in E_{0, t}$ for $i = 1, \dots, k$. 
To prove that condition (iii) holds we will need to find $v \in X_{0}$ such that $(v, v_{i}) \in \mathcal{R}$ for $i = 1, \dots, k$ and $(v, w_{j}) \in X_{1}$ for $j = 0, \dots, t$.

For $i = 1, \dots, k$, let $U_{i} \subset X_{0}$ denote the subset consisting of all $y \in X_{0}$ for which $(y, v_{i}) \in \mathcal{R}$. 
Since $\mathcal{R} \subset X_{0}\times_{X_{-1}}\times X_{0}$ is open and dense it follows that each $U_{i}$ is an open dense subset of $X_{0}$, and thus $U := \cap_{i=1}^{k}U_{i}$ is an open and dense subset of $X_{0}$. 
In particular, the intersection $U = \cap_{i=1}^{k}U_{i}$ is non-empty. 
Recall that $(v_{1}, w_{j}) \in X_{1}$ for all $j = 1, \dots, t$. 
Since $X_{1} \subset X_{0}\times_{X_{-1}}X_{0}$ is an open subset, there exists an open neighborhood $V \subset X_{0}$ of $v_{1}$ such that $(y, w_{j}) \in X_{1}$ for all $y \in V$ and $j = 0, \dots, t$. 
 Since the subset $U$ is dense in $X_{0}$, it follows that the intersection $V\cap U$ is non-empty. 
We then choose $v \in V\cap U$, and it follows that $(v, w_{j}) \in X_{1}$ for all $j = 0, \dots, t$ and $(v, v_{i}) \in \mathcal{R}$ for all $i = 1, \dots, k$.
It follows that the augmented topological flag complex $E_{\bullet, t} \longrightarrow E_{-1, t}$ satisfies condition (iii) of Theorem \ref{theorem: flag complex equivalence} for $t \geq -1$ and thus the induced map $|E_{\bullet, t}| \longrightarrow E_{-1, t}$ is a weak homotopy equivalence. 
This concludes the proof of (b) and the the proof of the theorem. 
\end{proof}

\subsection{Proof of Theorem \ref{theorem: contractibility of space of surgery data}} \label{subsection: contractibility of surgery objects}
Theorem \ref{theorem: contractibility of space of surgery data} states that when $d = 2n+1 \geq 9$, the
augmentation map $\mb{D}^{\partial, \mathcal{H}}_{\bullet, \bullet}
\longrightarrow \mb{D}^{\partial, \mathcal{H}}_{\bullet, -1}$ induces
the weak homotopy equivalence $|\mb{D}^{\partial,
  \mathcal{H}}_{\bullet, \bullet}| \simeq |\mb{D}^{\partial,
  \mathcal{H}}_{\bullet, -1}|$, where recall that $\mb{D}^{\partial,
  \mathcal{H}}_{\bullet, -1} = \mb{D}^{\partial, \mb{c}'}_{\bullet}$.
 For each $p \in \Z_{\geq 0}$, we obtain an
augmented semi-simplicial space
\begin{equation} \label{equation: flag complex for fixed p}
\mb{D}^{\partial, \mathcal{H}}_{p, \bullet} \longrightarrow \mb{D}^{\partial, \mathcal{H}}_{p, -1}.
\end{equation}
It follows easily from the definition (Definition \ref{defn: objects
  surgery data}) that for each $p$ this is an augmented topological
flag complex as defined in the previous section.  
In order to prove Theorem \ref{theorem: contractibility of space of surgery data}, it will suffice to prove that for each $p \in \Z_{\geq 0}$, the induced map $|\mb{D}^{\partial, \mathcal{H}}_{p, \bullet}| \longrightarrow \mb{D}^{\partial, \mathcal{H}}_{p, -1}$ is a weak homotopy equivalence. 
Geometrically realizing the $p$-coordinate then yields the weak homotopy equivalence $|\mb{D}^{\partial, \mathcal{H}}_{\bullet, \bullet}|
\simeq |\mb{D}^{\partial, \mathcal{H}}_{\bullet, -1}|$ and the proof
of Theorem \ref{theorem: contractibility of space of surgery data}.

The augmented topological flag complex from (\ref{equation: flag complex for fixed p}) is difficult to analyze directly. 
We will need to replace this augmented topological flag complex 
with one that is more ``flexible''.
Let us define the \textit{core} 
\begin{equation} \label{equation: the core}
C \; = \; (-6, -2)\times\{0\}\times D^{n+1}_{+} \;  \; \subset \; \; (-6, -2)\times\R^{n+1}\times D^{n+1}_{+}.
\end{equation}
\begin{defn} \label{defn: flexible verison}
For $x = (a, \varepsilon, (W, \ell_{W})) \in \mb{D}^{\partial, \mb{c}'}_{p}$, let $\widetilde{\mb{Y}}_{\bullet}(x)$ be the semi-simplicial space defined as in Definition \ref{defn: objects surgery data} except now we only ask the map $e$ to be a smooth embedding on a neighborhood of the core
$$
\Lambda\times C \; \; \subset \;  \; \Lambda\times (-6, -2)\times\R^{n+1}\times D^{n+1}_{+}.
$$
Let $\widetilde{\mb{D}}^{\partial, \mathcal{H}}_{\bullet, \bullet}$ be the  bi-semi-simplicial space defined by setting
$
\widetilde{\mb{D}}^{\partial, \mathcal{H}}_{p, q} = \{(x, y) \; | \; x \in \mb{D}^{\partial, \mb{c}'}_{p}, \; y \in \widetilde{\mb{Y}}_{\bullet}(x)\}.
$
By setting $\widetilde{\mb{D}}^{\partial, \mathcal{H}}_{\bullet, -1} = \mb{D}^{\partial, \mb{c}'}_{\bullet}$,
we obtain an augmented bi-semi-simplicial space 
$\widetilde{\mb{D}}^{\partial, \mathcal{H}}_{\bullet, \bullet} \longrightarrow \widetilde{\mb{D}}^{\partial, \mathcal{H}}_{\bullet, -1}$
defined in the same way as before. 
\end{defn}

The lemma below is proven in exactly the same way as \cite[Proposition 6.15]{GRW 14} and so we omit the proof.
\begin{lemma} \label{lemma: flexible version equivalence}
The inclusion map $\mb{D}^{\partial, \mathcal{H}}_{\bullet, \bullet} \hookrightarrow \widetilde{\mb{D}}^{\partial, \mathcal{H}}_{\bullet, \bullet}$ induces a weak homotopy equivalence on each bi-degree, and thus induces a weak homotopy equivalence on geometric realization.
\end{lemma}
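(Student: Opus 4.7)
The plan, following the strategy of \cite[Proposition 6.15]{GRW 14}, is to establish the weak equivalence on each bi-degree $(p,q)$; the statement on geometric realizations then follows since the bi-realization functor preserves levelwise weak equivalences when applied to the bi-semi-simplicial spaces at hand (one may realize in one direction at a time, which is compatible with the topologies involved).

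For fixed $(p,q)$, the first step is to reduce to the comparison of the fibers of the forgetful maps to $\mb{D}^{\partial,\mb{c}'}_p$, namely to showing that the inclusion $\mb{Y}_q(x) \hookrightarrow \widetilde{\mb{Y}}_q(x)$ is a weak equivalence for every $x = (a,\varepsilon,(W,\ell_W)) \in \mb{D}^{\partial,\mb{c}'}_p$. Since $\mb{Y}_q(x)$ and $\widetilde{\mb{Y}}_q(x)$ are cut out by the open disjoint-image condition inside the $(q{+}1)$-fold fibre products of $\mb{Y}_0(x)$ and $\widetilde{\mb{Y}}_0(x)$, it suffices to treat the case $q=0$.

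The main step is to introduce the common space $G_0(x)$ consisting of germs at $\Lambda \times C$ of quadruples $(\Lambda,\delta,e,\ell)$ that satisfy the local versions of conditions (i)--(iii) and the embedding condition on some open neighborhood of the core. A crucial observation is that condition (iv), which requires each surgery to produce a disk, depends only on such a germ, because the surgery family $\mathcal{K}^t_{e,\ell}(W,\ell_W)$ modifies $W$ only within an arbitrarily small neighborhood of $e(\Lambda \times C)$. Restriction yields a diagram
\[ \mb{Y}_0(x) \;\longrightarrow\; G_0(x) \;\longleftarrow\; \widetilde{\mb{Y}}_0(x), \]
and I would show that both maps are Serre microfibrations with weakly contractible fibres. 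The fibre of the right-hand map over a germ $g$ is the space of smooth extensions of $g$ to the entire source satisfying the global versions of (i) and (iii); it is convex by a partition-of-unity argument and hence contractible. The fibre of the left-hand map consists of those extensions that are moreover smooth embeddings; its weak contractibility follows from the infinite-dimensional factor $(-1,1)^{\infty-1}$ in the target, which allows one to isotope any smooth extension into an embedding through smooth extensions by pushing into unused coordinate directions, in the spirit of the classical contractibility of $\Emb(X,\R^\infty)$-type spaces.

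The principal obstacle will be to verify that the global conditions (i) on the preimage $e^{-1}(W)$ and (iii) on the agreement of $\ell$ with $\ell_W$ can be arranged continuously in families while deforming a smooth extension to an embedding. This is handled by exploiting the infinite-dimensional target: away from the core one pushes the extension into fresh coordinate directions so that its image automatically avoids $W$ where required, after which the $\theta$-structure extension is a contractible choice by the extension argument used in Proposition \ref{proposition: extension of theta-structure}.
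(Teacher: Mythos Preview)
The paper itself omits the proof and simply defers to \cite[Proposition 6.15]{GRW 14}, so your outline is being compared against that argument. Your overall strategy (reduce to fibres $\mb{Y}_0(x)\hookrightarrow\widetilde{\mb{Y}}_0(x)$, factor through a space of germs at the core) is reasonable, and your observation that condition (iv) depends only on the germ at $\Lambda\times C$ is correct and important.

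There is, however, a genuine gap in the step where you claim the fibre of $\widetilde{\mb{Y}}_0(x)\to G_0(x)$ is ``convex by a partition-of-unity argument and hence contractible.'' Condition (i) of Definition~\ref{defn: objects surgery data} requires
\[
e^{-1}(W)\;=\;\Lambda\times\{0\}\times(-6,-2)\times\R^{n}\times\partial_{1}D^{n+1}_{+},
\]
which is a preimage condition that is \emph{not} preserved under affine combinations: if $e_0,e_1$ both satisfy it, there is no reason for $(1-t)e_0+te_1$ to, since $W$ is not a convex subset of the target. Note also that the right-hand side is unbounded in the $\R^n$-direction while the core has $\{0\}$ there, so this condition genuinely constrains $e$ away from the core and cannot be absorbed into the germ data. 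The same issue affects condition (ii).

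The argument that actually goes through (and is what \cite{GRW 14} does) is a direct \emph{shrinking} deformation rather than a convexity argument. One precomposes $e$ with the one-parameter family of self-embeddings of the source that scales the $\R\times\R^{n}$-factor by $s\in(0,1]$ towards $0$, fixing the $(-6,-2)\times D^{n+1}_{+}$-factor. The subsets appearing in conditions (i) and (ii) are invariant under this scaling, and the data $\ell$ is transported accordingly so that (iii) persists; condition (iv), as you noted, depends only on the germ at the core. Since $e$ is already an embedding on a neighbourhood of $\Lambda\times C$ and the embedding condition is open, for any compact family in $\widetilde{\mb{Y}}_0(x)$ there is an $s_0>0$ such that for all $s\leq s_0$ the rescaled maps are global embeddings and hence lie in $\mb{Y}_0(x)$. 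This gives the required weak equivalence directly. Your germ-space factorisation can be made to work, but the fibre contractibility must be argued via this shrinking deformation rather than convexity.
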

By Lemma \ref{lemma: flexible version equivalence}, in order to prove
Theorem \ref{theorem: contractibility of space of surgery data}, it
will suffice to prove that for each $p \in \Z_{\geq 0}$, the induced map 
$|\widetilde{\mb{D}}^{\partial, \mathcal{H}}_{p, \bullet}| \longrightarrow \widetilde{\mb{D}}^{\partial, \mathcal{H}}_{p, -1}$ 
is a weak homotopy equivalence. 
We will prove this by applying Theorem \ref{theorem: improved flag complex theorem}. 
We will need a suitable open, dense, symmetric relation on the zero simplices of $\widetilde{\mb{D}}^{\partial,\mathcal{H}}_{p, \bullet}$ an in Definition \ref{notation: relation on the zero simplices}. 
Recall the core 
$$C \; = \; (-6, -2)\times\{0\}\times D^{n+1}_{+} \;  \; \subset \; \; (-6, -2)\times\R^{n+1}\times D^{n+1}_{+}.$$
We will denote 
$$\partial_{1}C := (-6, -2)\times\{0\}\times\partial_{1}D^{n+1}_{+} \quad \text{and} \quad \partial_{0, 1}C := (-6, -2)\times\{0\}\times\partial_{0, 1}D^{n+1}_{+}.$$
\begin{defn} \label{defn: transversality relation}
We define $\mathcal{T} \subset \widetilde{\mb{D}}^{\partial, \mathcal{H}}_{p, 0}\times_{\widetilde{\mb{D}}^{\partial, \mathcal{H}}_{p, -1}}\widetilde{\mb{D}}^{\partial, \mathcal{H}}_{p, 0}$ to be the subset consisting of those 
$$\left((a, \varepsilon, (W, \ell_{W})), \; (\Lambda_{1}, \delta_{1}, e_{1}, \ell_{1}), (\Lambda_{2}, \delta_{2}, e_{2}, \ell_{2})\right)$$ 
such that the images 
$$\begin{aligned}
e_{1}(\Lambda_{1}\times\partial_{1}C), \; e_{2}(\Lambda_{2}\times\partial_{1}C) &\subset W, \quad \text{and} \\
e_{1}(\Lambda_{1}\times\partial_{0,1}C), \; e_{2}(\Lambda_{2}\times\partial_{0,1}C) &\subset \partial W,
\end{aligned}$$
are transversal as submanifolds of $W$ of and $\partial W$ respectively. 
This subset $\mathcal{T}$ is clearly a symmetric relation in the sense of Definition \ref{notation: relation on the zero simplices}. 
By \textit{Thom's transversality theorem} it follows that it is an open and dense subset of the fibred product $\widetilde{\mb{D}}^{\partial, \mathcal{H}}_{p, 0}\times_{\widetilde{\mb{D}}^{\partial, \mathcal{H}}_{p, -1}}\widetilde{\mb{D}}^{\partial, \mathcal{H}}_{p, 0}$.
\end{defn}
We may now proceed to apply Theorem \ref{theorem: improved flag
  complex theorem} to the augmented topological flag complex
$$\widetilde{\mb{D}}^{\partial, \mathcal{H}}_{p, \bullet}
\longrightarrow \widetilde{\mb{D}}^{\partial, \mathcal{H}}_{p, -1}.$$
We will need to verify conditions (i), (ii), and (iii)* with respect
to the open, dense, symmetric relation $\mathcal{T}$ defined in
Definition \ref{defn: transversality relation}.  The first condition,
that the map $\varepsilon: \widetilde{\mb{D}}^{\partial,
  \mathcal{H}}_{p, 0} \longrightarrow \widetilde{\mb{D}}^{\partial,
  \mathcal{H}}_{p, -1}$ has local lifts, follows directly from
\cite[Proposition 6.10]{GRW 14}.  We will need to verify the
conditions (ii) and (iii)*.  We start by verifying (iii)*.  We will
need to use the following preliminary result which is an application
of the results proven in Appendix \ref{section: A generalization of the Whitney trick}.

\begin{proposition} \label{proposition: inductive disjunction}
Let $n \geq 4$.
Let $(W; \partial_{0}W, \partial_{1}W)$ be a manifold triad of dimension $2n+1$.
For $i = 0, \dots, k$, let $(P_{i}; \partial_{0}P_{i}, \partial_{1}P_{i})$ be $(n+1)$-dimensional manifold triads with $(P_{0}; \partial_{0}P_{0}, \partial_{1}P_{0})$ compact. 
Let 
$$
g_{i}: (P_{i}; \partial_{0}P_{i}, \partial_{1}P_{i}) \longrightarrow (W; \partial_{0}W, \partial_{1}W), \quad \quad i = 0, \dots, m,
$$
be embeddings.
Suppose that the following conditions are met:
\begin{enumerate} \itemsep.2cm
\item[(a)]  $(\partial_{0}P_{i}, \partial_{0,1}P_{i})$ is $0$-connected and $(P_{i}, \partial_{1}P_{i})$ is $1$-connected for $i = 0, \dots, k$;
\item[(b)] $(\partial_{0}W, \partial_{0, 1}W)$ is $1$-connected and $(W, \partial_{1}W)$ is $2$-connected;
\item[(c)] the collection of submanifolds $g_{1}(P_{1}), \dots, g_{k}(P_{k}) \subset W$ is pairwise transverse;
\end{enumerate}
Then there exists an isotopy 
$$f^{t}: (P_{0}; \partial_{0}P_{0}, \partial_{1}P_{0}) \; \longrightarrow \; (W; \partial_{0}W, \partial_{1}W), \quad \quad t \in [0,1],$$
with $f^{0} = g_{0}$ such that $f^{1}(P_{0})\cap g_{i}(P_{i}) = \emptyset$ for all $i = 1, \dots, k$. 
\end{proposition}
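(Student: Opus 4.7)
My plan is to prove the proposition by induction on $k$. The base case $k=0$ is vacuous. For the inductive step, assume that an isotopy has already been constructed so that $g_0(P_0)\cap g_i(P_i)=\emptyset$ for $i=1,\dots,k-1$, and the aim is to further isotope $g_0$ to disjoin it from $g_k(P_k)$ without reintroducing intersections with the lower-indexed submanifolds. First I would invoke Thom transversality for maps of manifold triads to put $g_0(P_0)$ in general position with respect to $g_k(P_k)$ and each of its boundary strata (in $\partial_0 W$, $\partial_1 W$, and $\partial_{0,1}W$), via an ambient isotopy small enough to preserve both disjointness from $g_1(P_1),\dots,g_{k-1}(P_{k-1})$ and the pairwise transversality (c).

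Since $\dim P_0+\dim P_k-\dim W = 2(n+1)-(2n+1)=1$, the intersection $X := g_0(P_0)\cap g_k(P_k)$ is a compact $1$-dimensional submanifold of $W$, whose components are either circles in $\Int W$ or arcs with endpoints on the boundary faces of $W$ (possibly meeting the corner $\partial_{0,1}W$). The core of the argument is to eliminate the components of $X$ one at a time by a generalized Whitney-type disjunction, modeled on the arguments used in the proof of Proposition \ref{proposition: cylinder deletion connectivity} (the half-Whitney trick, its higher variant, and the Hudson embedding theorem), but here applied in the appendix's generalized setting. For each component $\alpha\subset X$, one uses the hypothesis that $(P_i,\partial_1 P_i)$ is $1$-connected and $(\partial_0 P_i,\partial_{0,1}P_i)$ is $0$-connected to build a suitable arc in $g_0(P_0)\cup g_k(P_k)$ cobounding a Whitney membrane with $\alpha$; the corresponding filling disk in $W$ exists because $(W,\partial_1 W)$ is $2$-connected and $(\partial_0 W,\partial_{0,1}W)$ is $1$-connected. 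The isotopy that removes $\alpha$ is then supported in a thin tubular neighborhood of this Whitney disk.

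The principal obstacle is ensuring that the Whitney disks, together with the supports of the resulting isotopies, can be chosen disjoint from the other submanifolds $g_i(P_i)$, $i=1,\dots,k-1$, and also from the remaining components of $X$ so that eliminating one arc does not spawn new ones. Here the dimension hypothesis $n\geq 4$ is crucial: the Whitney disks have low dimension (on the order of $2$ or $3$), and a standard general-position count shows that their sum with $\dim g_i(P_i)=n+1$ is strictly less than $\dim W=2n+1$ precisely in the stable range forced by $2n+1\geq 9$. This same range also guarantees that the boundary portions of the disks can simultaneously be put in general position on the faces $\partial_0 W$ and $\partial_{0,1}W$, so that the half-Whitney and higher half-Whitney tricks apply and the disjunction isotopy may be taken with support disjoint from every $g_i(P_i)$, $i<k$. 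Iterating the elimination over the finitely many components of $X$ completes the inductive step and hence the proof.
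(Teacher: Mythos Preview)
Your inductive framework matches the paper's, and your approach is correct, but the execution of the inductive step differs in a meaningful way. You propose to work in the full $W$ and keep the Whitney membranes (and hence the support of the disjunction isotopy) away from $g_1(P_1),\dots,g_{k-1}(P_{k-1})$ by general position of the $2$- and $3$-dimensional Whitney data against the $(n+1)$-dimensional $g_i(P_i)$. The paper instead passes to the complement: it sets $W' = W \setminus \bigcup_{i<k} g_i(P_i)$ and $V = P_k \setminus g_k^{-1}\bigl(\bigcup_{i<k} g_i(P_i)\bigr)$, verifies that $(W',\partial_1 W')$ is still $2$-connected, $(\partial_0 W',\partial_{0,1}W')$ is still $1$-connected, and $(V,\partial_1 V)$, $(\partial_0 V,\partial_{0,1}V)$ retain the required connectivity, and then applies Theorems~\ref{theorem: half whitney trick} and~\ref{theorem: higher half whitney trick} as black boxes in two stages (first on $\partial_0$, then on the interior relative to $\partial_0$). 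Condition~(c) and $n\geq 4$ enter exactly in those connectivity checks: transversality makes $g_k^{-1}(g_i(P_i))$ one-dimensional in $P_k$, so deleting it does not destroy $1$-connectedness of $(V,\partial_1 V)$, and similarly for the other pairs. Your route also works and your dimension counts are right, but it requires reopening the internals of the Whitney moves; the paper's route is more modular since disjointness from the lower-indexed submanifolds is automatic once one works in $W'$. One point to tighten in your write-up: your use of hypothesis~(c) is only implicit---you need it so that the half-disks built inside $g_k(P_k)$ can be pushed off the (one-dimensional) loci $g_k(P_k)\cap g_i(P_i)$, and you should say so explicitly.
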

\begin{proof}
We prove this by induction on $k$. 
If $k = 0$ then there is nothing to show. 
This establishes the base case. 
For the induction step let 
$$
g_{0}, \dots, g_{k}: (P_{i}; \; \; \partial_{0}P_{i}, \; \; \partial_{1}P_{i}) \; \longrightarrow \; (W;\; \partial_{0}W,\; \partial_{1}W) 
$$
be as in the statement of the proposition. 
By the induction assumption we may assume that 
$$g_{0}(P_{0})\cap g_{i}(P_{i}) = \emptyset \quad \quad \text{for $i = 1, \dots, k-1$.}$$
Let $(W'; \partial_{1}W', \partial_{0}W')$ be the (open) manifold triad obtained by setting
$$\begin{aligned}
W' \; &= \; W\setminus\left[\cup_{i=1}^{k-1}g_{i}(P_{i})\right], \\
\partial_{0}W' \; &= \; \partial_{0}W\setminus\left[\cup_{i=1}^{k-1}g_{i}(\partial_{0}P_{i})\right], \\ 
\partial_{1}W' \; &= \; \partial_{1}W\setminus\left[\cup_{i=1}^{k-1}g_{i}(\partial_{1}P_{i})\right].
\end{aligned}$$
Similarly, we let $(V; \partial_{1}V, \partial_{0}V) \subset (P_{k}; \partial_{0}P_{k}, \partial_{1}P_{k})$ be the manifold triad obtained by setting
$$\begin{aligned}
V \; &= \; P_{k}\setminus g_{k}^{-1}(\cup_{i=1}^{k-1}g_{i}(P_{i})), \\
\partial_{0}V \; &= \; \partial_{0}P_{k}\setminus g_{k}^{-1}(\cup_{i=1}^{k-1}g_{i}(\partial_{0}P_{i})), \\
\partial_{1}V \; &= \; \partial_{1}P_{k}\setminus g^{-1}_{k}(\cup_{i=1}^{k-1}g_{i}(\partial_{1}P_{i})). 
\end{aligned}
$$
We let 
$$\bar{g}_{k}: (V; \partial_{0}V, \partial_{1}V) \longrightarrow (W'; \partial_{1}W', \partial_{0}W')$$ 
be the embedding obtained by restricting $g_{k}$.
To prove the proposition it will suffice to find an isotopy 
$$
f^{t}: (P_{0}; \partial_{0}P_{0}, \partial_{1}P_{0}) \longrightarrow (W'; \partial_{0}W', \partial_{1}W'), \quad \quad t \in [0,1],
$$
with $f^{0} = g_{0}$ such that $f^{1}(P_{0})\cap\bar{g}_{k}(V) = \emptyset$.  
We construct this isotopy in two stages. 
The first stage is to apply Theorem \ref{theorem: half whitney trick} to obtain an isotopy that pushes $g_{0}(\partial_{0}P_{0}) \subset \partial_{0}W'$ off of the submanifold $\bar{g}_{k}(\partial_{0}V) \subset \partial_{0}W'$. 
For the second stage, assuming that $g_{0}(\partial_{0}P_{0})\cap\bar{g}_{k}(\partial_{0}V) = \emptyset$, we will apply Theorem \ref{theorem: higher half whitney trick} to obtain an isotopy of $g_{0}$, relative to $\partial_{0}P_{0}$, that pushes the rest of $g_{0}(P_{0})$ off of $g_{k}(V)$. 

\textbf{Stage 1:} As discussed above, we want to apply Theorem \ref{theorem: half whitney trick} (the \textit{half Whitney trick}) to the embeddings 
$$\begin{aligned}
g_{k}|_{\partial_{0}P_{0}}: (\partial_{0}P_{0}, \partial_{0,1}P) &\longrightarrow (\partial_{0}W', \partial_{0,1}W'), \\
\bar{g}_{k}|_{\partial_{0}V}: (\partial_{0}V, \partial_{0,1}V) &\longrightarrow (\partial_{0}W', \partial_{0,1}W'),
\end{aligned}$$
to find an isotopy that pushes the image of $g_{0}|_{\partial_{0}P_{0}}$ off of the image of $\bar{g}_{k}|_{\partial_{0}V}$.
In order to do this we need to verify that the manifold pairs involved satisfy certain connectivity conditions. 
To apply Theorem \ref{theorem: half whitney trick} it is required that the co-domain pair $(\partial_{0}W', \partial_{0,1}W')$ be $1$-connected and that the pairs $(\partial_{0}P_{0}, \partial_{0,1}P)$ and $(\partial_{0}V, \partial_{0,1}V)$ both be $0$-connected. 
The pair $(\partial_{0}P_{0}, \partial_{0,1}P)$ is $0$-connected by assumption (see condition (a) in the statement of the proposition).
The pair $(\partial_{0}W', \partial_{0,1}W')$ is obtained from $(\partial_{0}W, \partial_{0,1}W)$ by deleting a finite collection of $n$-dimensional submanifolds.  
Since $n \geq 4$ and $\dim(\partial_{0}W) = 2n \geq 8$, it follows by a standard general position argument that $\pi_{i}(\partial_{0}W', \partial_{0,1}W') \cong \pi_{i}(\partial_{0}W, \partial_{0,1}W) = 0$ for $i \leq 1$,
hence the pair $(\partial_{0}W', \partial_{0,1}W')$ is $1$-connected.
We now verify that $(\partial_{0}V, \partial_{0,1}V)$ is $0$-connected. 
Recall that by hypothesis, the embeddings $g_{1}|_{\partial_{0}P_{1}}, \dots, g_{k}|_{\partial_{0}P_{k}}$ are pairwise transverse. 
This implies that for each $i = 1, \dots, k-1$, the pre-image 
$$(g_{k}|_{\partial_{0}P_{k}})^{-1}\left(\cup_{i=1}^{k-1}g_{i}(\partial_{0}P_{i})\right) \; \subset \; \partial_{0}P_{k}$$ 
is also a zero-dimensional submanifold. 
The manifold pair $(\partial_{0}V, \partial_{0,1}V)$ is obtained by deleting this zero-dimensional submanifold from $(\partial_{0}P_{k}, \partial_{0,1}P_{k})$. 
Since $n \geq 4$, deleting a finite collection of points from $(\partial_{0}P_{k}, \partial_{0,1}P_{k})$ does not alter its homotopy $0$-type. 
It follows that the pair $(\partial_{0}V, \partial_{0,1}V)$ is indeed $0$-connected.

With the above connectivity conditions verified, we may obtain an isotopy 
$$h^{t}: (\partial_{0}P_{0}, \partial_{0,1}P_{0}) \longrightarrow (\partial_{0}W', \partial_{0,1}W'), \quad t \in [0,1],$$ 
with $h^{0} = g_{0}|_{\partial_{0}P_{0}}$ such that $h_{1}(\partial_{0}P_{0})\cap \bar{g}_{k}(\partial_{0}V) = \emptyset$.
Then, by applying the isotopy extension theorem we obtain an isotopy 
$$
f^{t}: (P_{0}; \partial_{0}P_{0}, \partial_{1}P_{0}) \longrightarrow (W'; \partial_{0}W', \partial_{1}W'), \quad \quad t \in [0,1],
$$
with $f^{0} = g_{0}$ such that $f^{1}(\partial_{0}P)\cap \bar{g}_{k}(\partial_{0}V) = \emptyset$.
 This completes the first stage of the inductive step.
 
 \textbf{Stage 2:} 
 By what was proven above we may assume that $g_{0}(\partial_{0}P_{0})\cap\bar{g}_{k}(\partial_{0}V) = \emptyset$.
 We want to apply Theorem \ref{theorem: higher half whitney trick} to obtain an isotopy of $g_{0}$, relative to $\partial_{0}P_{0}$, that pushes the rest of $g_{0}(P_{0})$ off of $\bar{g}_{k}(V)$.
In order to apply Theorem \ref{theorem: higher half whitney trick} we will need to verify that $(W', \partial_{1}W')$ is $2$-connected, and that both $(V, \partial_{1}V)$ and $(P_{0}, \partial_{1}P_{0})$ are $1$-connected. 
The pair $(P_{0}, \partial_{1}P_{0})$ is $1$-connected by condition (a) in the statement of the proposition.
The pair $(W', \partial_{1}W')$ is obtained from $(W, \partial_{1}W)$ by deleting a finite collection of $(n+1)$-dimensional submanifolds. 
Since $n \geq 4$ and $\dim(W) = 2n+1 \geq 9$, it follows by general position that $\pi_{i}(W', \partial_{1}W') \cong \pi_{i}(W, \partial W) = 0$ for $i \leq 2$, and thus $(W', \partial_{1}W')$ is $2$-connected.
We now verify that $(V, \partial_{1}V)$ is $1$-connected. 
Since $g_{k}$ is transverse to $g_{i}$ for $i = 1, \dots, k-1$, it follows that the pre-image $g^{-1}_{k}(g_{i}(P_{i})) \subset P_{k}$ is a $1$-dimensional submanifold for each $i = 1, \dots, k-1$.
The manifold $V$ is obtained from $P_{k}$ by deleting the submanifolds $g^{-1}_{k}(g_{i}(P_{i})) \subset P_{k}$. 
Since $\dim(P_{k}) \geq n +1 \geq 5$, it follows by general position that $\pi_{i}(P_{k}, \partial_{1}P_{k}) \cong \pi_{i}(V, \partial_{1}V) = 0$ for $i \leq 3$ and thus $(V, \partial_{1}V)$ is $1$-connected (it is actually $3$-connected). 

Since $(W', \partial_{1}W')$, $(V, \partial_{1}V)$, $(D^{n}\times[0,1], \partial D^{n}\times[0,1])$ satisfy all of the required connectivity conditions in the statement of Theorem \ref{theorem: higher half whitney trick}, we obtain an isotopy 
$$
f^{t}: (P_{0}; \partial_{0}P_{0}, \partial_{1}P_{0}) \longrightarrow (W'; \partial_{1}W', \partial_{0}W'), \quad t \in [0,1], 
$$
with $f^{0} = g_{0}$ and $f^{t}|_{\partial_{0}P_{0}} = g_{0}$ for all $t \in [0, 1]$, such that $f^{1}(P_{0})\cap \bar{g}_{k}(V) = \emptyset$. 
This concludes the induction step and the proof of the proposition.
\end{proof}
We can now verify condition (iii)* of
Theorem \ref{theorem: flag complex equivalence} which translates to
the following lemma.
This lemma should be compared to \cite[Proposition 6.19]{GRW 14}. 
\begin{lemma} \label{proposition: disjunction condition iii}
Let $x = (a, \varepsilon, (W, \ell_{W})) \in \mb{D}^{\partial, \mb{c}'}_{p}$. 
Let $\{v_{1}, \dots, v_{m}\} \subset \widetilde{\mb{Y}}_{0}(x)$ be a non-empty subset that is in general position with respect to $\mathcal{T}$.
Let $\{w_{1}, \dots, w_{k}\} \subset \widetilde{\mb{Y}}_{0}(x)$ be an arbitrary subset (possibly empty and not necessarily in general position) with the property that $(v_{i}, w_{j}) \in \widetilde{\mb{Y}}_{1}(x)$ 
for all $i, j$.
Then there exists 
$u \in \widetilde{\mb{Y}}_{0}(x)$ such that 
$$(u, v_{i}) \in \widetilde{\mb{Y}}_{1}(x) \quad \text{and} \quad (u, w_{j}) \in \widetilde{\mb{Y}}_{1}(x)$$ 
for all $i = 1, \dots, m$ and $j = 1, \dots, k$.
\end{lemma}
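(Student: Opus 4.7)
The strategy is to construct $u$ by transporting a copy of $v_{1}$ and then iteratively perturbing its core in $W$ off the cores of all the other surgery data via repeated applications of Proposition \ref{proposition: inductive disjunction}. First, I would choose a finite subset $\Lambda_{u} \subset \Omega$ disjoint from the index sets of the given data, together with a bijection $\phi: \Lambda_{u} \to \Lambda_{v_{1}}$, and transport $v_{1}$'s data along $\phi$ to obtain an initial candidate $u_{0} = (\Lambda_{u}, \delta_{v_{1}}\circ\phi, e^{(0)}_{u}, \phi^{*}\ell_{v_{1}})$ in $\widetilde{\mb{Y}}_{0}(x)$; at this starting point the image of $u_{0}$ coincides with that of $v_{1}$.

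The task then becomes to isotope the core image $e^{(0)}_{u}(\Lambda_{u}\times C)\cap W$ off the cores of $v_{1},\dots,v_{m},w_{1},\dots,w_{k}$. For each such $y$ and each $\lambda \in \Lambda_{y}$, the intersection $e_{y}(\{\lambda\}\times C)\cap W$ is an $(n+1)$-dimensional compact submanifold triad $P^{\lambda}_{y}$ of $(W;\partial_{0}W,\partial_{1}W)$ whose connectivity conditions (a) of Proposition \ref{proposition: inductive disjunction} can be checked from the explicit geometry of $K$ in (\ref{equation: base manifold K}). The general position assumption on $\{v_{1},\dots,v_{m}\}$ with respect to $\mathcal{T}$ gives pairwise transversality of the collection $\{P^{\mu}_{v_{i}}\}$. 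I would then apply Proposition \ref{proposition: inductive disjunction} component-by-component over $\lambda \in \Lambda_{u}$, taking $P_{0} = P^{\lambda}_{u_{0}}$ and letting the remaining $P_{i}$'s run over the $v$-cores, to obtain an ambient isotopy of $W$ pushing $P^{\lambda}_{u_{0}}$ off every $P^{\mu}_{v_{i}}$. The required connectivity of $(W;\partial_{0}W,\partial_{1}W)$ follows from the conditions defining $\mb{D}^{\partial,\mb{c}'}_{p}$ combined with $n \geq 4$. I would then disjoin from each $w_{j}$ one at a time, applying the half- and higher half-Whitney tricks inside the open manifold $W^{(r)} := W \setminus (\text{previously disjoined cores})$. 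Since each deleted core has codimension $n$ in $W$, general position preserves $\pi_{i}$ for $i \leq n-1$; with $n \geq 4$ this continues to suffice for the half-Whitney arguments at every stage.

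Having obtained an ambient isotopy of $W$ moving the core of $u_{0}$ off all the other cores, I would extend it to a height-preserving ambient isotopy of $\R\times[0,1)\times(-1,1)^{\infty-1}$ via the isotopy extension theorem, supported in a neighborhood of the core of $u_{0}$ and away from the other images. Applying this to $e^{(0)}_{u}$ gives the final embedding $e_{u}$, and I set $u = (\Lambda_{u},\delta_{u},e_{u},\ell_{u})$. Conditions (i)--(ii) of Definition \ref{defn: objects surgery data} will be preserved because the extended isotopy is height-preserving and respects the stratification; condition (iii) can be verified via Proposition \ref{proposition: extension of theta-structure} applied to the perturbed core; and condition (iv) will hold because ambient isotopy restricts to isotopies of each slice $M_{i}$, preserving the diffeomorphism type of $M'_{i}$, which equals $D^{2n}$ by hypothesis on $v_{1}$. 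By construction $(u,v_{i}),(u,w_{j}) \in \widetilde{\mb{Y}}_{1}(x)$ for all $i,j$, as required.

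The hard part will be the iterated disjunction in the second paragraph. Because only the $v_{i}$'s, not the $w_{j}$'s, are assumed pairwise transverse, one cannot feed all cores into Proposition \ref{proposition: inductive disjunction} simultaneously. Iterating in successively smaller open complements is delicate: at each stage one must re-verify that $(\partial_{0}W^{(r)},\partial_{0,1}W^{(r)})$ remains $1$-connected and $(W^{(r)},\partial_{1}W^{(r)})$ remains $2$-connected. This is exactly where the hypothesis $2n+1 \geq 9$ is essential, as it supplies the margin $n-1 \geq 3$ needed so that removing finitely many $(n+1)$-dimensional cores does not spoil the connectivity required by the higher half-Whitney trick. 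A secondary subtlety is arranging the extended ambient isotopy to be height-preserving in the $x_{0}$-direction so that condition (ii) of Definition \ref{defn: objects surgery data} is not violated; this can be handled slice-by-slice over the intervals $(a_{k}-\varepsilon_{k},a_{k}+\varepsilon_{k})$ and interpolated across regular strata.
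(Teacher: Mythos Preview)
Your overall strategy of copying $v_{1}$ and isotoping its core off the others matches the paper's. However, your two-stage plan---first disjoin from the $v_{i}$'s in $W$, then from the $w_{j}$'s one at a time in shrinking complements $W^{(r)}$---has a genuine gap in the second stage, and the paper avoids it by a simple reordering.

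Since you start from $u_{0}$ with the same core as $v_{1}$, and $(v_{1}, w_{j}) \in \widetilde{\mb{Y}}_{1}(x)$ for all $j$, the core of $u_{0}$ already lies in $W' := W \setminus \bigcup_{j} e_{w_{j}}(\Lambda_{w_{j}} \times \partial_{1}C)$. Moreover every $v_{i}$-core also lies in $W'$, since $(v_{i}, w_{j}) \in \widetilde{\mb{Y}}_{1}(x)$ by hypothesis. The paper therefore performs the entire disjunction from the $v_{i}$'s \emph{inside} $W'$: the $v_{i}$-cores remain pairwise transverse there, and deleting finitely many $(n+1)$-dimensional submanifolds from $W$ preserves the connectivity needed for Proposition~\ref{proposition: inductive disjunction} by general position (this is where $n \geq 4$ enters). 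The resulting embedding is then automatically disjoint from every $w_{j}$-core, and no second stage is required.

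Your second stage, by contrast, does not go through as written. To disjoin $u$'s core from $w_{j}$'s core via Theorem~\ref{theorem: higher half whitney trick} inside $W^{(r)}$, you must treat the portion of $w_{j}$'s core lying in $W^{(r)}$ as the target embedding $g$, and the theorem requires $(Q, \partial_{0}Q)$ to be $1$-connected. But the $w_{j}$'s are \emph{not} assumed pairwise transverse, so $w_{j}$'s core intersected with $W^{(r)}$ need not even be a submanifold, let alone $1$-connected. This is precisely the role of hypothesis~(c) in Proposition~\ref{proposition: inductive disjunction}: transversality of the fixed collection is what ensures that after deleting some of them, the remaining one restricts to something with controlled topology. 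Your connectivity check on the ambient $W^{(r)}$ is fine; the obstruction is the target submanifold.

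One further remark: the paper applies Proposition~\ref{proposition: inductive disjunction} not to $W'$ as a single triad but slice-by-slice to the relative cobordisms $W'|_{[a_{l}, a_{l+1}]}$, inductively in $l$. This both supplies the correct triad structure on the cores (they are copies of $(D^{n}\times[0,1]; \partial D^{n}\times[0,1], D^{n}\times\{0,1\})$ in each slice) and makes the resulting isotopy automatically compatible with the height-preserving condition~(ii) of Definition~\ref{defn: objects surgery data}, so your separate interpolation step is not needed.
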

\begin{proof}[Proof of Lemma \ref{proposition: disjunction condition iii}]
Let the sets $\{v_{1}, \dots, v_{m}\}, \{w_{1}, \dots, w_{k}\} \subset \widetilde{\mb{Y}}_{0}(x)$ be given and let us write $v_{i} = (\Lambda_{v}^{i}, \delta_{v}^{i}, e_{v}^{i}, \ell_{v}^{i})$ and $w_{j} = (\Lambda_{w}^{j}, \delta_{w}^{j}, e_{w}^{j}, \ell_{w}^{j})$ for $i = 1, \dots, m$ and $j = 1, \dots, k$. 
To produce the desired zero-simplex $u \in \widetilde{\mb{Y}}_{0}(x)$ we do the following. 
We first set $u = v_{1}$ and write $u = (\Lambda, \delta, e, \ell)$.
By hypothesis, the image of the core $e(\Lambda\times C)$ is disjoint from $e^{j}_{w}(\Lambda^{j}_{w}\times C)$ for all $j = 1, \dots, k$.
However, it may happen that the image of the core $e(\Lambda\times C)$ has non-empty intersection with $e^{i}_{v}(\Lambda^{i}_{v}\times C)$ for $i = 1, \dots, m$.
To prove the lemma, we will need to make $e(\Lambda\times C)$ disjoint from the $e^{i}_{v}(\Lambda^{i}_{v}\times C)$ while keeping it disjoint from the $e^{j}_{w}(\Lambda^{j}_{w}\times C)$.
Recall from Definition \ref{defn: transversality relation} 
the submanifolds
$$\partial_{1}C := (-6, -2)\times\{0\}\times\partial_{1}D^{n+1}_{+} \quad \text{and} \quad \partial_{0, 1}C := (-6, -2)\times\{0\}\times\partial_{0, 1}D^{n+1}_{+}.$$
Recall that  
$$\left(e^{i}_{v}(\Lambda^{i}_{v}\times\partial_{1}C),\;  e^{i}_{v}(\Lambda^{i}_{v}\times\partial_{0,1}C)\right) \; \subset \; \left(W, \; \partial W\right)$$
for all $i = 0, \dots, p$ (the conditions holds for the $e^{j}_{w}$). 
Our first step is to deform $e(\Lambda\times\partial_{1}C) \subset W$ through an isotopy of embeddings, making it 
disjoint from $e^{i}_{v}(\Lambda^{i}_{v}\times\partial_{1}C)$ for $i = 1, \dots, m$, while keeping it disjoint from the $e^{j}_{w}(\Lambda^{j}_{w}\times\partial_{1}C)$.
This will be achieved by an application of Proposition \ref{proposition: inductive disjunction}.  
To begin we denote 
$$
W' := W\setminus\left[\cup_{j=1}^{k}e^{j}_{w}(\Lambda^{j}_{w}\times \partial_{1}C)\right]. 
$$
By assumption we have $e(\Lambda\times\partial_{1}C), \; e^{i}_{v}(\Lambda^{i}_{v}\times\partial_{1}C) \; \subset \; W'$ for $i = 1, \dots, m$.
For each $l = 0, \dots, p$ let us denote:
$$\begin{aligned}
\partial_{1}C^{i}_{l} &:= e^{i}_{v}(\Lambda^{i}_{v}\times\partial_{1}C)\cap W'|_{[a_{l}, a_{l+1}]}, \\
\partial_{0, 1}C^{i}_{l} &:= e^{i}_{v}(\Lambda^{i}_{v}\times\partial_{0, 1}C)\cap(\partial W')|_{[a_{l}, a_{l+1}]}, \\
\partial_{1}\widehat{C}^{i}_{l} &:= e^{i}_{v}(\Lambda^{i}_{v}\times\partial_{1}C)\cap(W'|_{a_{l}}\sqcup W'|_{a_{l+1}}).
\end{aligned} $$
We define
$$
\partial_{1}C_{l}, \quad \partial_{0, 1}C_{l}, \quad \text{and} \quad \partial_{1}\widehat{C}_{l} 
$$
similarly using the embedding $e$.
It follows from Definition \ref{defn: objects surgery data} (condition (ii)) that each sub-manifold-triad 
$$(\partial_{1}C^{i}_{l}; \; \partial_{0, 1}C^{i}_{l}, \; \partial_{1}\widehat{C}^{i}_{l}) \; \; \subset \; \; (W'|_{[a_{l}, a_{l+1}]}; \;(\partial W')|_{[a_{l}, a_{l+1}]}, \;  W'|_{a_{l}}\sqcup W'|_{a_{l+1}})$$
is diffeomorphic to a finite disjoint union of copies of 
$$
(D^{n}\times[0, 1]; \; \partial D^{n}\times[0, 1], \; D^{n}\times\{0, 1\}).
$$ 
By the assumption that $\{v_{1}, \dots, v_{m}\}$ is in general position with respect to the relation $\mathcal{T}$, it follows that for each $l$, the collection of submanifold triads, 
$$(\partial_{1}C_{l}^{i}; \; \partial_{0, 1}C_{l}^{i}, \; \partial_{1}\widehat{C}^{i}_{l}) \quad \quad i = 1, \dots, m$$ 
is pairwise transverse. 
Since $(a, \varepsilon, (W, \ell_{W})) \in \mb{D}^{\partial, \mb{c}'}_{p}$, the pairs
$$(W|_{[a_{l}, a_{l+1}]}, (\partial W)|_{[a_{l}, a_{l+1}]}) \quad  \text{and} \quad (W|_{a_{l}}\sqcup W|_{a_{l+1}}, (\partial W)|_{a_{l}}\sqcup (\partial W)|_{a_{l+1}})$$ 
are $2$-connected and $1$-connected respectively, and 
the manifold $W'$ is obtained from $W$ by deleting a finite collection of $(n+1)$-dimensional manifolds.
It follows by general position that the pairs
$$\left(W'|_{[a_{l}, a_{l+1}]}, (\partial W')|_{[a_{l}, a_{l+1}]}\right) \quad  \text{and} \quad \left(W'|_{a_{l}}\sqcup W'|_{a_{l+1}}, (\partial W')|_{a_{l}}\sqcup (\partial W')|_{a_{l+1}}\right)$$ 
are $2$-connected and $1$-connected respectively.

We may apply Proposition \ref{proposition: inductive disjunction} to 
$$
(\partial_{1}C_{0}; \; \partial_{0,1}C_{0}, \; \partial_{1}\widehat{C}_{0}) \; \hookrightarrow \; (W'|_{[a_{0}, a_{1}]};  \; (\partial W')|_{[a_{0}, a_{1}]}, \;  W'|_{a_{0}}\sqcup W'|_{a_{1}})
$$
and the collection of inclusion maps
\begin{equation} \label{equation: collection of inclusion maps}
(\partial_{1}C_{0}^{i}; \; \partial_{0, 1}C_{0}^{i}, \; \partial_{1}\widehat{C}^{i}_{0}) \hookrightarrow (W'|_{[a_{0}, a_{1}]}; \;(\partial W')|_{[a_{0}, a_{1}]}, \;  W'|_{a_{0}}\sqcup W'|_{a_{1}}), 
\end{equation}
for $i = i, \dots, m$,
so as to obtain an isotopy that pushes $\partial_{1}C_{0} \subset W'|_{[a_{0}, a_{1}]}$ off of 
$$\partial_{1}C_{0}^{i} \; \subset \; W'|_{[a_{0}, a_{1}]}.$$ 
In our application of Proposition \ref{proposition: inductive disjunction} the inclusion of $(\partial_{1}C_{0}; \; \partial_{0,1}C_{0}, \; \partial_{1}\widehat{C}_{0})$  plays the role of the embedding $g_{0}$ and the inclusion maps of (\ref{equation: collection of inclusion maps}) play the role of the embeddings $g_{1}, \dots, g_{k}$.
With $\partial_{1}C_{0} \subset W'|_{[a_{0}, a_{1}]}$ made disjoint from $\partial_{1}C_{0}$, by applying Proposition \ref{proposition: inductive disjunction} again we may construct an isotopy of
$$
(\partial_{1}C_{1}; \; \partial_{0,1}C_{1}, \; \partial_{1}\widehat{C}_{1}) \; \hookrightarrow \; (W'|_{[a_{1}, a_{2}]};  \; (\partial W')|_{[a_{1}, a_{2}]}, \;  W'|_{a_{1}}\sqcup W'|_{a_{2}}),
$$
constant on the face $\partial_{1}\widehat{C}_{1}\cap W'|_{a_{1}}$, that makes $\partial_{1}C_{l} \subset W'|_{[a_{1}, a_{2}]}$ disjoint from the 
$$\partial_{1}C_{1}^{i} \; \subset \; W'|_{[a_{1}, a_{2}]}$$
for all $i = 1, \dots, m$.
By continuing this process indicatively for the submanifolds $\partial_{1}C_{0}, \dots, \partial_{1}C_{p}$, 
we obtain an embedding 
$
\Lambda\times\partial_{1}C \; \hookrightarrow \; W
$
with image disjoint from $e_{v}^{i}(\Lambda\partial_{1}C) \subset W$ for all $i = 1, \dots, m$.
By application of isotopy extension, 
we obtain a new embedding 
$$
e': \Lambda\times(-6, -2)\times\R^{n+1}\times D^{n+1}_{+} \longrightarrow (a_{0}-\varepsilon_{0}, a_{p}+\varepsilon_{p})\times\R^{\infty}_{+},
$$
that determines an element of $\mb{Z}_{0}(x)$, such that $e'(\Lambda\times\partial_{1}C) \subset W$ is disjoint from the 
$$e^{i}_{v}(\Lambda^{i}_{v}\times \partial_{1}C), \; e^{j}_{w}(\Lambda^{j}_{w}\times\partial_{1}C) \; \subset \; W$$ 
for all $i = 1, \dots, m$ and $j = 1, \dots, k$.

Let us now denote $\widetilde{C} := C\setminus\partial_{1}C$. 
Notice that $e'(\Lambda\times\widetilde{C}) \subset \R\times\R^{\infty}_{+}\setminus W$. 
It remains still to make $e(\Lambda\times\widetilde{C})$ disjoint from the submanifolds $e^{i}_{v}(\Lambda^{i}_{v}\times \widetilde{C})$ and $e^{j}_{w}(\Lambda^{i}_{v}\times \widetilde{C})$.
Since the ambient space $\R\times\R^{\infty}_{+}\setminus W$ is infinite dimensional, this condition is achieved by simply putting $e(\Lambda\times\widetilde{C})$ into general position with respect to the submanifolds
$$e^{i}_{v}(\Lambda^{i}_{v}\times \widetilde{C}), \; e^{j}_{w}(\Lambda^{i}_{v}\times \widetilde{C}) \subset \R\times\R^{\infty}_{+}\setminus W.$$
By what was proven in the previous paragraph, it follows that $e(\Lambda\times\widetilde{C})$ is already disjoint from these submanifolds near $W$. 
The desired disjointness condition can be achieved by a small isotopy that is fixed on a neighborhood of $W$. 
As a result we obtain an embedding $e''$ with $e''(\Lambda\times C)$ disjoint from $e^{i}_{v}(\Lambda^{i}_{v}\times C)$ and $e^{i}_{v}(\Lambda^{j}_{w}\times C)$ for all $i, j$. 
This embedding determines the desired element of $\mb{Z}_{0}(x)$ and concludes the proof of the proposition.
\end{proof}
\begin{remark}
In the proof of the above lemma we had to use Proposition \ref{proposition: inductive disjunction} (which is a corollary of Theorem \ref{theorem: higher half whitney trick} from and Theorem \ref{theorem: half whitney trick} from the appendix). 
This crucial result requires the condition that $2n+1 \geq 9$. 
This is the only place in the paper where this dimensional assumption is required. 
If Proposition \ref{proposition: inductive disjunction} could be strengthened to work in lower dimensions, then so could the above lemma. 
\end{remark}
The proof of condition (ii) will require the following ingredient.
\begin{lemma} \label{lemma: pi-pi theorem}
For $n \geq 3$, let $M$ be a compact, $2n$-dimensional manifold. 
Suppose that $(M, \partial M)$ is $(n-1)$-connected and that $\partial M$ is $(n-2)$-connected. 
Then there exists a finite set $\Lambda$ together with an embedding 
$$
\varphi: (\Lambda\times D^{n}\times \R^{n}, \; \Lambda\times S^{n-1}\times \R^{n}) \longrightarrow (M, \partial M)
$$
such that manifold 
$$
M'  \; = \; M\setminus\varphi(\Lambda\times D^{n}\times \R^{n})
$$
obtained by deleting the image of $\varphi$ is 
 diffeomorphic to the disk $D^{2n}$. 
 In particular, the manifold $M$ can be made into a disk by a collection of disjoint handle-subtraction surgeries of index $n$.
\end{lemma}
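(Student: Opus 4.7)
The plan is to present $M$ as a handlebody of the form $M \cong D^{2n}\cup \coprod_{\lambda\in\Lambda} H_\lambda$, with a single $0$-handle and some collection of $n$-handles, and then to take the open tubular neighborhoods of the cocores of the $H_\lambda$ as the desired embedding data. The hypothesis $n \geq 3$ (which gives $\dim M = 2n \geq 6$) together with the three connectivity assumptions on $M$, $\partial M$, and $(M,\partial M)$ provide exactly the input needed both to produce such a minimal handle decomposition and to execute the complement computation.

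First I would carry out the handle simplification in two dual stages, starting from an arbitrary smooth handle decomposition of $M$. On the absolute side (building $M$ from a $0$-handle), use $\pi_0(M) = 0$ to reduce to a single $0$-handle, and then apply the Smale--Whitney handle-cancellation lemma to eliminate handles of index $1, \dots, n-1$; this is legal because $\pi_i(M) = 0$ for $i \le n-1$ and the ambient dimension $2n \ge 2(n-1)+2$ allows the Whitney trick, where one also uses that $\partial M$ is simply connected (from $(n-2)$-connectedness with $n\geq 3$) so that Whitney discs can be pushed off the boundary. On the dual side (building $M$ relative to the collar $\partial M \times I$, obtained by inverting the Morse function), handles of index $k$ correspond to absolute handles of index $2n-k$; in particular the absolute handles of index $n+1, \ldots, 2n-1$ become relative handles of index $n-1, \dots, 1$. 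Apply Smale cancellation again in this relative setting, now using $\pi_i(M,\partial M) = 0$ for $i \le n-1$; this cancels all remaining absolute handles of index $>n$. The outcome is a handle decomposition
\[
M \;\cong\; D^{2n}\;\cup_{\psi}\;\coprod_{\lambda\in\Lambda} H_\lambda, \qquad H_\lambda \cong D^n_{\mathrm{core}}\times D^n_{\mathrm{cocore}},
\]
with $|\Lambda|$ finite, which is essentially a form of Wall's handle decomposition theorem for highly-connected manifolds with boundary.

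Next, for each $\lambda$ take the cocore disk $\{0\}\times D^n_{\mathrm{cocore}}\subset H_\lambda$; this is properly embedded in $(M,\partial M)$ with boundary in $D^n_{\mathrm{core}}\times S^{n-1}_{\mathrm{cocore}} \subset \partial M$, and its normal bundle in $M$ is canonically trivialized by the handle product structure. Using the identification $\R^n \cong \mathrm{int}(D^n_{\mathrm{core}})$, this gives a proper embedding
\[
\varphi_\lambda\colon (D^n\times \R^n,\; S^{n-1}\times \R^n) \hookrightarrow (M,\partial M)
\]
whose image is precisely the relative interior $\mathrm{int}_M(H_\lambda) = H_\lambda\setminus(S^{n-1}_{\mathrm{core}}\times D^n_{\mathrm{cocore}})$, i.e.\ the handle minus its attaching region. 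The images for different $\lambda$ are disjoint because the handles are, and assembling yields the desired $\varphi\colon \Lambda\times D^n\times \R^n\hookrightarrow M$. The complement computation is then immediate:
\[
M\setminus \varphi(\Lambda\times D^n\times \R^n) \;=\; D^{2n}\;\cup\;\coprod_\lambda (S^{n-1}_{\mathrm{core}}\times D^n_{\mathrm{cocore}}) \;=\; D^{2n},
\]
since each attaching region already sits inside $\partial D^{2n}$. The main obstacle in the argument is the first step, the existence of the minimal handle decomposition. The absolute cancellation of handles of index $<n$ is the textbook Smale argument; the cancellation of handles of index $>n$ via the dual relative decomposition is the less standard ingredient, and requires one to check carefully that the $(n-1)$-connectivity of the pair $(M,\partial M)$ translates, in the relative setting with collar $\partial M\times I$, into the correct vanishing of homotopy groups needed to run Smale--Whitney cancellation against the remaining $n$-handles.
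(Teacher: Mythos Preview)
Your approach is correct but genuinely different from the paper's. Rather than constructing a handle decomposition, the paper picks a $\Z$-basis of $H_n(M,\partial M)\cong\pi_n(M,\partial M)$, represents each basis element by a map $(D^n,S^{n-1})\to(M,\partial M)$, upgrades these to disjoint embeddings using Hudson's embedding theorem and the half Whitney trick, and takes tubular neighbourhoods. It must then verify \emph{a posteriori} that the complement $M'$ is a disk: this requires an intersection-form argument (isolated as a separate Claim) to show that $M'$ is still $(n-1)$-connected, then excision to get $H_n(M',\partial M')=0$, and finally the recognition of the disk via the $h$-cobordism theorem. Your route front-loads all the effort into producing the minimal handle decomposition, after which $M'\cong D^{2n}$ is immediate by inspection. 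The paper's choice dovetails with the embedding and disjunction machinery it develops elsewhere for other purposes; your approach trades that for the standard Smale--Wall cancellation package and entirely sidesteps the somewhat delicate complement analysis.

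One point worth flagging, though it applies equally to the paper's own proof: you assert $\pi_i(M)=0$ for $i\le n-1$, but the stated hypotheses (that the \emph{pair} $(M,\partial M)$ is $(n-1)$-connected and $\partial M$ is $(n-2)$-connected) only yield this for $i\le n-2$; take $M=S^2\times D^4$ with $n=3$ for a counterexample. In the intended application the manifold $M$ is an object of $\Cob^{\partial,\mb{c}'}_\theta$ and hence $(n-1)$-connected by definition, so this is a slip in the lemma's statement rather than a flaw in either argument.
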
 
Lemma \ref{lemma: pi-pi theorem} is a special case of the
\textit{$\pi-\pi$-theorem} (see \cite[Section 4]{W 70}).  
Below we give a simple independent proof.
\begin{proof}[Proof of Lemma \ref{lemma: pi-pi theorem}]
By \textit{Lefschetz duality}, the long exact sequence on homology groups associated to the pair $(M, \partial M)$ reduces to the four term exact sequence
$$
\xymatrix{
0 \ar[r] & H_{n}(\partial M) \ar[r] & H_{n}(M) \ar[r] & H_{n}(M, \partial M) \ar[r] & H_{n-1}(\partial M)  \ar[r] & 0.
}
$$
By the fact that $M$ is $(n-1)$-connected, the group $H_{n}(M, \partial M)$ is finitely generated free-abelian.
Let $g \in \Z_{\geq 0}$ be the $\Z$-rank of $H_{n}(M, \partial M)$.
Using the Hurewicz theorem it then follows that there are maps
\begin{equation} \label{equation: full basis}
\phi_{i}: (D^{n}, \; S^{n-1}) \; \longrightarrow \; (M, \partial M) \quad \text{for $i = 1, \dots, g$}
\end{equation}
such that the list
$
(\phi_{1}, \dots, \phi_{g}) 
$
represents a $\Z$-basis for $H_{n}(M, \partial M)$. 
By Theorem \ref{theorem: hudsen embedding theorem} we may assume that each $\phi_{i}$ is an embedding and then by applying Theorem \ref{subsection: the half Whitney trick} inductively, we may assume that the embeddings $\phi_{i}$ are pairwise disjoint. 
Thus by setting $\Lambda = \{1, \dots, g\}$, we obtain an embedding
$$
\phi: (\Lambda\times D^{n}, \Lambda\times S^{n-1}) \longrightarrow (M, \partial M) 
$$
with $\phi|_{\{i\}\times D^{n}} = \phi_{i}$. 
Since the disk is contractible, it follows that the normal bundle of each $\phi_{i}$ is trivial.
It follows that the embedding $\phi$ extends to an embedding 
$$
\varphi: (\Lambda\times D^{n}\times\R^{n}, \; \Lambda\times S^{n-1}\times\R^{n}) \longrightarrow (M, \partial M). 
$$
We now set
$$M' = M\setminus\varphi(\Lambda\times D^{n}\times\R^{n}).$$
By general position it follows that both $M'$ and $\partial M'$ are $(n-2)$-connected. 
We will also need something a little bit stronger which requires more a bit more than a general position argument to prove. 
\begin{claim} \label{claim: n-1 connectivity}
The manifold $M'$ is $(n-1)$-connected. 
\end{claim}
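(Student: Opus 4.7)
The plan is to establish the claim in three stages: first upgrade $M$ itself to being $(n-1)$-connected, then recognize $M$ as obtained from $M'$ by attaching $g=|\Lambda|$ many $n$-cells (which handles $\pi_i$ for $i\le n-2$ and reduces the claim to a single homology calculation), and finally verify that calculation via Poincar\'e--Lefschetz duality on $M$.

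For the first stage, the four-term exact sequence displayed just before the claim gives surjectivity of $H_n(M,\partial M)\to H_{n-1}(\partial M)$. Inserting this into the tail of the long exact sequence of $(M,\partial M)$, in which $H_{n-1}(\partial M)\twoheadrightarrow H_{n-1}(M)$ is also surjective because $H_{n-1}(M,\partial M)=0$, forces $H_{n-1}(M)=0$. Together with the $(n-2)$-connectivity of $M$ (which follows from the same long exact sequence and the hypothesis that $\partial M$ is $(n-2)$-connected) and with simple-connectivity of $M$ (valid for $n\ge 3$), Hurewicz yields $\pi_{n-1}(M)=0$, so $M$ is $(n-1)$-connected.

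For the second stage, I would write $N:=\varphi(\Lambda\times D^n\times D^n)\subset M$ and note $M=M'\cup N$ with $N\cap M'=\Lambda\times D^n\times S^{n-1}$. Since $(D^n\times D^n,\,D^n\times S^{n-1})$ deformation retracts onto $(D^n,S^{n-1})$, the homotopy pushout exhibits $M$, up to homotopy, as $M'$ with $g$ many $n$-cells attached along the co-core spheres $\{0\}\times S^{n-1}\subset M'$. Hence the inclusion $M'\hookrightarrow M$ is $(n-1)$-connected, so $\pi_i(M')=0$ for $i\le n-2$; from the long exact sequence of $(M,M')$ -- using $H_i(M,M')=0$ for $i\ne n$ and $H_n(M,M')\cong\Z^g$ generated by the co-cores $\{0\}\times D^n$ -- Hurewicz applied to the simply connected $M'$ reduces the claim to showing that the connecting homomorphism
\[ \partial_*\colon H_n(M,M')\cong\Z^g\longrightarrow H_{n-1}(M') \]
vanishes, equivalently that each co-core sphere $\{0\}\times S^{n-1}\subset M'$ is null-homologous in $M'$.

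For the third stage, I would apply Poincar\'e--Lefschetz duality on the compact oriented $2n$-manifold $M$: the intersection pairing $H_n(M)\otimes H_n(M,\partial M)\to\Z$ is perfect modulo torsion. Since $H_n(M,\partial M)\cong\Z^g$ is free with basis $\{[\phi_i]\}$, dual classes $\xi_i\in H_n(M)$ exist with $\xi_i\cdot[\phi_j]=\delta_{ij}$; by Hurewicz each $\xi_i$ is represented by a map $S^n\to M$. The main obstacle is to put this representative into standard form inside the tubes: after general position with respect to the embedded cores $\phi_j(D^n)$ and the tubes $N_j$, together with the Whitney trick (valid in $M$ because the complementary dimension is $n\ge 3$), I would arrange $\xi_i$ to meet $\phi_i$ transversely at a single interior point and to be disjoint from the other cores; a local isotopy inside the contractible tube $N_i$, supported away from $M'\setminus N_i$, then straightens $\xi_i\cap N_i$ into the co-core $\{0\}\times D^n$. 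Splitting $\xi_i=(\xi_i\cap M')+(\xi_i\cap N_i)$ as chains and using $\partial\xi_i=0$ then yields $\partial(\xi_i\cap M')=\pm[\{0\}\times S^{n-1}]$, exhibiting the co-core sphere as a boundary in $M'$ and giving $\partial_*=0$.
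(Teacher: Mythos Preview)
Your argument is correct, and at its heart it is the same as the paper's: both establish $H_{n-1}(M')=0$ by showing the map $H_n(M)\to H_n(M,M')$ is surjective, using that the intersection pairing $\lambda\colon H_n(M)\otimes H_n(M,\partial M)\to\Z$ is nonsingular and that the $[\phi_i]$ form a basis of $H_n(M,\partial M)$. Your Stages~1--2 (upgrading $M$ to $(n-1)$-connected and recognising $M$ as $M'$ with $g$ many $n$-cells attached) are a clean way to set this up; the paper instead removes one handle at a time, proving each $M_i=M\setminus\varphi_i(D^n\times\Int D^n)$ is $(n-1)$-connected.

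The only substantive difference is your Stage~3. The paper simply observes that the map $H_n(M)\to H_n(M,M_i)\cong\Z$ is given by the formula $z\mapsto\lambda(z,x_i)\cdot y$, so surjectivity is immediate from nonsingularity of $\lambda$; the same identification shows $H_n(M)\to H_n(M,M')\cong\Z^g$ is $z\mapsto(\lambda(z,x_j))_j$, which is onto because the $x_j$ form a basis. You instead realise the dual classes $\xi_i$ by maps $S^n\to M$, invoke the Whitney trick to make geometric and algebraic intersections agree, straighten inside the tubes, and then split $\xi_i$ as a chain. This works (the Whitney trick applies since $M$ is simply connected and $n\ge 3$, and the chain-splitting is valid for singular cycles), but it is more than is needed: once you have $\xi_i\cdot[\phi_j]=\delta_{ij}$ in homology, the image of $\xi_i$ in $H_n(M,M')\cong\bigoplus_j H_n(N_j,N_j\cap M')$ is already the $i$-th co-core generator, because that map is computed by intersection with the cores---no embedded representatives or isotopies required.
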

We assume this claim for now and will give its proof after the proof of the current lemma. 
Now, by the excision theorem we have an isomorphism 
$$
H_{i}(M, \; \partial M\cup\varphi(\Lambda\times D^{n}\times\R^{n})) \; \cong \; H_{i}(M', \partial M')
$$
for all $i$. 
Since $(\phi_{1}, \dots, \phi_{g})$ represents a $\Z$-basis for $H_{n}(M, \partial M)$, it follows that 
$$H_{n}(M, \; \partial M\cup\varphi(\Lambda\times D^{n}\times\R^{n})) = 0$$ 
and so $H_{n}(M', \partial M') = 0$. 
Using Claim \ref{claim: n-1 connectivity}, the long exact sequence associated to the pair $(M', \partial M')$ implies that $M'$ is weakly contractible (it is simply connected and all of its homology groups vanish). 
Since $2n \geq 6$ it follows by \cite[Proposition A (page 108)]{M 65} that $M'$ is diffeomorphic to a disk. 
The proof of Lemma \ref{lemma: pi-pi theorem} will be complete once we prove Claim \ref{claim: n-1 connectivity}.
\end{proof}
\begin{proof}[Proof of Claim \ref{claim: n-1 connectivity}.]
Recall the embedding 
$$\varphi: (\Lambda\times D^{n}\times\R^{n}, \; \Lambda\times S^{n-1}\times\R^{n}) \longrightarrow (M, \partial M).$$
 For each $i \in \Lambda = \{1, \dots, g\}$, let $\varphi_{i}: (D^{n}\times\Int(D^{n}), S^{n-1}\times\Int(D^{n}))  \longrightarrow (M, \partial M)$ denote the embedding obtained by restricting $\varphi$ to $\{i\}\times D^{n}\times\Int(D^{n})$. 
To prove the claim it will suffice to show that each manifold 
$$M_{i} := M\setminus\varphi_{i}(D^{n}\times\Int(D^{n}))$$ 
is $(n-1)$-connected. 
It follows by general position that it is $(n-2)$-connected. 
Let $x_{i} \in H_{n}(M, \partial M)$ be the class represented by $\varphi_{i}|_{D^{n}\times\{0\}}: (D^{n}, \partial D^{n}) \longrightarrow (M, \partial M)$.
Let  
$$
\lambda: H_{n}(M)\otimes H_{n}(M, \partial M) \longrightarrow \Z,
$$
be the intersection pairing, which is non-singular by Lefschetz-Duality. 
Let 
$$y \in H_{n}(M, M_{i}) \cong H_{n}(D^{n}\times D^{n}, D^{n}\times S^{n-1}) \cong \Z$$ 
be a generator. 
The map induced by inclusion
\begin{equation} \label{equation: inclusion intersection map}
H_{n}(M) \longrightarrow H_{n}(M, M_{i}),
\end{equation}
is given by the formula $z \mapsto \lambda(z, x_{i})\cdot y$. 
Since $(x_{1}, \dots, x_{g})$ is a $\Z$-basis for $H_{n}(M, \partial M)$ and $\lambda(\underline{\hspace{.3cm}}, \; \underline{\hspace{.3cm}} )$ is non-singular, it follows that there exists $v \in H_{n}(M)$ such that $\lambda(v, x_{i}) = 1$.  
Thus, the map (\ref{equation: inclusion intersection map}) is surjective.
From the exact sequence 
$$
\xymatrix{
H_{n}(M) \ar[r] & H_{n}(M, M_{i}) \ar[r] & H_{n-1}(M_{i}) \ar[r] & H_{n-1}(M) = 0, 
}
$$
it then follows that $H_{n-1}(M_{i}) = 0$ for $i = 1, \dots, g$. 
This concludes the proof of Claim \ref{claim: n-1 connectivity} and the proof of Lemma \ref{lemma: pi-pi theorem}.
\end{proof}

The condition (ii) translates to the following lemma.
\begin{lemma} \label{proposition: nonempty surgery data}
For any element $x = (a, \varepsilon, (W, \ell_{W})) \in \mb{D}^{\partial, \mb{c}'}_{p}$, the space $\widetilde{\mb{Y}}_{0}(x)$ is non-empty. 
\end{lemma}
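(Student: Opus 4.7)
The plan is to build the surgery datum slice-by-slice using Lemma \ref{lemma: pi-pi theorem}, then extend the resulting slice-level embeddings to the full flexible-model domain, and finally extend the $\theta$-structure using Proposition \ref{proposition: extension of theta-structure}.

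First I would exploit the hypothesis $x \in \mb{D}^{\partial, \mb{c}'}_p$. By Definitions \ref{defn: connected sub categories} and \ref{defn: category c prime}, for each $i \in [p]$ the compact $2n$-dimensional regular level $M_i := W|_{a_i}$ is $(n-1)$-connected and its boundary $\partial M_i = (\partial W)|_{a_i}$ is $(n-2)$-connected. Thus Lemma \ref{lemma: pi-pi theorem} applies and yields a finite set $\Lambda_i$ together with an embedding
$$
\varphi_i : (\Lambda_i \times D^n \times \R^n,\; \Lambda_i \times S^{n-1} \times \R^n) \;\longrightarrow\; (M_i, \partial M_i)
$$
whose complement in $M_i$ is diffeomorphic to $D^{2n}$. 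I choose the $\Lambda_i$'s to be pairwise disjoint subsets of the fixed infinite set $\Omega$, put $\Lambda = \bigsqcup_i \Lambda_i$, and let $\delta : \Lambda \to [p]$ be the evident labeling.

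Next I need to promote each $\varphi_i$ to an embedding $e_i$ of $\Lambda_i \times \R \times (a_i - \varepsilon_i, a_p + \varepsilon_p) \times \R^n \times D^{n+1}_+$ (after the reparametrisation (\ref{equation: reparametrisation})) into $\R \times [0,1) \times (-1,1)^{\infty-1}$. Under the identifications $\partial_1 D^{n+1}_+ \cong D^n$ and $\partial_{0,1} D^{n+1}_+ \cong S^{n-1}$, the map $\varphi_i$ is exactly the prescribed restriction of $e_i$ to $\Lambda_i \times \{0\} \times \{a_i\} \times \R^n \times \partial_1 D^{n+1}_+$. Because each interval $(a_k - \varepsilon_k, a_k + \varepsilon_k)$ consists entirely of regular values of $x_0|_W$ and $x_0|_{\partial W}$ (by definition of $\mb{D}^{\partial}_p$), the slab $W|_{(a_k - \varepsilon_k, a_k + \varepsilon_k)}$ carries a canonical product structure over the base interval, which I use to extend $\varphi_i$ height-preservingly over each such interval with $k \geq i$. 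Between consecutive regular intervals I interpolate using a gradient-like vector field for $x_0|_W$ that is tangent to $\partial W$, producing a smooth embedding
$$
\Lambda_i \times \{0\} \times (a_i - \varepsilon_i, a_p + \varepsilon_p) \times \R^n \times \partial_1 D^{n+1}_+ \;\hookrightarrow\; W
$$
carrying the $\partial_{0,1}D^{n+1}_+$-face into $\partial W$ and satisfying condition (ii) of Definition \ref{defn: objects surgery data}. Since the codomain has infinite ambient dimension transverse to $W$, I extend this to an embedding $e_i$ of the full domain by pushing in the $\R$-factor and the $\Int(D^{n+1}_+)$-direction off of $W$ into the surrounding space, arranging the $\partial_0 D^{n+1}_+$-face to map into $\R \times \{0\} \times (-1,1)^{\infty-1}$. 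The images of the cores of the various $e_i$ can be made pairwise disjoint by a general position adjustment (again using the infinite ambient room); since Definition \ref{defn: flexible verison} only asks $e$ to be an embedding on a neighborhood of the core, this suffices.

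For the $\theta$-structure, I pull back $\ell_W$ along $e$ on the subset $\Lambda \times (K_{|x|^2 > 2})|_{(-6,-2)}$, which by construction lies in $e^{-1}(W)$; combined with a trivialisation of the normal bundle to the image inside $W$ this yields a $\theta$-structure on $\Lambda \times (K_{|x|^2>2})|_{(-6,-2)}$. Applying Proposition \ref{proposition: extension of theta-structure} componentwise extends this to a $\theta$-structure $\ell$ on all of $\Lambda \times K$. Condition (iii) of Definition \ref{defn: objects surgery data} then holds by construction, (i) and (ii) are built into $e$, and (iv) is exactly the output of Lemma \ref{lemma: pi-pi theorem}. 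The genuine topological content is packaged inside Lemma \ref{lemma: pi-pi theorem} (a form of the $\pi$-$\pi$ theorem for disks), which is already established; the remainder is a geometric bookkeeping construction whose main ingredients are the product structure on regular slabs, a gradient-like vector field to interpolate between them, and the abundance of ambient room in $\R \times [0,1) \times (-1,1)^{\infty-1}$.
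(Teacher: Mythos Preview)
Your proposal has the right overall shape (slice embeddings via Lemma \ref{lemma: pi-pi theorem}, extension to the product domain, extension of the $\theta$-structure via Proposition \ref{proposition: extension of theta-structure}), but there are two genuine gaps.

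\textbf{Extension over $[a_i, a_p]$.} You propose to extend $\varphi_i$ from the slice $M_i$ to $W|_{[a_i, a_p]}$ by flowing along a gradient-like vector field for $x_0|_W$. This is fine on each product slab $W|_{(a_k - \varepsilon_k, a_k + \varepsilon_k)}$, but between consecutive regular intervals the height function $x_0|_W$ may have critical points, and flow lines meeting the stable manifolds do not continue to the next level; there is no reason the flow carries your embedded $D^n$ to an embedded $D^n \times [a_i, a_p]$. The paper instead uses the $n$-connectivity of the triad $(W|_{[a_i, a_p]}; \; W|_{a_p}, (\partial W)|_{[a_i, a_p]})$ (coming from the $\mb{c}'$ condition) to extend each $\varphi_{i,\lambda}$ to a \emph{map} over $[a_i, a_p]$, and then invokes Hudson's embedding theorem (Theorem \ref{theorem: hudsen embedding theorem}) to promote the restriction to the core to an embedding.

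\textbf{Disjointness of the cores.} This is the essential gap. You claim the cores of the various $e_i$ can be made pairwise disjoint ``by a general position adjustment (again using the infinite ambient room)''. But a large piece of each core lives inside $W$, not in the ambient infinite-dimensional space: by condition (i) of Definition \ref{defn: objects surgery data} one has $e_i(\Lambda_i \times \partial_1 C) \subset W$, and this is an $(n{+}1)$-dimensional submanifold of the $(2n{+}1)$-dimensional manifold $W$. Two such submanifolds generically intersect in a $1$-manifold, so general position alone cannot separate them. The paper handles this by invoking Proposition \ref{proposition: inductive disjunction} (which packages the half-Whitney and higher half-Whitney tricks of Theorems \ref{theorem: half whitney trick} and \ref{theorem: higher half whitney trick}) to isotope the cores apart inside each $W|_{[a_l, a_{l+1}]}$; this step is exactly where the hypothesis $n \geq 4$ is needed, and it is the technical heart of the argument. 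Only the portion $\widetilde{C} = C \setminus \partial_1 C$, which lies in the complement of $W$, can be separated by ambient general position.
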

\begin{proof}
Let $a = (a_{0}, \dots, a_{p})$. 
For $i = 0, \dots, p$, we let $(M_{i}, \partial M_{i})$ denote the level sets $(W|_{a_{i}}, (\partial W)|_{a_{i}})$. 
By Lemma \ref{lemma: pi-pi theorem}, for each $i$ there exists an embedding 
\begin{equation} \label{equation: phi surgery embedding}
\varphi_{i}: (\Lambda_{i}\times\R^{n}\times\partial_{1}D^{n+1}_{+}, \;
\Lambda_{i}\times\R^{n}\times\partial_{0,1}D^{n+1}_{+})
\longrightarrow (M_{i}, \partial M_{i}),
\end{equation}
such that the manifold
$$
M'_{i} \; = \; M\setminus\varphi_{i}(\Lambda_{i}\times\R^{n}\times\partial_{1}D^{n+1}_{+})
$$
is diffeomorphic to the disk $D^{2n}$. 

The first step to producing an element of 
 $\widetilde{\mb{Y}}_{0}(x)$ will be to extend the embeddings $\varphi_{i}$ to maps
$$
e_{i}: \textcolor{black}{\left(\Lambda_{i}\times[a_{i}, a_{p}]\times\R^{n}\times\partial_{1}D^{n+1}_{+}, \; \; \Lambda_{i}\times[a_{i}, a_{p}]\times\R^{n}\times\partial_{0,1}D^{n+1}_{+}\right)} \longrightarrow \textcolor{black}{\left(W|_{[a_{i}, a_{p}]}, \; (\partial W)|_{[a_{i}, a_{p}]}\right)},
$$
such that:
\begin{itemize} \itemsep.2cm
\item  the restriction to 
$\Lambda_{i}\times[a_{i}, a_{p}]\times\{0\}\times\partial_{1}D^{n+1}_{+}$
is an embedding for $i = 0, \dots, p$;
\item 
$e_{i}(\Lambda_{i}\times[a_{i}, a_{p}]\times\{0\}\times\partial_{1}D^{n+1}_{+})\bigcap e_{j}(\Lambda_{j}\times[a_{j}, a_{p}]\times\{0\}\times\partial_{1}D^{n+1}_{+}) \; = \; \emptyset$ \; \;  whenever $i \neq j$.
\end{itemize}
\vspace{.2cm}

For each $i = 0, \dots, p$, the triad 
$(W|_{[a_{i}, a_{p}]}; \; W|_{a_{p}}, \; (\partial W)|_{[a_{i}, a_{p}]})$ 
is $n$-connected and hence, the homomorphism 
\begin{equation} \label{equation: pi-n surjectivity}
\pi_{n}(W|_{a_{p}}, \; (\partial W)|_{a_{p}}) \longrightarrow \pi_{n}(W|_{[a_{i}, a_{p}]}, \; (\partial W)|_{[a_{i}, a_{p}]})
\end{equation}
is surjective. 
It follows from surjectivity of (\ref{equation: pi-n surjectivity}) that for each $\lambda \in \Lambda_{i}$, the embedding 
$$
\left(\{\lambda\}\times\R^{n}\times\partial_{1}D^{n+1}_{+}, \; \{\lambda\}\times\R^{n}\times\partial_{0,1}D^{n+1}_{+}\right) \; \longrightarrow \;
\left(W|_{[a_{i}, a_{p}]}, \; (\partial W)|_{[a_{i}, a_{p}]}\right),
$$
given by restricting (\ref{equation: phi surgery embedding}) and then including $(M_{i}, \partial M_{i})$ into $(W|_{[a_{i}, a_{p}]}, \; (\partial W)|_{[a_{i}, a_{p}]})$,
extends to a map 
$$
e_{i, \lambda}: \textcolor{black}{\left(\{\lambda\}\times[a_{i}, a_{p}]\times\R^{n}\times\partial_{1}D^{n+1}_{+}, \; \{\lambda\}\times[a_{i}, a_{p}]\times\R^{n}\times\partial_{0,1}D^{n+1}_{+}\right)} \;
\longrightarrow \; 
\textcolor{black}{\left(W|_{[a_{i}, a_{p}]}, \; (\partial W)|_{[a_{i}, a_{p}]}\right)}.
$$
By direct application of Theorem \ref{theorem: hudsen embedding theorem}, we may assume that the restriction of $e_{i, \lambda}$ to the submanifold $\{\lambda\}\times[a_{i}, a_{p}]\times\{0\}\times\partial_{1}D^{n+1}_{+}$ is an embedding.  
We will need to deform the embeddings $e_{i, \lambda}$ so as to make these submanifolds 
$$e_{i, \lambda}(\{\lambda\}\times[a_{i}, a_{p}]\times\{0\}\times\partial_{1}D^{n+1}_{+}) \; \subset \; W|_{[a_{i}, a_{p}]}$$
for $\lambda \in \Lambda_{i}$ pairwise disjoint. 
This is done by 
applying the same argument employed in the proof of Proposition \ref{proposition: disjunction condition iii}, using Proposition \ref{proposition: inductive disjunction}. 
By combining the maps $e_{i, \lambda}$ we obtain the desired map 
$$e_{i}: \left(\Lambda_{i}\times[a_{i},
    a_{p}]\times\R^{n}\times\partial_{1}D^{n+1}_{+}, \; \;
  \Lambda_{i}\times[a_{i},
    a_{p}]\times\R^{n}\times\partial_{0,1}D^{n+1}_{+}\right)
\longrightarrow \left(W|_{[a_{i}, a_{p}]}, \;
  (\partial W)|_{[a_{i}, a_{p}]}\right).
$$ We then carry out the same construction for each $i = 0, \dots, p$
to obtain the maps $e_{0}, \dots, e_{p}$.  By employing again the
argument from Proposition \ref{proposition: disjunction condition iii}
using Proposition \ref{proposition: inductive disjunction}, we may arrange
for
$$e_{i}(\Lambda_{i}\times[a_{i}, a_{p}]\times\{0\}\times\partial_{1}D^{n+1}_{+})\bigcap e_{j}(\Lambda_{j}\times[a_{j}, a_{p}]\times\{0\}\times\partial_{1}D^{n+1}_{+}) \; = \; \emptyset \quad  \text{whenever $i \neq j$.}$$
Combining the maps $e_{0}, \dots, e_{p}$ yields the desired map 
$$e: \Lambda\times[a_{0}, a_{p}]\times\R^{n}\times\partial_{1}D^{n+1}_{+} \longrightarrow W|_{[a_{0}, a_{p}]}.$$
We may then extend this to a map
$$
e': \Lambda\times\R\times(a_{0}-\varepsilon_{0}, a_{p}+\varepsilon_{p})\times\R^{n}\times D^{n+1}_{+} \; \longrightarrow \; \R\times\R\times\R^{\infty}_{+}
$$
that satisfies the conditions of Definition \ref{defn: objects surgery data}. 
Since the ambient space if infinite dimensional, there is non obstruction to defining this extension.

Now, in addition to the map $e'$, the other piece of data required for
an element of $\widetilde{\mb{Y}}_{0}(x)$ is a $\theta$-structure
$\ell$ on $\Lambda\times K|_{(-6, -2)}$ that agrees with the
restriction of $\ell_{W}$ to the submanifold
$$
e'(\Lambda\times K_{|x|^{2}>2}|_{(-6, -2)}) \; \subset \; W|_{(-6, -2)},
$$ (see Definition \ref{defn: objects surgery data} condition (iii)).
By Proposition \ref{proposition: extension of theta-structure}, such
an extension always exists (the existence of this extension does not
depend on the choice of tangential structure $\theta$).  With this
$\theta$-structure defined over $(\Lambda\times K_{|x|^{2}>2})|_{(-6,
  -2)}$ and the map $e'$, we obtain the desired element of
$\widetilde{\mb{Y}}_{0}(x)$.  This concludes the proof of the
proposition.
\end{proof}
Propositions \ref{proposition: disjunction condition iii} and \ref{proposition: nonempty surgery data} together via Theorem \ref{theorem: flag complex equivalence}, imply that for any $p \in \Z_{\geq 0}$, the augmentation map 
$\widetilde{\mb{D}}^{\partial, \mathcal{H}}_{p, \bullet} \longrightarrow \mb{D}^{\partial, \mb{c}'}_{p}$ 
is a weak homotopy equivalence upon geometric realization.  Then,
by taking the geometric realization with respect to the
$p$-coordinate, we obtain a weak homotopy equivalence $
|\widetilde{\mb{D}}^{\partial, \mathcal{H}}_{\bullet, \bullet}|
\stackrel{\simeq} \longrightarrow |\mb{D}^{\partial,
  \mb{c}'}_{\bullet}|.$ 
  Theorem \ref{theorem: contractibility of space of surgery data} then follows from the weak homotopy
equivalence $|\widetilde{\mb{D}}^{\partial, \mathcal{H}}_{\bullet,
  \bullet}| \simeq |\mb{D}^{\partial, \mathcal{H}}_{\bullet,
  \bullet}|$ and commutativity of the diagram
$$
\xymatrix{
|\mb{D}^{\partial, \mathcal{H}}_{\bullet, \bullet}| \ar[dr] \ar[rr]^{\simeq} && |\widetilde{\mb{D}}^{\partial, \mathcal{H}}_{\bullet, \bullet}| \ar[dl]^{\simeq} \\
& |\mb{D}^{\partial, \mb{c}'}_{\bullet}|. &
}
$$
This concludes the proof of Theorem \ref{theorem: contractibility of space of surgery data}. 
With this, the proof of Theorem \ref{theorem: handlebody equivalence} is now complete.

\section{Group Completion} \label{section: group completion}
By combining the results of the previous sections we obtain the weak homotopy equivalence
\begin{equation} \label{equation: equivalence of categories final}
B\Cob^{\partial, \mathcal{H}}_{\theta} \; \simeq \; B\Cob^{\partial}_{\theta} \; \simeq \; \Omega^{\infty-1}\Sigma^{\infty}B_{+}
\end{equation}
in the case that $d = 2n+1 \geq 9$ and 
$\theta: B \longrightarrow BO(d)$ is such that $B$ is $(n-1)$-connected.  
Below we show how to derive
Theorems A* and A from this weak homotopy equivalence.
\subsection{Reduction to a monoid}
We begin with an observation.
\begin{lemma} \label{lemma: path connected objects}
The object space ${\mathrm{Ob}}\Cob^{\partial,
  \mathcal{H}}_{\theta}$ is path connected.
\end{lemma}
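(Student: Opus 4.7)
The proof will proceed in two independent stages: first deforming the underlying submanifold while carrying the $\theta$-structure along, and then deforming the $\theta$-structure while keeping the submanifold fixed. Given two objects $(M_0, \ell_0), (M_1, \ell_1) \in \Ob\Cob^{\partial, \mathcal{H}}_{\theta}$, it will suffice to connect them by a concatenation of paths of these two types.

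For the first stage, both $M_0$ and $M_1$ are diffeomorphic to $D^{2n}$ and contain the fixed collared disk $D \subset \partial \R^{\infty}_{+}$ in their boundaries. I will choose diffeomorphisms of pairs $\phi_i \colon (D^{2n}, D^{2n-1}) \stackrel{\cong}{\longrightarrow} (M_i, D)$ agreeing on $D^{2n-1}$ with the standard inclusion of $D$. Viewed as embeddings into $(\R^{\infty}_{+}, \partial \R^{\infty}_{+})$ that extend a common germ along $D^{2n-1}$, the space of such embeddings is contractible by a standard infinite-dimensional isotopy extension argument: ample ambient room lets one unknot any compact submanifold relative to a fixed boundary germ. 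Consequently there is an isotopy $\phi_t$ from $\phi_0$ to $\phi_1$ that is constant on $D^{2n-1}$, and pushing $\ell_0$ forward along $\phi_t\circ\phi_0^{-1}$ defines a continuous path $(M_t, \ell'_t)$ in the object space from $(M_0, \ell_0)$ to some $(M_1, \ell'_1)$ having the same underlying submanifold as $(M_1, \ell_1)$.

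For the second stage, it suffices to show that for fixed $M$ diffeomorphic to $D^{2n}$ the space $\Bun(TM, \theta^{*}\gamma^{2n+1}; \ell_D)$ of $\theta$-structures on $M$ restricting to $\ell_D$ on $D$ is path-connected. This is essentially the argument behind Proposition \ref{proposition: theta-n structures} in the special case of a disk: up to homotopy this space is the fiber over $\ell_D$ of the restriction map $\Bun(TM, \theta^{*}\gamma^{2n+1}) \longrightarrow \Bun(TD, \theta^{*}\gamma^{2n+1})$, and since both $M$ and $D$ are contractible, both of these bundle spaces are homotopy equivalent to $B$ and the restriction is a homotopy equivalence, so the fiber is contractible. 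A path in this fiber from $\ell'_1$ to $\ell_1$ then yields a path in the object space with fixed underlying submanifold $M_1$; concatenation with the path from the first stage completes the argument. The only mild subtlety is verifying that the isotopy in the first stage preserves the containment condition $\partial M_t \cap [(-1, 0] \times (-1, 1)^{\infty-2}] = D$, but this is automatic because the isotopy is constant on $D^{2n-1}$; the $(n-2)$-connectivity of $\partial M_t \cong S^{2n-1}$ holds for free.
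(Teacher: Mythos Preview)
Your proof is correct and reaches the same conclusion as the paper, but the route is different. The paper's argument is more structural: it identifies $\Ob\Cob^{\partial,\mathcal{H}}_{\theta}$ with the homotopy quotient $\BDiff_{\theta}(D^{2n}, D) = \Bun(TD^{2n}\oplus\epsilon^{1}, \theta^{*}\gamma^{2n+1}; \ell_{D})//\Diff(D^{2n}, D)$, observes that the $\Bun$-factor is weakly contractible (by the same contractibility-of-disks reasoning you invoke in Stage~2), and concludes that the object space is weakly equivalent to $\BDiff(D^{2n}, D)$, which is path-connected simply because it is a classifying space. Your argument unpacks this into an explicit two-step path construction: first isotope the embedding (corresponding to the contractibility of the relative embedding space $\Emb((D^{2n}, D^{2n-1}),(\R^{\infty}_{+},\partial\R^{\infty}_{+}))$ that underlies the $\BDiff$ identification), then homotope the $\theta$-structure. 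The paper's version is shorter; yours is more self-contained and avoids quoting the moduli-space identification.

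Two small corrections. In Stage~2, the spaces $\Bun(TM\oplus\epsilon^{1}, \theta^{*}\gamma^{2n+1})$ and $\Bun(TD\oplus\epsilon^{2}, \theta^{*}\gamma^{2n+1})$ are not homotopy equivalent to $B$ but rather to the frame bundle of $\theta^{*}\gamma^{2n+1}$ (equivalently, to the homotopy fibre of $\theta$); your conclusion that the restriction map is an equivalence and hence the relative space is contractible is unaffected. And preserving the condition $\partial M_t \cap [(-1,0]\times(-1,1)^{\infty-2}] = D$ is not literally automatic from fixing $D^{2n-1}$: you also need the rest of $\partial M_t$ to stay out of that region, which is not guaranteed by an arbitrary ambient isotopy. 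This is easily arranged in infinite codimension, but it does require a word.
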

\begin{proof}
Recall that $\Ob\Cob^{\partial, \mathcal{H}}_{\theta} \subset \Ob\Cob^{\partial}_{\theta}$ is the subspace consisting of those pairs $(M, \ell)$ such that: 
the submanifold $M \subset \R^{\infty}_{+}$ is diffeomorphic to the disk $D^{2n}$ and $\partial M$ contains the standard $(2n-1)$-dimensional disk $D \subset \partial\R^{\infty}_{+}$. 
Furthermore, $\ell$ is required to agree with $\ell_{D}$ when restricted to $D$. 
This object space is homotopy equivalent to the homotopy quotient 
$$\xymatrix{
\BDiff_{\theta}(D^{2n}, D) = 
\textstyle{\Bun(TD^{2n}\oplus\epsilon^{1}, \theta^{*}\gamma^{2n+1}; \ell_{D})}//\Diff(D^{2n}, D)
}$$
where $\textstyle{\Bun(TD^{2n}\oplus\epsilon^{1}, \theta^{*}\gamma^{2n+1}; \ell_{D})}$ is the space of bundle maps $TD^{2n}\oplus\epsilon^{1} \longrightarrow \theta^{*}\gamma^{2n+1}$ that agree with $\ell_{D}$ when restricted to $D$, and $D$ here is considered to be a subspace of $\partial D^{2n}$.
Since $\textstyle{\Bun(TD^{2n}\oplus\epsilon^{1}, \theta^{*}\gamma^{2n+1}; \ell_{D})}$ is weakly contractible it follows that $\BDiff_{\theta}(D^{2n}, D)$ is weakly homotopy equivalent to the classifying space $\BDiff(D^{2n}, D)$, and thus is path connected. 
This completes the proof of the lemma. 
\end{proof}

We will need to work with an even ``smaller'' subcategory of $\Cob^{\partial, \mathcal{H}}_{\theta}$.
\begin{defn}
Fix once and for all an object $(P, \ell_{P}) \in \Ob\Cob^{\partial, \mathcal{H}}_{\theta}$.
The topological subcategory $\mathcal{M}_{\theta} \subset \Cob^{\partial, \mathcal{H}}_{\theta}$ is defined to be the endomorphism monoid on the object $(P, \ell_{P})$.
\end{defn}

The main theorem of this subsection is the following:
\begin{theorem} \label{theorem: reduction to a monoid}
The inclusion 
$B\mathcal{M}_{\theta} \hookrightarrow B\Cob^{\partial, \mathcal{H}}_{\theta}$ is a weak homotopy equivalence. 
\end{theorem}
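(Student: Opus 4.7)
The plan is to prove the weak equivalence $B\mathcal{M}_\theta \simeq B\Cob^{\partial, \mathcal{H}}_\theta$ by appealing to the standard principle that the classifying space of a topological category with path-connected object space is weakly equivalent to the classifying space of any of its endomorphism monoids. The path-connectedness of the object space has just been verified in Lemma \ref{lemma: path connected objects}, so this principle applies directly.

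To implement this, I would introduce an auxiliary bi-semi-simplicial space $Z_{\bullet, \bullet}$ whose $(p, q)$-simplices are composable chains in $\Cob^{\partial, \mathcal{H}}_\theta$ of the form
$$(P, \ell_P) \to x_1 \to \cdots \to x_p \to z \to y_q \to \cdots \to y_1 \to (P, \ell_P),$$
i.e.\ chains that start and end at the basepoint $(P, \ell_P)$, with a distinguished ``middle'' object $z$. Composition of all morphisms in such a chain produces a single endomorphism of $(P, \ell_P)$, yielding on geometric realization a natural map $|Z_{\bullet, \bullet}| \to B\mathcal{M}_\theta$. On the other hand, forgetting the endpoint constraints and retaining only the chain $x_1 \to \cdots \to x_p \to z \to y_q \to \cdots \to y_1$ yields a natural map $|Z_{\bullet, \bullet}| \to B\Cob^{\partial, \mathcal{H}}_\theta$. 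My task is then to show that both natural maps are weak equivalences, giving the desired zig-zag.

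For the first equivalence, a bar-construction style argument shows that, fibrewise over each endomorphism, the space of ``decompositions'' $(P, \ell_P) \to \cdots \to z \to \cdots \to (P, \ell_P)$ factoring that endomorphism is contractible. For the second, the key input is that every object $z$ admits morphisms both to and from $(P, \ell_P)$: such morphisms can be produced by lifting a path in $\Ob \Cob^{\partial, \mathcal{H}}_\theta$ from $z$ to $(P, \ell_P)$ (whose existence is guaranteed by Lemma \ref{lemma: path connected objects}) to a cylinder-type relative cobordism carrying the appropriate $\theta$-structure, using Proposition \ref{proposition: salient features of smooth maps} on smooth families of $\theta$-manifolds. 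The main obstacle will not be conceptual but the bookkeeping required to set up $Z_{\bullet, \bullet}$ precisely and to verify that both augmentations induce weak equivalences on realizations; this parallels the analogous reduction in Section 7 of \cite{GRW 14} and should go through with only cosmetic changes to adapt for our non-unital setting and for the presence of boundary.
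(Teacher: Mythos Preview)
Your strategy---exploit path-connectedness of $\Ob\Cob^{\partial,\mathcal{H}}_\theta$ together with the graph construction (Proposition \ref{proposition: salient features of smooth maps}) to produce morphisms to and from $(P,\ell_P)$---is exactly the idea the paper uses. But the paper's implementation is rather different from your $Z_{\bullet,\bullet}$, and your construction as written has a gap.

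The paper works entirely inside the flexible models. It passes to the cylindrical sub-semi-simplicial spaces $\mb{D}^{\partial,\mathcal{H},\perp}_\bullet$ and $\mb{D}^{\partial,\mathcal{M},\perp}_\bullet$, and then builds an augmented bi-semi-simplicial resolution $\mb{D}^{\partial,\mathcal{M},\perp}_{\bullet,\bullet}\to\mb{D}^{\partial,\mathcal{H},\perp}_\bullet$ in which the extra direction records, for each regular value $a_i$, a \emph{path} $\gamma_i$ in $\bPsi^\partial_\theta((a_i-\varepsilon_i,a_i+\varepsilon_i)\times\R^\infty_+)$ from the slice near $a_i$ to $(P,\ell_P)$. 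Non-emptiness of these path-spaces is precisely Lemma \ref{lemma: path connected objects}; the resolution is then shown to be a weak equivalence via the flag-complex criterion (Theorem \ref{theorem: flag complex equivalence}), which here is almost trivial since $\mb{Z}_q(x)=\mb{Z}_0(x)^{q+1}$. Finally a homotopy $\mathcal{F}$ is constructed by literally replacing $W$ near each $a_i$ by $\gamma_i(t)$, following the same template as the surgery-on-objects argument in Section \ref{subsection: disk equivalence}. So the paper never leaves the ``parametrized surgery'' framework; the paths are used directly to deform the manifold, not to manufacture categorical morphisms.

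As for your $Z_{\bullet,\bullet}$: the map ``compose everything'' sends a $(p,q)$-simplex to a single endomorphism, i.e.\ to a point of $\mathcal{M}_\theta=N_1\mathcal{M}_\theta$, not to $B\mathcal{M}_\theta$, so you do not get a map $|Z_{\bullet,\bullet}|\to B\mathcal{M}_\theta$ in the way you describe. Relatedly, the claim that the fibre over a fixed endomorphism---the space of all factorizations of that endomorphism through arbitrary objects of $\Cob^{\partial,\mathcal{H}}_\theta$---is contractible is not at all obvious in a non-unital cobordism category and would itself require essentially the work you are trying to avoid. One can repair this by setting up a genuine two-sided bar resolution, but at that point you are reproving the general principle rather than citing it; the paper instead recycles its existing surgery machinery, which is both shorter and keeps the argument self-contained.
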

The proof of the above theorem will require several steps and will be carried out over the course of the whole section.
Recall from the previous section the semi-simplicial space $\mb{D}^{\partial, \mathcal{H}}_{\bullet}$.
Let $\mb{D}^{\partial, \mathcal{M}}_{\bullet} \subset \mb{D}^{\partial, \mathcal{H}}_{\bullet}$ be the sub-semi-simplicial space whose $p$-simplices consist of those $(a, \varepsilon, (W, \ell))$ such that 
$$(W|_{a_{i}}, \ell|_{a_{i}}) \; = \; (P, \ell_{P}) \quad \text{for all $i = 0, \dots, p$,}$$
where $(P, \ell_{P}) \in \Ob\Cob^{\partial, \mathcal{H}}_{\theta, D}$ is the object used in the definition of the monoid $\mathcal{M}_{\theta}$.
By \cite[Proposition 2.14]{GRW 14}, there is a weak homotopy equivalence $B\mathcal{M}_{\theta} \simeq |\mb{D}^{\partial, \mathcal{M}}_{\bullet}|$, and thus to prove Theorem \ref{theorem: reduction to a monoid} it will suffice to prove that the inclusion $|\mb{D}^{\partial, \mathcal{M}}_{\bullet}| \hookrightarrow |\mb{D}^{\partial, \mathcal{H}}_{\bullet}|$ is a weak homotopy equivalence. 

In order to prove that this inclusion is a weak homotopy equivalence, we will need to work with slightly more rigid models for $\mb{D}^{\partial, \mathcal{H}}_{\bullet}$ and $\mb{D}^{\partial, \mathcal{M}}_{\bullet}$.
\begin{defn}
Let $\mb{D}^{\partial, \mathcal{H}, \perp}_{\bullet} \subset \mb{D}^{\partial, \mathcal{H}}_{\bullet}$ be the sub-semi-simplicial space whose $p$-simplices consist of those $(a, \varepsilon, (W, \ell))$ for which the $\theta$-manifold $(W, \ell)$ is \textit{cylindrical} over each interval $(a_{i}-\varepsilon_{i}, a_{i}+\varepsilon_{i})$. 
We then define
$\mb{D}^{\partial, \mathcal{M}, \perp}_{\bullet} \subset \mb{D}^{\partial, \mathcal{H}, \perp}_{\bullet}$
to be the intersection $\mb{D}^{\partial, \mathcal{H}, \perp}_{\bullet}\cap\mb{D}^{\partial, \mathcal{M}}_{\bullet}$. 
\end{defn}
The following proposition is proven in the same way as \cite[Theorem 3.9]{GRW 09}.
\begin{proposition}
The inclusion maps induce weak homotopy equivalences
$$|\mb{D}^{\partial, \mathcal{H}, \perp}_{\bullet}| \simeq |\mb{D}^{\partial, \mathcal{H}}_{\bullet}| \quad \text{and} \quad
|\mb{D}^{\partial, \mathcal{M}, \perp}_{\bullet}| \simeq |\mb{D}^{\partial, \mathcal{M}}_{\bullet}|.$$
\end{proposition}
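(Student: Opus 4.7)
The plan is to follow the strategy of \cite[Theorem 3.9]{GRW 09}, with minor modifications to account for the presence of boundary and of $\theta$-structures. The essential geometric content is the classical fact that over an interval of regular values of the height function $x_0 : W \to \R$, the submersion $x_0$ admits a local trivialization; the task is simply to promote this trivialization, in a face-map compatible and family-continuous manner, to an ambient isotopy inside $\R \times \R^\infty_+$ that cylindrifies $W$ near each $a_i$.

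First, given a $p$-simplex $x = (a, \varepsilon, (W, \ell))$ of $\mb{D}^{\partial, \mathcal{H}}_\bullet$, Definition \ref{defn: flexible model 1}(b) guarantees that each interval $J_i := (a_i - \varepsilon_i, a_i + \varepsilon_i)$ consists of regular values for both $x_0 : W \to \R$ and $x_0 : \partial W \to \R$. By integrating a vector field on $W|_{J_i}$ that projects to $\partial/\partial x_0$ and is tangent to $\partial W$, one obtains a diffeomorphism $\Phi_i : J_i \times W|_{a_i} \xrightarrow{\cong} W|_{J_i}$ over $J_i$, carrying the product $\theta$-structure on $J_i \times W|_{a_i}$ (built from $\ell|_{a_i}$ as in (\ref{equation: product with R})) to the restriction of $\ell$ up to a bundle homotopy rel $W|_{a_i}$. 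Since the codimension of $W$ in $\R \times \R^\infty_+$ is infinite, standard ambient isotopy arguments let us use $\Phi_i$ and a cutoff supported in a slightly smaller interval $J'_i \subset J_i$ to construct an ambient isotopy $H_t^i$ of $\R \times \R^\infty_+$, equal to the identity outside a neighborhood of $J'_i \times W|_{a_i}$, whose time-one map sends $W$ to a submanifold that is genuinely cylindrical (with cylindrical $\theta$-structure) over some open neighborhood $U_i \subset J_i$ of $a_i$.

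Second, the local isotopies $H_t^i$ are supported in disjoint regions (shrink $J'_i$ so as to make the supports disjoint), so they compose to a global ambient isotopy $H_t$ whose time-one value $H_1(x)$ lies in $\mb{D}^{\partial, \mathcal{H}, \perp}_p$. The construction depends smoothly on $x$ (here one uses Proposition \ref{proposition: salient features of smooth maps} and partition-of-unity arguments to make the choices of gradient-like vector field and cutoff functions depend continuously on a compact parameter family, exactly as in \cite[Section 2.8]{GRW 09}); this provides a continuous homotopy $H_t : \mb{D}^{\partial, \mathcal{H}}_p \to \mb{D}^{\partial, \mathcal{H}}_p$ from the identity to a map $r_p$ with image in $\mb{D}^{\partial, \mathcal{H}, \perp}_p$. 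Because the support of the cylindrification at $a_i$ depends only on the pair $(a_i, \varepsilon_i)$, the homotopy commutes with the face maps $d_j$, so the $H_t$ assemble to a deformation of $|\mb{D}^{\partial, \mathcal{H}}_\bullet|$ onto $|\mb{D}^{\partial, \mathcal{H}, \perp}_\bullet|$. Combined with the fact that $r_p$ restricted to $\mb{D}^{\partial, \mathcal{H}, \perp}_p$ is homotopic to the identity (the isotopy does nothing when $W$ is already cylindrical), this exhibits the inclusion $|\mb{D}^{\partial, \mathcal{H}, \perp}_\bullet| \hookrightarrow |\mb{D}^{\partial, \mathcal{H}}_\bullet|$ as a weak homotopy equivalence.

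For the monoid version, one observes that by construction $H_t(x)$ has the same level sets $W|_{a_i}$ and $\ell|_{a_i}$ as $x$ for all $t$ and all $i$. Consequently, if $x \in \mb{D}^{\partial, \mathcal{M}}_p$, then $H_t(x) \in \mb{D}^{\partial, \mathcal{M}}_p$ for all $t$, and the restricted deformation gives the weak equivalence $|\mb{D}^{\partial, \mathcal{M}, \perp}_\bullet| \simeq |\mb{D}^{\partial, \mathcal{M}}_\bullet|$. The main technical hurdle is the family-continuity of the cutoff and vector-field choices together with the disjointness of the supports as $\varepsilon_i$ varies, but these are routine partition-of-unity constructions identical to those used in \cite[Theorem 3.9]{GRW 09}, so we omit the details.
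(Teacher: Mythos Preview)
Your proposal is correct and takes essentially the same approach as the paper, which simply states that the proposition ``is proven in the same way as \cite[Theorem 3.9]{GRW 09}'' without further elaboration. You have in fact supplied a reasonable sketch of what that cited argument consists of (cylindrification over regular intervals via vector-field flow, face-map compatibility, and preservation of level sets for the $\mathcal{M}$-version), so there is nothing to correct.
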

By the above proposition, to prove Theorem \ref{theorem: reduction to a monoid} it will suffice to prove that the inclusion 
$$|\mb{D}^{\partial, \mathcal{M}, \perp}_{\bullet}| \hookrightarrow |\mb{D}^{\partial, \mathcal{H}, \perp}_{\bullet}|$$ 
is a weak homotopy equivalence. 
Our method of proving that this inclusion induces a weak homotopy equivalence will be formally similar to the proof of Theorem \ref{theorem: disk object equivalence}, though technically much easier. 
As in the proof of that theorem we will need to construct an augmented bi-semi-simplicial space $\mb{D}^{\partial, \mathcal{M}, \perp}_{\bullet, \bullet} \longrightarrow \mb{D}^{\partial, \mathcal{M}, \perp}_{\bullet, -1}$ with $\mb{D}^{\partial, \mathcal{M}, \perp}_{\bullet, -1} = \mb{D}^{\partial, \mathcal{H}, \perp}_{\bullet}$.
\begin{defn} \label{defn: path resolution}
For $x = (a, \varepsilon, (W, \ell)) \in \mb{D}^{\partial, \mathcal{H}, \perp}_{\bullet}$, let $\mb{Z}_{0}(x)$ denote the set of $(p+1)$-tuples $(\gamma_{0}, \dots, \gamma_{p})$, where each $\gamma_{i}$ is a path
$\gamma_{i}: [0, 1] \longrightarrow \bPsi_{\theta}^{\partial}((a_{i}-\varepsilon_{i}, a_{i}+\varepsilon_{i})\times\R^{\infty}_{+}),$
subject to the following conditions:
\begin{enumerate} \itemsep.2cm
\item[(a)] $\gamma_{i}(0) \; = \; (W|_{(a_{i}-\varepsilon_{i}, a_{i}+\varepsilon_{i})}, \; \ell|_{(a_{i}-\varepsilon_{i}, a_{i}+\varepsilon_{i})})$,
\item[(b)] $\gamma_{i}(1)|_{a_{i}} = (P, \; \ell_{P})$,
\item[(c)] for each $t \in [0, 1]$, the $\theta$-manifold $\gamma_{i}(t)$ agrees with $(W, \ell)$ outside of  $(a_{i}-\tfrac{2\varepsilon_{i}}{3}, a_{i}+\tfrac{2\varepsilon_{i}}{3})\times\R^{\infty}_{+}$.
\end{enumerate}
For $q \in \Z_{\geq 0}$, $\mb{Z}_{q}(x)$ is defined to be the $(q+1)$-fold product $[\mb{Z}_{0}(x)]^{\times(q+1)}$. 
We then define 
$$\mb{D}^{\partial, \mathcal{M}, \perp}_{p, q} \; = \; \{(x, y) \; | \; x \in \mb{D}^{\partial, \mathcal{H}, \perp}_{p} \quad \text{and} \quad y \in \mb{Z}_{q}(x) \; \},$$
topologized as a subspace of the product
$$
\mb{D}^{\partial, \mathcal{H}, \perp}_{p}\times\Maps\left([0, 1], \Ob\Cob^{\partial, \mathcal{H}}_{\theta, D}\right)^{\times(q+1)(p+1)}.
$$
This definition makes the assignment $[p, q] \mapsto \mb{D}^{\partial, \mathcal{M}, \perp}_{p, q}$ into a bi-semi-simplicial space. 
The forgetful maps
$\mb{D}^{\partial, \mathcal{M}, \perp}_{p, q} \longrightarrow \mb{D}^{\partial, \mathcal{H}, \perp}_{p},$ $(x, y) \mapsto x,$
yield the augmented bi-semi-simplicial space 
$$\mb{D}^{\partial, \mathcal{M}, \perp}_{\bullet, \bullet}  \longrightarrow \mb{D}^{\partial, \mathcal{M}, \perp}_{\bullet, -1}$$ 
with $\mb{D}^{\partial, \mathcal{M}, \perp}_{\bullet, -1} = \mb{D}^{\partial, \mathcal{H}, \perp}_{\bullet}$.
\end{defn}

The following proposition should be compared to Theorem \ref{theorem: contractibility of space of surgery data}. 
The key technical component of its proof relies on the fact that $\Ob\Cob^{\partial, \mathcal{H}}_{\theta}$ is path-connected. 
\begin{proposition} \label{proposition: resolution by path choices}
There are weak homotopy equivalences
$$
|\mb{D}^{\partial, \mathcal{M}, \perp}_{\bullet, 0}| \; \stackrel{\simeq} \longrightarrow \; |\mb{D}^{\partial, \mathcal{M}, \perp}_{\bullet, \bullet}| \; \stackrel{\simeq} \longrightarrow \; |\mb{D}^{\partial, \mathcal{M}, \perp}_{\bullet, -1}|
$$ where the first map is induced by inclusion of zero-simplices and
the second one is induced by the augmentation.
\end{proposition}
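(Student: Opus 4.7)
The strategy is to prove both weak equivalences levelwise in the $p$-direction and then take geometric realization. The essential structural observation is that, since $\mb{Z}_{q}(x) = \mb{Z}_{0}(x)^{\times(q+1)}$ with face maps projecting away factors, the augmented semi-simplicial space $q \mapsto \mb{D}^{\partial, \mathcal{M}, \perp}_{p, q}$ is exactly the topological \v{C}ech nerve of the forgetful map $\pi_{p}: \mb{D}^{\partial, \mathcal{M}, \perp}_{p, 0} \to \mb{D}^{\partial, \mathcal{H}, \perp}_{p}$. Throughout, the key geometric input is Lemma~\ref{lemma: path connected objects}.

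For the second weak equivalence (the augmentation), I plan to invoke the standard principle that the realization of a \v{C}ech nerve of a map admitting local sections is weakly equivalent to the base. To construct local sections of $\pi_{p}$, fix $x_{0} = (a, \varepsilon, (W, \ell))$: path-connectedness of $\Ob\Cob^{\partial, \mathcal{H}}_{\theta}$ furnishes, for each $i$, a path in the object space from $(W|_{a_{i}}, \ell|_{a_{i}})$ to $(P, \ell_{P})$, and the cylindrical (perpendicular) structure of $(W, \ell)$ near $a_{i}$ converts this into a path $\gamma_{i} \in \mb{Z}_{0}(x_{0})$; by localizing the deformation inside the middle third $(a_{i} - \tfrac{2\varepsilon_{i}}{3}, a_{i} + \tfrac{2\varepsilon_{i}}{3})$, condition (c) of Definition~\ref{defn: path resolution} is satisfied automatically. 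The smooth-family description of $\bPsi_{\theta}^{\partial}$ in Proposition~\ref{proposition: salient features of smooth maps} lets this choice be extended continuously over a neighborhood of $x_{0}$, yielding the required local sections. Geometric realization in the $p$-direction preserves the levelwise weak equivalence.

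For the first weak equivalence (the inclusion of $q$-zero-simplices), composition with the augmentation equivalence from the preceding step identifies the map $\mb{D}^{\partial, \mathcal{M}, \perp}_{p, 0} \hookrightarrow |\mb{D}^{\partial, \mathcal{M}, \perp}_{p, \bullet}|$ with $\pi_{p}$ itself. Hence it suffices to show that $\pi_{p}$ is a weak equivalence for each $p$, that is, that the fibers $\mb{Z}_{0}(x) = \prod_{i=0}^{p} \mathcal{P}_{i}$ are weakly contractible. The main obstacle is the contractibility of each factor $\mathcal{P}_{i}$, which is the space of compactly-supported paths of $\theta$-manifolds in $(a_{i} - \varepsilon_{i}, a_{i} + \varepsilon_{i}) \times \R^{\infty}_{+}$ starting at the fixed cylinder $\gamma_{i}(0)$ and ending with slice $(P, \ell_{P})$ at $a_{i}$. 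I would identify $\mathcal{P}_{i}$ with the homotopy fiber of the inclusion of the ``sliced'' subspace of $\theta$-manifolds having the prescribed slice at $a_i$ into the ambient space of $\theta$-manifolds with cylindrical ends, and then argue that this inclusion is a weak equivalence via a gluing/decomposition argument that deforms any such manifold to pass through $(P, \ell_{P})$ at $a_{i}$, once again leveraging Lemma~\ref{lemma: path connected objects} and the cylindrical structure at the interval boundary.
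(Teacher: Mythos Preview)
Your treatment of the augmentation equivalence is correct and is essentially what the paper does: the paper applies Theorem~\ref{theorem: flag complex equivalence} to $\mb{D}^{\partial,\mathcal{M},\perp}_{p,\bullet}\to\mb{D}^{\partial,\mathcal{M},\perp}_{p,-1}$, verifying (i) by observing the augmentation is a fibration, (iii) trivially from $\mb{Z}_q(x)=\mb{Z}_0(x)^{q+1}$, and (ii) via the path-connectedness of $\Ob\Cob^{\partial,\mathcal{H}}_{\theta}$ exactly as you describe. Your \v{C}ech-nerve phrasing is just another name for the same thing.

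For the first equivalence, however, there is a genuine gap. Your reduction is correct: levelwise in $p$, once the augmentation is known to be an equivalence, the inclusion $\mb{D}^{\partial,\mathcal{M},\perp}_{p,0}\hookrightarrow|\mb{D}^{\partial,\mathcal{M},\perp}_{p,\bullet}|$ is a weak equivalence \emph{iff} the forgetful map $\pi_p$ is. But your claim that the fibres $\mb{Z}_0(x)=\prod_i\mathcal{P}_i$ are weakly contractible is not justified by the sketch you give, and is in fact doubtful. The inclusion $Z_i\hookrightarrow Y_i$ of ``$\theta$-manifolds with slice exactly $(P,\ell_P)$ at $a_i$'' into ``$\theta$-manifolds cylindrical near the ends'' imposes a highly rigid pointwise constraint, and there is no evident deformation forcing an arbitrary element of $Y_i$ to acquire that precise slice; Lemma~\ref{lemma: path connected objects} gives only path-connectedness of the object space, which is far weaker. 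More tellingly, in the exactly parallel surgery-data resolution (Proposition~\ref{proposition: inclusion of 0 simplices}), the fibres $\mb{Y}_0(x)$ are complicated spaces of embeddings and are certainly \emph{not} contractible, yet the inclusion of $0$-simplices is still a weak equivalence there. So the mechanism cannot be ``$\pi_p$ is a weak equivalence''.

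The paper instead defers to the argument of \cite[p.~327]{GRW 14}, which works uniformly in both the surgery-data and the path settings and does not attempt to show that $\pi_p$ is a levelwise equivalence. That argument exploits the extra structure in the $q$-direction (the ability to adjoin an additional vertex compatible with any given simplex) to compare $|\mb{D}_{\bullet,0}|$ and $|\mb{D}_{\bullet,\bullet}|$ directly after realisation in the $p$-direction. You should either carry out that argument, or give a complete proof that $Z_i\hookrightarrow Y_i$ is a weak equivalence; the ``gluing/decomposition'' hint you offer does not do this.
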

\begin{proof}
We will explicitly prove that the augmentation map $|\mb{D}^{\partial, \mathcal{M}, \perp}_{\bullet, \bullet}| \; \longrightarrow \; |\mb{D}^{\partial, \mathcal{M}, \perp}_{\bullet, -1}|$ is a weak homotopy equivalence.
With homotopy equivalence established, the proof that $|\mb{D}^{\partial, \mathcal{M}, \perp}_{\bullet, 0}| \;  \longrightarrow \; |\mb{D}^{\partial, \mathcal{M}, \perp}_{\bullet, \bullet}|$ is a weak homotopy equivalence follows by the same argument given in \cite[Page 327]{GRW 14}.
To prove the proposition it will suffice to prove that for each $p \in \Z_{\geq 0}$, the augmented topological flag complex 
$\mb{D}^{\partial, \mathcal{M}, \perp}_{p, \bullet}\longrightarrow \mb{D}^{\partial, \mathcal{M}, \perp}_{p, -1}$
satisfies conditions (i), (ii), and (iii) of Theorem \ref{theorem: flag complex equivalence}. 
Condition (i) follows from the fact that the augmentation map $\mb{D}^{\partial, \mathcal{M}, \perp}_{\bullet, 0} \longrightarrow \mb{D}^{\partial, \mathcal{M}, \perp}_{\bullet, -1}$ is a fibration. 
Indeed the augmentation map is given by sending a tuple of continuous paths to their starting points.
To establish condition (iii) we need to show that given any non-empty list $v_{1}, \dots, v_{k} \in \mb{Z}_{0}(x)$, there exists $v \in \mb{Z}_{0}(x)$ such that 
$(v, v_{i}) \in \mb{Z}_{0}(x)$ for all $i = 1, \dots, p$. 
Since $\mb{Z}_{q}(x) = (\mb{Z}_{0}(x))^{\times(q+1)}$, given such a list $v_{1}, \dots, v_{k} \in \mb{Z}_{0}(x)$, we simply set $v = v_{1}$. 
It is automatic that $(v, v_{i}) \in \mb{Z}_{0}(x)$ for all $i = 1, \dots, p$.
This establishes condition (iii). 
Condition (ii) is more involved and will be proven in the next lemma. 
\end{proof}

\begin{lemma}
Let $p \in \Z_{\geq 0}$. 
The set $\mb{Z}_{0}(x)$ is non-empty for all $x \in \mb{D}^{\partial, \mathcal{H}, \perp}_{p}$.
\end{lemma}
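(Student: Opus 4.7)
The plan is to invoke Lemma \ref{lemma: path connected objects} --- the path-connectedness of $\Ob\Cob^{\partial, \mathcal{H}}_\theta$ --- to join each slice $(W|_{a_i},\ell|_{a_i})$ to the basepoint $(P,\ell_P)$ by a path in the object space, and then graft that path into the cylindrical collar around $a_i$ using a bump function, so that the resulting family of submanifolds changes only in the interior of each slab.

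First, for each $i = 0, \dots, p$, the hypothesis $x \in \mb{D}^{\partial, \mathcal{H}, \perp}_p$ says that $(W, \ell)$ is cylindrical over $(a_i - \varepsilon_i, a_i + \varepsilon_i)$ and that its slice $(W|_{a_i}, \ell|_{a_i})$ is a genuine object of $\Cob^{\partial, \mathcal{H}}_\theta$. By Lemma \ref{lemma: path connected objects} there is a continuous path
$$ \sigma_i \colon [0, 1] \longrightarrow \Ob\Cob^{\partial, \mathcal{H}}_\theta $$
with $\sigma_i(0) = (W|_{a_i}, \ell|_{a_i})$ and $\sigma_i(1) = (P, \ell_P)$. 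Using Proposition \ref{proposition: salient features of smooth maps}, I would homotope $\sigma_i$ rel endpoints to a smooth path in the sense of Definition \ref{defn: smooth maps}; concretely, a smooth submanifold $\Gamma_i \subset [0,1] \times \R^\infty_+$ whose projection to $[0,1]$ is a submersion (and similarly for its boundary), carrying a $\theta$-structure $L_i$ on the vertical tangent bundle.

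Next, I would pick a smooth cutoff $\beta_i \colon (a_i - \varepsilon_i, a_i + \varepsilon_i) \to [0, 1]$ that equals $1$ near $a_i$ and vanishes outside $(a_i - \tfrac{2}{3}\varepsilon_i, a_i + \tfrac{2}{3}\varepsilon_i)$, and set
$$
\gamma_i(t) \;=\; \bigl\{\, (x_0, y) \in (a_i - \varepsilon_i, a_i + \varepsilon_i) \times \R^\infty_+ \;\big|\; \bigl(t \cdot \beta_i(x_0),\; y\bigr) \in \Gamma_i \,\bigr\},
$$
equipped with the $\theta$-structure pulled back from $L_i$ along the map $(x_0,y)\mapsto(t\beta_i(x_0),y)$. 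Transversality of this map with $\Gamma_i$ is automatic --- the $\R$-direction of $T([0,1]\times\R^\infty_+)$ is always covered by $T\Gamma_i$ since $\Gamma_i\to[0,1]$ is a submersion --- so $\gamma_i(t)$ is a smooth $d$-dimensional submanifold with the appropriate boundary behaviour. It remains to verify (a), (b), (c) of Definition \ref{defn: path resolution}: (a) holds because at $t = 0$ the family collapses to the cylinder on $(W|_{a_i}, \ell|_{a_i})$, which coincides with $(W,\ell)|_{(a_i - \varepsilon_i, a_i + \varepsilon_i)}$ by cylindricity; (b) holds because at $t=1$, $x_0=a_i$ we have $\beta_i(a_i)=1$, so the slice is $\sigma_i(1)=(P,\ell_P)$; (c) holds because outside $(a_i - \tfrac{2}{3}\varepsilon_i, a_i + \tfrac{2}{3}\varepsilon_i)$ the function $\beta_i$ vanishes and $\gamma_i(t)$ is the cylinder on $\sigma_i(0)=(W|_{a_i},\ell|_{a_i})$, matching $W$. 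The tuple $(\gamma_0,\dots,\gamma_p)$ is then the required element of $\mb{Z}_0(x)$.

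I do not foresee any serious obstacle: all the geometric content is packaged inside Lemma \ref{lemma: path connected objects}, while the bump function confines every change to the interior of each slab, so the construction glues cleanly onto the cylindrical tail. The only honest technical step is the transversality check, which is made trivial by the submersion property of $\Gamma_i \to [0,1]$, and the smoothing of $\sigma_i$ is routine via Proposition \ref{proposition: salient features of smooth maps}.
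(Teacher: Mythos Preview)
Your proposal is correct and follows essentially the same approach as the paper: both invoke Lemma \ref{lemma: path connected objects} to obtain a path in the object space, smooth it via Proposition \ref{proposition: salient features of smooth maps}, and then graft it into the cylindrical collar using a bump function supported in $(a_i - \tfrac{2}{3}\varepsilon_i, a_i + \tfrac{2}{3}\varepsilon_i)$, taking the graph as the required family. The only minor point the paper makes more explicit is the passage from the $\theta$-structure on the vertical tangent bundle to one on the full tangent bundle of the graph via a choice of splitting $T\Gamma \cong T^{v}\Gamma \oplus \epsilon^{1}$, which must be chosen carefully to keep the structure cylindrical outside the inner region; you should be aware of this when you flesh out ``the $\theta$-structure pulled back from $L_i$''.
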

\begin{proof}
Let $x = (a, \varepsilon, (W, \ell)) \in \mb{D}^{\partial, \mathcal{H}, \perp}_{p}$. 
For each $i = 0, \dots, p$, we will construct a path $\gamma_{i}: [0, 1] \longrightarrow \bPsi_{\theta}^{\partial}((a_{i}-\varepsilon_{i}, a_{i}+\varepsilon_{i})\times\R^{\infty}_{+})$
satisfying conditions (a), (b), and (c) from Definition \ref{defn: path resolution}. 
Let $i = 0, \dots, p$. 
By Lemma \ref{lemma: path connected objects} there exists a path 
$$\alpha_{i}: [0, 1] \longrightarrow \Ob\Cob^{\partial, \mathcal{H}}_{\theta}$$ 
with $\alpha_{i}(0) = (W|_{a_{i}}, \ell|_{a_{i}})$ and $\alpha_{i}(1) = (P, \ell_{P})$. 
By Proposition \ref{proposition: salient features of smooth maps} we may assume that this path is smooth (see Definition \ref{defn: smooth maps}). 
Fix a smooth function $\rho: \R \longrightarrow [0, 1]$ with $\rho(t) = 1$ for $|t| \leq \tfrac{1}{3}$ and $\rho(t) = 0$ for $|t| \geq \tfrac{2}{3}$.
Let 
$$
\bar{\alpha}^{t}_{i}: (a_{i}-\varepsilon_{i}, a_{i}+\varepsilon_{i}) \longrightarrow \Ob\Cob^{\partial, \mathcal{H}}_{\theta}, \quad t \in [0, 1],
$$
be the one-parameter family of paths defined by the formula
$$\bar{\alpha}^{t}_{i}(s) = \alpha_{i}(t\cdot\rho(\tfrac{s-a_{i}}{\varepsilon_{i}})).$$
For $t \in [0, 1]$, consider the graph 
$$\Gamma(\bar{\alpha}^{t}_{i}) \; = \; \{\; (s, x) \in (a_{i}-\varepsilon_{i}, a_{i}+\varepsilon_{i})\times\R^{\infty}_{+} \; | \; x \in \bar{\alpha}^{t}_{i}(s)\; \}.$$
Since the path $\bar{\alpha}^{t}_{i}$ is smooth, it follows that $\Gamma(\bar{\alpha}^{t}_{i})$ is a $(2n+1)$-dimensional submanifold and that the projection $\Gamma(\bar{\alpha}^{t}_{i}) \longrightarrow (a_{i}-\varepsilon_{i}, a_{i}+\varepsilon_{i})$ is a submersion (see Subsection \ref{subsection: smooth families of manifolds}). 
Notice that by how the function $\rho$ was defined, the graph 
$$\Gamma(\bar{\alpha}^{t}_{i}) \subset (a_{i}-\varepsilon_{i}, a_{i}+\varepsilon_{i})\times\R^{\infty}_{+}$$ 
is cylindrical outside of the set $(a_{i}+\tfrac{2\varepsilon_{i}}{3}, a_{i}+\varepsilon_{i})\times\R^{\infty}_{+}$.
The vertical tangent bundle of this submersion comes equipped with a $\theta$-structure 
$$T^{v}\Gamma(\bar{\alpha}^{t}_{i})\oplus\epsilon^{1} \longrightarrow \theta^{*}\gamma^{2n+1}.$$
By precomposing this $\theta$-structure with a choice of bundle isomorphism 
\begin{equation} \label{equation: bundle isomorphism}
T\Gamma(\bar{\alpha}^{t}_{i}) \cong T^{v}\Gamma(\bar{\alpha}^{t}_{i})\oplus\epsilon^{1},
\end{equation}
we obtain a $\theta$-structure 
$$
\phi^{t}_{i}: T\Gamma(\bar{\alpha}^{t}_{i}) \longrightarrow \theta^{*}\gamma^{2n+1}.
$$
For each $t$, 
the pair 
$(\Gamma(\bar{\alpha}^{t}_{i}), \phi^{t}_{i})$ is an element of $\bPsi_{\theta}((a_{i}-\varepsilon_{i}, a_{i}+\varepsilon_{i})\times\R^{\infty}_{+})$. 
By choosing the bundle isomorphism (\ref{equation: bundle isomorphism}) appropriately, this pair is cylindrical (as a $\theta$-manifold) outside of the set $(a_{i}-\tfrac{2\varepsilon_{i}}{3}, a_{i}+\tfrac{2\varepsilon_{i}}{3})\times\R^{\infty}_{+}$. 
Let us now observe that by how the function $\rho$ was defined, $(\Gamma(\bar{\alpha}^{t}_{i}), \phi^{t}_{i})$ agrees with $(W, \ell)$ outside of $(a_{i}-\tfrac{2\varepsilon_{i}}{3}, a_{i}+\tfrac{2\varepsilon_{i}}{3})\times\R^{\infty}_{+}$ for all $t \in [0,1]$. 
We then set 
$$\gamma_{i}(t) \; = \; (\Gamma(\bar{\alpha}^{t}_{i}), \phi^{t}_{i}), \quad t \in [0,1].$$
By repeating the same construction for each $i = 0, \dots, p$, we obtain an element $(\gamma_{0}, \dots, \gamma_{p}) \in \mb{Z}_{0}(x)$. 
This concludes the proof of the proposition and furthermore concludes the proof of Proposition \ref{proposition: resolution by path choices}. 
\end{proof}
We now show how to use Proposition \ref{proposition: resolution by path choices} to finish the proof of Theorem \ref{theorem: reduction to a monoid}. 
Let $(a, \varepsilon, (W, \ell), \gamma) \in \mb{D}^{\partial, \mathcal{M}, \perp}_{\bullet, 0}$ be with $\gamma = (\gamma_{0}, \dots, \gamma_{p})$.  
For each $i \in 0, \dots, p$,
we define a one-parameter family 
$$
\mathcal{K}^{\gamma_{i}}_{t}(W, \ell) \; \in \; \bpsi^{\partial}_{\theta}(\infty, 1), \quad t \in [0, 1],
$$
by setting $\mathcal{K}^{\gamma_{i}}_{t}(W, \ell)$ equal to $(W, \ell)$ on the complement of $(a_{i}-\varepsilon_{i}, a_{i}+\varepsilon_{i})\times\R^{\infty}_{+}$ for all $t$. 
On the subspace $(a_{i}-\varepsilon_{i}, a_{i}+\varepsilon_{i})\times\R^{\infty}_{+}$, the family $\mathcal{K}^{\gamma_{i}}_{t}(W, \ell)$ is defined to be equal to $\gamma_{i}(t)$. 
By Definition \ref{defn: path resolution}, it follows that $\gamma_{i}(t)$ does agree with $(W, \ell)$ near the boundary of $(a_{i} - \varepsilon_{i}, a_{i} +\varepsilon_{i})$ and thus 
$\mathcal{K}^{\gamma_{i}}_{t}(W, \ell) \subset \R\times\R^{\infty}_{+}$ does yield a smooth family of elements of $\bpsi^{\partial}_{\theta}(\infty, 1)$.
By iterating the above definition we define the $(p+1)$-parameter family 
\begin{equation} \label{equation: iterated homotopy K}
\mathcal{K}^{\gamma}_{\bar{t}}(W, \ell) := \mathcal{K}^{\gamma_{p}}_{t_{p}}\circ \cdots \circ \mathcal{K}^{\gamma_{0}}_{t_{0}}(W, \ell), \quad \bar{t} = (t_{0}, \dots, t_{p}) \in [0,1]^{p+1}.
\end{equation}
From the above construction it can be verified directly that 
$$
(a, \; \tfrac{\varepsilon}{3}, \; \mathcal{K}^{\gamma}_{\bar{t}}(W, \ell)), \quad \bar{t} \in [0,1]^{\times(p+1)},
$$
defines a $(p+1)$-parameter family of elements of $\mb{D}^{\partial, \mathcal{H}, \perp}_{p}$. 
Furthermore it follows that 
$$(a, \; \tfrac{\varepsilon}{3}, \; \mathcal{K}^{\gamma}_{\bar{t}}(W, \ell)) \in \mb{D}^{\partial, \mathcal{M}, \perp}_{p} \quad \text{whenever $\bar{t} = (1, \dots, 1)$.}$$ 
For each $p \in \Z_{\geq 0}$ we define the map 
$$
\mathcal{F}_{p}: [0,1]^{p+1}\times\mb{D}^{\partial, \mathcal{M}, \perp}_{p, 0} \; \longrightarrow \; \mb{D}^{\partial, \mathcal{H}, \perp}_{p}, \quad (\bar{t}, \; (a, \varepsilon, (W, \ell), \gamma)) \; \mapsto \; (a, \tfrac{\varepsilon}{3}, \mathcal{K}^{\gamma}_{\bar{t}}(W, \ell)).
$$
Using the same construction from Proposition \ref{proposition: assembled homotopy} (\cite[Page 312-313]{GRW 14}) the maps $\mathcal{F}_{p}$ assemble to a homotopy 
\begin{equation}
\mathcal{F}: [0,1]\times|\mb{D}^{\partial, \mathcal{M}, \perp}_{\bullet, 0}| \; \longrightarrow \; |\mb{D}^{\partial, \mathcal{H}, \perp}_{\bullet}|
\end{equation}
that satisfies:
\begin{enumerate} \itemsep.2cm
\item[(i)] The map $\mathcal{F}(0, \underline{\hspace{.3cm}})$ agrees with the composition
$$\xymatrix{
|\mb{D}^{\partial, \mathcal{M}, \perp}_{\bullet, 0}| \ar[r]^{\simeq} & |\mb{D}^{\partial, \mathcal{M}, \perp}_{\bullet, \bullet}| \ar[r]^{\simeq \ \ \ \ \ \ \ \ \ } & |\mb{D}^{\partial, \mathcal{M}, \perp}_{\bullet, -1}| \; = \; |\mb{D}^{\partial, \mathcal{H}, \perp}_{\bullet}|,
}
$$
which is a weak homotopy equivalence by Proposition \ref{proposition: resolution by path choices}. 
\item[(ii)] The map $\mathcal{F}(1, \underline{\hspace{.3cm}})$ factors through the inclusion 
$
|\mb{D}^{\partial, \mathcal{M}, \perp}_{\bullet}| \; \hookrightarrow \; |\mb{D}^{\partial, \mathcal{H}, \perp}_{\bullet}|.
$
\end{enumerate}

With the above construction in place we are now ready to finish the proof of Theorem \ref{theorem: reduction to a monoid}. 
\begin{proof}[Proof of Theorem \ref{theorem: reduction to a monoid}]
The homotopy $\mathcal{F}$ constructed above yields the homotopy commutative diagram 
$$
\xymatrix{
|\mb{D}^{\partial, \mathcal{M}, \perp}_{\bullet, 0}| \ar[d]^{\mathcal{F}(0, \underline{\hspace{.3cm}})}_{\simeq} \ar[drr]^{\mathcal{F}(1, \underline{\hspace{.3cm}})} && \\
|\mb{D}^{\partial, \mathcal{H}, \perp}_{\bullet}|  && |\mb{D}^{\partial, \mathcal{M}, \perp}_{\bullet}|. \ar@{_{(}->}[ll]
}
$$
Since the left-vertical map is a weak homotopy equivalence and the diagram is homotopy commutative, it follows that the diagonal map induces an injection on homotopy groups. 
Now, this diagonal map $\mathcal{F}(1, \underline{\hspace{.3cm}})$ has a section, namely the embedding $|\mb{D}^{\partial, \mathcal{M}, \perp}_{\bullet}| \hookrightarrow |\mb{D}^{\partial, \mathcal{H}, \perp}_{\bullet, 0}|$ induced by sending a $p$-simplex $(a, \varepsilon, (W, \ell))$ to the tuple $(a, \varepsilon, (W, \ell), \gamma^{c})$, where each $\gamma^{c}_{i}$ is the constant path defined by $\gamma^{c}_{i}(t) = (P, \ell_{P})$.
The existence of this section implies that $\mathcal{F}(1, \underline{\hspace{.3cm}})$ induces a surjection on homotopy groups, and thus is a weak homotopy equivalence. 
Homotopy commutativity of the diagram then implies that the inclusion 
$
|\mb{D}^{\partial, \mathcal{M}, \perp}_{\bullet}| \hookrightarrow |\mb{D}^{\partial, \mathcal{H}, \perp}_{\bullet}|
$
is a weak homotopy equivalence. 
Combining this with the weak homotopy equivalences $|\mb{D}^{\partial, \mathcal{M}, \perp}_{\bullet}| \simeq B\mathcal{M}_{\theta}$ and $|\mb{D}^{\partial, \mathcal{H}, \perp}_{\bullet}| \simeq B\Cob^{\partial, \mathcal{H}}_{\theta}$ concludes the proof of Theorem \ref{theorem: reduction to a monoid}. 
\end{proof}

Combining Theorem \ref{theorem: reduction to a monoid} with all previous results proven about the cobordism category $\Cob^{\partial}_{\theta}$ yields the following corollary. 
\begin{corollary} \label{corollary: equivalence to infinite loopspace}
Let $n \geq 4$ and let $\theta: B \longrightarrow BO(2n+1)$ be such that $B$ is $(n-1)$-connected. 
Then there is a weak homotopy equivalence $B\mathcal{M}_{\theta} \simeq \Omega^{\infty-1}\Sigma^{\infty}B_{+}$. 
\end{corollary}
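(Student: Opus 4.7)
The plan is to simply string together the sequence of weak homotopy equivalences established throughout Sections \ref{section: cobordism categories}--\ref{section: contractibility of the spaces of surgery data} and the opening of Section \ref{section: group completion}. Given the hypotheses $n\geq 4$ and $B$ being $(n-1)$-connected, all dimensional and connectivity conditions appearing in the relevant earlier theorems are met: in particular $d=2n+1\geq 9$ (required by Theorem \ref{theorem: handlebody equivalence}, and ultimately by Proposition \ref{proposition: inductive disjunction}), and $(n-1)$-connectedness of $B$ implies $(n-2)$-connectedness of $B$ (required by Theorem \ref{theorem: easy weak equivalences}).

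Concretely, I would verify each link in the chain in the following order. First, apply the just-proved Theorem \ref{theorem: reduction to a monoid} to obtain the weak equivalence $B\mathcal{M}_{\theta}\simeq B\Cob^{\partial,\mathcal{H}}_{\theta}$. Next, invoke Theorem \ref{theorem: handlebody equivalence} (proved via Theorems \ref{theorem: surgery below middle dimension}, \ref{theorem: disk object equivalence}, and ultimately Theorem \ref{theorem: contractibility of space of surgery data}) to move up the filtration one step: $B\Cob^{\partial,\mathcal{H}}_{\theta}\simeq B\Cob^{\partial,\mb{c}}_{\theta}$. Then apply Theorem \ref{theorem: easy weak equivalences} to climb the remaining rungs of the filtration, yielding $B\Cob^{\partial,\mb{c}}_{\theta}\simeq B\Cob^{\partial,\mb{b}}_{\theta}\simeq B\Cob^{\partial,D}_{\theta}\simeq B\Cob^{\partial}_{\theta}$. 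Finally, identify the weak homotopy type of the top cobordism category via Genauer's result, packaged as Corollary \ref{corollary: homotopy type of category of mfd pairs}: $B\Cob^{\partial}_{\theta}\simeq \Omega^{\infty-1}\Sigma^{\infty}B_{+}$.

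There is no real obstacle here, since all the technical work has already been done: the deepest input is Theorem \ref{theorem: handlebody equivalence}, whose proof occupies Sections \ref{section: surgery on objects}--\ref{section: contractibility of the spaces of surgery data} and is precisely where the assumption $2n+1\geq 9$ enters (through Proposition \ref{proposition: inductive disjunction} and the half-Whitney-trick machinery of the appendix). The only bookkeeping to perform is a quick check that the hypothesis ``$B$ is $(n-1)$-connected'' is strong enough to feed into each intermediate result, which it is. Composing the four weak equivalences above yields the asserted weak homotopy equivalence $B\mathcal{M}_{\theta}\simeq \Omega^{\infty-1}\Sigma^{\infty}B_{+}$ and completes the proof.
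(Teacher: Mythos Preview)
Your proposal is correct and follows exactly the same approach as the paper: the corollary is stated immediately after Theorem \ref{theorem: reduction to a monoid} with the remark that it is obtained by combining that theorem with all previous results about $\Cob^{\partial}_{\theta}$, which is precisely the chain of equivalences you spell out. If anything, your version is more explicit than the paper's, which leaves the chain implicit.
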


\subsection{Handlebodies and the structure of $\mathcal{M}_{\theta}$} \label{subsection: handlebodies and structure of the monoid}
The results of the previous section identify the homotopy type of the
classifying space $B\mathcal{M}_{\theta}$.  We now aim to give a more
detailed description of the homotopy type of the monoid
$\mathcal{M}_{\theta}$ itself.  We must first recall some basic
results from \cite{W 67} on high dimensional handlebodies.
\begin{defn} \label{defn: handlebody}
For $m, k \in \N$, let $\mathcal{H}(m, k)$ denote the set of all  $m$-dimensional manifolds that can be constructed by attaching index-$k$ handles to the $m$-dimensional disk. 
More precisely, $M$ is an element of $\mathcal{H}(m, k)$ if there exists a smooth embedding 
$
\displaystyle
\varphi: \coprod_{i} \partial D_{i}^{k}\times D_{i}^{m-k} \longrightarrow \partial D^{m}
$
such that there is a diffeomorphism
$$
M \; \cong \; D^{m}\cup_{\varphi}\bigg(\coprod_{i}D_{i}^{k}\times D_{i}^{m-k}\bigg).
$$
The elements of $\mathcal{H}(m, k)$ are called \textit{index $k$
  handlebodies of dimension $m$}.
\end{defn}
Our main applications concern the set of handlebodies
$\mathcal{H}(2n+1, n)$, that is the index-$n$ handlebodies of
dimension $2n+1$.  The following proposition gives us a simple means
for recognizing elements of the set $\mathcal{H}(2n+1, n)$.
\begin{proposition} \label{proposition: recognition of handlebodies}
Let $n \geq 3$ and let $M$ be a compact manifold of dimension $2n+1$. 
Suppose that the boundary $\partial M$ is $(n-1)$-connected and that the pair $(M, \partial M)$ is $n$-connected. 
Then the manifold $M$ is diffeomorphic to an element of $\mathcal{H}(2n+1, n)$.
\end{proposition}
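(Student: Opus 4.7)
The plan is to produce a handle decomposition of $M$ consisting of a single $(2n+1)$-disk together with finitely many $n$-handles, which by Definition \ref{defn: handlebody} exhibits $M$ as an element of $\mathcal{H}(2n+1,n)$. First I would extract the homotopical consequences of the hypotheses. The long exact sequence of the pair $(M,\partial M)$, combined with $\pi_i(\partial M)=0$ for $i\le n-1$ and $\pi_i(M,\partial M)=0$ for $i\le n$, gives $\pi_i(M)=0$ for $i\le n-1$; in particular $M$ is simply connected since $n\ge 3$. Lefschetz duality then yields $H_j(M)\cong H^{2n+1-j}(M,\partial M)=0$ for $j\ge n+1$, so the reduced integral homology of $M$ is concentrated in degree $n$, and by Hurewicz $\pi_n(M)\cong H_n(M)$ is a finitely generated free abelian group.

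Next, starting from any smooth handle decomposition of $M$ (obtained from a self-indexing Morse function, say), I would apply the Smale--Wall handle calculus, available since $\dim M=2n+1\ge 7$ and $M$ is simply connected. Cancel all but one $0$-handle using connectedness of $M$. Then iteratively, for $k=1,\ldots,n-1$: because $\pi_k(M)=0$, each $k$-handle can be traded for a $(k+2)$-handle by introducing a cancelling $(k+1)/(k+2)$-pair whose $(k+1)$-handle kills the original $k$-handle. After this phase only handles of index $\ge n$ remain. I would then run the symmetric argument on the dual (upside-down) handle decomposition of $M$, which builds $M$ from a collar on $\partial M$; the $(n-1)$-connectedness of $\partial M$ together with the $n$-connectedness of $(M,\partial M)$ provides exactly the connectivity needed to trade away all dual handles of index $\le n-1$, equivalently all original handles of index $\ge n+2$. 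After both stages the surviving handles are a single $0$-handle together with some $n$- and $(n+1)$-handles.

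The final step is to cancel every $(n+1)$-handle against an $n$-handle. The associated cellular chain complex is $0\to C_{n+1}\xrightarrow{\partial}C_n\to 0$ in the relevant range. Since $H_{n+1}(M)=0$ the boundary $\partial$ is injective, and since $H_n(M)$ is free abelian its image is a direct summand of $C_n$. Choosing a basis of $C_{n+1}$ and extending its $\partial$-image to a basis of $C_n$, each $(n+1)$-handle algebraically intersects a unique paired $n$-handle with intersection number $\pm 1$ and is disjoint from all other $n$-handles. The main obstacle is to realise this algebraic cancellation geometrically: the attaching $n$-sphere of each $(n+1)$-handle lies in the $2n$-dimensional intermediate level set, where its algebraic intersection with the belt $n$-sphere of the paired $n$-handle is $\pm 1$. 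Since the ambient dimension is $2n\ge 6$ and everything in sight is simply connected, the middle-dimensional Whitney trick (as used in the $h$-cobordism theorem) applies and isotopes this intersection to a single geometric transverse point. The standard handle cancellation lemma then removes each paired $(n+1)/n$ pair, leaving a $0$-handle and only $n$-handles, exhibiting $M$ as an element of $\mathcal{H}(2n+1,n)$.
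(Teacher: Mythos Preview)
Your proposal is correct and follows essentially the same strategy as the paper: use the connectivity hypotheses plus Lefschetz duality to see that the relevant relative homology is free and concentrated in degree $n$, then invoke Smale's handle cancellation to leave only a $0$-handle and $n$-handles. The only difference is packaging: the paper first deletes an interior disk to obtain a cobordism $M'$ from $S^{2n}$ to $\partial M$, observes directly that $H_i(M',\partial_0 M')=0$ for $i\neq n$ and $H_n(M',\partial_0 M')$ is free, and then cites Milnor's $h$-cobordism lectures once to arrange a Morse function with only index-$n$ critical points; you instead unpack that citation into the three-stage routine (trade low handles up, trade high handles down via the dual, cancel the remaining $(n{+}1)$-handles against $n$-handles). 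Both routes rely on the same Whitney-trick handle calculus, and the dimensional hypothesis $n\ge 3$ enters for the same reason in each.
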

\begin{proof}
Let $M'$ denote the manifold obtained from $M$ by deleting an open
disk from the interior of $M$.  Using \textit{Lefschetz-Duality}
together with the homology long exact sequence associated to the pair
$(M', \partial M)$, it follows that $H_{i}(M', \partial M) = 0$
whenever $i \neq n+1$ and that $H_{n+1}(M', \partial M)$ is
free-abelian.  The boundary of $M'$ has the decomposition $\partial M'
= \partial_{0}M'\sqcup\partial_{1}M'$, where $\partial_{1}M' =
\partial M$ and $\partial_{0}M' \cong S^{2n}$ corresponds to the
boundary of the deleted disk.  Applying \textit{Lefschetz-Duality} to
the pairs $(M', \partial_{0}M')$ and $(M', \partial_{1}M')$ it follows
that $H_{i}(M', \partial_{0}M') = 0$ when $i \neq n$ and that
$H_{n}(M', \partial_{0}M')$ is free-abelian.  Now, choose a Morse
function $f: M' \longrightarrow [0, 1]$ with regular levels $f^{-1}(i)
= \partial_{i}M'$ for $i = 0, 1$.  Since $H_{i}(M', \partial_{0}M') =
0$ whenever $i \neq n$, we may apply the critical point cancelation
techniques of \cite{M 65} so as to arrange for $f$ to only have
critical points of index $n$ (in order to apply the cancelation
theorems from \cite{M 65} it is necessary that $n \geq 3$ as in the
hypothesis).  With such a Morse function with only index $n$-critical
points obtained, the desired handlebody structure on $M$ is given by
the \textit{stable-trajectories} of the Morse function.  This
concludes the proof of Proposition \ref{proposition: recognition of handlebodies}.
\end{proof}
\begin{corollary} \label{corollary: handlebody structure on W}
Let $(t, (W, \ell)) \in \mathcal{M}_{\theta}$.  Then the underlying
manifold $W|_{[0, t]}$ is an element of $\mathcal{H}(2n+1, n)$.
Conversely, let $(t, (W, \ell)) \in \Mor\Cob_{\theta}^{\partial}$
be an endomorphism of the object $(P, \ell_{P})$  
and suppose that $W|_{[0, t]} \in
\mathcal{H}(2n+1, n)$.  Then the morphism $(t, (W, \ell))$ is an
element of $\mathcal{M}_{\theta}$.
\end{corollary}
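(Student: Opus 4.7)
The plan is to prove both directions by comparing the connectivity data defining $\mathcal{M}_\theta$ (inherited from $\Cob^{\partial, \mb{c}}_\theta$) with the hypotheses of Proposition \ref{proposition: recognition of handlebodies}. Throughout, write $V := W|_{[0,t]}$, $\partial_1 V := (\partial W)|_{[0,t]}$, and recall that $V$ has the manifold triad decomposition $\partial V = \partial_1 V \cup \partial_0 V$ with $\partial_0 V = W|_0 \sqcup W|_t = P \sqcup P$ glued to $\partial_1 V$ along the two copies of $\partial P \cong S^{2n-1}$.

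For the forward direction, since $(t,(W,\ell)) \in \mathcal{M}_\theta \subset \Mor\Cob^{\partial, \mb{c}}_\theta$, we know that the pair $(\partial_1 V, (\partial_1 V)|_t)$ is $(n-1)$-connected and that the triad $(V; \partial_1 V, W|_t)$ is $n$-connected. Using that $P \cong D^{2n}$ is contractible (so $W|_t$ is) and that $\partial P \cong S^{2n-1}$ is $(2n-2)$-connected, the long exact sequence of the pair $(\partial_1 V, \partial P)$ forces $\partial_1 V$ itself to be $(n-1)$-connected. The pair $(\partial V, \partial_1 V)$ has, by excision, the relative homotopy type of $(D^{2n}\sqcup D^{2n}, S^{2n-1}\sqcup S^{2n-1})$ and hence is $(2n-1)$-connected; feeding this into the long exact sequence of $(\partial V, \partial_1 V)$ gives that $\partial V$ is $(n-1)$-connected. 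For the pair $(V, \partial V)$, the triad condition together with $\pi_i(W|_t, \partial P) = \pi_i(D^{2n}, S^{2n-1}) = 0$ for $i \leq n$ first implies $(V, \partial_1 V)$ is $n$-connected, and then the long exact sequence of the triple $\partial_1 V \subset \partial V \subset V$ combined with the $(2n-1)$-connectivity of $(\partial V, \partial_1 V)$ yields $\pi_i(V, \partial V) = 0$ for $i \leq n$. Proposition \ref{proposition: recognition of handlebodies} then gives $V \in \mathcal{H}(2n+1, n)$.

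For the converse, suppose $V \in \mathcal{H}(2n+1, n)$. Then $V$ is homotopy equivalent to a wedge of $n$-spheres, hence $(n-1)$-connected, and by a standard Lefschetz duality computation (as in the proof of Proposition \ref{proposition: recognition of handlebodies}) $\partial V$ is also $(n-1)$-connected and the pair $(V, \partial V)$ is $n$-connected. The task is to translate these global properties of $V$ into the triad condition defining morphisms of $\Cob^{\partial, \mb{c}}_\theta$. Since $\partial V$ is $(n-1)$-connected and is obtained from $\partial_1 V$ by attaching two copies of $D^{2n}$ along $S^{2n-1}$, general position (or equivalently the $(2n-1)$-connectivity of $(\partial V, \partial_1 V)$) propagates $(n-1)$-connectivity back to $\partial_1 V$, and then the long exact sequence of $(\partial_1 V, \partial P)$ gives that this pair is $(n-1)$-connected. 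For the triad $(V; \partial_1 V, W|_t)$, the triple sequence for $\partial_1 V \subset \partial V \subset V$ combined with $(V, \partial V)$ being $n$-connected and $(\partial V, \partial_1 V)$ being $(2n-1)$-connected shows $(V, \partial_1 V)$ is $n$-connected, and hence (using again that $(W|_t, \partial P)$ is highly connected) the triad is $n$-connected. The ambient boundary conditions of $\Cob^{\partial, D}_\theta$ hold automatically because source and target equal the fixed object $(P, \ell_P) \in \Ob\Cob^{\partial, \mathcal{H}}_\theta$. Thus $(t,(W,\ell)) \in \Mor\Cob^{\partial, \mathcal{H}}_\theta$, and being an endomorphism of $(P, \ell_P)$ it lies in $\mathcal{M}_\theta$.

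No step looks genuinely hard; the main subtlety is being careful with the triple sequence $\partial_1 V \subset \partial V \subset V$ (ensuring we have enough simple connectivity, available here since $n \geq 3$, to convert the relative Hurewicz and excision-type inputs into statements about homotopy groups). The proof is essentially a bookkeeping exercise routing the connectivity inputs through Proposition \ref{proposition: recognition of handlebodies} and back.
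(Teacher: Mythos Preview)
Your proposal is correct and follows essentially the same approach as the paper's own proof: both directions amount to translating between the connectivity conditions defining $\mathcal{M}_\theta$ and the hypotheses of Proposition~\ref{proposition: recognition of handlebodies}, using that $W|_0, W|_t \cong D^{2n}$ are contractible with $(2n-2)$-connected boundary spheres. Your write-up is in fact more careful than the paper's (which handles the converse with ``proven similarly''), making the triple sequence $\partial_1 V \subset \partial V \subset V$ and the $(2n-1)$-connectivity of $(\partial V,\partial_1 V)$ explicit.
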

\begin{proof}
Suppose that $(t, (W, \ell)) \in \Mor\Cob^{\partial, \mathcal{H}}_{\theta}$.  
By definition we have that $(W|_{[0, t]}; (\partial W)|_{[0, t]}, W|_{t})$ is $n$-connected.
The manifolds $W|_{t}$ and $W|_{0}$ are are both diffeomorphic to the disk $D^{2n}$ and thus it follows that the pair
  $$
 \left(W|_{[0, t]}, \; W|_{0}\cup(\partial W)|_{[0, t]}\cup W|_{t}\right) \;  = \; \left(W|_{[0, t]}, \; \partial(W|_{[0, t]})\right)
  $$ 
  is $n$-connected.  
  Similarly, by definition the pair $((\partial
  W)|_{[0, t]}, \; (\partial W)|_{t})$ is $(n-1)$-connected but since
  $(\partial W)|_{t}$ and $(\partial W)|_{0}$ are diffeomorphic to the
  sphere $S^{2n}$, it follows that the manifold
$$W|_{0}\cup(\partial W)|_{[0, t]}\cup W|_{t} = \partial(W|_{[0, t]})$$ 
is $(n-1)$-connected. 
Combining these observations it follows from Proposition \ref{proposition: recognition of handlebodies} that $W|_{[0, t]}$ is diffeomorphic to an index-$n$ handlebody of dimension $2n+1$.
The other statement in the corollary is proven similarly. 
This concludes the proof.
\end{proof}
We now use Corollary \ref{corollary: handlebody structure on W} to
give a description of the homotopy type of the topological monoid
$\mathcal{M}_{\theta}$.  
Let us fix some notation.  
For $t \in \R$, denote by $P_{t}$ the submanifold $\{t\}\times P \subset \{t\}\times \R^{\infty}_{+}$ and denote by
$D_{t}$ the disk $\{t\}\times D \subset
\{t\}\times\partial\R^{\infty}_{+}$ used in the definition of
$\Cob^{\partial, \mathcal{H}}_{\theta}$.  
Let $L_{t}$ denote the
product $[0, t]\times D \subset [0, t]\times\R^{\infty}_{+}$.  We have
$L_{t}\cap(P_{0}\sqcup P_{t}) = D_{0}\sqcup D_{t}$.  Consider the
union $P_{0}\cup L_{t}\cup P_{t}$.  Upon smoothing corners, it follows
that $P_{0}\cup L_{t}\cup P_{t}$ is diffeomorphic to the
$2n$-dimensional disk $D^{2n}$.  
Now, let $(t, (W, \ell)) \in
\mathcal{M}_{\theta}$.  The boundary of the underlying manifold
$W|_{[0, t]}$ has the decomposition $\partial(W|_{[0, t]}) =
P_{0}\cup(\partial W)|_{[0, t]}\cup P_{1}$.  The boundary
$\partial(W|_{[0, t]})$ then contains the union $P_{0}\cup L_{t}\cup
P_{t}$.  
From this observation we obtain the following proposition. 
\begin{proposition} \label{proposition: structure of the monoid}
There is a weak homotopy equivalence 
$$
\mathcal{M}_{\theta} \; \simeq \; \coprod_{V}\textstyle{\BDiff_{\theta}}(V, D^{2n})
$$
where the disjoint union varies over diffeomorphism classes of elements $V \in \mathcal{H}(2n+1, n)$. 
Furthermore, the monoid $\mathcal{M}_{\theta}$ is homotopy commutative. 
\end{proposition}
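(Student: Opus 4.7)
The plan is to combine the morphism-space decomposition from Remark \ref{remark: topological description of morphism spaces} with the handlebody identification of Corollary \ref{corollary: handlebody structure on W}. By definition $\mathcal{M}_{\theta}$ is the endomorphism monoid on $(P,\ell_{P})$ inside $\Cob^{\partial,\mathcal{H}}_{\theta}$, so the weak equivalence (\ref{equation: morphism space}), restricted to the subcategory, yields
$$
\mathcal{M}_{\theta} \;\simeq\; \coprod_{W}\textstyle{\BDiff_{\theta}}(W,\partial_{0}W),
$$
where the disjoint union ranges over diffeomorphism classes of relative $\theta$-cobordisms from $(P,\partial P)$ to itself whose underlying morphism lies in $\Cob^{\partial,\mathcal{H}}_{\theta}$. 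Corollary \ref{corollary: handlebody structure on W} then puts these into bijection with diffeomorphism classes of elements of $\mathcal{H}(2n+1,n)$: every such $W|_{[0,t]}$ smooths to some $V\in\mathcal{H}(2n+1,n)$, and conversely every $V\in\mathcal{H}(2n+1,n)$ arises in this way.

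Next I would identify each summand $\BDiff_{\theta}(W,\partial_{0}W)$ with $\BDiff_{\theta}(V,D^{2n})$. Elements of $\BDiff_{\theta}(W,\partial_{0}W)$ fix a neighborhood of $\partial_{0}W=P_{0}\sqcup P_{t}$; because the morphism lies in $\Cob^{\partial,D}_{\theta}$, they additionally fix a neighborhood of the standard cylinder $L_{t}=[0,t]\times D\subset\partial_{1}W$, together with the $\theta$-structure on these pieces. Under the corner-smoothing diffeomorphism $W\to V$, the three regions $P_{0}\cup L_{t}\cup P_{t}$ assemble into a single embedded disk $D^{2n}\subset\partial V$, and the combined $\theta$-structure matches the reference structure $\ell_{D}$ used in the definition of $\BDiff_{\theta}(V,D^{2n})$. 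Reassembling the resulting weak equivalences over all diffeomorphism classes of $W$ gives the first assertion.

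For homotopy commutativity, the idea is that composition in $\mathcal{M}_{\theta}$ corresponds, on underlying manifolds, to boundary connected sum of handlebodies along the embedded disks $D^{2n}\subset\partial V_{i}$, and boundary connected sum is commutative up to diffeomorphism. To lift this to the topological monoid itself, given $(t_{i},(W_{i},\ell_{i}))\in\mathcal{M}_{\theta}$ for $i=1,2$, one constructs a one-parameter family of morphisms in $\mathcal{M}_{\theta}$ connecting $W_{2}\circ W_{1}$ to $W_{1}\circ W_{2}$ by using a $2$-plane in the unused infinite coordinates of $\R^{\infty-1}$ (with the standard cylinder $\R\times D$ held fixed throughout): first isotope the two summands into thin tubular neighborhoods of two rays in that plane meeting at the common intermediate $P$-slice, then perform a $180^{\circ}$ rotation in the $2$-plane combined with a reparametrisation of the $x_{0}$-coordinate that swaps the slabs $[0,t_{1}]$ and $[t_{2},t_{1}+t_{2}]$. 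The $\theta$-structure is transported by the differentials of the isotopy, and the defining conditions of $\Cob^{\partial,\mathcal{H}}_{\theta}$ (disk level sets, connectivity of triads and boundary pairs) are diffeomorphism invariants, so the family remains inside $\mathcal{M}_{\theta}$.

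The main obstacle in this program lies in the third step: writing out the rotation-plus-reparametrisation explicitly and verifying that the path stays inside $\mathcal{M}_{\theta}$ at every moment, preserving both the standard cylinder $\R\times D$ (with its fixed $\theta$-structure) and the condition that each intermediate level set is a disk. The construction is unobstructed because the codimension of the summands in $\R^{\infty}_{+}$ is infinite, but the argument requires a careful use of the isotopy extension theorem and explicit parametrisation of the swap, in contrast to the essentially formal first two paragraphs.
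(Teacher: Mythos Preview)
Your proposal is correct and follows essentially the same approach as the paper: the morphism-space decomposition from Remark~\ref{remark: topological description of morphism spaces}, Corollary~\ref{corollary: handlebody structure on W}, and the identification of $P_{0}\cup L_{t}\cup P_{t}$ with $D^{2n}$ after corner-smoothing are exactly what the paper uses. For homotopy commutativity the paper is terser than you are, simply citing the argument in \cite[p.~46]{GRW 09} (which is precisely the rotation-in-extra-coordinates argument you sketch) rather than writing out the swap explicitly; so the ``main obstacle'' you flag is handled by reference rather than by a self-contained construction.
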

\begin{proof}
By Remark \ref{remark: topological description of morphism spaces} it follows that we have the weak homotopy equivalence 
$$
\mathcal{M}_{\theta} \simeq \coprod_{W}\textstyle{\BDiff_{\theta}}(W, P_{0}\cup L_{t}\cup P_{1})
$$ where $W$ ranges over all relative cobordisms $(W, \partial _{1})$
from $(P_{0}, \partial P_{0})$ to $(P_{1}, \partial P_{1})$, equipped
with an embedding $[0,1]\times D \hookrightarrow \partial_{1} W$, such
that $(W; \partial_{1}W, P_{1})$ and $(\partial_{1}W, \partial P_{1})$
are $n$ and $(n-1)$-connected respectively.  With this connectivity
condition, it follows from Corollary \ref{corollary: handlebody
  structure on W} that $W \in \mathcal{H}(2n+1, n)$ after smoothing
corners.  By choosing a diffeomorphism $P_{0}\cup L_{t}\cup P_{1}
\cong D^{2n}$, we obtain the weak homotopy equivalence asserted in the
statement of the proposition.

The second statement of the proposition is obtained by the same
approach taken in \cite{GRW 09} (see specifically the argument in
\cite[Page 46]{GRW 09}).  Indeed, via this weak homotopy equivalence
$\mathcal{M}_{\theta} \; \simeq \;
\coprod_{V}\textstyle{\BDiff_{\theta}}(V, D^{2n})$, the monoid
structure on the right-hand side can be interpreted as being induced
by boundary-connected sum, and this operation is homotopy commutative.
This concludes the proof of the proposition.
\end{proof}

\subsection{Group completion}
We will need to apply the group completion theorem from \cite{MS 75} to the topological monoid $\mathcal{M}_{\theta}$. 
This will allow us to relate the homology of $\mathcal{M}_{\theta}$ to that of the infinite loopspace $\Omega^{\infty}\Sigma^{\infty}B_{+}$. 
Let us recall this group completion result. 
Let $\mathcal{M}$ be any topological monoid that is homotopy commutative. 
Consider the graded homology group $H_{*}(\mathcal{M})$ equipped with ring structure given by the \textit{Pontryagin product} (the product induced by the multiplication in $\mathcal{M}$). 
The monoid of path-components $\pi_{0}(\mathcal{M}) \subset H_{*}(\mathcal{M})$ is a multiplicative subset.
There is a natural map 
$
T: \mathcal{M} \longrightarrow \Omega B\mathcal{M}
$
whose induced homomorphism on homology sends $\pi_{0}\mathcal{M}$ to the group of units in the ring $H_{*}(\Omega B\mathcal{M})$. 
Thus the map $T$ induces a homomorphism
\begin{equation} \label{equation: pi-0 localization map}
H_{*}(\mathcal{M})[\pi_{0}\mathcal{M}^{-1}] \; \longrightarrow H_{*}(\Omega B\mathcal{M}).
\end{equation}
In \cite{MS 75} it is proven that this map (\ref{equation: pi-0 localization map}) is in fact an isomorphism. 
Now, in the case that the monoid $\pi_{0}\mathcal{M}$ is finitely generated (or more generally if the localization $H_{*}(\mathcal{M})[\pi_{0}\mathcal{M}^{-1}]$ can be constructed by inverting a finite subset of elements), the ring $H_{*}(\mathcal{M})[\pi_{0}\mathcal{M}^{-1}]$ can be realized as the homology of a certain space $\mathcal{M}_{\infty}$ defined below. 
\begin{defn} \label{defn: M-infty}
Let $\mathcal{M}$ be a homotopy commutative topological monoid. 
Suppose that the monoid of path-components $\pi_{0}\mathcal{M}$ is finitely generated and let
$m_{1}, \dots, m_{k} \in \mathcal{M}$ be a list of elements such that $\{[m_{1}], \dots, [m_{k}]\} \subset \pi_{0}\mathcal{M}$ is a generating set. 
Let $m$ denote the product $m_{1}\cdots m_{k}$. 
We define $\mathcal{M}_{\infty}$ to be the homotopy colimit taken over the direct system,
$
\xymatrix{
\mathcal{M} \ar[r]^{\cdot m} & \mathcal{M} \ar[r]^{\cdot m} & \mathcal{M} \ar[r]^{\cdot m} & \cdots.
}
$
Note that homotopy commutativity implies that the homotopy type of $\mathcal{M}_{\infty}$ does not depend on the choice of generators $m_{1}, \dots, m_{k}$. 
\end{defn}
It is easy to verify the isomorphism
$H_{*}(\mathcal{M})[\pi_{0}\mathcal{M}^{-1}] \cong H_{*}(\mathcal{M}_{\infty}).$
Furthermore the natural map $T: \mathcal{M} \longrightarrow \Omega B\mathcal{M}$ induces a map 
\begin{equation} \label{equation: induced map to loopspace}
\mathcal{M}_{\infty} \longrightarrow  \Omega B\mathcal{M}.
\end{equation}
The main theorem from \cite{MS 75} is the following:
\begin{theorem} \label{theorem: group completion theorem}
Let $\mathcal{M}$ be a homotopy commutative topological monoid such that the monoid of path components $\pi_{0}(\mathcal{M})$ is homotopy commutative. 
Then the map $\mathcal{M}_{\infty} \longrightarrow  \Omega B\mathcal{M}$ from (\ref{equation: induced map to loopspace}) induces an isomorphism
$
H_{*}(\mathcal{M}_{\infty}) \; \cong \; H_{*}(\Omega B\mathcal{M}).
$
\end{theorem}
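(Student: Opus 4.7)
The plan is to combine the bar-construction model of $B\mathcal{M}$ with a spectral-sequence comparison that exploits homotopy commutativity of $\mathcal{M}$. First I would observe that $\mathcal{M}_\infty$ is by construction a group-like H-space: since $[m_1],\dots,[m_k]$ generate $\pi_0\mathcal{M}$ and multiplication by $m = m_1\cdots m_k$ is a self-equivalence of the mapping telescope, the monoid $\pi_0\mathcal{M}_\infty$ is the abelian group completion of $\pi_0\mathcal{M}$. Homotopy commutativity of $\mathcal{M}$ further ensures that $\mathcal{M}_\infty$ inherits a well-defined homotopy-commutative H-space structure from $\mathcal{M}$.

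The central technical step is to show that the canonical map $T\colon \mathcal{M} \to \Omega B\mathcal{M}$ induces an isomorphism $H_*(\mathcal{M})[\pi_0\mathcal{M}^{-1}] \cong H_*(\Omega B\mathcal{M})$. For this I would invoke the Serre spectral sequence of the path-loop fibration $\Omega B\mathcal{M} \to PB\mathcal{M} \to B\mathcal{M}$, compared with the bar-construction spectral sequence
$$E^2_{p,q} = \mathrm{Tor}^{H_*(\mathcal{M})}_{p,q}(\mathbb{Z},\mathbb{Z}) \Rightarrow H_{p+q}(B\mathcal{M}).$$
The homotopy-commutativity hypothesis forces the conjugation action of $\mathcal{M}$ on itself to induce the identity on $H_*(\mathcal{M})$ after inverting $\pi_0\mathcal{M}$, which in turn forces the local coefficient system $H_*(\Omega B\mathcal{M})$ over $B\mathcal{M}$ to become effectively constant after localization. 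Comparing the two spectral sequences term-by-term then identifies $H_*(\mathcal{M})[\pi_0\mathcal{M}^{-1}]$ with $H_*(\Omega B\mathcal{M})$.

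Once this localization identification is in place, the isomorphism $H_*(\mathcal{M}_\infty) \cong H_*(\mathcal{M})[\pi_0\mathcal{M}^{-1}]$ follows because singular homology commutes with filtered (homotopy) colimits, and the telescope defining $\mathcal{M}_\infty$ inverts precisely the classes $[m_i]$, which generate $\pi_0\mathcal{M}$. Composing with the map from the telescope to $\Omega B\mathcal{M}$ yields the desired isomorphism $H_*(\mathcal{M}_\infty) \cong H_*(\Omega B\mathcal{M})$.

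The main obstacle will be the spectral-sequence comparison. The delicate point is that the Serre spectral sequence for $\Omega B\mathcal{M} \to PB\mathcal{M} \to B\mathcal{M}$ has local coefficients governed by the $\pi_1(B\mathcal{M})$-action on $H_*(\Omega B\mathcal{M})$, and showing that this action becomes trivial after inverting $\pi_0\mathcal{M}$ requires producing, for each $\alpha \in \pi_0\mathcal{M}$, an explicit homotopy between left- and right-translation by $\alpha$ in $\mathcal{M}$ and tracking its effect through the comparison of filtrations. This is the classical hard step of McDuff--Segal, and it is precisely where the homotopy-commutativity hypothesis is used in an essential way.
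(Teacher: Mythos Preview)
The paper does not prove this statement at all: it is introduced with the sentence ``The main theorem from \cite{MS 75} is the following'' and then simply quoted, with no argument given. So there is no ``paper's own proof'' to compare against; the result is imported wholesale from McDuff--Segal.

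Your sketch is broadly in the spirit of the classical argument, but it is worth noting that McDuff--Segal's actual proof is organised differently from what you describe. Rather than a direct comparison of the bar spectral sequence with the Serre spectral sequence of the path--loop fibration, they form the Borel construction $E\mathcal{M}\times_{\mathcal{M}}\mathcal{M}_{\infty}\to B\mathcal{M}$ and prove that this is a \emph{homology fibration}: each actual fibre (a copy of $\mathcal{M}_{\infty}$) maps by a homology equivalence to the homotopy fibre $\Omega B\mathcal{M}$. The key input is that left translation by any element of $\mathcal{M}$ acts by a homology isomorphism on $\mathcal{M}_{\infty}$, which is exactly what the telescope construction together with homotopy commutativity guarantees. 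Your spectral-sequence comparison would ultimately encode the same information, but the homology-fibration formulation is cleaner and avoids having to track local coefficients explicitly. Also note that the hypothesis you need (and that the paper implicitly assumes via Definition~\ref{defn: M-infty}) is that $\pi_{0}\mathcal{M}$ is finitely generated, so that $\mathcal{M}_{\infty}$ is defined; the phrase ``$\pi_{0}(\mathcal{M})$ is homotopy commutative'' in the stated hypothesis appears to be a slip for this.
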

We wish to apply this theorem to our monoid $\mathcal{M}_{\theta}$. 
Corollary \ref{corollary: equivalence to infinite loopspace} implies that there is a weak homotopy equivalence $\Omega B\mathcal{M}_{\theta} \simeq \Omega^{\infty}\Sigma^{\infty}B_{+}$ and Theorem \ref{theorem: group completion theorem} implies that there is an isomorphism 
\begin{equation}
H_{*}((\mathcal{M}_{\theta})_{\infty}) \; \cong \; H_{*}(\Omega^{\infty}\Sigma^{\infty}B_{+}).
\end{equation}
We will prove
\textcolor{black}{Theorem A} by identifying the space
$(\mathcal{M}_{\theta})_{\infty}$ with
$\displaystyle{\colim_{g\to\infty}}\textstyle{\BDiff_{\theta}}(V^{2n+1}_{g}, D^{2n})$ when
the tangential structure $\theta$ is appropriately chosen.

\subsection{Proof of Theorem A*} \label{subsection: proof of theorem A}
In this subsection we restrict our attention to the case where the tangential structure $\theta: B \longrightarrow BO(2n+1)$ is chosen such that the space $B$ is $n$-connected.  
This is precisely the condition imposed on $\theta$ in the statement of Theorem A*.

\begin{proposition} \label{proposition: structure of monoid n connected theta}
Suppose that $\theta: B \longrightarrow BO(2n+1)$ is such that $B$ is $n$-connected. 
Then there is a homotopy equivalence, 
$\mathcal{M}_{\theta} \; \simeq \; \coprod_{g=0}^{\infty}\textstyle{\BDiff_{\theta}}(V^{2n+1}_{g}, D^{2n}).$
\end{proposition}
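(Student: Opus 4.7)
The plan is to deduce this specialization of Proposition \ref{proposition: structure of the monoid} by analyzing which diffeomorphism classes $V \in \mathcal{H}(2n+1,n)$ actually contribute a nonempty summand $\BDiff_\theta(V, D^{2n})$ once $B$ is required to be $n$-connected. The indexing will reduce to the sequence $\{V^{2n+1}_g\}_{g \geq 0}$ through a tangential obstruction argument combined with Wall's classification from \cite{W 67}.

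First I would note that every $V \in \mathcal{H}(2n+1, n)$ admits a handle decomposition with only $0$- and $n$-handles, and hence deformation retracts onto a CW-complex of dimension at most $n$ (in fact onto a wedge of $n$-spheres). Suppose now that $V$ admits a $\theta$-structure $\ell \colon TV \to \theta^{*}\gamma^{2n+1}$. This corresponds to a lift $\bar\ell \colon V \to B$ of the classifying map $\tau_V \colon V \to BO(2n+1)$ for $TV$. Since $B$ is $n$-connected and $V$ has the homotopy type of an $n$-dimensional CW-complex, the standard obstruction-theoretic argument (equivalently cellular approximation together with $\pi_i(B) = 0$ for $i \leq n$) shows that $\bar\ell$ is nullhomotopic, and hence $\tau_V \simeq \theta \circ \bar\ell$ is nullhomotopic. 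Consequently $TV$ is trivial and $V$ is parallelizable.

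Next I would invoke Wall's classification of $(2n+1)$-dimensional, index-$n$ handlebodies from \cite{W 67}: a parallelizable element of $\mathcal{H}(2n+1,n)$ is determined up to diffeomorphism (relative to the boundary disk) by the number of its $n$-handles, and is therefore diffeomorphic to $V^{2n+1}_{g} = (D^{n+1}\times S^{n})^{\natural g}$ for a unique $g \in \mathbb{Z}_{\geq 0}$. Conversely each $V^{2n+1}_g$ is parallelizable, so its tangent classifying map is nullhomotopic and hence lifts through any $n$-connected map $\theta \colon B \to BO(2n+1)$; in particular $\BDiff_\theta(V^{2n+1}_g, D^{2n})$ is nonempty for every $g$.

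Combining these observations with Proposition \ref{proposition: structure of the monoid}, the coproduct $\coprod_{V \in \mathcal{H}(2n+1, n)} \BDiff_\theta(V, D^{2n})$ collapses to $\coprod_{g=0}^{\infty} \BDiff_\theta(V^{2n+1}_g, D^{2n})$, yielding the asserted weak homotopy equivalence. The obstruction-theoretic triviality of $TV$ is routine once the handlebody structure is used to reduce $V$ to an $n$-complex; the main input requiring care is the geometric half of Wall's classification, which is needed to ensure that parallelizability is sufficient to exhaust $\mathcal{H}(2n+1,n)$ precisely by the family $\{V^{2n+1}_g\}$ and that no extra diffeomorphism classes slip through.
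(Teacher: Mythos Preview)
Your proposal is correct and follows essentially the same approach as the paper: both start from Proposition~\ref{proposition: structure of the monoid}, use the $n$-connectedness of $B$ together with the fact that $V \in \mathcal{H}(2n+1,n)$ has the homotopy type of an $n$-complex to deduce that any $V$ admitting a $\theta$-structure is parallelizable, and then invoke Wall's classification \cite{W 67} to conclude that such $V$ must be diffeomorphic to some $V^{2n+1}_g$. The only cosmetic difference is that the paper phrases the classification via the explicit invariant $\alpha_n \colon \pi_n(W) \to \pi_{n-1}(SO_{n+1})$ (whose vanishing is equivalent to parallelizability), whereas you state the consequence of Wall's theorem directly for parallelizable handlebodies.
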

\begin{proof}
By Proposition \ref{proposition: structure of the monoid} we have the weak homotopy equivalence 
$$
\mathcal{M}_{\theta} \; \simeq \; \coprod_{V}\textstyle{\BDiff_{\theta}}(V, D^{2n})
$$
with disjoint union ranging over the diffeomorphism classes of elements of $\mathcal{H}(n, 2n+1)$. 
To prove the proposition we will show that for $V \in \mathcal{H}(n, 2n+1)$, the space $\textstyle{\BDiff_{\theta}}(V, D^{2n})$ is non-empty if and only if $V$ is diffeomorphic to $V^{2n+1}_{g}$ for some $g \in \Z_{\geq 0}$. 
To do this we will apply \ Wall' s \ 
  classification \ theorem \ for \ high-dimensional handlebodies, \ 
  \textcolor{black}{see} \cite[Theorem 2]{W 67}.  For $2n+1 \geq 4$,
  let $W \in \mathcal{H}(n, 2n+1)$.  
It follows from this theorem that the
  diffeomorphism type of $W$ is determined completely by two invariants:  
  \begin{enumerate} \itemsep.2cm
  \item[(a)] the $\Z$-rank of the free-abelian group $\pi_{n}(W)$;
  \item[(b)] a homomorphism
$\alpha_{n}: \pi_{n}(W) \longrightarrow \pi_{n-1}(SO_{n+1})$
defined by sending an element $x \in \pi_{n}(W)$ to the class in $ \pi_{n-1}(SO_{n+1}) \cong \pi_{n-1}(SO)$ that classifies the normal bundle of an embedding $S^{n} \hookrightarrow W$ that represents $x$. 
\end{enumerate}
Suppose now that $W$ admits a $\theta$-structure. 
  Since $B$ is $n$-connected it follows that $W$ is parallelizable over its $n$-skeleton (and hence it actually is parallelizable since $W$ deformation retracts to its $n$-skeleton).
It follows that the homomorphism $\alpha_{n}$ must be identically zero. 
By \cite[Theorem 2]{W 67} the diffeomorphism class of $W$ then is determined entirely by the $\Z$-rank of $\pi_{n}(W)$. 
For each $g \in \Z_{\geq 0}$ we have $\pi_{n}(V^{2n+1}_{g}) \cong \Z^{\oplus g}$, and thus it follows that the manifolds $V^{2n+1}_{g}$ exhaust all diffeomorphism classes of elements in $\mathcal{H}(n, 2n+1)$ for which the invariant $\alpha_{n}$ vanishes. 
It follows that $W \cong V^{2n+1}_{g}$ where $g$ is the $\Z$-rank of $\pi_{n}(W)$.
This concludes the proof of the corollary.
\end{proof}

The next proposition will also be important for describing the structure of the monoid of path components $\pi_{0}\mathcal{M}_{\theta}$ and its group completion. 
\begin{proposition} \label{proposition: path connectivity of theta structures}
Let $\theta: B \longrightarrow BO(2n+1)$ be such that $B$ is $n$-connected. 
Then for all $g \in \Z_{\geq 0}$ the space $\BDiff_{\theta}(V^{2n+1}_{g}, D^{2n})$ is path connected. 
\end{proposition}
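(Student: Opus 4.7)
I use the homotopy-quotient model $\BDiff_{\theta}(V^{2n+1}_{g}, D^{2n}) = \Bun(TV^{2n+1}_{g}, \theta^{*}\gamma^{2n+1}; \ell_{D}) // \Diff(V^{2n+1}_{g}, D^{2n})$.  The associated homotopy fibration
$$
\Bun(TV^{2n+1}_{g}, \theta^{*}\gamma^{2n+1}; \ell_{D}) \; \longrightarrow \; \BDiff_{\theta}(V^{2n+1}_{g}, D^{2n}) \; \longrightarrow \; \BDiff(V^{2n+1}_{g}, D^{2n})
$$
has path connected base (classifying space of a topological group), so its long exact sequence reduces the proposition to showing that the mapping class group $\pi_{0}\Diff(V^{2n+1}_{g}, D^{2n})$ acts transitively on the path components of the bundle space.

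\textbf{Identifying the bundle space.}  By Proposition \ref{proposition: structure of monoid n connected theta} the manifold $V^{2n+1}_{g}$ is parallelizable, and its handle decomposition shows $V^{2n+1}_{g}/D^{2n} \simeq \bigvee^{g} S^{n}$.  A framing of $V^{2n+1}_{g}$ compatible with $\ell_{D}$ identifies the bundle space with the space of maps $V^{2n+1}_{g} \longrightarrow \widetilde{B}$ extending a fixed restriction on $D^{2n}$, where $\widetilde{B} := \theta^{*}EO(2n+1)$ is the total space of the frame bundle of $\theta^{*}\gamma^{2n+1}$, a principal $O(2n+1)$-bundle over $B$ fitting into a fibration $O(2n+1) \longrightarrow \widetilde{B} \longrightarrow B$.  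Since $B$ is $n$-connected, the long exact sequence collapses to
$$
\pi_{n+1}(B) \longrightarrow \pi_{n}(O(2n+1)) \longrightarrow \pi_{n}(\widetilde{B}) \longrightarrow 0,
$$
and therefore $\pi_{0}$ of the bundle space is identified with $[V^{2n+1}_{g}/D^{2n}, \widetilde{B}]_{*} \cong \pi_{n}(\widetilde{B})^{g}$, which is a quotient of $\pi_{n}(O(2n+1))^{g}$ via the fibre inclusion.

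\textbf{Realizing transitivity by handle twists.}  The main step is to construct, for each of the $g$ handles of $V^{2n+1}_{g} = (D^{n+1} \times S^{n})^{\natural g}$, a ``framing twist'' diffeomorphism realizing an arbitrary element of $\pi_{n}(O(n+1))$.  Concretely, starting from $\alpha \in \pi_{n}(O(n+1))$, the standard clutching construction on a tubular neighborhood $S^{n} \times D^{n+1}$ of the handle's core yields a compactly supported self-diffeomorphism of $V^{2n+1}_{g}$ (in particular fixing $D^{2n}$) whose differential twists the framing along that handle by $\alpha$.  Its induced action on the bundle space via $\ell \mapsto \ell \circ Df$ corresponds, in the mapping-space description, to translation by $\alpha$ under the free $O(2n+1)$-action on $\widetilde{B}$ restricted to that handle; on $\pi_{n}(\widetilde{B})$ this is translation by the image of $\alpha$ under the composition
$$
\pi_{n}(O(n+1)) \; \cong \; \pi_{n}(O(2n+1)) \; \longrightarrow \; \pi_{n}(\widetilde{B}),
$$
the first isomorphism holding in the stable range ($n+1 > n$) and the second being surjective by the exact sequence above.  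These $g$ handle twists therefore realize a transitive action of $\pi_{0}\Diff$ on $\pi_{n}(\widetilde{B})^{g}$, yielding path connectedness of $\BDiff_{\theta}(V^{2n+1}_{g}, D^{2n})$.  The principal obstacle is the precise verification of the claimed formula for the action of the handle-twist diffeomorphisms on $\pi_{0}$ of the bundle space, which requires careful bookkeeping of the framing conventions under composition with the differential of a diffeomorphism.
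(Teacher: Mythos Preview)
Your reduction to transitivity of the $\pi_{0}\Diff(V^{2n+1}_{g}, D^{2n})$-action on $\pi_{0}\Bun(TV^{2n+1}_{g}, \theta^{*}\gamma^{2n+1}; \ell_{D})$, and the further reduction to framings, is exactly the paper's approach.  Where you diverge is in the final step: the paper does not construct the handle-twist diffeomorphisms directly but instead invokes Wall's analysis of the mapping class groups of handlebodies in \cite{W 63}, specifically Lemma~13 there, which shows that for any two framings there is a self-diffeomorphism (in fact one homotopic to the identity) carrying one to the other.  Your handle-twist construction is essentially a rederivation of that lemma.

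Two points on your sketch.  First, the map $\pi_{n}(O(n+1)) \to \pi_{n}(O(2n+1))$ is \emph{not} an isomorphism in general (e.g.\ for $n=3$ one has $\pi_{3}(SO(4)) \cong \Z^{2}$ while $\pi_{3}(SO) \cong \Z$); it is only a surjection, coming from the fibration $O(n+1) \to O(n+2) \to S^{n+1}$.  Fortunately surjectivity is all you need for the transitivity argument, so this is a correctable slip rather than a fatal gap.  Second, the step you flag as the ``principal obstacle'' --- computing the action of $\phi_{\alpha}: (y,x) \mapsto (\alpha(x)y, x)$ on framings --- is genuinely where the work lies.  The differential $D\phi_{\alpha}$ is block upper-triangular with diagonal blocks $\alpha(x)$ and $\Id$, and one must check that precomposition by this matrix changes the framing class on the $i$-th core sphere by the image of $[\alpha]$ under the stabilization $\pi_{n}(O(n+1)) \to \pi_{n}(O(2n+1))$ while leaving the other coordinates fixed.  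This is straightforward but requires care with the off-diagonal term; it is precisely what Wall verifies in \cite{W 63}, and you would do well to consult that reference rather than reinvent it.
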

\begin{proof}
To verify this claim, it will suffice to show that for all $g \in
\Z_{\geq 0}$ the action of $\Diff(V^{2n+1}_{g}, D^{2n})$ on
$\pi_{0}\Bun(TV^{2n+1}_{g}, \theta^{*}\gamma^{2n+1}; \ell_{D})$ is
transitive.  Since the space $B$ is $n$-connected and the
$V^{2n+1}_{g}$ is homotopy equivalent to an $n$-dimensional
$CW$-complex, namely $(S^{n})^{\vee g}$, it follows any
$\theta$-structure on $V^{2n+1}_{g}$ is induced from a framing of the
tangent bundle $TV^{2n+1}_{g}$.  To show that $\Diff(V^{2n+1}_{g},
D^{2n})$ acts transitively on $\pi_{0}\Bun(TV^{2n+1}_{g},
\theta^{*}\gamma^{2n+1}; \ell_{D})$ it will then suffice to show that
$\Diff(V^{2n+1}_{g}, D^{2n})$ transitively on the set of homotopy
classes of framings of $TV^{2n+1}_{g}$.  In other words, given any two
framings $\phi_{0}, \phi_{1}: TV^{2n+1}_{g} \stackrel{\cong}
\longrightarrow \epsilon^{2n+1}$, there exists $f \in
\Diff(V^{2n+1}_{g}, D^{2n})$ such that $\phi_{0}\circ Df$ is homotopic
(though bundle isomorphisms) to $\phi_{1}$.  This fact follows from
Wall's description of the mapping class groups of the manifolds
$V^{2n+1}_{g}$ in \cite{W 63}. 
Indeed, the
  invariants $\beta_i$ from \cite{W 63} determine a framing of 
  the tangent bundle $TV^{2n+1}_{g}$, and \cite[Lemma 13]{W 63} in
  fact implies that there exists a diffeomorphism $f$ homotopic to the
  identity such that $\phi_{0}\circ Df$ and $\phi_{1}$ are indeed 
  homotopic though bundle isomorphisms. 
  This completes the proof of
the proposition.
\end{proof}
The previous two propositions let us identify the structure of the
monoid of path components $\pi_{0}\mathcal{M}_{\theta}$.  We have the
following corollary.
\begin{corollary} \label{corollary: identification of monoid}
Let $\theta: B \longrightarrow BO(2n+1)$ be such that $B$ is $n$-connected.
Then there is an isomorphism of monoids $\pi_{0}\mathcal{M}_{\theta} \cong \Z_{\geq 0}$,
where $\Z_{\geq 0}$ is considered a monoid under addition. 
\end{corollary}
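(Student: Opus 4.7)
The plan is to combine the two preceding propositions with an identification of the monoid structure on $\pi_{0}\mathcal{M}_{\theta}$. By Proposition \ref{proposition: structure of monoid n connected theta}, since $B$ is $n$-connected (and in particular $(n-1)$-connected), there is a weak homotopy equivalence
$$
\mathcal{M}_{\theta} \; \simeq \; \coprod_{g=0}^{\infty}\textstyle{\BDiff_{\theta}}(V^{2n+1}_{g}, D^{2n}).
$$
Then Proposition \ref{proposition: path connectivity of theta structures} shows that each factor $\BDiff_{\theta}(V^{2n+1}_{g}, D^{2n})$ is path connected. Combining these two facts immediately yields a set-theoretic bijection $\pi_{0}\mathcal{M}_{\theta} \cong \Z_{\geq 0}$, where the class of an endomorphism $(t, (W, \ell))$ of $(P, \ell_{P})$ is sent to the unique $g \in \Z_{\geq 0}$ such that the underlying handlebody $W|_{[0,t]}$ (which is an index-$n$ handlebody by Corollary \ref{corollary: handlebody structure on W}) is diffeomorphic to $V^{2n+1}_{g}$.

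It remains to check that this bijection is a monoid homomorphism, i.e.\ that composition in $\mathcal{M}_{\theta}$ corresponds to addition in $\Z_{\geq 0}$. As noted in Section \ref{section: cobordism categories intro} and Proposition \ref{proposition: structure of the monoid}, composition in $\mathcal{M}_{\theta}$ is realized by boundary-connected sum along the common disk $D^{2n} \cong P_{0}\cup L_{t}\cup P_{t}$ embedded in the boundary of each handlebody. Thus if $(t_{1}, (W_{1}, \ell_{1}))$ and $(t_{2}, (W_{2}, \ell_{2}))$ have underlying handlebodies diffeomorphic to $V^{2n+1}_{g_{1}}$ and $V^{2n+1}_{g_{2}}$ respectively, their composition has underlying handlebody diffeomorphic to the boundary-connected sum
$$
V^{2n+1}_{g_{1}}\natural V^{2n+1}_{g_{2}} \; = \; (D^{n+1}\times S^{n})^{\natural g_{1}}\natural(D^{n+1}\times S^{n})^{\natural g_{2}} \; \cong \; V^{2n+1}_{g_{1}+g_{2}},
$$
whose class in $\pi_{0}\mathcal{M}_{\theta}$ corresponds to $g_{1}+g_{2}$. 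This gives the desired monoid isomorphism.

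The identification is essentially routine once the two previous propositions are in hand; the only real content to verify is that the path component $[(t, (W, \ell))]$ depends only on the diffeomorphism type of $W|_{[0,t]}$ (and not on the chosen $\theta$-structure), but this is exactly the content of Proposition \ref{proposition: path connectivity of theta structures}, which uses Wall's classification via the invariants $\beta_{i}$ from \cite{W 63}. No further difficulty is expected.
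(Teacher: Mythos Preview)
Your proof is correct and follows essentially the same approach as the paper. The paper phrases the map $\pi_{0}\mathcal{M}_{\theta} \to \Z_{\geq 0}$ as $[(W,\ell)] \mapsto \dim_{\Z} H_{n}(W;\Z)$ rather than via the index $g$ with $W \cong V^{2n+1}_{g}$, but since $H_{n}(V^{2n+1}_{g}) \cong \Z^{g}$ this is the same invariant; both arguments then invoke Propositions \ref{proposition: structure of monoid n connected theta} and \ref{proposition: path connectivity of theta structures} for the bijection and boundary-connected sum for additivity.
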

\begin{proof}
There is map $ \pi_{0}\mathcal{M}_{\theta} \longrightarrow \Z_{\geq 0}
$ defined by sending $[(W, \ell)] \in \pi_{0}\mathcal{M}_{\theta}$ to
$\dim_{\Z}(H_{n}(W; \Z))$.  It is easily checked that this is a
surjective and additive map.  To see that it is an isomorphism of
monoids, observe that Propositions \ref{proposition: structure of
  monoid n connected theta} and \ref{proposition: path connectivity of
  theta structures} imply that the path component of any element $(W,
\ell) \in \mathcal{M}_{\theta}$ is determined entirely by the rank
$\dim_{\Z}(H_{n}(W; \Z))$.  This concludes the proof of the corollary.
\end{proof}
With the above results established we are now in a position to prove
Theorem A*.  By the above corollary it follows that the monoid
$\pi_{0}\mathcal{M}_{\theta}$ is generated by a singe element, namely
the path component corresponding to $\BDiff_{\theta}(V^{2n+1}_{1},
D^{2n})$ under the weak homotopy equivalence of Proposition
\ref{proposition: structure of monoid n connected theta}.  Let $x \in
\mathcal{M}_{\theta}$ represent such a generator and let
$\displaystyle{\colim_{g\to\infty}}\textstyle{\BDiff_{\theta}}(V^{2n+1}_{g},
D^{2n})$ denote the direct limit taken over the direct system
$$
\xymatrix{
\BDiff_{\theta}(V^{2n+1}_{0}, D^{2n}) \ar[r] & \BDiff_{\theta}(V^{2n+1}_{1}, D^{2n}) \ar[r] & \cdots \ar[r] & \BDiff_{\theta}(V^{2n+1}_{g}, D^{2n}) \ar[r] & \cdots 
}
$$
with each map induced by multiplication by the generator $x$. 
The homotopy type of this direct limit does not depend on choice of generator $x$.
By Corollary \ref{corollary: identification of monoid}, it follows that there is a homotopy equivalence 
\begin{equation} \label{equation: group completion limit}
(\mathcal{M}_{\theta})_{\infty} \; \simeq \; \left(\displaystyle{\colim_{g\to\infty}}\textstyle{\BDiff_{\theta}}(V^{2n+1}_{g}, D^{2n})\right)\times\Z,
\end{equation}
where $(\mathcal{M}_{\theta})_{\infty}$ is the direct limit from Definition \ref{defn: M-infty} applied to the monoid $\mathcal{M}_{\theta}$.
Restricting the homology equivalence $(\mathcal{M}_{\theta})_{\infty}  \simeq_{H_{*}} \Omega B\mathcal{M}_{\theta}$ to a path component and combining with (\ref{equation: group completion limit}) yields the homology isomorphism
$$
\displaystyle{\colim_{g\to\infty}}H_{*}(\textstyle{\BDiff_{\theta}}(V^{2n+1}_{g}, D^{2n}); \Z)  \; \cong \; H_{*}(\Omega_{0}B\mathcal{M}_{\theta}; \Z).
$$
Combining this with the weak homotopy equivalence $\Omega_{0}B\mathcal{M}_{\theta} \simeq Q_{0}B_{+}$ (which holds in the case that $n \geq 4$) yields Theorem A*.

By restricting to a particular choice of tangential structure $\theta$ we may obtain the proof of Theorem A stated in the introduction. 
Let $\theta^{n}: BO(2n+1)\langle n \rangle \longrightarrow BO(2n+1)$ denote the $n$-connected cover. 
Theorem A will follow from the proposition below. 
\begin{proposition} \label{proposition: theta-n structures}
For all $g \in \Z_{\geq g}$, the forgetful map 
$$
\textstyle{\BDiff_{\theta^{n}}}(V^{2n+1}_{g}, D^{2n}) \; \longrightarrow \; \BDiff(V^{2n+1}_{g}, D^{2n})
$$
is a weak homotopy equivalence. 
\end{proposition}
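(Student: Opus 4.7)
The strategy is to show that the forgetful map is a fibration whose fiber
$\Bun(TV^{2n+1}_g, (\theta^{n})^{*}\gamma^{2n+1}; \ell_D)$
is weakly contractible. This fiber description is immediate from the
homotopy-quotient definition
$\BDiff_{\theta^{n}}(V, D) = \Bun(\ldots)//\Diff(V, D)$. To handle this bundle space I will first invoke the
parallelizability of $V^{2n+1}_g$: by the classification used in
Section \ref{subsection: handlebodies and structure of the monoid}
(via \cite{W 67}), the invariant
$\alpha_n: \pi_n(V^{2n+1}_g) \to \pi_{n-1}(SO)$ vanishes because each
summand $D^{n+1}\times S^n$ has trivial tangent bundle. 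Hence $V^{2n+1}_g$
is parallelizable, and since $D^{2n}$ is contractible I can choose a framing
of $TV^{2n+1}_g$ extending the trivialization of $TD^{2n}\oplus \epsilon^{1}$
determined by the underlying $BO(2n+1)$-map of $\ell_D$.

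With such a compatible framing fixed, the classifying map of $TV^{2n+1}_g$ becomes
constant, so $\Bun(TV^{2n+1}_g, (\theta^{n})^{*}\gamma^{2n+1}; \ell_D)$ is
weakly equivalent to the space of lifts of the constant map
$V^{2n+1}_g \to BO(2n+1)$ through $\theta^{n}$ extending the chosen lift on
$D^{2n}$. Such lifts are sections of the trivial bundle
$V^{2n+1}_g \times F$ restricting to a fixed section over $D^{2n}$, where
$F$ denotes the homotopy fiber of $\theta^{n}$. Because $D^{2n}$ is contractible
and $V^{2n+1}_g$ deformation retracts onto a wedge $(S^n)^{\vee g}$, the
quotient $V^{2n+1}_g/D^{2n}$ is homotopy equivalent to $(S^n)^{\vee g}$ as a
pointed space, giving
$$
\Bun(TV^{2n+1}_g, (\theta^{n})^{*}\gamma^{2n+1}; \ell_D)
\; \simeq \;
\Maps_{*}\!\left((S^n)^{\vee g},\; F\right)
\; = \;
(\Omega^n F)^{\times g}.
$$

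The final step is to compute the homotopy groups of $\Omega^n F$. From the
long exact sequence of the fibration
$F \to BO(2n+1)\langle n \rangle \to BO(2n+1)$, together with the definition
of the $n$-connective cover, one reads off
$\pi_i(F) = \pi_{i+1}(BO(2n+1))$ for $i < n$ and $\pi_i(F) = 0$ for $i \geq n$.
Consequently $\pi_i(\Omega^n F) = \pi_{i+n}(F) = 0$ for every $i \geq 0$, so
$\Omega^n F$ and therefore $(\Omega^n F)^{\times g}$ is weakly contractible. This
proves the proposition. The principal technical care lies in the second
paragraph, namely choosing a framing compatibly with $\ell_D$ and converting
the bundle-map space into the based mapping space; this uses only the
contractibility of $D^{2n}$ and the standard bundle-theoretic correspondence
between $\theta$-structures and lifts of classifying maps, and introduces no essentially new
technique beyond what is already employed throughout the paper.
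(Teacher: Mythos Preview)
Your proof is correct and follows essentially the same approach as the paper: both reduce to showing that the fibre $\Bun(TV^{2n+1}_g,(\theta^n)^*\gamma^{2n+1};\ell_D)$ is weakly contractible, identify it with a space of lifts through $\theta^n$ relative to $D^{2n}$, and conclude using that the homotopy fibre $F$ of $\theta^n$ has $\pi_i(F)=0$ for $i\geq n$ together with $V^{2n+1}_g\simeq (S^n)^{\vee g}$. The only minor difference is that you first invoke parallelizability to make the classifying map constant and thereby obtain the explicit identification with $(\Omega^n F)^{\times g}$, whereas the paper works directly with the lifting diagram over the (not necessarily constant) tangent classifying map and appeals to obstruction theory; your extra step is harmless and makes the final computation slightly more transparent.
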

\begin{proof}
To prove the proposition it will suffice to prove that the space $\Bun_{\theta^{n}}(TV^{2n+1}_{g}, (\theta^{n})^{*}\gamma^{2n+1}; \ell_{D})$ is weakly contractible. 
This space can be identified with the space of lifts
$$
\xymatrix{
D^{2n} \ar[rr] \ar@{^{(}->}[d]  && BO(2n+1)\langle n\rangle \ar[d]^{\theta^{n}} \\
V^{2n+1}_{g} \ar[rr] \ar@{-->}[urr] && BO(2n+1)
}
$$
where the bottom-horizontal map if the map classifying the tangent bundle $TV^{2n+1}_{g}$ and the top-horizontal map is the map underlying the fixed $\theta^{n}$-structure $\ell_{D}: TD\oplus\epsilon^{1} \longrightarrow (\theta^{n})^{*}\gamma^{2n+1}$. 
Let $F(\theta^{n})$ denote the homotopy fibre of the map $\theta^{n}$. 
Since $\theta^{n}$ is the $n$-connected cover, it follows that $\pi_{i}(F(\theta^{n})) = 0$ for all $i \geq n$. 
Weak contractibility of this space of lifts then follows from the fact that $V^{2n+1}_{g}$ is homotopy equivalent to $(S^{n})^{\vee g}$. 
This concludes the proof of the proposition.
\end{proof}

Proposition \ref{proposition: theta-n structures} yields the weak homotopy equivalence 
$$
\colim_{g\to\infty}\textstyle{\BDiff_{\theta^{n}}}(V^{2n+1}_{g}, D^{2n}) \; \simeq \; \displaystyle{\colim_{g\to\infty}}\BDiff(V^{2n+1}_{g}, D^{2n}).
$$
Combining this weak homotopy equivalence with Theorem A* then yields the homology isomorphism 
$$
\colim_{g\to\infty}H_{*}(\BDiff(V^{2n+1}_{g}, D^{2n}); \Z) \; \cong \; H_{*}(Q_{0}BO(2n+1)\langle n \rangle_{+}; \Z)
$$
and concludes the proof of Theorem A.

\appendix

\section{Surgery on Morphisms} \label{section: surgery on morphisms}
In this section we prove that the inclusion $B\Cob^{\partial, \mb{c}}_{\theta} \hookrightarrow B\Cob^{\partial, \mb{b}}_{\theta}$ is a weak homotopy equivalence in the case that $2n+1 \geq 5$.
This constitutes one of the weak homotopy equivalences stated in Theorem \ref{theorem: easy weak equivalences}. 
We will prove this weak homotopy equivalence in stages by filtering $\Cob^{\partial, \mb{b}}_{\theta}$ by a sequence of intermediate subcategories contained in $\Cob^{\partial, \mb{c}}_{\theta}$. 
We begin with a definition. 
\begin{defn} \label{defn: intermediate morphism category}
Let $k \in \Z_{\geq-1}$. 
The topological subcategory $\Cob^{\partial, \mb{b}, k}_{\theta} \subset \Cob^{\partial, \mb{b}}_{\theta}$ has the same space of objects. 
Its morphisms are those $(t, (W, \ell))$ such that the triad 
$(W|_{[0, t]}; \; W|_{t}, \; (\partial W)|_{[0, t]})$
is $k$-connected. .  
\end{defn}
For each $k \in \Z_{\geq-1}$, the corresponding semi-simplicial spaces $\mb{D}^{\partial, \mb{b}, k}_{\bullet}$ and $\mb{X}_{\bullet}^{\partial, \mb{b}, k}$ are defined analogously to Definitions \ref{defn: flexible model 1} and \ref{defn: more flexible model} so that there are weak homotopy equivalences
$$B\Cob^{\partial, \mb{b}, k}_{\theta} \simeq |\mb{D}^{\partial, \mb{b}, k}_{\bullet}| \simeq |\mb{X}_{\bullet}^{\partial, \mb{b}, k}|.$$
By definition we have $\Cob^{\partial, \mb{b}, n}_{\theta} = \Cob^{\partial, \mb{c}}_{\theta}$.
We will prove the following theorem. 
\begin{theorem} \label{theorem: highly connected morphisms}
Let $2n+1 \geq 5$.
Then the inclusion $\Cob^{\partial, \mb{b}, k}_{\theta} \hookrightarrow \Cob^{\partial, \mb{b}, k-1}_{\theta}$ induces a weak homotopy equivalence
$
B\Cob^{\partial, \mb{b}, k}_{\theta} \; \simeq \; B\Cob^{\partial, \mb{b}, k-1}_{\theta}
$
whenever $k \leq n$. 
\end{theorem}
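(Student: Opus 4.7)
The plan is to follow the parametrized-surgery paradigm of \cite[Section 4]{GRW 14}, now performing surgery in the \emph{interior} of morphisms rather than on objects. Fix $k \leq n$. Passing to the flexible models of Proposition \ref{proposition: X-D equivalence}, I would construct an augmented bi-semi-simplicial space $\mb{X}^{\partial, \mb{b}, k}_{\bullet, \bullet} \longrightarrow \mb{X}^{\partial, \mb{b}, k-1}_{\bullet}$ whose $q$-simplices over $x = (a, \varepsilon, (W, \ell))$ are $(q+1)$-tuples of finite pairwise-disjoint families of framed embeddings $S^k \times D^{2n+1-k} \hookrightarrow \Int(W)$, together with a compatible $\theta$-structure on the surgery trace, such that simultaneous surgery along them upgrades $x$ to a $p$-simplex of $\mb{X}^{\partial, \mb{b}, k}_{\bullet}$. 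The main simplification over Section \ref{section: surgery on objects} is that both the attaching spheres and their $(k+1)$-handle traces stay in the interior of $W$, so the $\mb{b}$-condition on $\partial W$ is automatically preserved.

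The first technical ingredient is a local surgery move analogous to that of Section \ref{subsection: surgery move on objects}: a one-parameter family $\mathcal{P}^{k}_{t}$ of $\theta$-manifolds interpolating between a cylinder on a thickened $k$-sphere and the trace of an elementary surgery of index $k+1$, with two Morse critical points of indices $k$ and $k+1$ in the interior. Because nothing happens on the boundary, the half-handle Morse analysis of Corollary \ref{lemma: morse level sets} is not required here. Assembling copies of $\mathcal{P}^{k}_{t}$ along surgery data then produces, exactly as in Section \ref{subsection: disk equivalence}, a homotopy $\mathcal{F}:[0,1] \times |\mb{X}^{\partial, \mb{b}, k}_{\bullet, 0}| \longrightarrow |\mb{X}^{\partial, \mb{b}, k-1}_{\bullet}|$ whose value at $t=0$ agrees with the augmentation and whose value at $t=1$ factors through $|\mb{X}^{\partial, \mb{b}, k}_{\bullet}|$.

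The core of the proof is to verify that the augmented topological flag complex $\mb{X}^{\partial, \mb{b}, k}_{p, \bullet} \longrightarrow \mb{X}^{\partial, \mb{b}, k-1}_{p}$ satisfies the hypotheses of Theorem \ref{theorem: flag complex equivalence} for each $p$. Condition (i) follows from a standard submersion argument, and condition (iii) (disjointness of finite collections) follows from general position in the interior of $W$, since embedded $k$-spheres with their normal thickenings can be made pairwise disjoint by small ambient isotopies. The non-emptiness condition (ii) is the delicate one: for every $x \in \mb{X}^{\partial, \mb{b}, k-1}_{p}$ one must exhibit a finite collection of framed $k$-spheres in $\Int(W)$ whose simultaneous surgery produces an element of $\mb{X}^{\partial, \mb{b}, k}_{p}$. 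The relevant obstruction in $\pi_{k}$ is finitely generated since the triad $(W|_{[b,c]}; (\partial W)|_{[b,c]}, W|_{c})$ is already $(k-1)$-connected, Hurewicz produces representative maps $S^k \to \Int(W)$, and the framing obstruction is resolved by the $\theta$-structure.

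The principal obstacle is the top case $k = n$, where general position yields only an immersion of $S^n$ into the $(2n+1)$-manifold $W$ with transverse double points, and a Whitney-type trick is required to remove them. This succeeds in codimension $n+1$ precisely when $n+1 \geq 3$, i.e.\ when $2n+1 \geq 5$, which is exactly the dimensional hypothesis of the theorem. Crucially, no \emph{half}-Whitney trick (and hence no appeal to Proposition \ref{proposition: inductive disjunction}) is needed, because the $k$-spheres are pushed into the interior of $W$ and stay far from $\partial W$; this is why the result holds under the mild hypothesis $2n+1 \geq 5$ rather than the stronger $2n+1 \geq 9$ needed for Theorem \ref{theorem: handlebody equivalence}.
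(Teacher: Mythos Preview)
Your high-level architecture is right: an augmented bi-semi-simplicial resolution of $\mb{D}^{\partial,\mb{b},k-1}_{\bullet}$ by surgery data, a one-parameter surgery move, and a resulting homotopy factoring through $|\mb{X}^{\partial,\mb{b},k}_{\bullet}|$. You are also right that the surgery stays in the interior and leaves $\partial W$ untouched, which is the key simplification over Section~\ref{section: surgery on objects}. But two concrete pieces are wrong.

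\textbf{The surgery data is not framed $S^k$'s.} The relevant model is \cite[Section~3]{GRW 14}, not Section~4. The data consists of embeddings of $\partial_{-}D^{k+1}\times\R^{d-k}$ (a half-$k$-sphere times a normal factor) into $\Int(W)$, level-preserving over the regular intervals $(a_j-\varepsilon_j,a_j+\varepsilon_j)$ and stretching across the entire cobordism in the height direction (see Definition~\ref{defn: Surgery on morphisms data}). The crucial condition is that adjoining these embedded $k$-disks to $W|_{a_i}$ makes the triad $\left(W|_{[a_{i-1},a_i]};(\partial W)|_{[a_{i-1},a_i]},W|_{a_i}\cup D_i\right)$ $k$-connected. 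Your picture of localized $S^k\times D^{2n+1-k}$'s followed by classical surgery does not interact correctly with the family of height-slices: it would alter $W$ as an abstract manifold without controlling the triad condition at every pair of regular values, and it is not the move in Proposition~\ref{proposition: morphism surgery move}, whose initial manifold is $\Int(\partial_{-}D^{k+1})\times\R^{d-k}$.

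\textbf{The source of $2n+1\geq 5$ is misidentified, and a half-Whitney trick \emph{is} used.} Your claim that ``general position yields only an immersion of $S^n$ into $W^{2n+1}$ with double points'' is false: $2n<2n+1$, so general position already embeds $S^n$. The actual place where $2n+1\geq 5$ enters is Lemma~\ref{lemma: surgery implementation lemma} in the top case $k=n$: one must show that cutting embedded copies of $(D^n,\partial D^n)$ out of $(W|_{[a,b]},W|_b)$ does not disturb $\pi_i$ of the triad for $i\leq n$ (Proposition~\ref{proposition: handle subtraction connectivity}). Injectivity on $\pi_n$ is obtained by applying the half-Whitney trick (Theorem~\ref{theorem: half whitney trick}) inside $W\times[0,1]$, which is $(2n+2)$-dimensional with $(n+1)$-dimensional submanifolds; this needs $n+1\geq 3$, i.e.\ $2n+1\geq 5$. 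So your conclusion that the constraint is milder than $2n+1\geq 9$ is correct, but the mechanism is the implementation lemma, not the construction of surgery data, and it does use a (lower-dimensional) half-Whitney move.
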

The proof of this weak homotopy equivalence is
similar to \cite[Theorem 3.1]{GRW 14}; we use the
same construction but take it one step further and achieve a slightly
stronger connectivity condition on the morphisms in our cobordism
category.  In particular, we use a bi-semi-simplicial
  space very similar to the one from \cite[Section 3.1]{GRW 14} and
the same surgery move that was constructed in \cite[Section 3.2]{GRW 14}.

\subsection{Surgery data} \label{subsection: morphism surgery data}
We are now ready for the main construction of this section used in the proof of Theorem \ref{theorem: highly connected morphisms}. 
We must first fix some notation. 
We denote $d := 2n+1$, which is the dimension of the morphisms in $\Cob_{\theta}^{\partial}$.
Let 
$V \subset \bar{V} \subset \R^{k+1}\times\R^{d - k}$ 
be the subspaces 
$$
V = (-2, 0)\times\R^{k}\times\R^{d-k}, \quad \quad \bar{V} = [-2, 0]\times\R^{k}\times\R^{d-k},
$$
and let 
$h: \bar{V} \longrightarrow [-2, 0] \subset \R$ 
denote the projection onto the first coordinate.
Let 
$$\partial_{-}D^{k+1}\subset \partial D^{k+1} \subset \R^{k+1}$$ 
denote the lower hemisphere of the boundary of the disk, i.e.\ 
$$
\partial_{-}D^{k+1} = \{(x_{1}, \dots, x_{k+1}) \in \R^{k+1} \; | \; \textstyle{\sum}x_{i}^{2} = 1, \quad x_{1} \leq 0 \; \}.
$$
Let $\triangle$ denote the \textit{simplex category}. 
For $[p] = \{0, \dots, p\} \in \Ob(\triangle)$, we use the notation 
$[p]^{\vee} = \triangle([p], [1]),$
 which is the set of monotone non-decreasing maps from $[p]$ to $[1]$. 
The elements of $[p]^{\vee}$ are in bijection with the set $\{0, \dots, p+1\}$, using the convention that $\varphi: [p] \rightarrow [1]$ corresponds to the number $i$ if $\varphi^{-1}(1) = \{i, i+1, \dots, p\}$.
Next definition should be compared to \cite[Definition 3.2]{GRW 14}.
\begin{defn} \label{defn: Surgery on morphisms data}
Fix once and for all an infinite set $\Omega$. 
Let 
$x = (a, \varepsilon, (W, \ell_{W})) \in \mb{D}^{\partial, \mb{b}, k-1}_{p}$
be a $p$-simplex. 
We define $\mb{Z}_{0}(x)$ to be the set of triples $(\Lambda, \delta, e)$, where $\Lambda \subset \Omega$ is a finite set, $\delta: \Lambda \longrightarrow [p]^{\vee}$ is a function, and 
$
e: \Lambda\times\bar{V} \hookrightarrow \R\times[0, 1)\times(-1, 1)^{\infty-1}
$ is an embedding subject to the following
  conditions:
\begin{enumerate} \itemsep.3cm
\item[\textcolor{black}{(i)}] the image $e(\Lambda\times\bar{V})$ is contained in the interior of $\R\times[0, 1)\times(-1, 1)^{\infty-1}$;
\item[\textcolor{black}{(ii)}] 
on each subset $(x_{0}\circ e|_{\{\lambda\}\times\bar{V}})^{-1}(a_{j}-\varepsilon_{j}, a_{j} + \varepsilon_{j}) \subset \{\lambda \}\times\bar{V}$, the function 
$$x_{0}\circ e: \Lambda\times\bar{V} \longrightarrow \R$$ coincides
with the function $h$ up to an affine translation, where recall
$x_{0}: \R\times\R^{\infty}_{+} \longrightarrow \R$ is the projection
onto the first coordinate;
\item[\textcolor{black}{(iii)}] the embedding $e$ sends $\Lambda\times
  h^{-1}(0)$ into $x_{0}^{-1}(a_{p}+\varepsilon_{p}, \infty)$ and sends $\Lambda\times h^{-1}(-2)$ into $x_{0}^{-1}(-\infty,
  a_{0}-\varepsilon_{0})$;
\item[\textcolor{black}{(iv)}] for $i > 0$, $e$ sends $\delta^{-1}(i)\times h^{-1}(-\tfrac{3}{2})$ into $x_{0}^{-1}(a_{i-1}+\varepsilon_{i-1}, \infty)$; 
\item[(v)] $e^{-1}(W\setminus\partial W) = \Lambda\times\partial_{-}D^{k+1}\times\R^{d-k}$.
\end{enumerate}
For each $i \in \{0, \dots, p\}$ let us denote 
$D_{i} = e(\delta^{-1}(i)\times\partial_{-}D^{k+1}\times\{0\}).$
 The above conditions imply that $D_{i} \subset W\setminus\partial W$. 
Below is our final condition:
\begin{enumerate}
\item[(vi)] For each $i \in \{1, \dots, p\}$, we require that
the triad
$$
\left(W|_{[a_{i-1}, a_{i}]}; \; (\partial W)|_{[a_{i-1}, a_{i}]}, \; W|_{a_{i}}\cup D_{i}|_{[a_{i-1}, a_{i}]}\right)
$$
is $k$-connected. 
\end{enumerate}
We will refer to $\mb{Z}_{0}(x)$ as the \textit{space of surgery data} associated to the element $x$. 
The way in which $\mb{Z}_{0}(x)$ is topologized will be described below in Definition \ref{defn: semi-simplicial space of morphism surgery data}. 
\end{defn}

\begin{remark}
In the above definition, the pairs $\left((\partial W)|_{[a_{i-1}, a_{i}]}, \; (\partial W)|_{a_{i}}\right)$ are $(n-1)$-connected for all $i = 0, \dots, p$. 
Thus if $k \leq n$,  then the triad
$$
\left(W|_{[a_{i-1}, a_{i}]}; \; (\partial W)|_{[a_{i-1}, a_{i}]}, \; W|_{a_{i}}\cup D_{i}|_{[a_{i-1}, a_{i}]}\right)
$$ 
is $k$-connected whenever the pair $\left(W|_{[a_{i-1}, a_{i}]}, \; W|_{a_{i}}\cup D_{i}|_{[a_{i-1}, a_{i}]}\right)$ is $k$-connected. 
This can be seen by using the long exact sequence on homotopy groups associated to the triad.
However, there are of course 
cases where $\left(W|_{[a_{i-1}, a_{i}]};  \; (\partial W)|_{[a_{i-1}, a_{i}]}, \; W|_{a_{i}}\cup D_{i}|_{[a_{i-1}, a_{i}]} \right)$ is $k$-connected but the pair $\left(W|_{[a_{i-1}, a_{i}]}, \; W|_{a_{i}}\cup D_{i}|_{[a_{i-1}, a_{i}]}\right)$ fails to be $k$-connected.
\end{remark}

\begin{remark} \label{remark: main remark on morphism surgery data}
We also observe that if the element $x = (a, \varepsilon, (W, \ell_{W}))$ is contained in the subspace $\mb{D}^{\partial, \mb{b}, k}_{p} \subset \mb{D}^{\partial, \mb{b}, k-1}_{p}$, then the \textit{empty surgery data}, determined by setting $\Lambda = \emptyset$, yields a valid element of $\mb{Z}_{0}(x)$. 
This observation will prove important in the construction of a certain map in Section \ref{subsection: formal proof of theorem}.
\end{remark}

We now enhance the space $\mb{Z}_{0}(x)$ to a semi-simplicial
space $\mb{Z}_{\bullet}(x)$ such that the space of $0$-simplices has
$\mb{Z}_{0}(x)$ as its underlying set.
\begin{defn} \label{defn: semi-simplicial space of morphism surgery data}
Let $x \in \mb{D}^{\partial, \mb{b}, k-1}_{p}$.
The set of $q$-simplices $\mb{Z}_{q}(x)$ is defined to be the set of triples $(\Lambda, \delta, e)$, where $\Lambda \subset \Omega$ is a finite set, $\delta: \Lambda \longrightarrow [p]^{\vee}\times[q]$ is a function, and 
$$
e: \Lambda\times\bar{V} \longrightarrow \R\times[0, 1)\times(-1, 1)^{\infty-1}
$$
is an embedding, subject to the requirement that for each $j \in [q]$, the restriction of the embedding $e$ to $\delta^{-1}([p]^{\vee}\times\{j\})\times\bar{V}$ defines an element of $\mb{Z}_{0}(x)$. 
For all $q$, the space $\mb{Z}_{q}(x)$ is topologized as a subspace  of 
$$
\bigg(\coprod_{\Lambda \subset \Omega}C^{\infty}(\Lambda\times\bar{V}, \R^{\infty+1})\bigg)^{q+1}.
$$
We shall write $\Lambda_{i, j} = \delta^{-1}(i, j)$ and $e_{i, j} = e|_{\Lambda_{i, j}\times\bar{V}}$ for each $i \in [p]^{\vee}$ and $j \in [q]$.
\end{defn}
Using the semi-simplicial spaces constructed in the above definition, we 
define a bi-semi-simplicial space $\mb{D}^{\partial, \mb{b}, k}_{\bullet, \bullet}$ by setting 
\begin{equation} \label{equation: resolution by handle subtractions}
\mb{D}^{\partial, \mb{b}, k}_{p, q} := \{(x, y) \; : \; x \in \mb{D}^{\partial, \mb{b}, k-1}_{p}, \;  \; y \in \mb{Z}_{q}(x) \}.
\end{equation}
The assignment $([p], [q]) \mapsto \mb{D}^{\partial, \mb{b}, k}_{p, q}$ is functorial in $[p]$ by composing each $\delta_{j}: \Lambda_{j} \rightarrow [p]^{\vee}$ with the induced map $[p']^{\vee} \rightarrow [p]^{\vee}$. 
It is functorial in $[q]$ in the same way that $\mb{Z}_{q}(x)$ is.

For each pair $(p, q)$ there is a forgetful map
$\mb{D}^{\partial, \mb{b}, k}_{p, q} \longrightarrow \mb{D}^{\partial, \mb{b}, k-1}_{p},$ $(x, y) \mapsto x.$
By setting $\mb{D}^{\partial, \mb{b}, k}_{\bullet, -1} = \mb{D}^{\partial, \mb{b}, k-1}_{\bullet}$,
these maps piece together to define an augmented bi-semi-simplicial space
$\mb{D}^{\partial, \mb{b}, k}_{\bullet, \bullet} \; \longrightarrow \; \mb{D}^{\partial, \mb{b}, k}_{\bullet, -1}.$
The following theorem is is proven in the same way as \cite[Theorem 3.4]{GRW 14}. 
\begin{theorem} \label{theorem: contractible surgery data for morphisms}
Let $k \leq n$.
Then the map 
$|\mb{D}^{\partial, \mb{b}, k}_{\bullet, \bullet}| \; \longrightarrow \; |\mb{D}^{\partial, \mb{b}, k}_{\bullet, -1}| = |\mb{D}^{\partial, \mb{b}, k}_{\bullet}|$ is a weak homotopy equivalence.
\end{theorem}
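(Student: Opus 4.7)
The plan is to prove the statement by applying the topological flag complex criterion (Theorem \ref{theorem: flag complex equivalence}) fibrewise over each $p$, and then geometrically realizing in the $p$-direction. For a fixed $p$-simplex $x \in \mb{D}^{\partial,\mb{b},k-1}_{p}$, the semi-simplicial space $\mb{Z}_{\bullet}(x)$ is a topological flag complex over the point $\{x\}$: a tuple $(v_{0},\dots,v_{q})$ of zero-simplices is a $q$-simplex iff the images of the embeddings $e_{v_{j}}$ are pairwise disjoint in $\R\times[0,1)\times(-1,1)^{\infty-1}$, which is manifestly a symmetric, open, pairwise condition. Hence the augmented bi-semi-simplicial space $\mb{D}^{\partial,\mb{b},k}_{p,\bullet}\to \mb{D}^{\partial,\mb{b},k-1}_{p}$ is an augmented topological flag complex, and it suffices to verify the three hypotheses of Theorem \ref{theorem: flag complex equivalence} for every $p$; geometric realization in $p$ then yields the theorem.

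\textbf{Condition (i) -- local lifts.} Given a map from a disk into $\mb{D}^{\partial,\mb{b},k-1}_{p}$ and a lift at a central point, extend the surgery data in a neighborhood by applying parametrized isotopy extension to the embedding $e$. Because $e$ lands in the infinite-dimensional ambient space, the only obstruction to extending $e$ across a neighborhood is the parametrized control of $e^{-1}(W)$, which is taken care of by the smoothness hypotheses of Proposition \ref{proposition: salient features of smooth maps} applied to $(W,\ell_{W})$ varying over the disk. This is the same argument as in [GRW 14, Prop.~6.10].

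\textbf{Condition (iii) -- disjunction.} Given $v_{1},\dots,v_{m}\in \mb{Z}_{0}(x)$, I first use surjectivity (proved below) to pick any $v_{0}\in \mb{Z}_{0}(x)$, then push its image $\Image(e_{v_{0}})$ off the finite union $\bigcup_{i}\Image(e_{v_{i}})$. The portion of each $\Image(e_{v_{i}})$ that meets $W$ is exactly $e_{v_{i}}(\Lambda_{v_{i}}\times\partial_{-}D^{k+1}\times\R^{d-k})$, a finite-dimensional submanifold of $W$ of dimension $d$; transversality of $\partial_{-}D^{k+1}$-cores (dimension $k\leq n$) inside $W$ (dimension $d=2n+1$) permits making the $k$-dimensional cores pairwise disjoint in $W$ by a small isotopy ($2k\leq 2n<d$), after which general position in the infinite-dimensional directions orthogonal to $W$ makes the full thickened images disjoint in the ambient space. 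The height-function conditions (ii)--(iv) of Definition \ref{defn: Surgery on morphisms data} are preserved since we only move in directions transverse to $x_{0}$.

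\textbf{Condition (ii) -- nonemptiness of $\mb{Z}_{0}(x)$ -- the main obstacle.} For each $i=1,\dots,p$ I must produce an embedded family of disks $D_{i}\subset W|_{[a_{i-1},a_{i}]}$ (and extending thickenings into $\R\times\R^{\infty}_{+}$) so that the triad
\[
\left(W|_{[a_{i-1},a_{i}]};\; (\partial W)|_{[a_{i-1},a_{i}]},\; W|_{a_{i}}\cup D_{i}|_{[a_{i-1},a_{i}]}\right)
\]
becomes $k$-connected. Since $x\in\mb{D}^{\partial,\mb{b},k-1}_{p}$ the triad is already $(k-1)$-connected, so by the relative Hurewicz theorem the obstruction group $\pi_{k}$ of the triad is generated by finitely many spheres. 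Each generator is represented by a map $(D^{k},\partial D^{k})\to (W|_{[a_{i-1},a_{i}]},W|_{a_{i}})$; because $2k\leq 2n<d$, Theorem \ref{theorem: hudsen embedding theorem} (Hudson's embedding theorem) promotes these to embeddings, and a further half-Whitney-style argument puts them in general position. I then thicken each embedded core $D^{k}\hookrightarrow W$ to a codimension-zero embedding $\partial_{-}D^{k+1}\times\R^{d-k}\hookrightarrow W$ and extend to $\partial_{-}D^{k+1}\times\R^{d-k}\subset\bar{V}\hookrightarrow \R\times\R^{\infty}_{+}$ using the infinite codimension of $W$ in the ambient space and triviality of all normal bundles over disks. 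Finally I straighten each embedding near the regular values $a_{i-1},a_{i}$ so that conditions (ii)--(iv) on compatibility with the height function hold; this can be arranged because these conditions only constrain $e$ on the preimages of the neighborhoods $(a_{j}-\varepsilon_{j},a_{j}+\varepsilon_{j})$, where I can replace $e$ by a height-respecting model without changing its relevant homotopy class.

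This step, which performs surgery on the morphisms of $\Cob^{\partial,\mb{b},k-1}_{\theta}$ below the middle dimension, is where the hypothesis $k\leq n$ (and hence $2n+1\geq 5$) is essential: both for realizing relative $\pi_{k}$ classes by embeddings and for the subsequent general-position disjunction. The argument is the direct analogue, performed in the interior of $W$ rather than in a closed manifold, of [GRW 14, Theorem 3.4]; condition (v) of Definition \ref{defn: Surgery on morphisms data} forces the surgery to happen in $\Int(W)$, so $\partial W$ plays no role in the disk construction and the proof there transports verbatim. Verifying (i)--(iii) then completes the proof of the theorem via Theorem \ref{theorem: flag complex equivalence}.
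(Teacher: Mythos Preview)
Your proposal is correct and follows precisely the approach the paper intends: the paper does not give its own proof of this theorem but states that it ``is proven in the same way as \cite[Theorem 3.4]{GRW 14}.'' That reference establishes the analogous result for closed manifolds by applying the augmented topological flag complex criterion (Theorem \ref{theorem: flag complex equivalence}) levelwise in $p$ and verifying conditions (i)--(iii) exactly as you outline, with the surgery performed in the interior so that the boundary plays no role; your sketch is a faithful transcription of that argument to the present setting.
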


\subsection{The surgery move}
We now implement the surgery move construction from \cite[Section 3.2]{GRW 14}.
Recall the subspaces 
$
V \subset \bar{V} \subset \R^{k+1}\times\R^{d-k}
$
used in the previous section.
The following proposition is a restatement of \cite[Proposition 3.6]{GRW 14}.
\begin{proposition} \label{proposition: morphism surgery move}
Suppose that $d = 2n + 1 \geq 3$ and let $k \leq n$. 
Let $\ell$ be a $\theta$-structure on $\Int(\partial_{-}D^{k+1})\times\R^{d-k} \subset V$. 
There exists a one parameter family  
$$(\mathcal{P}_{t}, \ell_{t}) \in \bPsi_{\theta}(V), \quad t \in [0, 1],$$ 
that satisfies the following conditions:
\begin{enumerate} \itemsep.2cm
\item[\textcolor{black}{(i)}] $(\mathcal{P}_{0}, \ell_{0}) =  (\Int(\partial_{-}D^{k+1})\times\R^{d-k}, \; \ell).$
\item[\textcolor{black}{(ii)}] The restriction of the height function $h: V \longrightarrow (-2, 0)$ to $\mathcal{P}_{t} \subset V$, has isolated critical values.
\item[\textcolor{black}{(iii)}] Independently of $t$, we have 
$$
\mathcal{P}_{t}\setminus(\R^{k+1}\times B_{3}^{d-k}(0)) \; = \; \Int(\partial_{-}D^{k+1})\times(\R^{d-k}\setminus B_{3}^{d-k}(0)).
$$
We will denote by $\mathcal{P}^{\partial}_{t}$ this closed subset of $\mathcal{P}_{t}$.
\item[\textcolor{black}{(iv)}] For all $t$ and each pair of regular values $-2 < a < b < 0$ of the height function, the pair 
$$
(\mathcal{P}_{t}|_{[a, b]}, \; \mathcal{P}_{t}|_{b}\cup\mathcal{P}_{t}^{\partial}|_{[a,b]})
$$
is $k$-connected. 
\item[\textcolor{black}{(v)}] For each pair of regular values $-2 < a < b < 0$ of the height function, the pair 
$$
(\mathcal{P}_{1}|_{[a, b]}, \; \mathcal{P}_{1}|_{b})
$$
is $k$-connected. 
\end{enumerate}
Furthermore, the family $(\mathcal{P}_{t}, \ell_{t})$ varies continuously in $\ell$. 
\end{proposition}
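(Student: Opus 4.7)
\medskip

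The plan is to build $(\mathcal{P}_{t}, \ell_{t})$ as the trace of a single index $(k{+}1)$ surgery performed on $\mathcal{P}_{0} = \Int(\partial_{-}D^{k+1})\times\R^{d-k}$ inside the slab $V$, carried out via a compactly supported isotopy of the ambient space. Concretely, I would first fix a standard local model: pick a smooth function $\sigma:\R\to\R$ which is $-1$ on $(-\infty,1]$, strictly increasing on $(1,2)$, and equals the identity on $[3,\infty)$, and set
$$
K \;=\; \bigl\{(x,y)\in \R^{k+1}\times\R^{d-k} \;\bigl|\; |y|^{2} = \sigma(|x|^{2})\bigr.\bigr\},
$$
together with an embedding of $K$ into a translate of the slab $V$ chosen so that $K$ coincides with $\partial_{-}D^{k+1}\times\R^{d-k}$ outside $\R^{k+1}\times B^{d-k}_{3}(0)$. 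The projection $h$ restricts to $K$ as a Morse function with exactly two critical points (cf.\ the analogous analysis in Proposition \ref{proposition: height-morse function}): a minimum of index $0$ at $(-\sqrt{2},0;0)$ and a critical point of index $k+1$ at $(+\sqrt{2},0;0)$. Thus $K$ realizes the trace of an index $(k{+}1)$ surgery, and after the upper critical point is passed the level sets acquire precisely the connectivity appearing in (v).

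Next I would produce the family $\mathcal{P}_{t}$ by pushing $\mathcal{P}_{0}$ towards $K$ along a one--parameter family of compactly supported embeddings
$\zeta_{t}:V\hookrightarrow V$, constructed exactly as in Section \ref{subsection: surgery move on objects}: multiply the horizontal coordinate $x_{1}$ by a bump cutting off at $|x|^{2}=2$ so as to slide the two critical values together as $t$ decreases, reverse the resulting piecewise linear path to obtain a loop in the two parameter square, and define $\mathcal{P}_{t}=\zeta_{t}(\mathcal{P}_{0})$ for the segment that gradually widens the separation of the two critical values. Property (iii) is built into the choice of support, (i) is immediate, and (ii) follows from the fact that, with the exception of the single parameter $t$ where the two critical values coalesce, the height function is Morse on each $\mathcal{P}_{t}$.

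To equip the family with $\theta$-structures $\ell_{t}$ extending the given $\ell$ I would follow the recipe of Section \ref{subsection: surgery move on objects}: the embedding $\zeta_{t}$ pulls back $\ell$ to a structure on $\mathcal{P}_{t}$ outside the compact region where $\mathcal{P}_{t}$ and $\mathcal{P}_{0}$ differ, and the remaining piece deformation--retracts onto its collar, so there is no obstruction to continuously extending $\ell$ over the whole family; this is precisely Proposition \ref{proposition: extension of theta-structure} applied in the present model. Continuity in $\ell$ is then automatic from the functoriality of this extension.

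The remaining and main technical content is the verification of the connectivity conditions (iv) and (v). Condition (v) is a direct Morse--theoretic computation: at $t=1$ the only critical point of $h$ on $\mathcal{P}_{1}$ above the minimum is the index $(k{+}1)$ critical point, so for regular values $a<b$ the descending trace from level $b$ to level $a$ attaches only cells of codimension $\leq d-k-1$ in $\mathcal{P}_{1}$, forcing $(\mathcal{P}_{1}|_{[a,b]},\mathcal{P}_{1}|_{b})$ to be $k$-connected. For (iv) the point is that, at arbitrary $t$, an appearance of an index $0$ critical point contributes a $d$-cell, which is harmless once we enlarge $\mathcal{P}_{t}|_{b}$ to $\mathcal{P}_{t}|_{b}\cup\mathcal{P}^{\partial}_{t}|_{[a,b]}$: the fixed product region $\mathcal{P}^{\partial}_{t}$ deformation retracts onto the level $b$ slice of itself, and a Mayer--Vietoris/pushout argument as in Lemma \ref{lemma: surgery implimentation} absorbs the low index cell into the outer cylindrical piece. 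The main obstacle is exactly this bookkeeping during the parameter value where the two critical points of $h$ on $\mathcal{P}_{t}$ collide and the Morse decomposition degenerates; the argument there is to perturb slightly, verify $k$-connectedness on either side, and use that $k\leq n$ (which keeps the relevant cells below the middle dimension of the level sets) to patch through the singular parameter. Since the whole construction is the one of \cite[Prop.~3.6]{GRW 14}, I would ultimately invoke that result for the detailed estimates.
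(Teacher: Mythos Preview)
Your bottom line --- invoke \cite[Proposition~3.6]{GRW 14} --- is precisely the paper's own approach: the paper gives no proof and simply records that this proposition is a restatement of that result. So at the level of what is actually being claimed, you agree with the paper.

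However, the sketch you offer of what that construction ``is'' does not describe the actual \cite{GRW 14} move, and contains real errors you should be aware of. First, your model $K=\{|y|^{2}=\sigma(|x|^{2})\}$ with $\sigma$ equal to the identity for large arguments is asymptotically the cone $|y|=|x|$, not a product; there is no embedding making it agree with $\partial_{-}D^{k+1}\times\R^{d-k}$ outside a compact set, so condition~(iii) cannot hold for this $K$. Second, a direct Hessian computation shows that the unique critical point of $x_{1}$ on $K$ lying in the slab $x_{1}\in(-2,0)$ has Morse index $d-k$, not $0$; the point with $x_{1}>0$ has index $k$, not $k+1$, and is outside the slab anyway. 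With an index $d-k$ critical point, passing it from above attaches a $k$-cell, so $(\mathcal{P}_{1}|_{[a,b]},\mathcal{P}_{1}|_{b})$ would fail to be $k$-connected and (v) would be violated rather than established. Third, writing $\mathcal{P}_{t}=\zeta_{t}(\mathcal{P}_{0})$ with $\zeta_{t}$ an isotopy of $V$ only reparametrises the height function on the fixed manifold $\mathcal{P}_{0}$; this cannot introduce a second critical point, and since $(\mathcal{P}_{0}|_{[a,b]},\mathcal{P}_{0}|_{b})$ is only $(k-1)$-connected when $-1\in[a,b]$, no reparametrisation alone will yield (v). You are conflating this morphism move with the object move of Section~\ref{subsection: surgery move on objects}, where one pulls back a fixed two-critical-point model via embeddings into a \emph{larger} target; the mechanism there is $\zeta_{t}^{-1}(K)$, not $\zeta_{t}(\mathcal{P}_{0})$, and the change in topology comes from new pieces of $K$ sliding into the window.

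In short: keep the citation and drop the sketch, or if you want to reconstruct the move, go back to \cite[Section~3.2]{GRW 14} directly rather than transplanting Section~\ref{subsection: surgery move on objects}.
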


The one-parameter family from the above proposition will be used as follows.
Let 
$$(a, \varepsilon, (W, \ell_{W}), e) \in \mb{D}^{\partial, \mb{b}, k}_{p, 0}.$$
We construct a one-parameter family 
\begin{equation}
\mathcal{K}^{t}_{e}(W, \ell_{W}) \in \bPsi^{\partial}_{\theta}((a_{0}-\varepsilon_{0}, a_{p}+\varepsilon_{p})\times\R^{\infty}_{+})
\end{equation}
by letting it equal $W|_{(a_{0}-\varepsilon_{0}, a_{p}+\varepsilon_{p})}$ outside the images of the embeddings $e_{i, 0}|_{\Lambda_{i, j}\times V}$, and on each of the subsets 
$$
e_{i, 0}(\{\lambda\}\times V)\cap\left((a_{0}-\varepsilon_{0}, a_{p}+\varepsilon_{p})\times\R^{\infty}_{+}\right),
$$ 
we set it equal to the $\theta$-manifold
$$\left(e_{i, 0}(\{\lambda\}\times\mathcal{P}_{t}), \; \ell_{W}\circ De_{i, 0}\right),$$ 
where 
$De_{i, 0}$ denotes the differential of the embedding $e_{i, 0}$.  
\begin{remark}
Notice that since $e^{-1}(\partial W) = \emptyset$, the above construction does not touch the boundary of $W$ and so $\partial \mathcal{K}^{t}_{e}(W, \ell_{W}) \; = \; (\partial W, \ell|_{\partial W})$ for all $t \in [0, 1]$.
\end{remark}

The main result that we will need regarding the family $\mathcal{K}^{t}_{e}(W, \ell_{W})$ is the lemma below.
\begin{lemma} \label{lemma: surgery implementation lemma}
Let $d = 2n + 1 \geq 5$,  let $k \leq n$, and let $(a, \varepsilon, (W, \ell), e) \in \mb{D}^{\partial, \mb{b}, k}_{p, 0}$. 
Then for all $t \in [0, 1]$, the tuple $(a, \varepsilon, \mathcal{K}^{t}_{e}(W, \ell_{W}))$ is an element of $\mb{X}^{\partial, \mb{b}, k-1}_{p}$.
If $t = 1$ or $(a, \varepsilon, (W, \ell_{W})) \in \mb{D}^{\partial, \mb{b}, k}_{p}$, then $(a, \varepsilon, \mathcal{K}^{t}_{e}(W, \ell_{W}))$ lies in the subspace $\mb{X}^{\partial, \mb{b}, k}_{p} \subset \mb{X}^{\partial, \mb{b}, k-1}_{p}$.
\end{lemma}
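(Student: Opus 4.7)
The plan is to follow the same template as the analogous Lemma \ref{lemma: surgery implimentation} for surgery on objects, using Proposition \ref{proposition: morphism surgery move} as the replacement for Proposition \ref{proposition: basic properties of object surgery move}. The first key observation is that by condition (v) of Definition \ref{defn: Surgery on morphisms data}, the embedding $e$ avoids $\partial W$ entirely, so the surgery leaves the boundary untouched: $\partial W_t = \partial W$ for all $t$. Consequently the $(n-2)$-connectivity of the boundary level sets and the $(n-1)$-connectivity of the boundary pairs required for membership in $\mb{X}^{\partial, \mb{b}, k-1}_p$ (or $\mb{X}^{\partial, \mb{b}, k}_p$) are inherited verbatim from the hypothesis $(a,\varepsilon,(W,\ell_W)) \in \mb{D}^{\partial, \mb{b}, k-1}_p$. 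The nontrivial content is therefore the triad connectivity.

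Fix a pair of regular values $b < c$ in $\bigcup_i (a_i-\varepsilon_i, a_i+\varepsilon_i)$ for the height function on $W_t$. We must show that the triad $(W_t|_{[b,c]}; (\partial W_t)|_{[b,c]}, W_t|_c)$ is $(k-1)$-connected for general $t$, and $k$-connected when either $t=1$ or the hypothesis is strengthened to $(a, \varepsilon, W, \ell_W) \in \mb{D}^{\partial, \mb{b}, k}_p$. Decompose the ambient cobordism as $W_t|_{[b,c]} = X \cup_\Sigma \mathcal{P}^{(t)}$, where $X$ is the unmodified complement $W|_{[b,c]} \setminus e(\Lambda \times \operatorname{Int}(\partial_-D^{k+1})\times\R^{d-k})$ of open tubular neighborhoods of the cores, $\mathcal{P}^{(t)}$ denotes the union of the inserted $\mathcal{P}_t$-pieces, and they are glued along $\Sigma = e(\Lambda \times \mathcal{P}^{\partial}_t) \cap [b,c]$. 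As in the proof of Lemma \ref{lemma: surgery implimentation}, one then writes two homotopy-pushout squares of pairs: the first extracts $X$ from the full cobordism, and the second inserts $\mathcal{P}^{(t)}$. Because the homotopy pushout of $k$-connected maps of pairs is $k$-connected, the desired triad connectivity is the minimum of two inputs.

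The two inputs are: (A) connectivity of $(X, X|_c \cup \partial X)$, obtained from condition (vi) of Definition \ref{defn: Surgery on morphisms data} together with a standard general-position / excision argument to account for removing the open tubular neighborhoods of the cores $D_i$ (their codimension in $W$ is $d-k \geq n+1$, which justifies excising them below degree $k$); this is $k$-connected regardless of $t$. (B) connectivity of the $\mathcal{P}_t$-pieces, furnished by Proposition \ref{proposition: morphism surgery move}: condition (iv) gives the pair $(\mathcal{P}_t|_{[\alpha,\beta]},\; \mathcal{P}_t|_\beta \cup \mathcal{P}^\partial_t|_{[\alpha,\beta]})$ is $k$-connected for every $t$, and condition (v) upgrades this to the stronger pair $(\mathcal{P}_1|_{[\alpha,\beta]}, \mathcal{P}_1|_\beta)$ at $t=1$. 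In the case that $(a,\varepsilon,W,\ell_W) \in \mb{D}^{\partial, \mb{b}, k}_p$, the empty surgery data is admissible by Remark \ref{remark: main remark on morphism surgery data} and the triad is already $k$-connected before the surgery; since the surgery move only strengthens connectivity, the conclusion in that case is immediate.

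The delicate point, and the main obstacle, is converting between the ``pair'' connectivity $(\mathcal{P}_t|_{[\alpha,\beta]},\; \mathcal{P}_t|_\beta \cup \mathcal{P}^\partial_t|_{[\alpha,\beta]})$ supplied by Proposition \ref{proposition: morphism surgery move}(iv) and the ``triad'' connectivity $(\mathcal{P}_t|_{[\alpha,\beta]};\; \mathcal{P}^\partial_t|_{[\alpha,\beta]},\; \mathcal{P}_t|_\beta)$ actually needed for $W_t$. For general $t$ the factor involving $\mathcal{P}^\partial_t$ contributes to the \emph{interior} of $W_t|_{[b,c]}$ rather than to its face $(\partial W_t)|_{[b,c]}$, so one loses a degree and obtains only $(k-1)$-connectivity of the resulting triad; at $t=1$ the stronger condition (v) avoids this loss, and one recovers full $k$-connectivity. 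Once this bookkeeping is carried out carefully, the two pushouts assemble to give the stated conclusion, exactly in parallel with the proof of Lemma \ref{lemma: surgery implimentation}.
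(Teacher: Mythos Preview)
Your overall structure---two homotopy pushout squares, with Proposition~\ref{proposition: morphism surgery move} supplying the connectivity of the $\mathcal{P}_t$-pieces---matches the paper, and for $k < n$ it essentially works. The gaps appear at the critical case $k = n$.

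First, your input (A) does not follow from general position alone. You are removing cores $(D^n, \partial D^n) \hookrightarrow (W|_{[b,c]}, W|_c)$ from a $(2n{+}1, 2n)$-dimensional pair; general position gives $\pi_i(X; X|_c, (\partial W)|_{[b,c]}) \cong \pi_i(W|_{[b,c]}; W|_c, (\partial W)|_{[b,c]})$ only for $i < n$, not for $i = n$ (an $(n{+}1)$-dimensional null-homotopy meets an $n$-dimensional core in points, and there is no transversality argument to remove them). The paper isolates precisely this point (``the proof will require a little something extra'') and proves Proposition~\ref{proposition: handle subtraction connectivity} via the half Whitney trick of Theorem~\ref{theorem: half whitney trick} to obtain the missing isomorphism at $i = n$. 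This is where $d \geq 5$ and the $1$-connectedness of the relevant pairs are actually used. Relatedly, your remark that the case $(a,\varepsilon,W,\ell_W) \in \mb{D}^{\partial,\mb{b},k}_p$ is ``immediate'' because empty surgery data is admissible is a non-sequitur: $\mathcal{K}^t_e$ is applied with the \emph{given} $e$, and showing that this surgery preserves $k$-connectivity is exactly the content of the first pushout argument, which again needs Proposition~\ref{proposition: handle subtraction connectivity} when $k=n$.

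Second, your account of the $t = 1$ case glosses over the essential mechanism. Condition (vi) of Definition~\ref{defn: Surgery on morphisms data} asserts $k$-connectivity of the triad with the extra piece $D_i|_{[b,c]}$ adjoined to the outgoing end---not $k$-connectivity of anything of the form $(X, X|_c \cup \partial X)$ you wrote down. The paper's argument displaces $D_{i,0}$ to a parallel copy $\widetilde{D}_{i,0}$ lying inside $\mathcal{P}^\partial_t$, transfers the $k$-connectivity from condition (vi) to the complement $X$ (with $\widetilde{D}_{i,0}$ still adjoined) via Proposition~\ref{proposition: handle subtraction connectivity}, pushes out to obtain that $(\widetilde{W}_t|_{[b,c]}; (\partial W)|_{[b,c]}, \widetilde{W}_t|_c \cup \widetilde{D}_{i,0}|_{[b,c]})$ is $k$-connected for all $t$, and finally uses condition (v) of Proposition~\ref{proposition: morphism surgery move} to show that at $t = 1$ the inclusion of $(\widetilde{D}_{i,0}|_{[b,c]}, \widetilde{D}_{i,0}|_c)$ factors up to homotopy through $\widetilde{W}_1|_c$, so the extra piece can be dropped. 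Your sentence ``at $t=1$ the stronger condition (v) avoids this loss'' captures neither the displaced-core trick nor this absorption step.
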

In the case that $k < n$, the above lemma is proven in the same way as \cite[Lemma 3.7]{GRW 14}.
For when $k = n$, the proof will require a little something extra. 
To prove Lemma \ref{lemma: surgery implementation lemma} in the case that $k = n$, 
we will need to use the following basic propositions below.

For the statement of the next proposition, 
let $W$ be a $(2n+1)$-dimensional manifold and let 
$M \subset \partial W$ be a $2n$-dimensional submanifold. 
Let 
$\varphi: (D^{n}, \partial D^{n}) \hookrightarrow (W, M)$ 
be an embedding and set $(W', M')$ equal to the pair defined by
$
(W', M') = (W\setminus\varphi(D^{n}), \; M\setminus\varphi(\partial D^{n})).
$
\begin{proposition} \label{proposition: handle subtraction connectivity}
Let $(W, M)$ and $(W', M')$ be as above and let $n \geq 2$. 
Suppose that the pair $(W, M)$ is $1$-connected.
Then the map induced by inclusion,
$
\pi_{i}(W', M') \longrightarrow \pi_{i}(W, M),
$
is an isomorphism for all $i \leq n$.
Furthermore, if $N \subset \partial W$ is another submanifold with $\partial N \subset N\cap M$ and $N\cap\varphi(D^{n}) = \emptyset$, then the map 
$
\pi_{i}(W'; M', N) \longrightarrow \pi_{i}(W; M, N),
$
is an isomorphism for all $i \leq n$ as well. 
\end{proposition}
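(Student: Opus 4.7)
The plan is to prove directly that $\pi_{i}(W', M') \to \pi_{i}(W, M)$ is an isomorphism for $i \leq n$, combining general-position arguments for $i \leq n-1$ with one application of the half-Whitney trick (Theorem \ref{theorem: half whitney trick}) for injectivity at $i = n$. The structure closely follows the proof of Proposition \ref{proposition: cylinder deletion connectivity} given earlier in the paper, but is technically simpler because here we are removing a disk $(D^{n}, \partial D^{n})$ rather than a cylinder $(D^{n} \times I, \partial D^{n} \times I)$, so the relevant Whitney intersections are $0$-dimensional rather than $1$-dimensional.

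For surjectivity at $i \leq n$, I would take $f \colon (D^{i}, \partial D^{i}) \to (W, M)$, first homotope $f|_{\partial D^{i}}$ inside $M$ to miss $\varphi(\partial D^{n})$ by general position (the generic preimage dimension in $\partial D^{i}$ is $(i-1)+(n-1)-2n = i-n-2 < 0$), extend this homotopy via a collar, and then homotope $f$ in $W$ rel $\partial D^{i}$ to miss $\varphi(D^{n})$ (preimage dimension $i+n-(2n+1) = i-n-1 < 0$). Injectivity at $i \leq n-1$ is the same argument applied to a null-homotopy $F \colon (D^{i} \times I, \partial(D^{i} \times I)) \to (W, M)$ of two maps in $(W', M')$, whose relevant generic preimages now have dimensions $(i+1)+n-(2n+1)$ and $i+(n-1)-2n$, both negative when $i \leq n-1$.

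The step that genuinely uses the hypotheses is injectivity at $i = n$. Here, after general position and an application of Theorem \ref{theorem: hudsen embedding theorem}, a null-homotopy $F \colon (D^{n} \times I, \partial(D^{n} \times I)) \to (W, M)$ with $F|_{D^{n} \times \{0,1\}}$ and $F|_{\partial D^{n} \times I}$ already in $(W', M')$ can be arranged to be an embedding whose image meets $\varphi(D^{n})$ transversely in finitely many isolated interior points (dimension count $(n+1)+n-(2n+1) = 0$) and is disjoint from $\varphi(\partial D^{n})$ on the boundary face $\partial D^{n} \times I$ (dimension count $n+(n-1)-2n = -1$). I then apply Theorem \ref{theorem: half whitney trick} to $F$ and the inclusion $\varphi \colon (D^{n}, \partial D^{n}) \hookrightarrow (W, M)$ to cancel the interior intersection points in pairs via an ambient isotopy rel $\partial(D^{n} \times I)$. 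The hypothesis $n \geq 2$ (so $\dim W = 2n+1 \geq 5$) ensures that Whitney disks embed by general position, and the $1$-connectedness of $(W, M)$ supplies the $\pi_{1}$-type hypothesis required by the half-Whitney trick in this relative setting. This is the only step where either dimensional or connectivity assumption is used nontrivially, and hence the main obstacle.

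The ``furthermore'' statement for triads $(W; M, N)$ with $N \cap \varphi(D^{n}) = \emptyset$ follows by the same scheme. Since $N \subset W'$ already, the $\partial_{1}D^{i}_{+}$-face of any triad map automatically lies in $W'$ and can be held fixed throughout every homotopy and isotopy performed above; the general-position and half-Whitney-trick steps on the interior and the $\partial_{0}D^{i}_{+}$-face go through verbatim, with the Whitney step at $i = n$ remaining the only nontrivial ingredient.
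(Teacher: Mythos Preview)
Your overall strategy is right, but the injectivity step at $i=n$ has a genuine mismatch with the theorems you cite. You propose to apply Theorem \ref{theorem: half whitney trick} to the pair of submanifolds $F(D^{n}\times I)$ and $\varphi(D^{n})$ inside $W$. These have dimensions $n+1$ and $n$ respectively in a $(2n+1)$-manifold, whereas Theorem \ref{theorem: half whitney trick} is stated for two $n$-dimensional submanifolds of a $2n$-manifold. Likewise, Theorem \ref{theorem: hudsen embedding theorem} requires the face $\partial_{1}P$ to map into a boundary face $\partial_{1}M$ of the ambient manifold, but in your setup $D^{n}\times\{0,1\}$ lands in the interior of $W$, so Hudson's theorem does not apply as stated either. (Your phrase ``cancel the interior intersection points in pairs'' also suggests the ordinary Whitney trick; the half-Whitney trick removes a single intersection point at a time by pushing it off the $\partial_{0}$-face.)

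The paper repairs exactly this by passing to $W\times[0,1]$. The null-homotopy becomes a map
\[
f:(D^{n}\times[0,1],\,\partial D^{n}\times[0,1])\;\longrightarrow\;(W\times[0,1],\,M\times[0,1])
\]
sending $D^{n}\times\{0,1\}$ into the boundary face $W\times\{0,1\}$, and the disk is thickened to $\varphi(D^{n})\times[0,1]$. Now both submanifolds are $(n+1)$-dimensional in the $2(n+1)$-manifold $W\times[0,1]$, and Theorems \ref{theorem: hudsen embedding theorem} and \ref{theorem: half whitney trick} apply verbatim with $n$ replaced by $n+1$; the hypothesis $n+1\geq 3$ is exactly the assumption $n\geq 2$. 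For the triad statement the paper does \emph{not} rerun the geometric argument with an extra fixed face as you suggest, but instead compares the two long exact sequences of the triads $(W';M',N)$ and $(W;M,N)$, uses general position on the maps $\pi_{*}(M',M'\cap N)\to\pi_{*}(M,M\cap N)$ (removing $\varphi(\partial D^{n})$ has codimension $n+1$ in $M$), and concludes by the four lemma.
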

\begin{proof}
In the case that $i < n$, the fact that $\pi_{i}(W', M') \longrightarrow \pi_{i}(W, M)$ is isomorphism follows by a general position argument. 
Indeed, the co-dimension of $\varphi(D^{n}) \subset W$ is $n+1$, to follows that any map $(D^{i}, \partial D^{i}) \longrightarrow (W, M)$ (or even homotopy between such maps) can be deformed by a small homotopy so as to have its image in $(W', M')$. 
A similar general position argument implies that $\pi_{n}(W', M') \longrightarrow \pi_{n}(W, M)$ is surjective. 
We now focus on proving that $\pi_{n}(W', M') \longrightarrow \pi_{n}(W, M)$ is injective. 
The proof of this will require use of the \textit{Half Whiney trick} (see Theorem \ref{theorem: half whitney
  trick}). 
Let 
$$
f: (D^{n}\times[0, 1], \; \partial D^{n}\times[0,1]) \; \longrightarrow \; (W\times[0, 1], M\times[0,1])
$$
be a map with 
$$
f(D^{n}\times\{\nu\}) \subset W'\times\{\nu\} \quad \text{and} \quad  f(\partial D^{n}\times\{\nu\}) \; \subset \; M'\times\{\nu\} \quad \text{for $\nu = 0, 1$.}
$$  
The proposition will be proven if we can deform $f$ through a
homotopy, relative to $D^{n}\times\{0, 1\}$, to a map $f'$ with image
in $(W'\times[0, 1], M'\times[0,1])$.  This will imply that any two
maps
$$
(D^{n}, \partial D^{n}) \longrightarrow (W', M')
$$
that are homotopic in $(W, M)$ are actually homotopic in $(W', M')$. 

By general position, we may assume that to begin with the restriction
$f|_{D^{n}\times\{0, 1\}}$ is an embedding.  Using the fact that $(W,
M)$ is $1$-connected, we may apply Theorem \ref{theorem: hudsen
  embedding theorem} (or \cite[Theorem 1]{H 69}) to obtain a homotopy
$$f_{t}: (D^{n}\times[0, 1], \; \partial D^{n}\times[0,1]) \;
\longrightarrow \; (W\times[0, 1], M\times[0,1]), \quad t \in [0, 1],$$
with $f_{0} = f$, relative to $D^{n}\times\{0, 1\}$, such that $f_{1}$
is an embedding.  We then may apply Theorem \ref{theorem: half whitney
  trick} to obtain an isotopy of
$f_{1}$, relative to $D^{n}\times\{0, 1\}$, to a new embedding $f'$
such that
$$f'(D^{n}\times[0, 1])\bigcap(\varphi(D^{n})\times[0, 1]) = \emptyset,$$ 
and thus $f'(D^{n}\times[0,1]) \subset W'\times[0,1]$. 
This proves that $\pi_{n}(W', M') \longrightarrow \pi_{n}(W, M)$ is injective and hence is an isomorphism.

We now prove that $\pi_{i}(W'; M', N) \longrightarrow
  \pi_{i}(W; M, N)$ is an isomorphism for all $i \leq n$.  
  As before, the proof that this map is an isomorphism for $i < n$ and surjective for $i = n$ follows by a general position argument. 
  We focus on proving that $\pi_{n}(W'; M', N) \longrightarrow \pi_{n}(W; M, N)$ is injective. 
Consider the map of exact sequences
$$
\xymatrix{
 \pi_{n}(M', M'\cap N) \ar[r] \ar[d] & \pi_{n}(W', M') \ar[r] \ar[d]^{\cong} &  \pi_{n}(W; M, N) \ar[r] \ar[d] & \pi_{n-1}(M', M'\cap N) \ar[r] \ar[d]  &\cdots \\
\pi_{n}(M, M\cap N) \ar[r] &  \pi_{n}(W, M) \ar[r] & \pi_{n}(W; M\cup D, N) \ar[r] & \pi_{n-1}(M, M\cap N) \ar[r] & \cdots
}
$$ 
Left-most vertical map is surjective by general position, as any
map $(D^{n}, \partial D^{n}) \rightarrow (M, N\cap M)$ can be deformed
to have image disjoint from $\varphi(\partial D^{n}) \subset M\setminus N$.  
By a similar general position argument it follows that
the right-most vertical map is an isomorphism.  The second vertical
map is an isomorphism by what was proven in the previous paragraph.  
It follows from the
\textit{four lemma} (or just a simple diagram chase) that the third
vertical map $\pi_{n}(W'; M', N) \longrightarrow \pi_{n}(W; M, N)$ is
injective and hence an isomorphism.  This concludes the proof of the
proposition.  
\end{proof}

With the above preliminary result established,
we now can give the proof of Lemma \ref{lemma: surgery implementation lemma}.

\begin{proof}[Proof of Lemma \ref{lemma: surgery implementation lemma}]
We prove the lemma explicitly for the case that $k = n$. 
The other cases are similar. 
Let $(a, \varepsilon, (W, \ell), e) \in \mb{D}^{\partial, \mb{b}, n}_{p, 0}$.
We let 
$W_{t}$ denote $\mathcal{K}^{t}_{e}(W, \ell_{W})$. 
Notice that by construction $\partial W_{t} = \partial W$ for all $t \in [0, 1]$. 
Consider the height function $x_{0}: W_{t} \longrightarrow \R$.
We begin by showing that if 
$(a, \varepsilon, (W, \ell))$ is contained in $\mb{D}^{\partial, \mb{b}, n}_{p},$
then for any two regular values 
$$a < b \in \bigcup^{p}_{i=0}(a_{i}-\varepsilon_{i}, a_{i}+\varepsilon_{i})$$
of the height function $x_{0}$, 
the triad
$\left(W_{t}|_{[a, b]}; \; (\partial W)|_{[a, b]}, \; W_{t}|_{b}\right)$
is $n$-connected, and hence the triple
$(a, \varepsilon, \mathcal{K}^{t}_{e}(W, \ell))$ is contained in $\mb{X}^{\partial, \mb{b}, n}_{p}$ for any $t \in [0, 1]$.

Assume that $(a, \varepsilon, (W, \ell)) \in \mb{D}^{\partial, \mb{b}, n}_{p}$.
The manifold $W_{t}|_{[a, b]}$ is obtained from $W|_{[a, b]}$ by cutting out embedded images of cobordisms $\mathcal{P}_{0}|_{[a_{\lambda}, b_{\lambda}]}$ indexed by $\lambda \in \Lambda = \coprod^{p+1}_{i=0}\Lambda_{i, 0}$ and gluing in $\mathcal{P}_{t}|_{[a_{\lambda}, b_{\lambda}]}$, where $a_{\lambda} < b_{\lambda}$ are regular values of the height function on $\mathcal{P}_{0}$ and $\mathcal{P}_{1}$. 
Let $X$ denote the complement of $e_{i, 0}(\Int(\partial_{-}D^{n+1}\times B^{d-n}_{3}(0)))$ in the manifold $W|_{[a, b]}$. 
Notice that $X\cap(\partial W)|_{[a, b]} = (\partial W)|_{[a, b]}$. 
There are homotopy pushout squares 
\begin{equation} \label{equation: pushout square 1}
\xymatrix{ \left(X|_{b}, \; (\partial W)|_{b}\right)
  \ar[rr] \ar[d] && \left(W_{t}|_{b}, \; (\partial
    W)|_{b}\right) \ar[d] \\ 
    \left(X, \; (\partial W)|_{[a,b]}
    \right) \ar[rr] && \left(W_{t}|_{b}\cup X, \; (\partial W)|_{[a,b]}\right), }
\end{equation}
and 
\begin{equation} \label{equation: pushout square 2}
\xymatrix{
\coprod_{\lambda \in \Lambda}\left((\mathcal{P}_{t}|_{b_{\lambda}}\cup \mathcal{P}^{\partial}_{t}|_{[a_{\lambda}, b_{\lambda}]}), \; \emptyset \right)
\ar[d] \ar[rr] && \left(W_{t}|_{b}\cup X, \; (\partial W)|_{[a,b]}\right) \ar[d] \\
\coprod_{\lambda \in \Lambda}\left(\mathcal{P}_{t}|_{[a_{\lambda}, b_{\lambda}]}, \; \emptyset \right)
  \ar[rr] && \left(W_{t}|_{[a, b]}, \; (\partial W)|_{[a,b]}\right).
}
\end{equation}
The pair $(X, X|_{b})$ was obtained from the pair $(W|_{[a, b]},
W_{b})$ by cutting out embedded disks $(D^{n}, \partial D^{n})$.
Since $\dim(W) = 2n+1 \geq 5$, by Proposition \ref{proposition: handle
  subtraction connectivity} there is an isomorphism
\begin{equation} \label{equation: relative homotopy isomorphism}
\pi_{i}\left(X; \; X|_{b}, \; (\partial W)|_{[a,b]}\right) \cong \pi_{i}\left(W|_{[a, b]}; \; W|_{b}, \; (\partial W)|_{[a,b]}\right)
\end{equation}
for all $i \leq n$.  Since the triad $\left(W|_{[a, b]};\; W|_{b},\;
(\partial W)|_{[a,b]}\right)$ is $n$-connected by assumption, it
follows form these isomorphisms that $\left(X;\; X|_{b},\; (\partial
W)|_{[a, b]}\right)$ is $n$-connected as well and thus the
left-vertical map of (\ref{equation: pushout square 1}) is
$n$-connected (induces isomorphism on $\pi_{i}$ for $i < n$ and
epimorphism on $\pi_{n}$).  Since (\ref{equation: pushout square 1})
is a homotopy pushout square, it follows that the right-hand vertical
map of (\ref{equation: pushout square 1}) is $n$-connected as well.
The left-hand vertical map of (\ref{equation: pushout square 2}) is
$n$-connected by property \textcolor{black}{(iv)} of Proposition
\ref{proposition: morphism surgery move}.  Since the square is a
homotopy pushout, it follows that the right-hand vertical map of
(\ref{equation: pushout square 2}) is $n$-connected as well.  It
follows that the composition of the right-hand vertical maps of
(\ref{equation: pushout square 1}) and (\ref{equation: pushout square
  2}),
$$
\left(W_{t}|_{b}, \; (\partial W)|_{b}\right) \; \longrightarrow \; \left(W_{t}|_{b}\cup X, \; (\partial W)|_{[a,b]}\right) \; \longrightarrow \; \left(W_{t}|_{[a, b]}, \; (\partial W)|_{[a,b]}\right),
$$ is $n$-connected for all $t \in [0, 1]$ and hence the triad
$\left(W_{t}|_{[a, b]}; \; W_{t}|_{b}, \; (\partial
W)|_{[a,b]}\right)$ is $n$-connected for all $t \in [0,1]$.  This
proves that $(a, \varepsilon, \mathcal{K}^{t}_{e}(W, \ell)) \in
\mb{X}^{\partial, \mb{b}, n}_{p}$ whenever $(a, \varepsilon, (W,
\ell)) \in \mb{D}^{\partial, \mb{b}, n}_{p}$.

The isomorphism (\ref{equation: relative homotopy isomorphism}) did
not depend on whether or not $(a, \varepsilon, (W, \ell))$ was an
element of the subspace $\mb{D}^{\partial, \mb{b}, n}_{p} \subset
\mb{D}^{\partial, \mb{b}, n-1}_{p}.$ Therefore, it is automatic that
$(X, X|_{b})$ is $(n-1)$-connected for any $(a, \varepsilon, (W,
\ell), e) \in \mb{D}^{\partial, \mb{b}, n}_{p, 0}$.  It then follows
by the same argument made above that the pair $(W_{t}|_{[a, b]},
W_{t}|_{b})$ is $(n-1)$-connected.  It follows that $(a, \varepsilon,
\mathcal{K}^{t}_{e}(W, \ell)) \in \mb{X}^{\partial, \mb{b}, n-1}_{p}$
for any choice of $(a, \varepsilon, (W, \ell), e) \in
\mb{D}^{\partial, \mb{b}, n}_{p, 0}$.

When $t = 1$ and the element $(a, \varepsilon, (W, \ell)) \in \mb{D}^{\partial, \mb{b}, n-1}_{p}$ is arbitrary, the proof requires an additional argument.
The proof breaks down into three cases. 

\textit{Case 1.}  Suppose that $a < b \in (a_{i}-\varepsilon_{i},
a_{i}+\varepsilon_{i})$.  Then the triad $(W|_{[a, b]}; (\partial
W)|_{[a, b]}, W|_{b})$ is $\infty$-connected.  It follows then that
$(W_{1}|_{[a, b]}; (\partial W)|_{[a, b]}, W_{1}|_{b})$ is
$n$-connected by the same argument employed in the paragraph above.

\textit{Case 2.}  Suppose that $a \in (a_{i-1} - \varepsilon_{i-1},
a_{i-1} + \varepsilon_{i-1})$ and $b \in (a_{i} - \varepsilon_{i},
a_{i} + \varepsilon_{i})$.  Let $\widetilde{W}_{t}$ be the family of
manifolds obtained by performing the surgery move that corresponds to
the set $\Lambda_{i, 0} \subset \Lambda$.  We will show that the triad
$(\widetilde{W}_{1}|_{[a, b]}; \; (\partial W)|_{[a, b]}, \;
\widetilde{W}_{1}|_{b})$ is $n$-connected.  Once this is
established, doing the remaining surgeries does not change this
property by the argument given in the first part of this proof.

By Definition \ref{defn: Surgery on morphisms data} (condition (vii)),
the triad $\left(W_{0}|_{[a, b]}; \; (\partial W)|_{[a, b]}, \;
W_{0}|_{b}\cup D_{i, 0}|_{[a, b]}\right)$ is $n$-connected, where
$$
D_{i, 0} \; = \; e_{i, 0}(\Lambda_{i, 0}\times\partial_{-}D^{n+1}\times\{0\}) \; \subset \; W \; = \; W_{0}.
$$
We denote 
$$
\widetilde{D}_{i, 0} = e_{i, 0}(\Lambda_{i,
  0}\times\partial_{-}D^{n+1}\times\{v\}) \subset W = W_{0}
$$
for some $v \in B^{n+1}_{4}(0)$.  It follows that the triad
$$ (W_{0}|_{[a, b]}; \; (\partial W)|_{[a, b]}, \;
W_{0}|_{b}\cup\widetilde{D}_{i, 0}|_{[a, b]})$$
is also $n$-connected.  As before, we let $X \subset W$ denote the
complement of $e_{i, 0}(\Int(\partial_{-}D^{n+1}\times
B^{d-n}_{3}(0)))$ in the manifold $W|_{[a, b]}$.  By the same argument
used to prove Proposition \ref{proposition: handle subtraction
  connectivity}, it follows that
$$ \pi_{i}(X;\; (\partial W)|_{[a, b]}, \; X|_{b}\cup\widetilde{D}_{i,
  0}|_{[a, b]}) \; \longrightarrow \; \pi_{i}(W_{0}|_{[a, b]}; \;
(\partial W)|_{[a, b]}, \; W_{0}|_{b}\cup\widetilde{D}_{i, 0}|_{[a,
    b]}) $$ is an isomorphism for all $i \leq n$, and thus the
triad $(X;\; (\partial W)|_{[a, b]}, \; X|_{b}\cup\widetilde{D}_{i,
  0}|_{[a, b]})$ is $n$-connected as well.  The same gluing argument
employed in the first part of the proof implies that
\begin{equation} \label{equation: tilde triad}
(\widetilde{W}_{t}|_{[a, b]}; \; (\partial W)|_{[a, b]}, \; \widetilde{W}_{t}|_{b}\cup\widetilde{D}_{i, 0}|_{[a, b]})
\end{equation}
is $n$-connected of all $t \in [0, 1]$. 

The subset $\widetilde{D}_{i, 0} \subset W$ is contained in $e_{i,
  0}(\Lambda_{i, 0}\times\mathcal{P}^{\partial}_{0})$, so we can
regard $\widetilde{D}_{i, 0}$ as a subset of $\widetilde{W}_{t}$ for
all $t \in [0, 1]$.  By construction, the inclusion
\begin{equation} \label{equation: inclusion of disk}
  (\widetilde{D}_{i, 0}|_{[a, b]}, \; \widetilde{D}_{i, 0}|_{b}) \;
  \hookrightarrow \; (\widetilde{W}_{1}|_{[a, b]}, \; \widetilde{W}_{1}|_{b})
\end{equation}
factors through the pair $\left(e_{i, 0}(\Lambda_{i,
  0}\times\mathcal{P}_{1})|_{[a, b]}, \; e_{i, 0}(\Lambda_{i,
  0}\times\mathcal{P}_{1})|_{b}\right),$ and thus Proposition
\ref{proposition: morphism surgery move} (condition
\textcolor{black}{(v)}) implies that (\ref{equation: inclusion of
  disk}) is homotopic (through maps of pairs) to a map with image in
$\widetilde{W}_{1}|_{b}$.  Thus, the inclusion
$$
(\widetilde{W}_{1}|_{[a, b]}; \; (\partial W)|_{[a, b]}, \; \widetilde{W}_{1}|_{b}) \; \hookrightarrow \; (\widetilde{W}_{1}|_{[a, b]}; \; (\partial W)|_{[a, b]}, \; \widetilde{W}_{1}|_{b}\cup\widetilde{D}_{i, 0}|_{[a, b]})
$$
is a homotopy equivalence of triads. 
Since the triad on the right is $n$-connected (as established in (\ref{equation: tilde triad})), it follows that 
$(\widetilde{W}_{1}|_{[a, b]}; \; (\partial W)|_{[a, b]}, \; \widetilde{W}_{1}|_{b})$ 
is $n$-connected as well. 
This concludes the proof of the lemma in this case when $a \in (a_{i-1}-\varepsilon_{i-1}, a_{i-1}+\varepsilon_{i-1})$ and $b \in (a_{i} - \varepsilon_{i}, a_{i}+\varepsilon_{i})$.  

\textit{Case 3.} Suppose that $a < b \in \bigcup^{p}_{i = 0}(a_{i}-\varepsilon_{i}, a_{i}+\varepsilon_{i})$. 
By choosing regular values in each intermediate interval $(a_{j}-\varepsilon_{j}, a_{j}+\varepsilon_{j})$, we may break up $(W|_{[a, b]}, (\partial W)|_{[a, b]})$ into a composition of relative cobordisms, for each of which we can apply the argument from case 2. 
In this way, we may express $(W_{1}|_{[a, b]}, (\partial W_{1})|_{[a, b]})$ as a composition of relative cobordisms, all of which satisfy the required connectivity conditions. 
This concludes the proof of the lemma. 
\end{proof}

\subsection{Proof of Theorem \ref{theorem: highly connected morphisms}} \label{subsection: formal proof of theorem}
Consider the composition 
\begin{equation} \label{equation: augmentation composition}
|\mb{D}^{\partial, \mb{b}, k}_{\bullet, \bullet}| \stackrel{\simeq} \longrightarrow |\mb{D}^{\partial, \mb{b}, k-1}_{\bullet}| \stackrel{\simeq} \longrightarrow |\mb{X}^{\partial, \mb{b}, k-1}_{\bullet}|,
\end{equation}
where the first map is the one induced by the augmentation (which is a homotopy equivalence by Theorem \ref{theorem: contractible surgery data for morphisms}) and the second is a homotopy equivalence by Proposition \ref{proposition: X-D equivalence}. 
Observe that if $x = (a, \varepsilon, (W, \ell_{W})) \in \mb{D}^{\partial, \mb{b}, k-1}_{p}$ is contained in the subspace $\mb{D}^{\partial, \mb{b}, k}_{p} \subset \mb{D}^{\partial, \mb{b}, k-1}_{p}$, then the \textit{empty surgery data} determined by setting $\Lambda = \emptyset$ yields a valid element of $\mb{Z}_{0}(x)$. 
Thus, we obtain a simplicial map 
$\mb{D}^{\partial, \mb{b}, k}_{\bullet} \; \longrightarrow \; \mb{D}^{\partial, \mb{b}, k}_{\bullet, 0}$, $x \mapsto (x, \emptyset),$
and an embedding  
 \begin{equation} \label{equation: empty surgery inclusion}
 i_{\emptyset}: |\mb{D}^{\partial, \mb{b}, k}_{\bullet}| \; \hookrightarrow \; |\mb{D}^{\partial, \mb{b}, k}_{\bullet, 0}| \;  \hookrightarrow \;  |\mb{D}^{\partial, \mb{b}, k}_{\bullet, \bullet}|.
 \end{equation}
where the second embedding in the composition is given by the inclusion of $0$-simplices.
The following proposition is proven by implementing the same construction from
\cite[p. 301]{GRW 14}, using Proposition \ref{proposition: morphism
  surgery move} and Lemma \ref{lemma: surgery implementation lemma}.
\begin{proposition} \label{proposition: surgery move homotopy}
There exists a homotopy 
$
\mathcal{F}: [0, 1]\times|\mb{D}^{\partial, \mb{b}, k}_{\bullet, \bullet}| \longrightarrow |\mb{X}^{\partial, \mb{b}, k-1}_{\bullet}|
$
with the following properties:
\begin{enumerate} \itemsep.2cm
\item[(i)] the map $\mathcal{F}(0, \underline{\hspace{.2cm}}): |\mb{D}^{\partial, \mb{b}, k}_{\bullet, \bullet}| \longrightarrow |\mb{X}^{\partial, \mb{b}, k-1}_{\bullet}|$ is the weak homotopy equivalence from (\ref{equation: augmentation composition}); 
\item[(ii)] for all $t \in [0,1]$ the maps
\begin{itemize} \itemsep.2cm
\item $\mathcal{F}(1, \underline{\hspace{.3cm}}): |\mb{D}^{\partial, \mb{b}, k}_{\bullet, \bullet}| \longrightarrow |\mb{X}^{\partial, \mb{b}, k-1}_{\bullet}|$,
\item $\mathcal{F}(t, \underline{\hspace{.2cm}})\circ i_{\emptyset}: |\mb{D}^{\partial, \mb{b}, k}_{\bullet}| \longrightarrow |\mb{X}^{\partial, \mb{b}, k-1}_{\bullet}|$,
\end{itemize}
\end{enumerate}
both factor through the inclusion $|\mb{X}^{\partial, \mb{b}, k}_{\bullet}| \hookrightarrow |\mb{X}^{\partial, \mb{b}, k-1}_{\bullet}|$.
\end{proposition}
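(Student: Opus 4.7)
The plan is to define $\mathcal{F}$ on each bi-simplex using an iterated version of the surgery move $\mathcal{K}^t_e$, and then assemble these bi-simplex-wise maps into a single homotopy on the geometric realization by the standard reparametrization argument from \cite[p.\,312--313]{GRW 14}. At bi-degree $(p,0)$, for $(x,y)=(a,\varepsilon,(W,\ell_W),(\Lambda,\delta,e))\in\mb{D}^{\partial,\mb{b},k}_{p,0}$, I would set
$$\mathcal{F}_{p,0}(t,(x,y))\;=\;(a,\varepsilon,\mathcal{K}^{t}_{e}(W,\ell_W)),$$
which lies in $\mb{X}^{\partial,\mb{b},k-1}_{p}$ by Lemma~\ref{lemma: surgery implementation lemma}. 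At bi-degree $(p,q)$ the surgery data $(\Lambda,\delta,e)$ decomposes along $[q]$ as $\Lambda=\coprod_{j\in[q]}\Lambda_{\cdot,j}$ with each restriction $e_j:=e|_{\Lambda_{\cdot,j}\times\bar{V}}$ defining an element of $\mb{Z}_0(x)$, and the images $e_j(\Lambda_{\cdot,j}\times\bar{V})$ are pairwise disjoint by Definition~\ref{defn: semi-simplicial space of morphism surgery data}. Using a $(q+1)$-parameter iterated surgery
$$\mathcal{F}_{p,q}\bigl((t_0,\dots,t_q),(x,y)\bigr)\;=\;\bigl(a,\varepsilon,\mathcal{K}^{t_q}_{e_q}\circ\cdots\circ\mathcal{K}^{t_0}_{e_0}(W,\ell_W)\bigr),$$
we obtain a well-defined element of $\mb{X}^{\partial,\mb{b},k-1}_{p}$; disjointness of the surgery supports makes the composition independent of the order of the factors.

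The next step is semi-simplicial compatibility. For the $p$-direction the face maps act on $\delta$ via postcomposition with $[p']^{\vee}\to[p]^{\vee}$ and do not affect the surgery move itself; for the $q$-direction one checks the identity
$$d_i\,\mathcal{F}_{p,q}\bigl(d^{i}\bar{t},\;(x,y)\bigr)\;=\;\mathcal{F}_{p,q-1}\bigl(\bar{t},\;d_i(x,y)\bigr),$$
where $d^{i}:[0,1]^{q}\hookrightarrow[0,1]^{q+1}$ inserts a $0$ in the $i$-th slot (using that $\mathcal{K}^{0}_{e_j}=\mathrm{Id}$ by Proposition~\ref{proposition: morphism surgery move}\,(i)). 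One then assembles the collection $\{\mathcal{F}_{p,q}\}$ into a single continuous map $\mathcal{F}:[0,1]\times|\mb{D}^{\partial,\mb{b},k}_{\bullet,\bullet}|\to|\mb{X}^{\partial,\mb{b},k-1}_{\bullet}|$ by pushing the cube coordinates $(t_0,\dots,t_q)$ simultaneously from $0$ to $1$ along the diagonal, reparametrized compatibly with face maps exactly as in \cite[p.\,312--313]{GRW 14}.

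Property (i) follows because at $t=0$ every $\mathcal{K}^0_{e_j}$ is the identity, so $\mathcal{F}(0,(x,y))=(a,\varepsilon,(W,\ell_W))$ viewed through the augmentation $\mb{D}^{\partial,\mb{b},k}_{\bullet,\bullet}\to\mb{D}^{\partial,\mb{b},k-1}_{\bullet}$ composed with the weak equivalence $|\mb{D}^{\partial,\mb{b},k-1}_{\bullet}|\simeq|\mb{X}^{\partial,\mb{b},k-1}_{\bullet}|$ of Proposition~\ref{proposition: X-D equivalence}. For (ii), the second part of Lemma~\ref{lemma: surgery implementation lemma} gives that when all $t_j=1$, each intermediate triad attains the required $k$-connectivity, so $\mathcal{F}(1,\underline{\hspace{.2cm}})$ factors through $|\mb{X}^{\partial,\mb{b},k}_{\bullet}|$. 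For the composite $\mathcal{F}(t,\underline{\hspace{.2cm}})\circ i_{\emptyset}$, the empty surgery datum forces $\mathcal{K}^{t}_{\emptyset}(W,\ell_W)=(W,\ell_W)$ for all $t$, and since the underlying $p$-simplex already lies in $\mb{D}^{\partial,\mb{b},k}_{p}\subset\mb{X}^{\partial,\mb{b},k}_{p}$ by Remark~\ref{remark: main remark on morphism surgery data}, the factorization is immediate.

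The main obstacle will be the assembly step: verifying that the cube-reparametrizations can be chosen simultaneously across all bi-degrees $(p,q)$ so that the resulting map descends to the geometric realization and respects all face maps. This is a bookkeeping exercise rather than a new idea, and it is handled identically to the corresponding step in the proof of Proposition~\ref{proposition: assembled homotopy} in Section~\ref{subsection: disk equivalence}; the disjointness of the surgery supports and the continuous dependence of $\mathcal{P}_t$ on the $\theta$-structure given by Proposition~\ref{proposition: morphism surgery move} ensure that all the required continuity and naturality hold.
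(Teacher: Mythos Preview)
Your proposal is correct and follows the paper's approach, which is simply to invoke the construction from \cite[p.~301]{GRW 14} together with Proposition~\ref{proposition: morphism surgery move} and Lemma~\ref{lemma: surgery implementation lemma}. Two small corrections: the appropriate reference is \cite[p.~301]{GRW 14} (the \emph{morphism}-surgery assembly, where the multi-parameter is indexed over $[q]$ and the homotopy lives on the full bi-realization $|\mb{D}_{\bullet,\bullet}|$) rather than \cite[p.~312--313]{GRW 14} or Proposition~\ref{proposition: assembled homotopy} (the \emph{object}-surgery assembly, indexed over $[p]$ and defined only on $|\mb{D}_{\bullet,0}|$); and the reparametrization $\Delta^{q}\times[0,1]\to[0,1]^{q+1}$ is not literally the diagonal but must send the $i$-th face of $\Delta^{q}$ into $\{t_i=0\}$, exactly as forced by the face-map identity you wrote down.
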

Consider the diagram,
 \begin{equation} \label{equation: homotopy commutative diagram}
 \xymatrix{
 |\mb{D}^{\partial, \mb{b}, k}_{\bullet}| \ar[rrr]^{\simeq} \ar[d]^{i_{\emptyset}} &&& |\mb{X}^{\partial, \mb{b}, k}_{\bullet}| \ar@{^{(}->}[d] \\
 |\mb{D}^{\partial, \mb{b}, k}_{\bullet, \bullet}| \ar[urrr]^{\mathcal{F}(1, \underline{\hspace{.3cm}})} \ar[rrr]^{\mathcal{F}(0, \underline{\hspace{.3cm}})}_{\simeq} &&& |\mb{X}^{\partial, \mb{b}, k-1}_{\bullet}|.
 }
 \end{equation}
 By Proposition \ref{proposition: surgery move homotopy},
 the outer square and the top triangle commute exactly, while the bottom triangle commutes up to the homotopy $\mathcal{F}$. 
 Passing to homotopy groups, it follows that the vertical maps are also weak homotopy equivalences. 
 Theorem \ref{theorem: highly connected morphisms} then follows by applying the weak homotopy equivalences $B\Cob^{\partial, \mb{b}, k-1}_{\theta} \simeq |\mb{X}^{\partial, \mb{b}, k-1}_{\bullet}|$ and $B\Cob^{\partial, \mb{b}, k}_{\theta} \simeq |\mb{X}^{\partial, \mb{b}, k}_{\bullet}|$.

\section{Surgery on the Boundary}    \label{section: surgery on the boundary}
In this section we prove that the inclusions 
$B\Cob^{\partial, \mb{b}}_{\theta} \hookrightarrow B\Cob^{\partial, D}_{\theta} \hookrightarrow B\Cob_{\theta}^{\partial}$ are weak homotopy equivalences in the when $\theta: B \longrightarrow BO(2n+1)$ is such that $B$ is $(n-1)$-connected. 
This is done in three stages, with each stage carried out in its own subsection.  
The key result that lets us do this is the lemma stated below. 
\begin{lemma} \label{lemma: restriction to boundary is a fibration}
Let $U \subset \R^{m}_{+}$ be an open subset. 
Then the restriction map 
$$\widehat{\partial}: \bPsi^{\partial}_{\theta}(U) \; \longrightarrow \; \bPsi_{\theta_{d-1}}(\partial U), \quad (M, \ell) \mapsto (\partial M, \ell|_{\partial M}),$$
is a Serre-fibration.
\end{lemma}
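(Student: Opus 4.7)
The plan is to verify the homotopy lifting property against every disk $D^k$, which characterizes Serre fibrations. Given a commutative square
\begin{equation*}
\xymatrix{
D^k \ar[r]^{f} \ar[d] & \bPsi^{\partial}_{\theta}(U) \ar[d]^{\widehat{\partial}} \\
D^k \times [0,1] \ar[r]^{H} \ar@{-->}[ur]^{\tilde{H}} & \bPsi_{\theta_{d-1}}(\partial U),
}
\end{equation*}
I would construct the lift $\tilde{H}$ by gluing two pieces in $U$: the part of the original $\theta$-manifold $f(x) = (M_x, \ell_x)$ lying away from a thin collar of $\partial U$, and an inner piece filling that collar which records the homotopy $H(x, \cdot)$ as a cobordism interpolating between $\partial M_x$ and $H(x, t)$.

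First I would use condition (iii) of Definition~\ref{defn: space of manifolds with boundary}, continuity of $f$, and compactness of $D^k$ to extract a uniform $\varepsilon > 0$ such that for every $x \in D^k$ the restriction of $(M_x, \ell_x)$ to the collar neighborhood $[0, 3\varepsilon) \times \R^{k} \times \partial \R^{n-k}_{+}$ of $\partial U$ in $U$ equals the $\theta$-product $[0, 3\varepsilon) \times (\partial M_x, \ell_x|_{\partial M_x})$. Uniformity of $\varepsilon$ is obtained from a partition-of-unity argument on the compact space $D^k$. Using Proposition~\ref{proposition: salient features of smooth maps}(b), I may also assume without loss of generality that $H$ is smooth.

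Next, I would fix a smooth family of monotone non-increasing functions $\sigma_t \colon [0, \varepsilon] \to [0, 1]$, $t \in [0, 1]$, with $\sigma_t(0) = t$, $\sigma_t \equiv 0$ on $[\varepsilon - \delta, \varepsilon]$ for some fixed small $\delta > 0$, and $\sigma_0 \equiv 0$, and define
\begin{equation*}
\tilde{M}_{x, t} \; := \; \big( M_x \cap \{r \geq \varepsilon - \delta\} \big) \; \cup \; \big\{ (r, y) \; : \; r \in [0, \varepsilon],\ y \in H(x, \sigma_t(r)) \big\},
\end{equation*}
equipped with $\ell_x$ on the outer piece and, on the inner piece, with the $\theta$-structure induced from the smoothly varying $\theta$-structure on $H(x, \sigma_t(r))$ via the standard bundle map, in the sense of condition (iii). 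Since $\sigma_t \equiv 0$ on $[\varepsilon - \delta, \varepsilon]$ and $H(x, 0) = \partial M_x$, both pieces and both structures agree on the overlap with the cylinder $[\varepsilon - \delta, \varepsilon] \times (\partial M_x, \ell_x|_{\partial M_x})$, so the gluing is consistent. The boundary is $\widehat{\partial} \tilde M_{x, t} = \{0\} \times H(x, \sigma_t(0)) = H(x, t)$, and $\tilde M_{x, 0} = M_x$ because $\sigma_0 \equiv 0$, so $\tilde{H}(x, t) := \tilde{M}_{x, t}$ provides a lift at the level of underlying pairs.

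The main obstacle is to verify that $(x, t) \mapsto \tilde M_{x, t}$ is continuous in the topology on $\bPsi^{\partial}_{\theta}(U)$, which via the embedding $T_U$ from Section~\ref{section: Cobordism categories an spaces of manifolds} reduces to $C^\infty$-convergence on compact subsets of $U_\infty$. Smoothness of $\sigma_t$ in $t$, smoothness of $H$ and $f$, and the uniform collar of size $\varepsilon$ together imply smooth parametric dependence of the pullback graph $\psi_t^{-1}(\Gamma_x)$ defining the inner piece, where $\Gamma_x = \{(s, y) : y \in H(x, s)\}$ and $\psi_t(r, y) = (\sigma_t(r), y)$; transversality of $\psi_t$ to $\Gamma_x$ at critical points of $\sigma_t$ follows from the submersivity of the projection $\Gamma_x \to [0, 1]$ provided by Proposition~\ref{proposition: salient features of smooth maps}(a). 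Once the disk lifting property has been verified, the standard cellular argument yields the homotopy lifting property for all CW pairs, establishing that $\widehat{\partial}$ is a Serre fibration.
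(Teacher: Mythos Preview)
Your direct collar-pushing construction is a genuinely different route from the paper's argument. The paper does not build the lift by hand; instead it invokes Randal-Williams' result \cite{RW 11} that the sheaf $\bPsi_{\theta}$ is micro-flexible and $\Diff$-equivariant, hence satisfies the parametric $h$-principle on open manifolds. This immediately gives that the restriction $\bPsi_{\theta}(U_{\infty}) \to \bPsi_{\theta}(\partial U \times (-\infty,0])$ is a Serre fibration, and the paper then observes that $\widehat{\partial}$ sits in a pullback square over this restriction, so it inherits the Serre fibration property. Your approach is more elementary and self-contained, avoiding the $h$-principle machinery entirely; the paper's approach is slicker and bypasses the parametric smoothness bookkeeping you have to do.

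That said, a few technical points in your sketch deserve more care. First, your reparametrization $\sigma_t$ should also be constant equal to $t$ on a neighbourhood of $0$, not merely satisfy $\sigma_t(0)=t$; otherwise $\tilde M_{x,t}$ will fail the collar condition (iii) of Definition~\ref{defn: space of manifolds with boundary} near the new boundary. Second, the reduction ``WLOG $H$ is smooth'' via Proposition~\ref{proposition: salient features of smooth maps}(b) is not quite free: smoothing $H$ replaces it by a homotopic $H'$, so you are lifting $H'$ rather than $H$, and you need a short additional argument (smooth $f$ first, concatenate $H$ with the induced base homotopy, then smooth rel $D^k\times\{0\}$, and finally reassemble) to produce an honest lift of the original $H$. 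Third, the ``uniform $\varepsilon$ by compactness'' step implicitly uses that the collar width is lower semicontinuous on $\bPsi^{\partial}_{\theta}(U)$, which follows from the subspace topology via $T_U$ but is worth stating. None of these is a genuine obstruction; with these points addressed your argument goes through.
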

\begin{proof}
Let $U \subset \R^{m}_{+}$ be an open subset and let 
$U_{\infty} = U\cup_{\partial U}(\partial U\times(-\infty, 0]) \subset \R^{m}$
be the subspace obtained by attaching the cylinder $\partial U\times(-\infty, 0]$ along the boundary. 
In \cite{RW 11} Randal-Williams proved that the sheaf $\bPsi_{\theta}(\underline{\hspace{.3cm}})$ is \textit{micro-flexible} and that it is \textit{$\Diff(\underline{\hspace{.3cm}})$-equivariant}, and thus it satisfies the \textit{parametric h-principle} on open manifolds. 
Let $\bPsi_{\theta}(\partial U\times(-\infty, 0])$ be the limiting space $\displaystyle{\lim_{V}}\bPsi_{\theta}(V)$, with limit taken over all open subsets $V \subset \R^{m}$ containing the closed subset $\partial U\times(-\infty, 0]$.
It follows as a consequence of the parametric $h$-principle that the restriction map 
$$
r: \bPsi_{\theta}(U_{\infty}) \; \longrightarrow \; \bPsi_{\theta}(\partial U\times(-\infty, 0]), \quad (M, \ell) \mapsto \left(M|_{\partial U\times(-\infty, 0]}, \; \ell|_{\partial U\times(-\infty, 0]}\right),
$$
is a Serre-fibration.
Consider now the commutative square 
\begin{equation} \label{equation: restriction pull-back}
\xymatrix{
\bPsi^{\partial}_{\theta}(U)  \ar[rr]^{T_{U}} \ar[d]^{\widehat{\partial}} && \bPsi_{\theta}(U_{\infty}) \ar[d]^{r} \\ 
\bPsi_{\theta_{d-1}}(\partial U) \ar[rr] && \bPsi_{\theta}(\partial U\times(-\infty, 0]),
}
\end{equation}
where the bottom horizontal map is given by sending an element $(M, \ell_{M}) \in \bPsi_{\theta_{d-1}}(\partial U)$ to the product $(M\times\R, \ell_{M\times\R}) \in  \bPsi_{\theta}(\partial U\times\R)$ and then restricting to its germ at $\partial U\times(-\infty, 0]$.
It is easy to verify that this square (\ref{equation: restriction pull-back}) is a pull-back square. 
Since the map $r$ is a Serre-fibration it follows then that the boundary restriction $\widehat{\partial}$ is a Serre-fibration, since the condition of being a Serre-fibration is hereditary under pull-backs.
This concludes the proof of the lemma. 
\end{proof}

\subsection{The equivalence $B\Cob_{\theta}^{\partial, D} \simeq B\Cob^{\partial}_{\theta}$}
In this subsection we prove the following proposition,
the proof of which is a direct consequence of Lemma \ref{lemma: restriction to boundary is a fibration}. 
It holds for any choice of tangential structure $\theta: B \longrightarrow BO(d)$. 
\begin{proposition}
The inclusion $B\Cob_{\theta}^{\partial, D} \hookrightarrow B\Cob_{\theta}^{\partial}$ is a weak homotopy equivalence. 
\end{proposition}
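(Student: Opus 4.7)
The plan is to prove the inclusion is a weak homotopy equivalence by passing to the flexible models of Proposition \ref{proposition: X-D equivalence} and reducing to showing that the level-wise inclusion $\mb{X}^{\partial, D}_{p} \hookrightarrow \mb{X}^{\partial}_{p}$ is a weak homotopy equivalence for each $p \in \Z_{\geq 0}$. Since geometric realization preserves level-wise weak equivalences between semi-simplicial spaces that satisfy the standard (good face inclusion) hypotheses, this will yield the desired equivalence $|\mb{X}^{\partial, D}_{\bullet}| \simeq |\mb{X}^{\partial}_{\bullet}|$, and hence $B\Cob^{\partial, D}_{\theta} \simeq B\Cob^{\partial}_{\theta}$.

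The first stage is to exhibit the inclusion at level $p$ as a pullback along the boundary-restriction fibration. Introduce an auxiliary space $\mc{B}_{p}^{\partial}$ consisting of tuples $(a, \varepsilon, (N, \ell_{N}))$ where $N \subset (a_{0} - \varepsilon_{0}, a_{p} + \varepsilon_{p}) \times \partial \R^{\infty}_{+}$ is a closed $(d-1)$-dimensional submanifold equipped with a $\theta_{d-1}$-structure, meeting the same ambient cube conditions as in Definition \ref{defn: more flexible model}. Let $\mc{B}_{p}^{\partial, D} \subset \mc{B}_{p}^{\partial}$ denote the subspace of those $(a, \varepsilon, (N, \ell_{N}))$ such that $N \cap [(a_{0}-\varepsilon_{0}, a_{p}+\varepsilon_{p}) \times (-1,0] \times (-1,1)^{\infty-2}] = (a_{0}-\varepsilon_{0}, a_{p}+\varepsilon_{p}) \times D$ with $\ell_{N}$ restricting to $\ell_{\R \times D}$. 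The boundary restriction map $\widehat{\partial}_{p}\colon \mb{X}^{\partial}_{p} \longrightarrow \mc{B}_{p}^{\partial}$ is a Serre fibration by Lemma \ref{lemma: restriction to boundary is a fibration} (applied fiberwise over the simplicial data, which is preserved by the restriction). By definition, $\mb{X}^{\partial, D}_{p} = \widehat{\partial}_{p}^{-1}(\mc{B}_{p}^{\partial, D})$, so once we show $\mc{B}_{p}^{\partial, D} \hookrightarrow \mc{B}_{p}^{\partial}$ is a weak equivalence, the standard fact that Serre fibrations preserve weak equivalences under pullback of the base finishes the argument.

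The second stage, which is the main obstacle, is to construct the needed homotopy equivalence $\mc{B}_{p}^{\partial, D} \simeq \mc{B}_{p}^{\partial}$. Since the ambient boundary $\partial \R^{\infty}_{+} = \{0\} \times \R^{\infty-1}$ is infinite-dimensional and the standard disk $D$ sits in just finitely many coordinate directions, we have unlimited room to translate. The construction is then as follows. Choose a smooth isotopy $\varphi_{t}\colon \R^{\infty-1} \hookrightarrow \R^{\infty-1}$, $t \in [0,1]$, with $\varphi_{0} = \mathrm{id}$ and such that $\varphi_{1}$ shifts the first coordinate directions far out along an unused infinite-dimensional axis, yielding $\varphi_{1}((-1, 1)^{\infty-1}) \cap [(-1, 0] \times (-1, 1)^{\infty-2}] = \emptyset$. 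For $(a, \varepsilon, (N, \ell_{N})) \in \mc{B}_{p}^{\partial}$, applying $\mathrm{id} \times \varphi_{t}$ to $N$ gives a path in $\mc{B}_{p}^{\partial}$ whose endpoint has boundary disjoint from the region reserved for $D$; then adjoining the standard cylinder $(a_{0}-\varepsilon_{0}, a_{p}+\varepsilon_{p}) \times D$ with structure $\ell_{\R \times D}$ produces an element of $\mc{B}_{p}^{\partial, D}$. The resulting assignment defines a continuous deformation retraction (up to homotopy) of $\mc{B}_{p}^{\partial}$ onto $\mc{B}_{p}^{\partial, D}$, compatible with the face maps since the construction depends only on the ambient $\theta$-structure data. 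The main technical care needed is to ensure that these operations are continuous in all parameters and respect the semi-simplicial face structure; once that is verified, combining with the fibration lifting provided by Lemma \ref{lemma: restriction to boundary is a fibration} yields the weak homotopy equivalence $\mb{X}^{\partial, D}_{p} \simeq \mb{X}^{\partial}_{p}$ and completes the proof.
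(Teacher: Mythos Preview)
Your proposal is correct and follows essentially the same strategy as the paper: reduce to the boundary via the restriction fibration of Lemma~\ref{lemma: restriction to boundary is a fibration}, exhibit the ``contains $D$'' subspace as a pullback, and show the inclusion on the base is a weak equivalence by pushing manifolds away and adjoining a standard disk cylinder.

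The only real difference is in packaging. The paper works directly with the single space $\bpsi^{\partial}_{\theta}(\infty,1)$ (rather than level-wise on $\mb{X}^{\partial}_{\bullet}$), forms the analogous pullback square over $\bpsi_{\theta_{d-1}}(\infty,1)$, and then simply invokes \cite[Proposition~2.16]{GRW 14} for the bottom equivalence $\bpsi^{D}_{\theta_{d-1}}(\infty,1)\simeq\bpsi_{\theta_{d-1}}(\infty,1)$ instead of reconstructing the push-and-adjoin homotopy by hand. This buys the paper some economy: no need to check compatibility with face maps or to argue that geometric realization preserves level-wise equivalences. Your route is slightly more self-contained but carries a bit more bookkeeping; in particular, you should make explicit the homotopy showing that on elements already in $\mc{B}^{\partial,D}_{p}$ the ``push away then re-adjoin $D$'' operation returns to the identity (this is the standard cancellation step underlying the cited GRW proposition).
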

\begin{proof}
Let $\bpsi^{\partial, D}_{\theta}(\infty, 1) \subset \bpsi^{\partial}_{\theta}(\infty, 1)$ be the subspace that consists of those $(W, \ell)$ such that 
$$
\partial W\cap\left(\R\times(-1, 0]\times(-1, 1)^{\infty-2}\right) = \R\times D,
$$
and such that the restriction of $\ell$ to $\R\times D$ agrees with $\ell_{\R\times D}$. 
By following the same argument in \cite[Theorem 3.10]{GRW 09} verbatim, we obtain the weak homotopy equivalence $B\Cob_{\theta}^{\partial, D} \simeq \bpsi^{\partial, D}_{\theta}(\infty, 1)$.
Now consider the space $\bpsi_{\theta_{d-1}}(\infty, 1)$. 
We define $\bpsi^{D}_{\theta_{d-1}}(\infty, 1) \subset \bpsi_{\theta_{d-1}}(\infty, 1)$ similarly.
The commutative square,
$$
\xymatrix{
\bpsi^{\partial, D}_{\theta}(\infty, 1) \ar@{^{(}->}[rr] \ar[d]^{\widehat{\partial}} && \bpsi^{\partial}_{\theta}(\infty, 1) \ar[d]^{\widehat{\partial}} \\
\bpsi^{D}_{\theta_{d-1}}(\infty, 1) \ar@{^{(}->}[rr]^{\simeq} && \bpsi_{\theta_{d-1}}(\infty, 1)
}
$$
is a pull-back square, where the vertical maps are boundary restriction maps. 
By Lemma \ref{lemma: restriction to boundary is a fibration} the right-hand vertical boundary map is a Serre-fibration and thus the above diagram is \textit{homotopy cartesian}. 
By \cite[Proposition 2.16]{GRW 14} the bottom-horizontal map is a weak homotopy equivalence.  
It follows that the top-horizontal map is a weak homotopy equivalence as well. 
Combining this with the weak homotopy equivalences
$B\Cob_{\theta}^{\partial, D} \simeq \bpsi^{\partial, D}_{\theta}(\infty, 1)$ and $B\Cob_{\theta}^{\partial} \simeq \bpsi^{\partial}_{\theta}(\infty, 1)$
completes the proof of the proposition. 
\end{proof}

\subsection{Surgery on the boundary morphisms}
We begin by defining a sequence of new intermediate subcategories of
$\Cob^{\partial}_{\theta}$ that are contained in $\Cob^{\partial, \mb{b}}_{\theta}$.
\begin{defn}
For $k \in \Z_{\geq -1}$, $\Cob_{\theta}^{\partial, k} \subset
\Cob_{\theta}^{\partial}$ is defined to be the topological
subcategory with the same space of objects.  It has as its morphisms
those tuples $(t, (W, \ell))$ such that the pair $\left((\partial W)|_{[0, t]}, (\partial W)|_{t}\right)$ is $k$-connected.
\end{defn}
To go with the categories $\Cob_{\theta}^{\partial, k}$, there are
semi-simplicial spaces $\mb{D}^{\partial, k}_{\bullet}$ and $\mb{X}^{\partial, k}_{\bullet}$.  
  The object of this section is to prove the following theorem.
\begin{theorem} \label{theorem: surgery on boundary morphisms}
Let $k \leq n-1$.  Then the inclusion $\Cob^{\partial, k}_{\theta}
\hookrightarrow \Cob^{\partial, k-1}_{\theta}$ induces the weak
homotopy equivalence $B\Cob^{\partial, k}_{\theta} \simeq
B\Cob^{\partial, k-1}_{\theta}$.
\end{theorem}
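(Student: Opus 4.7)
The plan is to mimic the proof of Theorem \ref{theorem: highly connected morphisms} from Section \ref{section: surgery on morphisms}, but carry out all of the parametrised surgery on the $2n$-dimensional boundary $\partial W$ rather than in the interior of the $(2n+1)$-dimensional $W$. The key dimensional point is that for $k \leq n-1$, the surgeries required to improve the connectivity of $((\partial W)|_{[0, t]}, (\partial W)|_{t})$ from $(k-1)$-connected to $k$-connected have index $k+1 \leq n$, which is at or below the middle dimension of the $2n$-dimensional boundary, so Hudson's embedding theorem together with general position provides all the embeddings that we need.

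First, in parallel with Definitions \ref{defn: Surgery on morphisms data} and \ref{defn: semi-simplicial space of morphism surgery data}, I will construct a bi-semi-simplicial space $\mb{D}^{\partial, k}_{\bullet, \bullet} \longrightarrow \mb{D}^{\partial, k-1}_{\bullet}$ whose space of $0$-simplices of surgery data over a $p$-simplex $x = (a, \varepsilon, (W, \ell_{W}))$ consists of triples $(\Lambda, \delta, e)$, now with $e: \Lambda\times\bar{V} \longrightarrow \R\times\partial\R^{\infty}_{+}$ an embedding whose image lies in the boundary slab, such that $e^{-1}(\partial W) = \Lambda\times\partial_{-}D^{k+1}\times\R^{2n-k}$, satisfying the height-preserving conditions of Definition \ref{defn: Surgery on morphisms data}, and finally such that the induced half-disks $D_{i} = e(\delta^{-1}(i)\times\partial_{-}D^{k+1}\times\{0\}) \subset \partial W$ together with $(\partial W)|_{a_{i}}$ make the pair $\left((\partial W)|_{[a_{i-1},a_{i}]}, \; (\partial W)|_{a_{i}}\cup D_{i}|_{[a_{i-1},a_{i}]}\right)$ $k$-connected. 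As in Remark \ref{remark: main remark on morphism surgery data}, if $x$ already lies in $\mb{D}^{\partial, k}_{p}$ then the empty surgery data is valid, yielding a section $i_{\emptyset}: |\mb{D}^{\partial, k}_{\bullet}| \hookrightarrow |\mb{D}^{\partial, k}_{\bullet, \bullet}|$ of the augmentation.

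Second, I will implement the surgery move. Apply Proposition \ref{proposition: morphism surgery move} inside $\R\times\partial\R^{\infty}_{+}$ to the boundary to obtain a family $\mathcal{K}^{t}_{e}(\partial W, \ell_{\partial W})$ of $\theta_{d-1}$-manifolds in the boundary. To lift this parametrised family to one of pairs $(W, \partial W)$, I invoke Lemma \ref{lemma: restriction to boundary is a fibration}: the restriction map $\widehat{\partial}: \bPsi^{\partial}_{\theta}(U) \longrightarrow \bPsi_{\theta_{d-1}}(\partial U)$ is a Serre fibration, so a homotopy of its target lifts to a homotopy of its source starting at $(W, \ell_{W})$. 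The resulting family $\mathcal{K}^{t}_{e}(W, \ell_{W})$ only modifies $W$ near its boundary. The boundary analogue of Lemma \ref{lemma: surgery implementation lemma} then shows that for every $t$, the tuple $(a, \varepsilon, \mathcal{K}^{t}_{e}(W, \ell_{W}))$ defines a $p$-simplex of $\mb{X}^{\partial, k-1}_{\bullet}$, and that at $t=1$ (or for any $t$ when the data is empty) it lies in the subspace $\mb{X}^{\partial, k}_{\bullet}$. The proof of this is exactly the push-out/connectivity argument from the proof of Lemma \ref{lemma: surgery implementation lemma}, applied now purely to the boundary pairs; since $k \leq n-1$ and $\dim(\partial W) = 2n$, no half-Whitney or three-dimensional obstruction arises and the connectivity is preserved and then gained.

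Third, I must prove the analogue of Theorem \ref{theorem: contractible surgery data for morphisms}, namely that the augmentation $|\mb{D}^{\partial, k}_{\bullet, \bullet}| \longrightarrow |\mb{D}^{\partial, k-1}_{\bullet}|$ is a weak equivalence. This is a direct transcription of \cite[Theorem 3.4]{GRW 14}, using the topological flag complex technology of Section \ref{subsection: topological flag complexes} and the fact that any element of $\pi_{k}\bigl((\partial W)|_{[a_{i-1},a_{i}]}, (\partial W)|_{a_{i}}\bigr)$ with $k \leq n-1$ may be represented by an embedded half-disk with trivialisable normal bundle inside the $2n$-dimensional boundary; compatibility with $\ell_{W}$ is automatic because $B$ is $(n-1)$-connected, which kills the relevant bundle-lifting obstructions. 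With these pieces in place, one assembles the maps $\mathcal{F}_{p}$ into a homotopy $\mathcal{F}: [0,1]\times |\mb{D}^{\partial, k}_{\bullet, \bullet}| \longrightarrow |\mb{X}^{\partial, k-1}_{\bullet}|$ exactly as in Proposition \ref{proposition: surgery move homotopy}, and the diagram chase of (\ref{equation: homotopy commutative diagram}) yields the desired weak homotopy equivalence $B\Cob^{\partial, k}_{\theta} \simeq B\Cob^{\partial, k-1}_{\theta}$.

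The hardest step will be the verification that the augmentation $|\mb{D}^{\partial, k}_{\bullet, \bullet}| \longrightarrow |\mb{D}^{\partial, k-1}_{\bullet}|$ is a weak homotopy equivalence, i.e.\ producing and gluing together the embedded half-disks that kill the relative $\pi_{k}$ on each slab; everything else is a formal transcription of the arguments in Sections \ref{section: surgery on objects} and \ref{section: surgery on morphisms} and Appendix \ref{section: surgery on morphisms}. The dimensional hypothesis $k \leq n-1$ is used precisely to place the half-disks in general position inside the $2n$-dimensional boundary and to ensure the normal bundles are trivial; no strengthening of the half-Whitney trick (Proposition \ref{proposition: inductive disjunction}) beyond what is already established is required, and in particular the proof goes through for all $2n+1 \geq 5$, consistent with the hypothesis in Theorem \ref{theorem: easy weak equivalences}.
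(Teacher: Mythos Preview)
Your overall strategy is right and matches the paper's: build a bi-semi-simplicial resolution by surgery data on $\partial W$, prove contractibility of the fibres as in \cite[Theorem 3.4]{GRW 14}, implement a surgery move, and assemble a homotopy exactly as in Proposition \ref{proposition: surgery move homotopy}. The key ingredient, Lemma \ref{lemma: restriction to boundary is a fibration}, is also what the paper uses.

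The difference lies in \emph{where} you invoke the Serre fibration. You place the surgery data entirely inside the boundary slab, perform the absolute surgery move of Proposition \ref{proposition: morphism surgery move} on $\partial W$, and then lift the resulting one-parameter family to $\bPsi^{\partial}_{\theta}$ globally via Lemma \ref{lemma: restriction to boundary is a fibration}. The paper instead thickens the surgery data: the domain $\bar V$ acquires a half-space factor $\R^{d-k+1}_{+}$, the embedding $e$ lands in the full ambient space with $e^{-1}(W)=\Lambda\times\partial_{-}D^{k+1}\times\R^{d-k+1}_{+}$ and $e^{-1}(\partial W)=\Lambda\times\partial_{-}D^{k+1}\times\partial\R^{d-k+1}_{+}$, and the Serre fibration is applied \emph{locally} to the model space $V$ to produce a family $\mathcal{P}_{t}\in\bPsi^{\partial}_{d}(V)$ whose boundary family is the move from \cite[Proposition 3.6]{GRW 14} (this is Proposition \ref{proposition: boundary morphism surgery move}). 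The global surgery is then literal cut-and-paste along $e$.

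Your version has a gap at exactly this point: the assertion that the lifted family ``only modifies $W$ near its boundary'' is not a consequence of the Serre-fibration lifting property. A lift of a path through $\widehat\partial$ can alter $W$ arbitrarily away from the image of $e$, and in particular need not be constant over height-intervals disjoint from the surgery. Without that locality you lose the face-map identity $d_{i}\mathcal{F}_{p}(d^{i}t,x)=\mathcal{F}_{p-1}(t,d_{i}x)$ and the commutation of surgeries on disjoint pieces of data, both of which are needed to assemble the $\mathcal{F}_{p}$ into a homotopy on the realisation. The paper's local-model approach makes locality automatic; if you repair your argument by applying Lemma \ref{lemma: restriction to boundary is a fibration} to $V$ rather than to the whole ambient space you recover precisely Proposition \ref{proposition: boundary morphism surgery move}.

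One minor point: no connectivity hypothesis on $B$ is needed here. The $\theta$-structure on $\mathcal{P}_{t}$ is produced by the Serre-fibration lift, not by an obstruction argument, so the reference to $B$ being $(n-1)$-connected in your third step is superfluous (that hypothesis enters only for the object-surgery step, Theorem \ref{theorem: highly connected boundary objects}).
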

The proof of the above theorem is almost the same as the proof of
\cite[Theorem 3.1]{GRW 14}.  It uses the same constructions and
requires adjustments in just a few places.  We provide an outline.  

We begin by fixing some notation. 
Set $d := 2n+1$. 
Let 
$$V \subset \bar{V} \subset \R^{k+1}\times\R^{d-k+1}_{+}$$ 
be the subspaces 
$$
V \; = \; (-2, 0)\times\R^{k}\times\R^{d-k+1}_{+} \quad \text{and} \quad \bar{V} \; = \; [-2, 0]\times\R^{k}\times\R^{d-k+1}_{+}.
$$ We then denote by $\partial_{1}\bar{V} \subset \bar{V}$ the subset
of the boundary given by
$$
\partial_{1}\bar{V} = [-2, 0]\times\R^{k}\times\partial\R^{d-k+1}_{+}.
$$
Let 
$h: \bar{V} \longrightarrow [-2, 0]$ 
denote the projection onto the first coordinate, which we call the \textit{height function}. 
As before $x_{0}: \R\times\R^{\infty}_{+} \longrightarrow \R$ is the projection onto the first coordinate. 
We let 
$$\partial_{-}D^{k+1} \subset \partial D^{k+1} \subset \R^{k+1}$$
denote the lower hemisphere. 

As in Section \ref{subsection: morphism surgery data} we use the
notation $[p]^{\vee} = \triangle([p], [1])$ for $[p] \in \triangle$.
The elements of $[p]^{\vee}$ are in bijection with $\{0, \dots,
p+1\}$, using the convention that $\varphi: [p] \longrightarrow [1]$
corresponds to the number $i$ with $\varphi^{-1}(1) = \{i, i+1, \dots,
p\}$.  
The definition below should be compared to
  \cite[Definition 3.3]{GRW 14}.
\begin{defn}
Fix once and for all an infinite set $\Omega$.
Let $x = (a, \varepsilon, (W, \ell_{W})) \in \mb{D}^{\partial,
  k-1}_{p}$.  We define $\mb{Z}_{0}(x)$ to be the set of triples
$(\Lambda, \delta, e)$, where $\Lambda \subset \Omega$ is a finite
subset, $\delta: \Lambda \longrightarrow [p]^{\vee}$ is a function,
and
$
e: \Lambda\times\bar{V} \hookrightarrow \R\times[0, 1)\times(-1, 1)^{\infty-1}  
$
is an embedding satisfying the following conditions:
\begin{enumerate} \itemsep.2cm
\item[(i)] \textcolor{black}{the embedding} $e$ satisfies
  $e^{-1}(\R\times\{0\}\times(-1, 1)^{\infty-1}) =
  \partial_{1}\bar{V}$;
\item[(ii)] on every subset $(x_{0}\circ e|_{\{\lambda\}\times\bar{V}})^{-1}(a_{k}-\varepsilon_{k}, a_{k}+\varepsilon_{k}) \; \subset \; \{\lambda\}\times\bar{V}$, the height function $x_{0}\circ e$ coincides with the height function $h$ up to an affine translation; 
\item[(iii)] the embedding $e$ sends $\Lambda\times h^{-1}(0)$ into $x^{-1}_{0}(a_{p}+\varepsilon_{p}, \infty)$; 
\item[(iv)] for $i > 0$, $e$ sends $\delta^{-1}(i)\times h^{-1}(-\tfrac{3}{2})$ into $x_{0}^{-1}(a_{i-1}+\varepsilon_{i-1}, \infty)$; 
\item[(v)] the embedding $e$ sends $\Lambda\times
  h^{-1}(-2)$ into $x_{0}^{-1}(-\infty, a_{0}-\varepsilon_{0})$;
\item[(vi)] the embedding $e$ satisfies 
$$\begin{aligned}
e^{-1}(W) \; &= \; \Lambda\times\partial_{-}D^{k+1}\times\R^{d-k+1}_{+},\\
e^{-1}(\partial W) \; &= \; \Lambda\times\partial_{-}D^{k+1}\times\partial\R^{d-k+1}_{+};
\end{aligned}$$
\item[(vii)] writing $D_{i} = e(\delta^{-1}(i)\times\partial_{-}D^{k+1}\times\{0\})$ for $i \in [p]^{\vee}$, the pair 
$$
\left((\partial W)|_{[a_{i-1}, a_{i}]}, \; (\partial W)|_{a_{i}}\cup D_{i}|_{[a_{i-1}, a_{i}]}\right)
$$
is $k$-connected for each $i \in \{0, \dots, p\}$. 
\end{enumerate}
Now let $q \in \Z_{\geq 0}$.  We define $\mb{Z}_{q}(x)$ to be the set
of triples $(\Lambda, \delta, e)$ where $\Lambda \subset \Omega$ is a
finite set, $\delta: \Lambda \longrightarrow [p]^{\vee}\times[q]$ is a
function, and
$
e: \Lambda\times\bar{V} \longrightarrow \R\times[0, 1)\times(-1, 1)^{\infty-1}
$
  is an embedding, subject to the condition that for each $j \in [q]$, the restriction of $e$ to
  $\delta^{-1}([p]^{\vee}\times\{j\})\times\bar{V}$ 
  defines an element of $\mb{Z}_{0}(x)$. 
We shall write 
$$\Lambda_{i, j} = \delta^{-1}(i ,j) \quad \text{and} \quad e_{i, j} = e|_{\Lambda_{i, j}\times\bar{V}}$$ 
for $i \in [p]^{\vee}$ and $j \in [q]$. 
The bi-semi-simplicial space $\mb{D}^{\partial, k}_{\bullet, \bullet}$ is defined by setting 
$$
\mb{D}^{\partial, k}_{p, q} \; = \; \{(x, y)\; | \; x \in \mb{D}^{\partial, k-1}_{p}, \; \; y \in \mb{Z}_{q}(x) \; \}
$$
and it is topologized as a subspace of the product 
$$
\mb{D}^{\partial, k-1}_{p}\times\bigg(\coprod_{\Lambda \subset \Omega}C^{\infty}(\Lambda\times\bar{V}, \R\times\R^{\infty}_{+})\bigg)^{(p+2)(q+1)}.
$$
The projection maps 
$\mb{D}^{\partial, k}_{p, q} \longrightarrow \mb{D}^{\partial, k-1}_{p}$, $(x, y) \mapsto x,$
yield an augmented bi-semi-simplicial space
$\mb{D}^{\partial, k}_{\bullet, \bullet} \longrightarrow \mb{D}^{\partial, k}_{\bullet, -1},$
with $\mb{D}^{\partial, k}_{\bullet, -1} :=  \mb{D}^{\partial, k-1}_{\bullet}$.
\end{defn}
The following lemma is proven in the same way as Theorem \ref{theorem: contractible surgery data for morphisms} and \cite[Theorem 3.4]{GRW 14}.
\begin{lemma}
Suppose that $k \leq n-1$. 
Then the augmentation map induces a weak homotopy equivalence 
$|\mb{D}^{\partial, k}_{\bullet, \bullet}| \simeq |\mb{D}^{\partial, k}_{\bullet, -1}| = |\mb{D}^{\partial, k-1}_{\bullet}|.$
\end{lemma}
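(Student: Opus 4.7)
The plan is to verify, for each $p \in \Z_{\geq 0}$, that the augmented semi-simplicial space $\mb{D}^{\partial, k}_{p, \bullet} \longrightarrow \mb{D}^{\partial, k}_{p, -1} = \mb{D}^{\partial, k-1}_{p}$ is an augmented topological flag complex satisfying the hypotheses of Theorem \ref{theorem: flag complex equivalence}. Once the fibrewise weak equivalence $|\mb{D}^{\partial, k}_{p, \bullet}| \simeq \mb{D}^{\partial, k-1}_{p}$ is established for each $p$, geometric realization in the $p$-direction yields the desired equivalence $|\mb{D}^{\partial, k}_{\bullet, \bullet}| \simeq |\mb{D}^{\partial, k-1}_{\bullet}|$. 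The flag-complex structure is essentially built into the definition of $\mb{Z}_{q}(x)$, since $(\Lambda, \delta, e) \in \mb{Z}_{q}(x)$ amounts to a $(q+1)$-tuple of elements of $\mb{Z}_{0}(x)$ whose labels $\Lambda_{i,j}$ and images under $e$ are pairwise disjoint; the topology on $\mb{D}^{\partial, k}_{p,q}$ is arranged so that the conditions of being orthogonal are open. Condition (i) of Theorem \ref{theorem: flag complex equivalence}, that the augmentation admits local lifts of maps from disks, is verified by the same parametrised embedding and extension techniques used in \cite[Proposition 6.10]{GRW 14}.

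First I would verify condition (ii), namely that $\mb{Z}_{0}(x)$ is nonempty for every $x = (a, \varepsilon, (W, \ell_{W})) \in \mb{D}^{\partial, k-1}_{p}$. For each $i \in \{0, \dots, p+1\}$ the pair $((\partial W)|_{[a_{i-1}, a_{i}]}, (\partial W)|_{a_{i}})$ is $(k-1)$-connected, so its relative $\pi_{k}$ is representable by finitely many maps $(D^{k+1}, S^{k}) \longrightarrow ((\partial W)|_{[a_{i-1}, a_{i}]}, (\partial W)|_{a_{i}})$ whose classes generate. Since $\dim(\partial W) = 2n \geq 2(k+1)$ when $k \leq n-1$, a standard general-position and half-Whitney-trick argument (compare Theorem \ref{theorem: hudsen embedding theorem} and Theorem \ref{theorem: half whitney trick}) promotes these to pairwise disjoint embeddings of $\partial_{-}D^{k+1}$ into $(\partial W)|_{[a_{i-1}, a_{i}]}$; because each such disk is contractible, the normal bundle is trivial and the embedding thickens into the form required by condition (vi) of Definition \ref{defn: Surgery on morphisms data} on $\partial W$. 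After deleting the images $D_{i}$, a short excision/Mayer--Vietoris computation, together with the fact that the new cells have been attached exactly to kill the obstruction, shows that the pair $((\partial W)|_{[a_{i-1}, a_{i}]}, (\partial W)|_{a_{i}} \cup D_{i}|_{[a_{i-1}, a_{i}]})$ becomes $k$-connected, verifying condition (vii). The assembled map $e$ on $\Lambda \times \bar{V}$ is then produced by extending from the core $\Lambda \times \partial_{-}D^{k+1} \times \R^{d-k+1}_{+}$ using the infinite-dimensional ambient space, exactly as in \cite[Theorem 3.4]{GRW 14}.

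For condition (iii), given a nonempty finite subset $\{v_{1}, \dots, v_{m}\} \subset \mb{Z}_{0}(x)$ I would produce an auxiliary element $v \in \mb{Z}_{0}(x)$ by the construction just described and then perturb $v$, using the infinite-dimensional ambient space $\R \times [0,1) \times (-1,1)^{\infty - 1}$, so that its image becomes disjoint from each of the finitely many submanifolds cut out by $v_{1}, \dots, v_{m}$. Relabelling via the infinite set $\Omega$ keeps the index set of $v$ disjoint from those of the $v_{j}$, and the disjointness of cores then extends to disjointness on thickenings by a further small perturbation. The connectivity condition (vii) is a generic condition that persists under small perturbation, so the perturbed datum remains in $\mb{Z}_{0}(x)$ and is orthogonal to each $v_{j}$ in the flag-complex sense, establishing condition (iii).

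The main obstacle is the connectivity bookkeeping in (ii): one must ensure that the collection of handle-subtractions performed simultaneously on the various pieces $(\partial W)|_{[a_{i-1}, a_{i}]}$ achieves $k$-connectedness uniformly, without disturbing the cylindrical structure near the regular levels $\{a_{i}\} \times \R^{\infty - 1}$ or the already-specified behaviour of $W$ near $\partial W$. This is where the hypothesis $k \leq n-1$ enters essentially: it provides the dimension margin $2n - 2(k+1) \geq 0$ needed for both the embedding step and for the disjunction step, and is the analogue in the present setting of the dimensional hypothesis of \cite[Theorem 3.4]{GRW 14}. With these three conditions verified, Theorem \ref{theorem: flag complex equivalence} applied fibrewise yields $|\mb{D}^{\partial, k}_{p, \bullet}| \simeq \mb{D}^{\partial, k-1}_{p}$, and realizing in $p$ completes the proof.
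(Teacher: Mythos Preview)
Your approach---verifying the hypotheses of Theorem~\ref{theorem: flag complex equivalence} levelwise in $p$---is correct and matches the strategy of the referenced result \cite[Theorem 3.4]{GRW 14}; the paper itself gives no argument beyond that reference, so your outline is a legitimate expansion of it.

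Two points to correct. First, in condition~(ii) the elements of $\pi_k$ of the pair are represented by maps $(D^k, S^{k-1}) \to ((\partial W)|_{[a_{i-1}, a_i]}, (\partial W)|_{a_i})$, not $(D^{k+1}, S^k)$ as you wrote; the core $\partial_{-}D^{k+1}$ is itself a $k$-disk, and it is $\pi_k$ one is killing.

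Second, your condition~(iii) argument as written does not work. Perturbation in the infinite-dimensional ambient space separates only the portion of $\mathrm{image}(e)$ lying \emph{outside} $W$; the portion $e(\Lambda \times \partial_{-}D^{k+1} \times \R^{d-k+1}_{+}) \subset W$ is a codimension-zero open subset of $W$, and two such open sets are not separated by a small perturbation. Your concluding paragraph correctly asserts that $k \leq n-1$ is needed for the disjunction step, but the mechanism is absent from the body of the argument. The fix (as in \cite{GRW 14}) is to construct the core of the new $v$ from the outset inside $\partial W \setminus \bigcup_j \mathrm{image}(e_j)$: since each $\mathrm{image}(e_j) \cap \partial W$ deformation retracts onto a $k$-dimensional core and $\dim \partial W = 2n$, removing these neighbourhoods leaves the relative $\pi_i$ of the pair unchanged for $i \leq k$ precisely when $2k < 2n$, i.e.\ when $k \leq n-1$. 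With the core of $v$ so placed, a sufficiently thin thickening remains in the complement, and only afterward does one extend over the rest of $\bar V$ using the infinite-dimensional ambient space.
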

The next ingredient in the proof of Theorem \ref{theorem: surgery on boundary morphisms} is the construction of a new surgery move, analogous to the one from Proposition \ref{proposition: morphism surgery move}.
We will need the following proposition.
\textcolor{black}{This should be compared to \cite[Proposition 3.6]{GRW 14}.}
\begin{proposition} \label{proposition: boundary morphism surgery move}
Let $d = 2n+1 \geq 3$ and $k \leq n-1$. 
There exists a one-parameter family 
$\mathcal{P}_{t} \in \bPsi^{\partial}_{d}(V)$ defined for $t \in [0, 1]$
with the following properties:
\begin{enumerate} \itemsep.2cm
\item[(i)] $\mathcal{P}_{0} = \Int(\partial_{-}D^{k+1})\times\R^{d-k+1}_{+}$, where recall that $\partial_{-}D^{k+1} \subset \partial D^{k+1} \subset \R^{k+1}$ is the lower hemisphere. 
\item[(ii)] The associated family of boundaries $\partial\mathcal{P}_{t} \in \bPsi_{d-1}(\partial V)$ satisfies the following conditions:
\begin{enumerate} \itemsep.2cm
\item[(a)] The height function, i.e.\ the restriction of $h|_{\partial V}: \partial V \longrightarrow (-2, 0)$ to $\partial\mathcal{P}_{t} \subset V$, has isolated critical values. 
\item[(b)] Let $B^{d-k}_{3}(0) \subset \partial\R^{d-k+1}_{+} = \R^{d-k}$ denote the ball of radius $3$. 
Independently of $t \in [0, 1]$ we have, 
$
\partial\mathcal{P}_{t}\setminus(\R^{k+1}\times B^{d-k}_{3}(0)) \; = \; \Int(\partial_{-}D^{k+1})\times(\R^{d-k}\setminus B^{d-k}_{3}(0)).
$
For ease of notation we denote $\widehat{\mathcal{P}}_{t} := \partial\mathcal{P}_{t}\setminus(\R^{k+1}\times B^{d-k}_{3}(0))$.
\item[(c)] For all $t \in [0, 1]$ and each pair of regular values $-2 < a < b < 0$ of the height function, the pair 
$
\left((\partial\mathcal{P}_{t})|_{[a, b]}, \; (\partial\mathcal{P}_{t})|_{b}\cup\widehat{\mathcal{P}}_{t}|_{[a, b]}\right)
$
is $k$-connected. 
\item[(d)] For each pair of regular values $-2 < a < b < 0$ of the height function, the pair 
$$ ((\partial\mathcal{P}_{1})|_{[a, b]}, \;
  (\partial\mathcal{P}_{1})|_{b})
$$
is $k$-connected. 
\end{enumerate}
\end{enumerate}
Furthermore, if $\mathcal{P}_{0}$ is equipped with a $\theta$-structure $\ell$, the above family can be upgraded to a one parameter family $(\mathcal{P}_{t}, \ell_{t}) \in \bPsi^{\partial}_{\theta}(V)$ with $\ell_{0} = \ell,$ such that the path of boundaries 
$$(\partial\mathcal{P}_{t}, \partial\ell_{t}) \in \bPsi_{\theta_{d-1}}(\partial V)$$ 
is constant as a $\theta$-manifold near $\widehat{\mathcal{P}}_{t}$, where $\partial\ell_{t}$ denotes the restriction of $\ell_{t}$ to the boundary $\partial\mathcal{P}_{t}$. 
 \end{proposition}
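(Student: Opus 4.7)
My plan is to reduce this proposition to the absolute surgery move of \cite[Proposition 3.6]{GRW 14}, applied on the boundary, combined with the Serre fibration property established in Lemma \ref{lemma: restriction to boundary is a fibration}. The decisive observation is that all of the conditions (ii)(a)--(d) concern only the boundary family $\partial\mathcal{P}_t$, so the construction splits naturally into two essentially independent tasks: first build the boundary family, then lift it to a family in $\bPsi^\partial_\theta(V)$ whose boundary is the family we chose.

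For the first step, I would produce a one-parameter family $(\partial\mathcal{P}_t, \partial\ell_t) \in \bPsi_{\theta_{d-1}}(\partial V)$ by a direct application of \cite[Proposition 3.6]{GRW 14} (equivalently, the same construction that was used in Proposition \ref{proposition: morphism surgery move}, carried out in one lower dimension). Here the ambient is $\partial V = (-2,0)\times\R^{k}\times\R^{d-k}$, and the surgery has index $k+1$. The hypothesis $k \leq n-1$ gives $k+1 \leq n$, which is precisely the index condition needed for the GRW14 construction at this dimension. The output satisfies (a)--(d) by construction: isolated critical values, cylindrical behaviour outside $\R^{k+1}\times B^{d-k}_{3}(0)$, the relative $k$-connectivity of (c) for all $t$, and the absolute $k$-connectivity of (d) at $t=1$. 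The initial $\theta$-structure $\partial\ell_0$ is taken to be the restriction to $\partial\mathcal{P}_0$ of the prescribed structure $\ell$ on $\mathcal{P}_0$.

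For the second step, Lemma \ref{lemma: restriction to boundary is a fibration} shows that the restriction map $\widehat{\partial}\colon \bPsi^\partial_\theta(V) \longrightarrow \bPsi_{\theta_{d-1}}(\partial V)$ is a Serre fibration. The initial element $(\mathcal{P}_0, \ell) \in \bPsi^\partial_\theta(V)$ lies over the starting point of the boundary path produced above, so applying the homotopy lifting property yields a lift $(\mathcal{P}_t, \ell_t) \in \bPsi^\partial_\theta(V)$ with $\ell_0 = \ell$ and $\widehat{\partial}(\mathcal{P}_t, \ell_t) = (\partial\mathcal{P}_t, \partial\ell_t)$ for every $t$. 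Conditions (i) and (ii) then hold by construction. Continuity of the whole procedure in the initial structure $\ell$ follows from naturality in parameters of the Serre fibration together with the manifestly parametrised nature of the GRW14 surgery move.

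The main technical nuisance is that abstract Serre-lifting gives no direct control over $\mathcal{P}_t$ in the interior of $V$ far from $\partial V$, whereas for subsequent applications one would like $\mathcal{P}_t$ to equal $\mathcal{P}_0$ outside a compact set, matching the cylindrical behaviour of the boundary family recorded in (ii)(b). This is addressed by performing the lift concretely rather than abstractly: in a collar neighbourhood $\partial V \times [0,\varepsilon)$ of the boundary, set $\mathcal{P}_t$ equal to $\partial\mathcal{P}_t \times [0,\varepsilon)$ with the product $\theta$-structure; outside a slightly larger collar keep $\mathcal{P}_t = \mathcal{P}_0$ unchanged; and smoothly interpolate in between using a bump function in the $\R^{d-k+1}_{+}$-direction. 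Wherever the boundary family is already constant in $t$, this concrete extension is constant in $t$ as well, so outside $\R^{k+1}\times B^{d-k}_{3}(0)$ the family remains equal to its initial value both on the boundary and in the interior, providing a clean model of $(\mathcal{P}_t, \ell_t)$ satisfying all the required conditions.
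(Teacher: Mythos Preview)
Your proposal is correct and follows essentially the same approach as the paper: build the boundary family using \cite[Proposition 3.6]{GRW 14}, then lift it to $\bPsi^\partial_\theta(V)$ using the Serre fibration property of Lemma \ref{lemma: restriction to boundary is a fibration}. Your final paragraph about a concrete collar-neighbourhood lift goes beyond what the paper does and beyond what the statement strictly requires (the constancy condition in the ``Furthermore'' clause is only on the boundary family, which is already handled by the GRW14 construction), but it is a reasonable refinement and does no harm.
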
 
\begin{proof}
We begin setting $\mathcal{P}_{0} = \Int(\partial_{-}D^{k+1})\times\R^{d-k+1}_{+}$. 
We need to extend $\mathcal{P}_{0}$ to a one parameter family $\mathcal{P}_{t}$ with $t \in [0, 1]$. 
We have $\partial\mathcal{P}_{0} = \Int(\partial_{-}D^{k+1})\times\R^{d-k} \subset \bPsi_{d-1}(\partial V)$.
By \cite[Proposition 3.6]{GRW 14}, the boundary $\partial\mathcal{P}_{0}$ can be extended to a one-parameter family $\partial\mathcal{P}_{t}$ that satisfies condition (ii) parts (a), (b), (c), and (d) from the statement of this proposition.
We have a path $\partial\mathcal{P}_{t} \in \bPsi_{d-1}(\partial V)$  such that $\partial\mathcal{P}_{0}$ admits an extension to an element $\mathcal{P}_{0} \in \bPsi^{\partial}_{d}(V)$.
 This data yields the diagram 
 $$
 \xymatrix{
 \{0\} \ar@{^{(}->}[d] \ar[rr]^{\mathcal{P}_{0}} && \bPsi^{\partial}_{d}(V) \ar[d] \\
 [0, 1] \ar@{-->}[urr] \ar[rr]^{\partial\mathcal{P}_{(\underline{\hspace{.2cm}})}} && \bPsi_{d-1}(\partial V).
 }
 $$
Using Lemma \ref{lemma: restriction to boundary is a fibration}, the right-hand vertical map is a Serre-fibration and thus the dotted arrow indeed exists. 
Thus we obtain an extension of the family $\partial\mathcal{P}_{t}$ from \cite[Proposition 3.6]{GRW 14} to a family $\mathcal{P}_{t} \in \bPsi^{\partial}_{d}(V)$. 
The argument in the case where a tangential structure is present is exactly the same since the key result Lemma \ref{lemma: restriction to boundary is a fibration} applies for arbitrary choice of $\theta$. 
 This concludes the proof of the proposition.
\end{proof}
To finish the proof of Theorem \ref{theorem: surgery on boundary morphisms}, we proceed in the same way as in Section \ref{subsection: formal proof of theorem}.
As in the proof of Proposition \ref{proposition: surgery move homotopy}, we use the surgery move from Proposition \ref{proposition: boundary morphism surgery move} to construct a homotopy 
\begin{equation} \label{equation: surgery homotopy}
\mathcal{F}: |\mb{D}^{\partial, k}_{\bullet, \bullet}|\times[0, 1] \longrightarrow |\mb{X}^{\partial, k-1}_{\bullet}|
\end{equation}
such that the map $\mathcal{F}(\underline{\hspace{.3cm}}, 0)$ equals the composition
$
|\mb{D}^{\partial, k}_{\bullet, \bullet}| \stackrel{\simeq} \longrightarrow |\mb{D}^{\partial, k-1}_{\bullet}| \stackrel{\simeq} \longrightarrow |\mb{X}^{\partial, k-1}_{\bullet}|
$
and $\mathcal{F}(\underline{\hspace{.3cm}}, 1)$ factors through the inclusion $|\mb{D}^{\partial, k}_{\bullet}| \hookrightarrow |\mb{D}^{\partial, k-1}_{\bullet}|$.
With such a homotopy constructed, we obtain the diagram 
$$
 \xymatrix{
 |\mb{D}^{\partial, k}_{\bullet}| \ar[rrr]^{\simeq} \ar@{^{(}->}[d]^{i_{\emptyset}} &&& |\mb{X}^{\partial, k}_{\bullet}| \ar@{^{(}->}[d] \\
 |\mb{D}^{\partial, k}_{\bullet, \bullet}| \ar[urrr]^{\mathcal{F}(1, \underline{\hspace{.3cm}})} \ar[rrr]^{\mathcal{F}(0, \underline{\hspace{.3cm}})}_{\simeq} &&& |\mb{X}^{\partial, k-1}_{\bullet}|,
 }
 $$
 analogous to (\ref{equation: homotopy commutative diagram}), with the top-triangle commutative and bottom-triangle homotopy commutative. 
 The left-vertical map is the embedding induced by the empty surgery data (this map is defined analogously to (\ref{equation: empty surgery inclusion})).
 The proof of Theorem \ref{theorem: surgery on boundary morphisms} then follows by the same argument in the proof of Theorem \ref{theorem: highly connected morphisms} at the end of Section \ref{section: surgery on morphisms}.
 Homotopy-commutativity of these triangles implies that the diagonal map $\mathcal{F}(\underline{\hspace{.3cm}}, 1):  |\mb{D}^{\partial, k}_{\bullet, \bullet}| \longrightarrow |\mb{X}^{\partial, k}_{\bullet}|$ is a weak homotopy equivalence. 
 This completes the proof of Theorem \ref{theorem: surgery on boundary morphisms}.
 
 \subsection{Surgery on boundary objects} 
We begin by filtering $\Cob^{\partial, n-1}_{\theta}$ by a sequence of subcategories that are contained in 
$\Cob_{\theta}^{\partial, \mb{b}}$. 
\begin{defn} \label{defn: filtration by highly connected boundary objects}
Let $k, l \in \Z_{\geq -1}$. 
We define $\Cob^{\partial, k, l}_{\theta} \subset \Cob^{\partial, k}_{\theta}$ to be the full-subcategory on those objects $(M, \ell)$ such that the boundary $\partial M$ is $l$-connected. 
\end{defn}
The main theorem of this subsection is the following.
\begin{theorem} \label{theorem: highly connected boundary objects}
Let $k \leq n-1$ and let
$l \leq \min\{k, n-2\}$. 
Let $\theta: B \longrightarrow BO(2n+1)$ be a tangential structure such that the space $B$ is $l$-connected. 
Then the inclusion $\Cob^{\partial, k, l}_{\theta} \hookrightarrow \Cob^{\partial, k, l-1}_{\theta}$ induces the weak homotopy equivalence
$B\Cob^{\partial, k, l}_{\theta} \simeq B\Cob^{\partial, k, l-1}_{\theta}$.
\end{theorem}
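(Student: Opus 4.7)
The plan is to follow the template that has already been used successfully in the paper, most closely that of Theorem~\ref{theorem: surgery on boundary morphisms} and its proof in Section~\ref{subsection: formal proof of theorem}, but now implementing $l$-surgery on the closed $(2n-1)$-dimensional boundaries $(\partial W)|_{a_i}$ that appear as the \emph{boundaries of objects} at each level $a_i$ of a $p$-simplex in $\mb{D}^{\partial, k, l-1}_{\bullet}$. First I would introduce an augmented bi-semi-simplicial space $\mb{D}^{\partial, k, l}_{\bullet, \bullet} \longrightarrow \mb{D}^{\partial, k, l}_{\bullet, -1} = \mb{D}^{\partial, k, l-1}_{\bullet}$ whose $(p,q)$-simplices consist of a $p$-simplex $x = (a, \varepsilon, (W, \ell))$ together with a finite indexed family of pairwise disjoint embeddings $e \colon \Lambda \times \bar{V} \hookrightarrow \R \times [0, 1) \times (-1,1)^{\infty-1}$ (with $\bar{V}$ an appropriate local half-disk model for a trace of $l$-surgery), a map $\delta \colon \Lambda \to [p] \times [q]$, and $\theta$-structure data, such that for each level $a_i$ the restriction of $e$ to the core over $\delta^{-1}(i, j)$ realizes a generating set of $\pi_l((\partial W)|_{a_i})$ by embedded $l$-spheres with trivialized normal bundle, extended to $\theta$-framed trace handles sitting inside a collar of $\partial W$ in $W$ near $a_i$. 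The design is patterned on Definitions~\ref{defn: Surgery on morphisms data} and~\ref{defn: semi-simplicial space of morphism surgery data}.

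Next I would verify that the augmentation induces a weak homotopy equivalence $|\mb{D}^{\partial, k, l}_{\bullet, \bullet}| \simeq |\mb{D}^{\partial, k, l-1}_{\bullet}|$ by applying Theorem~\ref{theorem: flag complex equivalence}, or its strengthening Theorem~\ref{theorem: improved flag complex theorem}, to the augmented topological flag complex $\mb{D}^{\partial, k, l}_{p, \bullet} \to \mb{D}^{\partial, k, l}_{p, -1}$ for each fixed $p$. Local lifts of the augmentation come from working in infinite-dimensional ambient space. Existence of surgery data reduces to the assertion that for any object $(M, \ell) \in \Ob \Cob^{\partial, k, l-1}_{\theta}$ there are finitely many pairwise disjoint embedded $l$-spheres in $\partial M$ with trivial normal bundle whose classes generate $\pi_l(\partial M)$, each extending to a compatible $\theta$-framed $(l+1)$-handle attached from the interior of $W$. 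This uses $(l-1)$-connectedness of $\partial M$ (inherited from $\Cob^{\partial, k, l-1}_{\theta}$), the dimension inequality $2n-1 \geq 2l+3$ coming from $l \leq n-2$ to apply general position, and the $l$-connectedness of $B$ to extend $\theta$-structures over $(l+1)$-disks. Contractibility of the spaces of surgery data (condition~(iii) or~(iii)$^{*}$) is handled by the now-standard open-dense transversality argument modeled on Section~\ref{section: contractibility of the spaces of surgery data}.

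For the surgery move itself, I would build a one-parameter family of local $\theta$-manifold models $(\mathcal{P}_t, \ell_t)$, $t \in [0, 1]$, whose boundary behavior coincides with the move of Proposition~\ref{proposition: boundary morphism surgery move}, with the index shifted to $l$ and with the surgery concentrated at a single level $a_i$; extension of the $\theta$-structure across the interpolating family is ensured by Lemma~\ref{lemma: restriction to boundary is a fibration} exactly as in Proposition~\ref{proposition: boundary morphism surgery move}. With this move available, the proof concludes as in Section~\ref{subsection: formal proof of theorem}: the empty surgery data yields a canonical inclusion $|\mb{D}^{\partial, k, l}_{\bullet}| \hookrightarrow |\mb{D}^{\partial, k, l}_{\bullet, \bullet}|$; the move assembles into a homotopy $\mathcal{F} \colon [0,1] \times |\mb{D}^{\partial, k, l}_{\bullet, \bullet}| \to |\mb{X}^{\partial, k, l-1}_{\bullet}|$ whose $t = 0$ map is the augmentation composite and whose $t = 1$ map factors through $|\mb{X}^{\partial, k, l}_{\bullet}|$, and the resulting commutative triangles force the inclusion to be a weak homotopy equivalence. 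The main technical obstacle will be checking that the surgery move preserves the morphism-level $k$-connectivity condition, namely that $((\partial W')|_{[a_{i-1}, a_i]}, (\partial W')|_{a_i})$ remains $k$-connected after the move. By a homotopy pushout argument identical in spirit to the one used in the proof of Lemma~\ref{lemma: surgery implementation lemma}, this reduces to the pair $(X, X|_{a_i})$ being $k$-connected, where $X$ is the trace cobordism of $l$-surgery on $(\partial W)|_{a_i}$; dually this cobordism carries a single handle of index $2n-1-l$, so the pair is $(2n-2-l)$-connected, and the inequalities $l \leq n-2$ and $k \leq n-1$ give $2n-2-l \geq n \geq k+1 > k$, completing the verification.
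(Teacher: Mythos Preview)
Your proposal is correct and follows essentially the same approach as the paper: construct an augmented bi-semi-simplicial space of surgery data indexed by $\delta \colon \Lambda \to [p]\times[q]$, verify the augmentation is a weak equivalence via the flag complex criterion, build the surgery move by lifting the boundary move along the Serre fibration of Lemma~\ref{lemma: restriction to boundary is a fibration}, and conclude with the standard homotopy $\mathcal{F}$ diagram. The paper models itself directly on \cite[Section~4]{GRW 14} (object surgery) rather than on the morphism-surgery Definitions~\ref{defn: Surgery on morphisms data}--\ref{defn: semi-simplicial space of morphism surgery data} you cite, so the local model is $(-6,-2)\times\R^{d-l-1}_{+}\times S^{l}$ rather than your $\bar V$, but this is cosmetic. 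Your explicit check that the morphism-level $k$-connectivity is preserved (via $2n-2-l \geq n > k$) is exactly the content of property~(ii)(c) of Proposition~\ref{proposition: surgery move for boundary objects}, which the paper states but does not spell out in the implementation step; and your appeal to the transversality machinery of Section~\ref{section: contractibility of the spaces of surgery data} is harmless overkill, since here $2l < 2n-1 = \dim(\partial M)$ and general position alone suffices, which is why the paper simply invokes \cite[Theorem~4.5]{GRW 14}.
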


Notice that we have $\Cob^{\partial, n-1, n-2}_{\theta} = \Cob^{\partial, \mb{b}}_{\theta}$. 
Combining Theorem \ref{theorem: highly connected boundary objects} together with Theorem \ref{theorem: surgery on boundary morphisms} then yields the weak homotopy equivalence $B\Cob^{\partial, \mb{b}}_{\theta} \simeq B\Cob^{\partial}_{\theta}$ whenever the space $B$ is $(n-2)$-connected.
This is one of the weak homotopy equivalences from the statement of Theorem \ref{theorem: easy weak equivalences}. 
The proof of Theorem \ref{theorem: highly connected boundary objects} is almost identical to that of \cite[Theorem 4.1]{GRW 14}. 
Just a few adjustments are needed. 
The first ingredient that we need is a new surgery move.
We set $d := 2n+1$. 
The family of manifolds in the next proposition will be contained in the space $(-6, -2)\times\R^{d-l-1}_{+}\times\R^{l+1}$. 
We denote by 
$$\textcolor{black}{h: (-6, -2)\times\R^{d-l-1}_{+}\times\R^{l+1}
  \longrightarrow (-6, -2)}$$ the projection onto the first
coordinate.  We refer to the function $h$ and its
  restriction to various submanifolds as the \textit{height function}.
\begin{proposition} \label{proposition: surgery move for boundary objects}
There exists a one-parameter family 
$$\mathcal{P}_{t} \in \bPsi_{d}^{\partial}((-6, -2)\times\R^{d-l-1}_{+}\times\R^{l+1})$$
satisfying the following conditions:
\begin{enumerate}\itemsep.3cm
\item[(i)] $\mathcal{P}_{0} = (-6, -2)\times\R^{d-l-1}_{+}\times S^{l}$. 
\item[(ii)] The associated family of boundaries $\partial\mathcal{P}_{t} \in \bPsi_{d-1}((-6, -2)\times\R^{d-l-2}\times\R^{l+1})$ satisfies the following further conditions:
\begin{enumerate} \itemsep.2cm
\item[(a)] $\partial\mathcal{P}_{t} \subset (-6, -2)\times\R^{d-l-2}\times D^{l+1}$ for all $t \in [0, 1]$; 
\item[(b)] for all $t \in [0, 1]$, the manifold
  $\partial\mathcal{P}_{t}$ agrees with $\partial\mathcal{P}_{0} =
  (-6, -2)\times\R^{d-l-2}\times S^{l}$ outside of the region $(-5,
  -2)\times B^{d-l-2}_{2}(0)\times D^{l+1}$;
\item[(c)] for all $t$ and each pair of regular values $a < b$ of the
  height function $$h: \partial\mathcal{P}_{t} \longrightarrow (-6,
  -2),$$ the pair $((\partial\mathcal{P}_{t})|_{[a, b]}, \;
  (\partial\mathcal{P}_{t})|_{b})$ is $(d - l - 3)$-connected;
\item[(d)] the only critical value of the height function $h: \partial\mathcal{P}_{1} \longrightarrow (-6, -2)$ is $-4$, and for $a \in (-4, -2)$, the manifold $(\partial\mathcal{P}_{1})|_{a}$ is obtained from $(\partial\mathcal{P}_{0})|_{a} = \{a\}\times\R^{d-l-2}\times S^{l}$ by $l$-surgery along the standard embedding. 
\end{enumerate}
 \end{enumerate}
Furthermore, if the initial manifold $\mathcal{P}_{0} = (-6, -2)\times\R^{d-l-1}_{+}\times S^{l}$ is equipped with a $\theta$-structure $\ell$, then the family can be upgraded to a one-parameter family 
$$(\mathcal{P}_{t}, \ell_{t}) \in \bPsi^{\partial}_{\theta}((-6, -2)\times\R^{d-l-1}_{+}\times\R^{l+1})$$ 
with $\ell_{0} = \ell$, such that the family $(\partial\mathcal{P}_{t}, \partial\ell_{t})$ is constant outside of the set $(-5, -2)\times B^{d-l-2}_{2}(0)\times D^{l+1}$. 
\end{proposition}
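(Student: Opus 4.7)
The plan is to mimic the two-step strategy already used in Proposition \ref{proposition: boundary morphism surgery move}: first build the desired family of boundaries $\partial\mathcal{P}_{t}$ using the existing surgery move for closed manifolds from \cite{GRW 14}, and then invoke Lemma \ref{lemma: restriction to boundary is a fibration} to lift this path of boundaries, together with the given initial interior $\mathcal{P}_{0}$, to a one-parameter family $\mathcal{P}_{t}$ of manifolds with boundary. The Serre-fibration property is precisely what lets us convert a homotopy of boundaries into a homotopy of manifolds-with-boundary keeping the initial interior prescribed.

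First I would construct $\partial\mathcal{P}_{t} \in \bPsi_{d-1}((-6,-2)\times\R^{d-l-2}\times\R^{l+1})$. The initial boundary is $\partial\mathcal{P}_{0} = (-6,-2)\times\R^{d-l-2}\times S^{l}$, which is a product cobordism whose slice at every height equals $\R^{d-l-2}\times S^{l}$. This is exactly the local model for surgery of index $l$ on a $(d-2)$-dimensional manifold. The construction from \cite[Section 4]{GRW 14} (the analogue of Proposition \ref{proposition: morphism surgery move} in the present paper, carried out one dimension lower and on closed manifolds) produces a one-parameter family of embedded cobordisms realizing a single index-$l$ handle attachment occurring at height $-4$, supported inside $(-5,-2)\times B^{d-l-2}_{2}(0)\times D^{l+1}$, and satisfying the required $(d-l-3)$-connectivity of each relative pair $((\partial\mathcal{P}_{t})|_{[a,b]},(\partial\mathcal{P}_{t})|_{b})$. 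This establishes parts (a)--(d) of condition (ii).

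Second, I lift this path of boundaries to a path in $\bPsi_{d}^{\partial}$ with the prescribed starting point. Concretely, consider the commutative square
$$
\xymatrix{
\{0\} \ar@{^{(}->}[d] \ar[rr]^{\mathcal{P}_{0}\ \ \ \ \ } && \bPsi^{\partial}_{d}((-6,-2)\times\R^{d-l-1}_{+}\times\R^{l+1}) \ar[d]^{\widehat{\partial}} \\
[0,1] \ar@{-->}[urr] \ar[rr]^{\partial\mathcal{P}_{(\underline{\hspace{.15cm}})}\ \ \ } && \bPsi_{d-1}((-6,-2)\times\R^{d-l-2}\times\R^{l+1}).
}
$$
By Lemma \ref{lemma: restriction to boundary is a fibration}, the right-hand vertical restriction map $\widehat{\partial}$ is a Serre fibration, so the dashed lift exists and provides the desired family $\mathcal{P}_{t}$. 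Condition (i) holds by design, and the restrictions in (ii) on $\partial\mathcal{P}_{t}$ are inherited directly from the boundary construction.

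For the $\theta$-structure version, the same diagram is used with $\bPsi^{\partial}_{d}$ and $\bPsi_{d-1}$ replaced by $\bPsi^{\partial}_{\theta}$ and $\bPsi_{\theta_{d-1}}$; Lemma \ref{lemma: restriction to boundary is a fibration} applies in this generality, so no new argument is needed. Since the boundary family from \cite{GRW 14} can be arranged to be constant outside $(-5,-2)\times B^{d-l-2}_{2}(0)\times D^{l+1}$ together with its $\theta$-structure, the lifted family will agree with $(\mathcal{P}_{0},\ell_{0})$ on the complement of this region on the boundary, as required. The only real content of the argument is the application of the Serre fibration lemma, so there is no genuine obstacle beyond correctly packaging the closed-manifold construction from \cite{GRW 14} as the boundary family.
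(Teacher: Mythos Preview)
Your proposal is correct and matches the paper's own proof essentially verbatim: the paper states that this proposition is proven in the same way as Proposition \ref{proposition: boundary morphism surgery move}, by applying Lemma \ref{lemma: restriction to boundary is a fibration} to lift the one-parameter boundary family from \cite[Proposition 4.2]{GRW 14}. Your write-up is in fact more detailed than what the paper provides.
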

\begin{proof}
This proposition is proven in same way as Proposition \ref{proposition: boundary morphism surgery move} by applying Lemma \ref{lemma: restriction to boundary is a fibration} and the one-parameter family from \cite[Proposition 4.2]{GRW 14}.
\end{proof}
The next ingredient in the proof of Theorem \ref{theorem: highly connected boundary objects} is the construction of an augmented bi-semi-simplicial space $\mb{D}^{\partial, k, l}_{\bullet, \bullet} \longrightarrow \mb{D}^{\partial, k, l}_{\bullet, -1}$
with $ \mb{D}^{\partial, k, l}_{\bullet, -1} \; = \; \mb{D}^{\partial, k, l-1}_{\bullet}$, analogous to \cite[Definition 4.3]{GRW 14}.
For the definition that follows, we choose once and for all, smoothly in the data $(a_{i}, \varepsilon_{i}, a_{p}, \varepsilon_{p})$, increasing diffeomorphisms 
\begin{equation} \label{equation: reparametrisation of height}
\varphi = \varphi(a_{i}, \varepsilon_{i}, a_{p}, \varepsilon_{p}): (-6, -2) \stackrel{\cong} \longrightarrow (a_{i}-\varepsilon_{i}, a_{p}-\varepsilon_{p}). 
\end{equation}
\begin{defn}
Fix once and for all an infinite set $\Omega$.
Let $x = (a, \varepsilon, (W, \ell_{W})) \in \mb{D}^{\partial, k, l-1}_{p}$. 
With $a = (a_{0}, \dots, a_{p})$, for each $i$ we write $M_{i} = W|_{a_{i}}$.
Define a set $\mb{Y}_{q}(x)$ to consist of tuples $(\Lambda, \delta, e, \ell)$, where:
\begin{itemize} \itemsep.2cm
\item $\Lambda \subset \Omega$ is a finite subset;
\item $\delta: \Lambda \longrightarrow [p]\times[q]$ is a function;
\item  
$
e: \Lambda\times(-6, -2)\times\R^{d-l-1}_{+}\times S^{l} \longrightarrow \R\times [0, 1)\times(-1, 1)^{\infty-1}
$
is an embedding that satisfies 
$$
e^{-1}(\R\times\{0\}\times(-1, 1)^{\infty-1}) =  \Lambda\times(-6, -2)\times\R^{d-l-2}\times S^{l}.
$$
\item $\ell$ is a $\theta$-structure on $\Lambda\times(-6, -2)\times\R^{d-l-1}_{+}\times S^{l}$.
\end{itemize}
We write $\Lambda_{i, j} = \delta^{-1}(i, j)$ and 
$$
e_{i, j}: \Lambda_{i, j}\times(a_{i} - \varepsilon_{i}, a_{i}+\varepsilon_{i})\times\R^{d-l-1}_{+}\times S^{l} \longrightarrow \R\times[0, 1)\times(-1, 1)^{\infty-1}
$$ for the embedding obtained by restricting $e$ and reparametrising
  using (\ref{equation: reparametrisation of height}).
  \textcolor{black}{We} let $\ell_{i, j}$ denote the restriction of
  $\ell$ to $\Lambda_{i, j}\times(-6, -2)\times\R^{d-l-1}_{+}\times
  S^{l}$.  This data is required to satisfy the following conditions:
\begin{enumerate} \itemsep.2cm
\item[(i)] The embedding $e$ satisfies
$$\begin{aligned}
e^{-1}(W) &= \Lambda\times(-6, -2)\times\R^{d-l-1}_{+}\times S^{l}, \\
e^{-1}(\partial W) &= \Lambda\times(-6, -2)\times\R^{d-l-2}\times S^{l}.
\end{aligned}$$
\item[(ii)] Let $(i, j) \in [p]\times[q]$. 
For $t \in (a_{i}-\varepsilon_{i}, a_{i}+\varepsilon_{i})$
we have 
$$(x_{0}\circ e_{i, j})^{-1}(t) = \Lambda_{i, j}\times\{t\}\times\R^{d-l-1}_{+}\times S^{l}.$$
In particular, this implies that 
$$\begin{aligned}
e_{i, j}(\Lambda_{i, j}\times\{a_{i}\}\times \R^{d-l-1}_{+}\times S^{l}) \; &\subset \; M_{i} = W|_{a_{i}},\\
e_{i, j}(\Lambda_{i, j}\times\{a_{i}\}\times\R^{d-l-2}\times S^{l}) \; &\subset \; \partial M_{i} = (\partial W)|_{a_{i}}.
\end{aligned}$$
\item[(iii)] The restriction of $\ell_{W}$ to the image $e(\Lambda\times(-6, -2)\times\R^{d-l-1}_{+}\times S^{l})$ agrees with $\ell$.
\end{enumerate}
For $(i, j) \in [p]\times[q]$, let $\partial M'_{i, j}$ denote the manifold obtained by doing surgery on $\partial M_{i,j}$ with respect to the embedding
$$
 \Lambda_{i, j}\times\{a_{i}\}\times\R^{d-l-2}\times S^{l} \longrightarrow \partial M_{i}
$$
given by restricting $e_{i, j}$ (see condition (ii)). 
We insist that
\begin{enumerate}
\item[(iv)] for each $(i, j) \in [p]\times[q]$ the resulting manifold $\partial M'_{i, j}$ is $l$-connected.  
\end{enumerate}
For each $x$, the assignment $q \mapsto \mb{Y}_{q}(x)$ defines a semi-simplicial space $\mb{Y}_{\bullet}(x)$.
The bi-semi-simplicial space $\mb{D}^{\partial, k, l}_{\bullet, \bullet}$ is defined by setting
$
\mb{D}^{\partial, k, l}_{p, q} \; = \; \{(x, y) \; | \; x \in \mb{D}^{\partial, k, l-1}_{p}, \quad  y \in \mb{Y}_{q}(x) \; \}.
$ 
The projections 
$\mb{D}^{\partial, k, l}_{p, q} \longrightarrow \mb{D}^{\partial, k, l-1}_{p}$, $(x, y) \mapsto x$
yield an augmented bi-semi-simplicial space
$\mb{D}^{\partial, k, l}_{\bullet, \bullet} \longrightarrow \mb{D}^{\partial, k, l}_{\bullet, -1}$ with $\mb{D}^{\partial, k, l}_{\bullet, -1} := \mb{D}^{\partial, k, l-1}_{\bullet}$.
\end{defn}
\textcolor{black}{The} following lemma which is proven in the same way
as \cite[Theorem 4.5]{GRW 14}.
\begin{lemma} \label{lemma: boundary objects augmentation}
Under the assumptions of Theorem \ref{theorem: highly connected boundary objects}, the maps 
$$
|\mb{D}^{\partial, k, l}_{\bullet, 0}| \longrightarrow |\mb{D}^{\partial, k, l}_{\bullet, \bullet}| \longrightarrow |\mb{D}^{\partial, k, l}_{\bullet, -1}| = |\mb{D}^{\partial, k, l-1}_{\bullet}|
$$ are weak homotopy equivalences, where the first map is induced by
the inclusion of zero-simplices and the second is induced by the
augmentation map.
\end{lemma}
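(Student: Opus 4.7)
The plan is to apply Theorem \ref{theorem: flag complex equivalence} to the augmented semi-simplicial space $\mb{D}^{\partial, k, l}_{p, \bullet} \longrightarrow \mb{D}^{\partial, k, l-1}_p$ for each fixed $p$, and then to take geometric realization in the $p$-direction. The flag-complex structure is immediate from the definition, since $(q+1)$-simplices are $(q+1)$-tuples of zero-simplices with pairwise disjoint images of the embedding data. The technical content thus reduces to verifying the three hypotheses of Theorem \ref{theorem: flag complex equivalence}: local lifts, surjectivity, and disjunction. This is formally identical to the proof of \cite[Theorem 4.5]{GRW 14} in the closed setting, with the boundary features handled via Lemma \ref{lemma: restriction to boundary is a fibration}.

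Local lifts follow from standard parametric sheaf theory for $\bPsi^{\partial}_\theta$. The central point is non-emptiness of $\mb{Y}_0(x)$. Given $x = (a, \varepsilon, (W, \ell_W)) \in \mb{D}^{\partial, k, l-1}_p$, each boundary $\partial M_i := (\partial W)|_{a_i}$ is a closed, $(l-1)$-connected manifold of dimension $2n-1$. Since $2l+1 \leq 2n-3 < \dim \partial M_i$, general position together with the Hurewicz theorem allows me to pick a finite collection of pairwise disjoint embedded spheres $S^l \hookrightarrow \partial M_i$ whose classes normally generate $\pi_l(\partial M_i)$. The $\theta$-structure restricts along each sphere to a map $S^l \to B$, which is null-homotopic since $B$ is $l$-connected; this forces $TS^l \oplus \nu_{S^l/\partial M_i} \oplus \epsilon^1$ to be trivial, and hence $\nu_{S^l/\partial M_i}$, of rank $2n-1-l \geq n+1 \geq l+2$, to be stably and therefore unstably trivial. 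Choosing framings yields tubular-neighbourhood embeddings $\R^{2n-l-1} \times S^l \hookrightarrow \partial M_i$. Extending inward along the collar of $\partial W \subset W$ produces embeddings $\R^{d-l-1}_+ \times S^l \hookrightarrow W$; cylindering across $(a_i - \varepsilon_i, a_i + \varepsilon_i)$, reparametrising via (\ref{equation: reparametrisation of height}), transferring the $\theta$-structure, and collecting over all $i$ yields an element of $\mb{Y}_0(x)$. The resulting surgered boundaries $\partial M'_i$ are $l$-connected by construction.

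The disjunction condition follows by general position. Given any finite non-empty list $v_1, \dots, v_m \in \mb{Y}_0(x)$, I produce some $v \in \mb{Y}_0(x)$ as above, then perturb its $l$-spheres in each $\partial M_i$ to be disjoint from those of $v_1, \dots, v_m$; this is possible because $2l \leq 2n-4 < \dim \partial M_i$. Choosing sufficiently small disjoint tubular neighbourhoods around these spheres, extending along the collar of $\partial W$ and cylindering in the $t$-direction, produces the required extensions into $W$. Theorem \ref{theorem: flag complex equivalence} now yields $|\mb{D}^{\partial, k, l}_{p, \bullet}| \simeq \mb{D}^{\partial, k, l-1}_p$ for each $p$, and realizing in the $p$-direction gives the augmentation equivalence $|\mb{D}^{\partial, k, l}_{\bullet, \bullet}| \simeq |\mb{D}^{\partial, k, l-1}_{\bullet}|$.

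For the zero-simplex inclusion $|\mb{D}^{\partial, k, l}_{\bullet, 0}| \simeq |\mb{D}^{\partial, k, l}_{\bullet, \bullet}|$, the argument is the same as for Proposition \ref{proposition: inclusion of 0 simplices} and follows \cite[p.~327]{GRW 14}: the semi-simplicial space $\mb{Y}_\bullet(x)$ is itself a topological flag complex whose higher levels are the disjoint-image tuples in the powers of $\mb{Y}_0(x)$, and the existence and disjointness statements verified above translate directly into the hypotheses needed to collapse it onto its zero-simplices. The main obstacle throughout is the non-emptiness step: it is the appeal to $l$-connectivity of $B$ to obtain an unstable trivialization of each normal bundle $\nu_{S^l/\partial M_i}$ that dictates both the connectivity hypothesis on $\theta$ and the dimensional restriction $l \leq n-2$.
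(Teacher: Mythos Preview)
Your proposal is correct and follows essentially the same approach as the paper, which simply defers to \cite[Theorem 4.5]{GRW 14}; you have fleshed out the argument with the appropriate boundary modifications. One small remark: the reference to Lemma \ref{lemma: restriction to boundary is a fibration} is slightly misplaced---that lemma is used to construct the surgery \emph{move} (Proposition \ref{proposition: surgery move for boundary objects}), whereas the extension of the surgery \emph{data} from $\partial W$ into $W$ needs only the elementary collar neighborhood of $\partial W \subset W$, as you in fact use.
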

With the surgery move defined and Lemma \ref{lemma: boundary objects
  augmentation} established, the proof of Theorem \ref{theorem: highly
  connected boundary objects} proceeds in the same way as the proof of
Theorem \ref{theorem: disk object equivalence}.  We use the surgery
move from Proposition \ref{proposition: surgery move for boundary
  objects} to define a homotopy
$$
\mathcal{F}: |\mb{D}^{\partial, k, l}_{\bullet, 0}|\times [0, 1] \longrightarrow |\mb{X}^{\partial, k, l-1}_{\bullet}|
$$
such that:
\begin{itemize} \itemsep.2cm
\item $\mathcal{F}(\underline{\hspace{.2cm}}, 0)$ is equal to the composition 
$
|\mb{D}^{\partial, k, l}_{\bullet, 0}| \stackrel{\simeq} \longrightarrow |\mb{D}^{\partial, k, l}_{\bullet, \bullet}| \stackrel{\simeq} \longrightarrow |\mb{D}^{\partial, k, l-1}_{\bullet}| \stackrel{\simeq} \longrightarrow |\mb{X}^{\partial, k, l-1}_{\bullet}|\textcolor{black}{;}
$
\item $\mathcal{F}(\underline{\hspace{.2cm}}, 1)$ factors through the inclusion $|\mb{X}^{\partial, k, l}_{\bullet}| \hookrightarrow |\mb{X}^{\partial, k, l-1}_{\bullet}|$.
\end{itemize}
This homotopy is constructed in the same way as in Section \ref{subsection: disk equivalence} (see also \cite[Section 4.1]{GRW 14}).
The empty surgery data yields an embedding 
$
|\mb{D}^{\partial, k, l}_{\bullet}| \hookrightarrow |\mb{D}^{\partial, k, l}_{\bullet, 0}|.
$
This embedding together with the homotopy $\mathcal{F}$ 
yields the diagram 
$$
\xymatrix{
|\mb{D}^{\partial, k, l}_{\bullet}| \ar@{^{(}->}[rr] \ar[d]_{\simeq} && |\mb{D}^{\partial, k, l}_{\bullet, 0}| \ar[d]^{\mathcal{F}(\underline{\hspace{.2cm}}, 0)}_{\simeq} \ar[dll]_{\mathcal{F}(\underline{\hspace{.2cm}}, 1)} \\
|\mb{X}^{\partial, k, l}_{\bullet}|  \ar@{^{(}->}[rr] &&  |\mb{X}^{\partial, k, l-1}_{\bullet}|
}
$$
with the upper-triangle commutative and the lower-triangle homotopy commutative. 
Homotopy commutativity implies that the bottom inclusion $|\mb{X}^{\partial, k, l}_{\bullet}|  \hookrightarrow |\mb{X}^{\partial, k, l-1}_{\bullet}|$ is a weak homotopy equivalence. 
This completes the proof of Theorem \ref{theorem: highly connected boundary objects}.

\section{Embeddings and Disjunction} \label{section: A generalization of the Whitney trick}
\subsection{The half Whitney-trick} \label{subsection: the half Whitney trick}
For what follows, let $(M; \partial_{0}M, \partial_{1}M)$ be a $2n$-dimensional manifold triad and let
$(P; \partial_{0}P, \partial_{1}P)$ and $(Q; \partial_{0}Q, \partial_{1}Q)$ be manifold triads of dimension $n$.
We consider embeddings 
\begin{equation} \label{equation: embeddings n-case}
f: (P, \partial_{0}P) \longrightarrow (M, \partial_{0}M) \quad \text{and} \quad g: (Q, \partial_{0}Q) \longrightarrow (M, \partial_{0}M).
\end{equation}
Note, that for the above embeddings we impose no conditions on the restrictions $f|_{\partial_{1}P}$ and $g|_{\partial_{1}Q}$. 
In particular we do not require that $f$ and $g$ send $\partial_{1}P$ and $\partial_{1}Q$ to $\partial_{1}M$. 
The theorem below is essentially the same as the \textit{half Whitney trick} in \cite[p. 345]{GRW 14}, or the \textit{piping argument} of \cite[p. 40]{W 70}. 

\begin{theorem} \label{theorem: half whitney trick}
Let $P$, $Q$, $M$, $f$ and $g$ be as above and let $P$ be compact. 
Suppose that $n \geq 3$ and that the following conditions are satisfied:
\begin{enumerate} \itemsep.2cm
\item[(i)] the pairs $(P, \partial_{0}P)$ and $(Q, \partial_{0}Q)$ are both $0$-connected;
\item[(ii)] the pair $(M, \partial_{0}M)$ is $1$-connected;
\item[(iii)] $f(\partial_{1}P)\cap g(\partial_{1}Q) = \emptyset$. 
\end{enumerate}
Then there exists an isotopy of embeddings $f_{t}: (P, \partial_{0}P) \longrightarrow (M, \partial_{0}M)$ with $t \in [0, 1]$ and $f_{0} = f$, such that: 
\begin{itemize} \itemsep.2cm
\item $f_{t}|_{\partial_{1}P} = f|_{\partial_{1}P}$ for all $t \in [0,1]$; 
\item $f_{1}(P)\cap g(Q) = \emptyset$. 
\end{itemize}
\end{theorem}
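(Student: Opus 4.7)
The plan is to follow the piping technique of \cite[p.~40]{W 70}, producing the required isotopy in three steps. First, by general position (with $f|_{\partial_{1}P}$ held fixed) I would arrange that $f$ is transverse to $g$, so that $f(P)\cap g(Q)$ consists of finitely many points. The dimension count $\dim P + \dim Q = 2n = \dim M$, together with hypothesis (iii) and the inequality $\dim f(\partial_{0}P) + \dim g(\partial_{0}Q) = 2n-2 < 2n-1 = \dim \partial_{0}M$, will let me assume that all intersections $p_{1},\dots,p_{k}$ lie in $f(\mathrm{int}\,P)\cap g(\mathrm{int}\,Q)$ and that $f(\partial P)\cap g(\partial Q) = \emptyset$ in $\partial_{0}M$.

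Second, I would use hypothesis (i) to select, for each $p_{i}$, an embedded arc $\alpha_{i}\subset P$ from $f^{-1}(p_{i})$ to a point of $\partial_{0}P$. The $0$-connectedness of $(P,\partial_{0}P)$ supplies such an arc, and since $\dim P = n\geq 3$ a general position argument makes the $\alpha_{i}$ mutually disjoint, disjoint from the remaining preimages $f^{-1}(p_{j})$, and meeting $\partial_{0}P$ only at one endpoint. The images $f(\alpha_{i})$ are then pairwise disjoint embedded arcs in $M$ joining the $p_{i}$ to $f(\partial_{0}P)\subset\partial_{0}M$. Invoking hypothesis (ii), the $1$-connectedness of $(M,\partial_{0}M)$, I can homotope each arc rel endpoints inside $M$ so that it meets $g(Q)$ only at the endpoint $p_{i}$; because $g(Q)$ has codimension $n\geq 3$ in $M$, a further general position perturbation turns this homotopy into an isotopy of embedded arcs with the same disjointness property.

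Third, I would carry out the piping isotopy: inside a thin tubular neighborhood of each $f(\alpha_{i})$, push a disk neighborhood of $f^{-1}(p_{i})$ in $P$ along $\alpha_{i}$ and out of $M$ through $\partial_{0}M$. The resulting ambient isotopy is supported near $\bigcup_{i}f(\alpha_{i})$, fixes $f|_{\partial_{1}P}$, and cancels the intersections $p_{i}$ one at a time. The main obstacle is verifying that the piping does not introduce any new intersections with $g(Q)$. This is where the dimension assumption $n\geq 3$ and the two connectivity hypotheses become essential: codimension-$n$ general position of the tubular neighborhoods against $g(Q)$ ensures the swept-out regions remain disjoint from $g(Q)$ except at the cancelled points, while the $1$-connectedness of $(M,\partial_{0}M)$ provides the room in $M$ needed to carry out the arc-simplification of step two before general position is applied.
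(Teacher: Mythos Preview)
There is a genuine gap in step three. Sliding a disk neighbourhood of $f^{-1}(p_i)$ ``along $\alpha_i$ and out of $M$ through $\partial_0 M$'' does not cancel the transverse intersection $p_i$. The arc $\alpha_i$ lies in $P$, so $f(\alpha_i)$ is tangent to $f(P)$ at $p_i$; an isotopy of $f$ supported in a tube around $f(\alpha_i)$ moves $f(P)$ essentially within itself near $p_i$ and the transverse intersection with $g(Q)$ simply slides along $g(Q)$ rather than disappearing. Nothing in your construction pushes $f(P)$ across $g(Q)$. A symptom of this is that your argument never uses the hypothesis that $(Q,\partial_0 Q)$ is $0$-connected, and your appeal to $1$-connectedness of $(M,\partial_0 M)$ in step two is misplaced: making a $1$-dimensional arc miss the codimension-$n$ submanifold $g(Q)$ (with $n\geq 3$) needs only general position, not any connectivity of $(M,\partial_0 M)$.

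What is actually required is a \emph{half Whitney disk}: an embedded $(D^2_+,\partial_0 D^2_+)\hookrightarrow (M,\partial_0 M)$ whose face $\partial_1 D^2_+$ decomposes as an arc in $f(P)$ from $p_i$ to $\partial_0 M$ together with an arc in $g(Q)$ from $p_i$ to $\partial_0 M$. The $0$-connectedness of $(P,\partial_0 P)$ and of $(Q,\partial_0 Q)$ produces the two arcs, and the $1$-connectedness of $(M,\partial_0 M)$ lets you fill in the half-disk (and $n\geq 3$ lets you embed it disjointly from $f(P)\cup g(Q)$ off $\partial_1 D^2_+$). One then thickens this half-disk to a model $\R^{n-1}\times D^2_+\times\R^{n-1}$ in $M$, exactly as in Milnor's proof of the Whitney trick, and flows along the vector field $\partial/\partial x_1$ on $D^2_+$. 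This flow pushes $f(P)$ across the half-disk and past $g(Q)$; the intersection point escapes through the edge $\partial_0 D^2_+\subset\partial_0 M$. All three connectivity hypotheses are genuinely consumed by this construction, so any correct argument must engage with $(Q,\partial_0 Q)$ and with $\pi_1(M,\partial_0 M)$ in an essential way.
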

\begin{remark}
We emphasize that the diffeotopy in the above lemma will in general
not be constant when restricted to the boundary of $M$.  If we were to
insist that $\Psi_{t}|_{\partial M} = \Id_{\partial M}$ for all $t \in
[0, 1]$ then the lemma would be false.  Indeed, the obstruction to the
existence of such a relative diffeotopy (relative meaning constant on
$\partial M$) is precisely the oriented algebraic intersection number
of $P$ and $Q$.  If we allow the diffeotopy to move the boundary, then
this intersection number ceases to be an obstruction to disjunction.
\end{remark}
\begin{proof}[Proof of Theorem \ref{theorem: half whitney trick}]
Through out the proof we will denote $P' := f(P)$ and $Q' := g(Q)$.
By putting $P'$ into general position we may assume that the intersection $P'\cap Q'$ is a set of points, all contained in the interior of $M$. 
Furthermore since $P$ is by assumption compact, it follows that the set $P'\cap Q'$ is actually finite.
It will suffice to prove the lemma in the case when the intersection contains a single point, say $P'\cap Q' = \{x\}$ for $x \in \Int(M)$. 
The general case follows by induction. 
We will apply the \textit{half Whitney trick} from see \cite[p. 345]{GRW 14} but with different notation. 
Recall the upper half-disk $D^{2}_{+} \subset \R^{2}_{+}$. 
The face $\partial_{1}D^{2}_{+}$ decomposes as the union $\partial_{1}D^{2}_{+} = \partial^{+}_{1}D^{2}_{+}\cup \partial^{-}_{1}D^{2}_{+}$, where 
$$
\partial^{+}_{1}D^{2}_{+} = \{\bar{x} \in \partial_{1}D^{2}_{+} \; | \; x_{2} \leq 0\}, \quad \partial^{+}_{1}D^{2}_{+} = \{\bar{x} \in \partial_{1}D^{2}_{+} \; | \; x_{2} \geq 0\}.
$$
Let $U \subset \R^{2}_{+}$ be an open neighborhood of $D^{2}_{+} \subset \R^{2}_{+}$.
Notice that we have $\partial_{0}D^{2}_{+} \subset \partial U$. 
We claim that there exists an embedding, which we call a \textit{half Whitney disk},
$\varphi: (U, \partial U) \longrightarrow (M, \partial M)$
that satisfies: 
$$\varphi^{-1}(P') = \partial^{+}_{1}D^{2}_{+}, \quad \varphi^{-1}(Q') = \partial^{-}_{1}D^{2}_{+}, \quad \varphi^{-1}(x) = \partial^{+}_{1}D^{2}_{+}\cap\partial^{-}_{1}D^{2}_{+}, \quad
\text{and} \quad \varphi^{-1}(\partial M) = \partial U.$$
The existence of such an embedding follows from the connectivity conditions put forth in the statement of the lemma: $(P, \partial P)$ and $(Q, \partial Q)$ are $0$-connected and $(M, \partial M)$ is $1$-connected
(it is also necessary that $\dim(U) < \dim(M)/2$ and that $\dim(U)$ be less than the codimensions of $P'$ and $Q'$ in $M$).
Using $\varphi$, we construct an embedding 
$$\tilde{\varphi}: (\R^{n-1}\times U\times\R^{n-1}, \; \; \R^{n-1}\times\partial U\times\R^{n-1}) \longrightarrow (M, \partial M)$$
whose restriction to $(\{0\}\times U\times\{0\}, \; \{0\}\times\partial U\times\{0\})$ is equal to $\varphi$ and satisfies:
\begin{itemize} \itemsep.2cm
\item $\tilde{\varphi}^{-1}(P') = \R^{n-1}\times\partial^{+}_{1}D^{2}_{+}\times\{0\}$;
\item $\tilde{\varphi}^{-1}(Q') = \{0\}\times\partial^{-}_{1}D^{2}_{+}\times\R^{n-1}$.
\end{itemize}
The extended embedding $\tilde{\varphi}$ is constructed in the same way as in \cite[Lemma 6.7]{M 65}
(the argument is easier in this case as we we are canceling intersection points against the boundary rather against each other, and so no framing obstructions arise). 

We now proceed to use the embedding $\tilde{\varphi}$ to define the
isotopy that pushes the submanifold $P'$ off of $Q'$.  This is done in
the same way as in the proof of \cite[Theorem 6.6 (page 74)]{M 65}.
The isotopy is constructed as follows.
First, choose a compactly supported vector field on $U$ that is equal
to $\partial/\partial x_{1}$ on the half-disk $D^{2}_{+} \subset U$.
Then extend this vector field to a compactly supported vector field,
denoted by $\mb{v}$, on $\R^{n-1}\times U\times\R^{n-1}$ using bump
functions.  Let $\mb{w}$ denote the vector field on $M$ obtained by
pushing forward $\mb{v}$ by the embedding $\tilde{\varphi}$ and then
extending over the complement
$M\setminus\tilde{\varphi}(\R^{n-1}\times U\times\R^{n-1})$
by the
zero vector field.  The flow associated to the vector field $\mb{w}$
yields an ambient isotopy of $M$ that pushes $P'$ off of $Q'$.  Since
the vector field $\mb{w}$ is by definition zero on
$M\setminus\tilde{\varphi}(\R^{n-1}\times U\times\R^{n-1})$, and hence
is zero on $\partial_{1}P'$, it follows that this ambient isotopy is
constantly the identity on $\partial_{1}P'$.  This concludes the proof
of the theorem.
\end{proof}

\subsection{A higher dimensional half Whitney-trick} \label{subsection: high dimensional half whitney trick}
In this section we will prove a higher-dimensional analogue of Theorem \ref{theorem: half whitney trick} from the previous section.
We now let $(M; \partial_{0}M, \partial_{1}M)$ be a $(2n+1)$-dimensional manifold triad
and let
$(P; \partial_{0}P, \partial_{1}P)$ and $(Q; \partial_{0}Q, \partial_{1}Q)$ be manifold triads of dimension $n+1$.
We consider embeddings 
\begin{equation}
f: (P, \partial_{0}P) \longrightarrow (M, \partial_{0}M) \quad \text{and} \quad g: (Q, \partial_{0}Q) \longrightarrow (M, \partial_{0}M),
\end{equation}
where as before we do not require that $f$ and $g$ send $\partial_{1}P$ and $\partial_{1}Q$ to $\partial_{1}M$. 
The main theorem of this section is the following.
\begin{theorem} \label{theorem: higher half whitney trick}
Let $P$, $Q$, $M$, $f$ and $g$ be as above and let $P$ be compact. 
Suppose that $n \geq 4$ and that the following conditions are satisfied:
\begin{enumerate} \itemsep.2cm
\item[(i)] the pairs $(P, \partial_{0}P)$ and $(Q, \partial_{0}Q)$ are both $1$-connected;
\item[(ii)] the pair $(M, \partial_{0}M)$ is $2$-connected;
\item[(iii)] $f(\partial_{1}P)\cap g(\partial_{1}Q) = \emptyset$. 
\end{enumerate}
Then there exists an isotopy of embeddings $f_{t}: (P, \partial_{0}P) \longrightarrow (M, \partial_{0}M)$ with $t \in [0, 1]$ and $f_{0} = f$, such that: 
\begin{itemize} \itemsep.2cm
\item $f_{t}|_{\partial_{1}P} = f|_{\partial_{1}P}$ for all $t \in [0,1]$; 
\item $f_{1}(P)\cap g(Q) = \emptyset$. 
\end{itemize}
\end{theorem}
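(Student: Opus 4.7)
The plan is to extend the Whitney-type argument of Theorem \ref{theorem: half whitney trick} one dimension higher: since now $\dim P + \dim Q = 2n+2 = \dim M + 1$, the intersection $C := f(P)\cap g(Q)$ is generically one-dimensional rather than zero-dimensional, so we must eliminate it component by component using $3$-dimensional Whitney balls in place of $2$-dimensional Whitney disks.

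First I would reduce to a good generic situation. By a small isotopy of $f$ supported away from $\partial_1 P$, arrange that $f(P)$ is transverse to $g(Q)$ and that $f|_{\partial_0 P}$ is transverse to $g|_{\partial_0 Q}$ inside $\partial_0 M$. The intersection $C$ is then a properly embedded $1$-submanifold of $M$ whose endpoints lie either on $\partial_0 M$ (coming from $f(\partial_0 P)\cap g(\Int Q)$ or $f(\Int P)\cap g(\partial_0 Q)$) or in the interior of $M$ (coming from $f(\partial_1 P)\cap g(\Int Q)$ or $f(\Int P)\cap g(\partial_1 Q)$); hypothesis (iii) rules out the remaining corner type. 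Next I would apply Theorem \ref{theorem: half whitney trick} inside the $2n$-dimensional manifold $\partial_0 M$ to the two $n$-dimensional submanifolds $f(\partial_0 P)$ and $g(\partial_0 Q)$, using the connectivity of the boundary pairs which follows from hypotheses (i)--(ii) by the long exact sequences together with general-position arguments, to further isotope $f$ rel $\partial_1 P$ until $f(\partial_0 P)\cap g(\partial_0 Q) = \emptyset$.

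The main step is to eliminate each remaining component of $C$ via a Whitney-ball construction. For a circle component $\gamma\subset C$, the loops $\gamma_P := f^{-1}(\gamma) \subset P$ and $\gamma_Q := g^{-1}(\gamma) \subset Q$ are null-homotopic by the $1$-connectedness of $(P,\partial_0 P)$ and $(Q,\partial_0 Q)$; since $n+1 \geq 5$, they bound embedded $2$-disks $D_P\subset P$ and $D_Q\subset Q$ by Theorem \ref{theorem: hudsen embedding theorem}. The union $S := f(D_P)\cup g(D_Q)$ is a $2$-sphere in $M$ which, by general position available since $\dim M = 2n+1 \geq 9$, can be made embedded; by the $2$-connectedness of $(M,\partial_0 M)$ it is null-homotopic, and a further application of Hudson's embedding theorem yields an embedded $3$-ball $B \subset M$ with $\partial B = S$ whose interior we may arrange to be disjoint from $f(P)\cup g(Q)$. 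A collar of $B$ then provides an ambient isotopy of $M$ which drags $f(D_P)$ across $B$ onto $g(D_Q)$, removing $\gamma$ from the intersection without creating new components. For an arc component of $C$, the analogous half-Whitney-ball construction uses half-disks in $P, Q$ whose free boundaries lie on $\partial_0 P, \partial_0 Q$ and whose round portions contain $f^{-1}(\alpha)$ and $g^{-1}(\alpha)$; these assemble into a disk in $M$ with part of its boundary on $\partial_0 M$, which by the $2$-connectedness of $(M,\partial_0 M)$ bounds a half $3$-ball producing the required Whitney isotopy.

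The main obstacle is ensuring that all of this machinery can be made to avoid a neighborhood of $f(\partial_1 P)$, so that the resulting ambient isotopy is genuinely constant on $\partial_1 P$. This is precisely where the dimensional hypothesis $2n+1 \geq 9$ (that is, $n \geq 4$) enters: it supplies enough codimension for the various general-position arguments to simultaneously embed the Whitney disks, half-disks, and $3$-balls and to push them off the $n$-dimensional submanifold $f(\partial_1 P)\subset M$, so that the resulting ambient isotopies vanish on a neighborhood of $\partial_1 P$. Iterating the elimination step over the finitely many components of the compact $1$-manifold $C$ then produces the desired isotopy $f_t$ with $f_0=f$, $f_t|_{\partial_1 P}=f|_{\partial_1 P}$, and $f_1(P)\cap g(Q)=\emptyset$.
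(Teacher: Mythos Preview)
Your approach has a genuine gap in the use of the connectivity hypotheses. The theorem only assumes \emph{relative} connectivity: $(P,\partial_0 P)$ and $(Q,\partial_0 Q)$ are $1$-connected and $(M,\partial_0 M)$ is $2$-connected. Your circle-elimination step, however, requires \emph{absolute} connectivity. When you claim that $\gamma_P = f^{-1}(\gamma)\subset P$ is null-homotopic ``by the $1$-connectedness of $(P,\partial_0 P)$'', this does not follow: the long exact sequence only gives that $\pi_1(\partial_0 P)\to\pi_1(P)$ is surjective, not that $\pi_1(P)=0$. Likewise, bounding the $2$-sphere $S\subset M$ by a $3$-ball requires $\pi_2(M)=0$, which is not implied by $\pi_2(M,\partial_0 M)=0$. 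The same problem afflicts your preliminary step of invoking Theorem~\ref{theorem: half whitney trick} inside $\partial_0 M$: that would require connectivity of the pairs $(\partial_0 M,\partial_{0,1}M)$, $(\partial_0 P,\partial_{0,1}P)$, $(\partial_0 Q,\partial_{0,1}Q)$, none of which are among the hypotheses.

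The paper's proof avoids these issues by never attempting to cap anything off absolutely. Instead it always pushes toward $\partial_0 M$ using \emph{half}-disks: an arc component of $P'\cap Q'$ is removed using a $3$-dimensional half-Whitney disk $(D^3_+,\partial_0 D^3_+)\hookrightarrow (M,\partial_0 M)$, and a circle component is not eliminated directly but is first \emph{converted into an arc} by a $2$-dimensional half-Whitney disk that drags a point of the circle down to $\partial_0 M$. Both constructions use exactly the relative connectivity available --- $\pi_1(P,\partial_0 P)=\pi_1(Q,\partial_0 Q)=0$ to build the arcs from the intersection locus to $\partial_0 P$, $\partial_0 Q$, and $\pi_2(M,\partial_0 M)=0$ to fill in the half-disk. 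Your arc-elimination sketch via half-Whitney-balls is essentially the paper's Lemma for arcs and is fine; the fix is to replace your circle step (and drop the $\partial_0$-cleaning step entirely) with the paper's circle-to-arc maneuver.
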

Generically the intersection $f(P)\cap g(Q)$ is a $1$-dimensional manifold with boundary given by $f(\partial_{0}P)\cap g(\partial_{0}Q) \subset \partial_{0}M$.  
Furthermore, since $P$ is compact by assumption, it follows that $f(P)\cap g(Q)$ is a compact $1$-dimensional manifold and thus is a disjoint union of a finite number of compact intervals and circles. 
We will prove Theorem \ref{theorem: higher half whitney trick} in two steps. 
The first will be to prove the existence of the separating isotopy $f_{t}$ in the special case that $f(P)\cap g(Q)$ has no closed components, and hence is a finite disjoint union of compact intervals. 
The second step will be to show that for any such embeddings $f$ and $g$ as above, there exists an isotopy $f_{t}: (P, \partial_{0}P) \longrightarrow (M, \partial_{0}M)$ with $f_{0} = f$, such that $f_{1}(P)\cap g(Q)$ is the disjoint union of a  collection of compact intervals.
The following lemma is the first step in this process. 
\begin{lemma} \label{lemma: whitney trick no closed components}
Let conditions (i), (ii), and (iii) of the statement of Theorem \ref{theorem: higher half whitney trick} be satisfied. 
Suppose further that the intersection $f(P)\cap g(Q)$ has no closed components and thus is diffeomorphic to a disjoint union of compact intervals. 
Then there exists an isotopy of embeddings 
$$f_{t}: (P, \partial_{0}P) \longrightarrow (M, \partial_{0}M)$$ 
with $f_{0} = f$ such that:
\begin{itemize} \itemsep.2cm
\item $f_{t}|_{\partial_{1}P} = f|_{\partial_{1}P}$ for all $t \in [0,1]$;
\item $f_{1}(P)\cap g(Q) = \emptyset$. 
\end{itemize}
\end{lemma}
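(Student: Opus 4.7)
The plan is to induct on the number $k$ of interval components of the intersection $f(P) \cap g(Q)$. The base case $k = 0$ is immediate, so I focus on the inductive step: given $k \geq 1$ interval components, I will construct an isotopy $f_t$ fixing $\partial_1 P$ with $f_0 = f$ and such that $f_1(P) \cap g(Q)$ equals $f(P) \cap g(Q)$ with one interval component removed. Fix one interval $I$ with endpoints $x_1, x_2 \in \partial_0 M$; the hypothesis $f(\partial_1 P) \cap g(\partial_1 Q) = \emptyset$ forces these endpoints to lie in $f(\partial_0 P) \cap g(\partial_0 Q)$, so the preimage arcs $\alpha_P = f^{-1}(I) \subset P$ and $\alpha_Q = g^{-1}(I) \subset Q$ are properly embedded arcs with endpoints on $\partial_0 P$ and $\partial_0 Q$ respectively.

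The central construction is a properly embedded \emph{half Whitney disk} for $I$, namely an embedding $\varphi \colon D^2_+ \hookrightarrow M$ with $\varphi(\partial_1 D^2_+) = I$, $\varphi(\partial_0 D^2_+) \subset \partial_0 M$, and $\varphi^{-1}(f(P)) = \varphi^{-1}(g(Q)) = \partial_1 D^2_+$, such that $\varphi(D^2_+)$ is disjoint from $f(\partial_1 P) \cup g(\partial_1 Q)$. Its construction proceeds in two steps. First, the $1$-connectedness of $(P, \partial_0 P)$ allows us to contract $\alpha_P$ rel endpoints into $\partial_0 P$, producing a map $\sigma_P \colon D^2_+ \to P$ with $\sigma_P(\partial_1 D^2_+) = \alpha_P$ and $\sigma_P(\partial_0 D^2_+) \subset \partial_0 P$; composing with $f$ yields a candidate map of pairs $(D^2_+, \partial_0 D^2_+) \to (M, \partial_0 M)$ whose curved-edge image is $I$. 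Second, I promote this to an embedding with the desired disjointness properties using general position together with Theorem \ref{theorem: hudsen embedding theorem}: the dimension counts $2 + (n+1) < 2n+1$ for $n \geq 3$ make generic interior intersections with $f(P)$ and $g(Q)$ isolated, and I remove them by a piping argument that uses the $1$-connectedness of $(Q, \partial_0 Q)$ and the $2$-connectedness of $(M, \partial_0 M)$ to cancel them in pairs while respecting the boundary stratification.

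With $D$ in hand, the desired isotopy follows from a Whitney-type move along $D$ that is a direct adaptation of the construction in the proof of Theorem \ref{theorem: half whitney trick}. Since $D$ is contractible its normal bundle in $M$ is trivial, so I choose a tubular neighborhood parametrization compatible with the local structure of $f(P)$ and $g(Q)$ along $I$, then integrate a compactly supported vector field tangent to $D$ and directed from $\partial_1 D^2_+$ toward $\partial_0 D^2_+$; the time-$1$ flow provides the required ambient isotopy of $M$ whose restriction to $f(P)$ pushes a neighborhood of $\alpha_P$ across $D$ into a neighborhood of $\partial_0 M$, thereby removing $I$ from the intersection while introducing no new intersections with $g(Q)$. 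Because $D$ is arranged to be disjoint from $f(\partial_1 P)$, the vector field can be chosen with support away from $f(\partial_1 P)$, so the isotopy fixes $\partial_1 P$ pointwise. I expect the main obstacle to lie in the embedded-disk construction, specifically in piping away interior intersections of $\sigma_P$ with $f(P)$ and $g(Q)$ in a manner compatible with the boundary stratification; this is precisely where the dimensional hypothesis $n \geq 4$ enters, in parallel with its use in Proposition \ref{proposition: inductive disjunction}.
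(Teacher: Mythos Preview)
There is a genuine gap in your construction: the half Whitney disk you build is $2$-dimensional, with its curved edge $\partial_{1}D^{2}_{+}$ lying \emph{on} the intersection arc $I$, but a Whitney-type push along such a disk cannot separate $P' := f(P)$ from $Q' := g(Q)$. Along $I$ the normal bundle of your disk $D$ in $M$ has rank $2n-1$, while the normal spaces of $I$ in $P'$ and in $Q'$ are each $n$-dimensional and together span the normal space of $I$ in $M$; their images in the normal bundle of $D$ therefore meet in a line, so no tubular-neighborhood framing of $D$ can make $P'$ and $Q'$ appear as complementary sub-bundles. An ambient isotopy supported near $D$ and flowing in the $D$-direction will slide the arc along but will not remove it from $P'\cap Q'$. (Your disk has no face in $P'$ and no face in $Q'$---only an edge in $P'\cap Q'$---so the usual Whitney geometry is simply absent.)

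The paper's proof instead constructs a \emph{three}-dimensional half-disk $D^{3}_{+}$: its curved face $\partial_{1}D^{3}_{+}$ is split into two $2$-discs $\partial_{1}^{+}D^{3}_{+}\subset P'$ and $\partial_{1}^{-}D^{3}_{+}\subset Q'$ meeting along $I$, with flat face $\partial_{0}D^{3}_{+}\subset\partial_{0}M$. This is the correct analogue of Theorem \ref{theorem: half whitney trick} one dimension up---the intersection is a $1$-manifold rather than a point, so the Whitney membrane must be $3$-dimensional. The $2$-discs $j_{\pm}$ in $P'$ and $Q'$ bounding $I$ come from the $1$-connectedness of \emph{both} $(P,\partial_{0}P)$ and $(Q,\partial_{0}Q)$ (and $n\geq 4$ is used here to make them embedded), and the resulting map of $\partial_{1}D^{3}_{+}$ is then filled in using $\pi_{2}(M,\partial_{0}M)=0$ and general position. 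With the $3$-disk the normal count works: its normal bundle has rank $2n-2$ and $P'$, $Q'$ contribute complementary $(n-1)$-dimensional sub-bundles, so the framed extension by $\R^{n-1}\times\R^{n-1}$ and the resulting flow genuinely push $P'$ past $Q'$. Your inductive framework and the final vector-field push are fine; what is missing is precisely this extra dimension in the Whitney membrane.
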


\begin{proof}
Let us denote the by $P'$ and $Q'$ the images $f(P)$ and $g(Q)$ respectively. 
Similarly, we denote $\partial_{i}P' = f(\partial_{i}P)$ and $\partial_{i}Q' = g(\partial_{i}Q)$. 
We will prove the lemma explicitly in the special case that $P'\cap Q'$ is diffeomorphic to the single compact interval $[0, 1]$. 
The general case follows by induction. 
Our construction is a simple generalization of the \textit{half Whitney trick} from Theorem \ref{theorem: half whitney trick}. 
Recall the upper half-disk $D^{3}_{+} \subset \R^{3}_{+}$. 
The face $\partial_{1}D^{3}_{+}$ decomposes as the union 
$$\partial_{1}D^{3}_{+} = \partial^{+}_{1}D^{3}_{+}\cup \partial^{-}_{1}D^{3}_{+},$$ 
where 
$$
\partial^{-}_{1}D^{3}_{+} = \{\bar{x} \in \partial_{1}D^{3}_{+} \; | \; x_{2} \leq 0\}, \quad \partial^{+}_{1}D^{3}_{+} = \{\bar{x} \in \partial_{1}D^{3}_{+} \; | \; x_{2} \geq 0\}.
$$
Let $U \subset \R^{3}_{+}$ be an open neighborhood of $D^{3}_{+} \subset \R^{3}_{+}$.
Notice that we have $\partial_{0}D^{3}_{+} \subset \partial U$. 
\begin{claim} \label{claim: existence of whitney disk}
There exists an embedding
$\varphi: (U, \partial U) \longrightarrow (M, \partial_{0}M)$
that satisfies: 
\begin{enumerate} \itemsep.2cm
\item[(i)] $\varphi^{-1}(P'\cap Q') = \partial^{+}_{1}D^{3}_{+}\cap\partial^{-}_{1}D^{3}_{+}$;
\item[(ii)] $\varphi^{-1}(P') = \partial^{+}_{1}D^{3}_{+}$;
\item[(iii)] $\varphi^{-1}(Q') = \partial^{-}_{1}D^{3}_{+}$;
\item[(iv)] $\varphi^{-1}(\partial_{0}M) = \partial U$.
\end{enumerate}
\end{claim}
We the above claim follows from the connectivity conditions in the statement of Theorem \ref{theorem: higher half whitney trick} together with the fact that $n \geq 4$. 
We will postpone the proof of this claim until after the proof of the current lemma. 
Assuming this claim the remainder of the proof proceeds in essentially the same way as the proof of Theorem \ref{theorem: half whitney trick}. 
Indeed, let $\varphi: (U, \partial U) \longrightarrow (M, \partial_{0}M)$ be such an embedding as in Claim \ref{claim: existence of whitney disk}. 
We now need to extend $\varphi$ to an embedding 
$$\tilde{\varphi}: (\R^{n-2}\times U\times\R^{n-2}, \; \; \R^{n-2}\times\partial U\times\R^{n-2}) \longrightarrow (M, \partial_{0}M)$$
that satisfies:
$$
\tilde{\varphi}^{-1}(P') = \R^{n-2}\times\partial^{+}_{1}D^{3}_{+}\times\{0\}, \quad \quad \tilde{\varphi}^{-1}(Q') = \{0\}\times\partial^{-}_{1}D^{3}_{+}\times\R^{n-2}.
$$
This extension is constructed in the same way as in \cite[Lemma 6.7]{M 65}.
As in the proof of Lemma Theorem \ref{theorem: half whitney trick}, since the extension is over a half-disk, there is no obstruction to the construction of such an extension. 
With this new embedding $\tilde{\varphi}$ constructed we may define the isotopy that pushes the submanifold $P'$ off of $Q'$ in the same way as in the proof of Theorem \ref{theorem: half whitney trick}. 
This completes the proof of the lemma assuming Claim \ref{claim: existence of whitney disk}.
\end{proof}

\begin{proof}[Proof of Claim \ref{claim: existence of whitney disk}]
We will construct the embedding $\varphi: (U, \partial U) \longrightarrow (M, \partial_{0}M)$ in stages.
By assumption, the intersection $P'\cap Q'$ is diffeomorphic to the interval $[0, 1]$ with boundary $\partial(P'\cap Q') = \partial_{0}P'\cap\partial_{0}Q'$ contained in $\partial M$. 
The inclusion maps 
$$\left(P'\cap Q',\; \partial(P'\cap Q')\right) \; \hookrightarrow \; (P', \partial P') \quad \text{and} \quad \left(P'\cap Q',\; \partial(P'\cap Q')\right) \; \hookrightarrow \; (Q', \partial Q')$$ 
determine classes in $\pi_{1}(P', \partial_{0}P')$ and $\pi_{1}(Q', \partial_{0}Q')$. 
Since $\pi_{1}(P', \partial_{0}P') = \pi_{1}(Q', \partial_{0}Q) = 0$, it follows that there exists embeddings
$$
j_{+}: (D^{2}_{+}, \partial_{0}D^{2}_{+}) \longrightarrow (P', \partial P'), \quad j_{-}: (D^{2}_{+}, \partial_{0}D^{2}_{+}) \longrightarrow (Q', \partial Q'),
$$
such that 
$$
j_{+}^{-1}(P'\cap Q') \; = \; \partial_{1}D^{2}_{+} \; = \; j_{-}^{-1}(P'\cap Q').
$$
(By the condition $n \geq 4$ we may assume that these maps are actually embeddings because $\dim(P') = \dim(Q') \geq 5$ and thus $\dim(D^{2}_{+})$ is strictly less than half the dimension of $P'$ and $Q'$.) 
Combining these embeddings and identifying $D^{2}_{+} \cong \partial^{\pm}_{1}D^{3}_{+}$, we obtain an embedding
$$
j: \partial_{1}D^{3}_{+} \; \longrightarrow \; M
$$
with
$$
j^{-1}(P') = \partial^{+}_{1}D^{3}_{+}, \quad \quad j^{-1}(Q') = \partial^{-}_{1}D^{3}_{+}, \quad \quad j^{-1}(\partial_{0}M) = \partial_{0, 1}D^{3}_{+}.
$$
Since $j(\partial_{0, 1}D^{3}_{+}) \subset \partial_{0}M$, the embedding $j$ determines a class in $\pi_{2}(M, \partial_{0}M)$. 
Since $\pi_{2}(M, \partial_{0}M) = 0$, the embedding $j$ extends to a map 
$$
\widehat{j}: (D^{3}_{+}, \partial_{0}D^{3}_{+}) \longrightarrow (M, \partial_{0}M)
$$
which we may assume is an embedding since $3 < 9/2 \leq \dim(M)/2$. 
By putting this embedding into general position we may further assume that the interior of $\widehat{j}(D^{3}_{+})$ is disjoint from $P'$ and $Q'$ (and $\partial_{1}M$). 
The desired embedding $\varphi: (U, \partial U) \longrightarrow (M, \partial_{0}M)$ is then constructed by extending $\widehat{j}$ using a ``small'' non-vanishing, normal vector-field along $\widehat{j}(\partial_{1}D^{3}_{+})$.
This completes the proof of Claim \ref{claim: existence of whitney disk}. 
\end{proof}

We now go on to the second component of the proof of Theorem \ref{theorem: higher half whitney trick}. 
\begin{lemma}
Let $P$, $Q$, $M$, $f$ and $g$ be as in the statement of Theorem \ref{theorem: higher half whitney trick}. 
Then there exists an isotopy $f_{t}: (P, \partial_{0}P) \longrightarrow (M, \partial_{0}M)$ with $f_{0} = f$ such that:
\begin{itemize} \itemsep.2cm
\item $f_{t}|_{\partial_{1}P} = f_{\partial_{1}P}$ for all $t \in [0,1]$;
\item $f_{1}(P)\cap g(Q)$ has no closed components, i.e. it is a disjoint union of compact intervals.
\end{itemize}
\end{lemma}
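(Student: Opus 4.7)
My plan is to eliminate circle components of $f(P)\cap g(Q)$ one at a time by a piping (finger-move) isotopy along a suitable arc in $P$. First I would put $f$ into general position with respect to $g$ so that $f(P)\cap g(Q)$ is a compact $1$-dimensional submanifold of $M$. Since $P$ is compact, this intersection has finitely many components, and by hypothesis~(iii) no boundary points of the intersection arise from $f(\partial_1 P)\cap g(\partial_1 Q)$. The intersection thus decomposes as a finite disjoint union of compact arcs and circles; I argue by induction on the number of circle components. It therefore suffices to produce, given any circle component $S\subset f(P)\cap g(Q)$, an isotopy $f_t$ constant on $\partial_1 P$ such that $f_1(P)\cap g(Q)$ has strictly fewer closed components than $f(P)\cap g(Q)$ and the same or more arc components.

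Fix such a circle $S$, pick $p\in S$, and let $q:=f^{-1}(p)\in\mathrm{Int}(P)$. The hypothesis that $(P,\partial_0 P)$ is $1$-connected gives $\pi_1(P,\partial_0 P)=0$, so there is a continuous path in $P$ from $q$ to $\partial_0 P$. Because $\dim P=n+1\geq 5$ and $f^{-1}(g(Q))\subset P$ is a properly embedded $1$-submanifold, general position allows me to replace this path with a smooth embedded arc $\alpha\colon[0,1]\to P$ satisfying $\alpha(0)=q$, $\alpha(1)\in\partial_0 P$, $\alpha((0,1))\subset\mathrm{Int}(P)$, $\alpha\cap\partial_1 P=\emptyset$, and $\alpha([0,1])\cap f^{-1}(g(Q))=\{q\}$ with transverse crossing at $q$.

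Next I would construct the piping isotopy along $\alpha$. Choose a tubular neighborhood $\nu(\alpha)\cong D^n\times[0,1]\subset P$, small enough to miss $\partial_1 P$, with $\{0\}\times[0,1]=\alpha$ and $D^n\times\{1\}\subset\partial_0 P$. Because $f(\alpha)$ is contractible in $M$, its normal bundle is trivial, so $f(\nu(\alpha))$ sits inside a tubular neighborhood of the form $D^n\times[0,1]\times D^n\subset M$ meeting $g(Q)$ only in a small normal disk to $f(P)$ at $p$. Using a bump function supported in this box together with a vector field along the $[0,1]$-coordinate, I build a compactly supported ambient isotopy $\Phi_t$ of $M$ that drags the normal disk at $p$ along $f(\alpha)$ and out through the collar of $\partial_0 M$, then precompose $f$ with the corresponding isotopy of $P$ to obtain $f_t$. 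The local effect on the intersection curve is to sever $S$ near $p$: the short arc of $S$ through $p$ is pushed off the new $f_1(P)$, and the two resulting free ends are carried into $\partial_0 M$ along the pipe, so that $S$ is replaced by one or two arc components with endpoints on $\partial_0 M$. Because the support of the isotopy lies in a neighborhood of $f(\alpha)$ meeting $g(Q)$ only at $p$, every other component of the intersection is preserved, and the isotopy is constant on $\partial_1 P$ by the choice of $\nu(\alpha)$.

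The principal technical work is executing the piping so that $f_t$ is a genuine smooth isotopy of embeddings $(P,\partial_0 P)\to(M,\partial_0 M)$ with the prescribed boundary behavior and the prescribed surgery on the intersection curve. The dimension hypotheses ($n\geq 4$, so $\dim M=2n+1\geq 9$ and $\dim P=n+1\geq 5$) are exactly what is needed to find the arc $\alpha$ with the stated transversality properties by general position, and to trivialize the normal bundle of $f(\alpha)$ in $M$ for the finger-move construction; the two-connectedness of $(M,\partial_0 M)$ from hypothesis~(ii) is not needed here (it is used in Lemma~\ref{lemma: whitney trick no closed components}), only the simpler $1$-connectedness of the pair $(P,\partial_0 P)$ from (i) is invoked. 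Iterating over all circle components yields the desired isotopy $f_t$ whose terminal value $f_1$ has $f_1(P)\cap g(Q)$ a disjoint union of compact arcs, as required; Theorem~\ref{theorem: higher half whitney trick} then follows by applying Lemma~\ref{lemma: whitney trick no closed components} to $f_1$.
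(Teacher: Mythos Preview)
Your proposed finger move does not achieve the claimed effect. The core problem is that your piping arc $\alpha$ lies in $P$, so $f(\alpha)$ lies entirely in $f(P)$. In your box $D^n\times[0,1]\times D^n$ the image $f(\nu(\alpha))$ is $D^n\times[0,1]\times\{0\}$, and the vector field you use to build $\Phi_t$ points in the $[0,1]$-direction. That direction is everywhere tangent to $f(P)$, so the flow preserves $f(P)$ setwise and $f_t(P)\cap g(Q)=f(P)\cap g(Q)$ for all $t$. Said differently: by your own hypothesis the box meets $g(Q)$ only near the $t=0$ end, so an isotopy of $f$ supported in the box can alter the intersection only in a neighbourhood of $p$ in the \emph{interior} of $M$; it cannot manufacture new endpoints on $\partial_0 M$. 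The assertion that ``the two resulting free ends are carried into $\partial_0 M$ along the pipe'' is false --- you are isotoping $f$, not $g$, and the points of $g(Q)$ are fixed.

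To break a circle component one must move $f(P)$ in a direction \emph{normal} to itself, along a path that follows $g(Q)$ out to $\partial_0 M$. The paper does precisely this via a $2$-dimensional half Whitney disk: choose an arc in $P'$ from $p$ to $\partial_0 P'$ and an arc in $Q'$ from $p$ to $\partial_0 Q'$, fill in the resulting relative $1$-cycle by an embedded half-disk $(D^2_+,\partial_0 D^2_+)\hookrightarrow(M,\partial_0 M)$ using $\pi_1(M,\partial_0 M)=0$, thicken to $\R^{n-1}\times\R\times\R^{n-1}\times U$, and flow across the disk. This moves $f(P)$ off itself and converts the circle into an interval. In particular the connectivity of $(Q,\partial_0 Q)$ (for the arc in $Q'$) and of $(M,\partial_0 M)$ (for the half-disk) are genuinely used here, contrary to your final paragraph.
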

\begin{proof}
We will again let $P'$ and $Q'$ denote the images $f(P)$ and $g(Q)$. 
We will prove the lemma in the case that $P'\cap Q'$ is diffeomorphic to the circle $S^{1}$. 
The general case follows by induction. 
So, suppose that $P'\cap Q'$ is diffeomorphic to $S^{1}$.
We will show how to deform $f$, via an isotopy fixed on $\partial_{1}P$, to a new embedding $\hat{f}$ so that $\hat{f}(P)\cap Q$ is diffeomorphic to a single compact interval. 

Recall the upper half-disk $D^{2}_{+} \subset \R^{2}_{+}$. 
The face $\partial_{1}D^{2}_{+}$ decomposes as the union 
$$\partial_{1}D^{2}_{+} = \partial^{+}_{1}D^{2}_{+}\cup \partial^{-}_{1}D^{2}_{+},$$
where 
$$
\partial^{+}_{1}D^{2}_{+} = \{\bar{x} \in \partial_{1}D^{2}_{+} \; | \; x_{2} \leq 0\}, \quad \partial^{+}_{1}D^{2}_{+} = \{\bar{x} \in \partial_{1}D^{2}_{+} \; | \; x_{2} \geq 0\}.
$$
Let $U \subset \R^{2}_{+}$ be an open neighborhood of $D^{2}_{+} \subset \R^{2}_{+}$.
Notice that we have $\partial_{0}D^{2}_{+} \subset \partial U$. 
We construct an embedding 
$$
\varphi: (U, \partial U) \; \longrightarrow \; (M, \partial_{0}M)
$$
that satisfies the following properties:
\begin{itemize} \itemsep.2cm
\item $\left(\varphi^{-1}(P'), \; \varphi^{-1}(\partial_{0}P')\right) \; = \; \left(\partial^{-}_{1}D^{2}_{+}, \; \partial^{-}_{0,1}D^{2}_{+}\right)$;
\item $\left(\varphi^{-1}(Q'), \; \varphi^{-1}(\partial_{0}Q')\right) \; = \; \left(\partial^{+}_{1}D^{2}_{+}, \; \partial^{+}_{0,1}D^{2}_{+}\right)$;
\item $\varphi^{-1}(P'\cap Q') = \partial^{+}_{1}D^{2}_{+}\cap\partial^{-}_{1}D^{2}_{+}$;
\item $\varphi^{-1}(\partial_{0}M) = \partial_{0}D^{2}_{+}$;
\item $\varphi^{-1}(\partial_{1}P'\cup\partial_{1}Q') = \emptyset$.
\end{itemize}
The existence of this embedding follows easily from the fact that both $(P, \partial_{0}P)$ and $(Q, \partial_{0}Q)$ are $1$-connected and that $(M, \partial_{0}M)$ is $2$-connected. 
We now need to extend the embedding $\varphi$ to an embedding 
$$
\widehat{\varphi}: \R^{n-1}\times\R\times\R^{n-1}\times U \; \longrightarrow \; M
$$
that satisfies the following properties: 
\begin{enumerate} \itemsep.2cm
\item[(i)] $\widehat{\varphi}|_{\{0\}\times U} = \varphi$;
\item[(ii)] $\widehat{\varphi}^{-1}(\partial_{0}M) = \R^{n-1}\times\R\times\R^{n-1}\times\partial U$;
\item[(iii)] $\widehat{\varphi}^{-1}(P') = \R^{n-1}\times\R\times\{0\}\times\partial_{1}^{-}D^{2}_{+}$;
\item[(iv)] $\widehat{\varphi}^{-1}(Q') = \{0\}\times\R\times\R^{n-1}\times\partial_{1}^{+}D^{2}_{+}.$
\end{enumerate}

We now proceed to use the embedding $\tilde{\varphi}$ to define the isotopy that deforms the intersection $P'\cap Q'$ into a compact interval.  
The isotopy that we use is constructed in the same way as the one used in proof of Theorem \ref{theorem: half whitney trick} or \cite[Theorem 6.6 (page 74)]{M 65}. 
Recall that we choose a compactly supported vector field on $U$ that is equal to $\partial/\partial x_{1}$ on the half-disk $D^{2}_{+} \subset U$. 
Extend this vector field to a compactly supported vector field, denoted by $\mb{v}$, on $\R^{n-1}\times\R\times\R^{n-1}\times U$ using bump functions. 
Let $\mb{w}$ denote the vector field on $M$ obtained by pushing forward $\mb{v}$ by the embedding $\tilde{\varphi}$ and then extending over the complement of $\tilde{\varphi}(\R^{n-1}\times\R\times\R^{n-1}\times U)$ by the zero vector field. 
It can be easily verified that this isotopy alters the intersection of $P'$ with $Q'$ in the desired way. 
Since the vector field $\mb{w}$ is by definition zero on $M\setminus\tilde{\varphi}(\R^{n-1}\times\R\times\R^{n-1}\times U)$, and hence is zero on $\partial_{1}P'$, it follows that this ambient isotopy is constantly the identity on $\partial_{1}P'$. 
This concludes the proof of the theorem. 
\end{proof}

\subsection{Embeddings of bounded manifolds} \label{subsection: embeddings of manifolds with boundary}
In addition to removing the intersections between embeddings, 
we will also need a technique for the producing of such embeddings. 
The following result is a restatement of \cite[Theorem 1]{H 69}.
\begin{theorem}[Hudson '72] \label{theorem: hudsen embedding theorem}
Let $(M; \partial_{0}M, \partial_{1}M)$ and $(P; \partial_{0}P, \partial_{1}P)$ be manifold triads of dimension $m$ and $p$ respectively. 
Let 
\begin{equation} \label{equation: map to make embedding}
f: (P; \partial_{0}P, \partial_{1}P) \longrightarrow (M; \partial_{0}M, \partial_{1}M)
\end{equation}
be a maps of triads such that the restriction $f|_{\partial_{1}P}: \partial_{1}P \longrightarrow \partial_{1}M$ is an embedding with $f^{-1}(\partial_{1}M) = \partial_{1}P$.
Suppose that the following conditions are satisfied:
\begin{enumerate}  \itemsep.2cm
\item[(i)] $m - p \geq 3$;
\item[(ii)] $(P, \partial_{0}P)$ is $(2p - m)$-connected;
\item[(iii)] $(M, \partial_{0}M)$ is $(2p - m +1)$-connected. 
\end{enumerate}
Then there exists a homotopy $f_{t}: (P; \partial_{0}P, \partial_{1}P) \longrightarrow (M; \partial_{0}M, \partial_{1}M)$ with $t \in [0, 1]$ and $f_{0} = f$ such that:
\begin{itemize} \itemsep.2cm
\item $f_{t}|_{\partial_{1}P} = f|_{\partial_{1}P}$ for all $t \in [0,1]$;
\item $f_{1}$ is an embedding with $f^{-1}(\partial_{i}M) = \partial_{i}P$ for $i = 0,1$.
\end{itemize}
\end{theorem}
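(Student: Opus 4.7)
The plan is to import Hudson's original argument from \cite{H 69}, repackaging it for the triad formulation stated here. The proof proceeds by induction on a relative handle decomposition of $P$ and at each stage reduces the existence of the isotopy to a combination of general position, the half-Whitney trick, and a low-dimensional obstruction-theoretic argument for framing the normal bundle of each core.

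First, I would use the hypothesis that $f|_{\partial_{1}P}$ is already a (triad) embedding with $f^{-1}(\partial_{1}M)=\partial_{1}P$, together with the tubular neighborhood theorem and isotopy extension, to arrange that $f$ is a smooth embedding on an entire collar $\partial_{1}P\times[0,1]\subset P$ mapping into a collar of $f(\partial_{1}P)\subset\partial_{1}M$. All subsequent deformations will be supported in the complement $P_{\circ}:=P\setminus(\partial_{1}P\times[0,\tfrac{1}{2}))$ and will keep $f|_{\partial_{1}P}$ fixed. Next, the $(2p-m)$-connectivity of $(P,\partial_{0}P)$ allows one (by the usual Morse-theoretic handle cancellation, applied to the dual decomposition) to choose a handle decomposition of $P_{\circ}$ relative to the collar in which every handle has index at most $m-p-1$; this is the key point at which the connectivity assumption on $P$ gets used.

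I would then extend $f$ inductively over these handles. Suppose $f$ has been deformed to an embedding on the union $P_{<k}$ of handles of index $<k$; to extend over a $k$-handle $D^{k}\times D^{p-k}$ with attaching data already embedded, one must first produce an embedding of the core $D^{k}\times\{0\}$ extending $f$ on $\partial D^{k}\times\{0\}\subset P_{<k}$, and then extend to a tubular neighborhood inside $M$. For the core: by smoothing and general position, $f$ on the core becomes an immersion with finitely many transverse double points of two types, core--core and core--(image of $\partial_{0}P_{<k}$). Because $k\leq m-p-1$ and $m-p\geq 3$, each such double point can be cancelled against $\partial_{0}M$ by a single application of the half-Whitney trick of Theorem \ref{theorem: half whitney trick}; the Whitney half-disk is found using precisely the $(2p-m+1)$-connectivity of $(M,\partial_{0}M)$, which ensures the relevant $\pi_{2}$-class of the triad vanishes. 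After the core has been embedded, extending to a tubular neighborhood is equivalent to choosing a normal framing over $D^{k}$ extending the one already given on $\partial D^{k}$; the obstruction lies in $\pi_{k-1}$ of a Stiefel manifold $V_{p-k}(\mathbb{R}^{m-k})$, which vanishes in the range $k\leq m-p-1$ thanks to $m-p\geq 3$.

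The main obstacle is the top of this induction, namely the handles whose index $k$ is close to $m-p-1$: here twice the core dimension just barely exceeds $m$, so general position produces genuine double points that cannot be removed by a small perturbation and the half-Whitney trick must be invoked. Its input hypotheses, that the source pairs $(D^{k},\partial D^{k})$ and $(\partial_{0}P_{<k},\dots)$ are sufficiently connected and that $(M,\partial_{0}M)$ is $2$-connected in the relevant sense, are delivered exactly by the numerology of the two connectivity assumptions in the statement. Once the inductive step is complete for all handles, assembling the isotopies (using isotopy extension and a collar argument to glue across handle boundaries while staying fixed on $\partial_{1}P$) yields the desired homotopy $f_{t}$ with $f_{1}$ an embedding satisfying $f_{1}^{-1}(\partial_{i}M)=\partial_{i}P$ for $i=0,1$.
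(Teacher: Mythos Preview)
The paper does not give its own proof of this theorem: it is stated in the appendix as ``a restatement of \cite[Theorem 1]{H 69}'' and is simply cited, with no argument supplied. So there is no paper proof to compare against; the authors are treating Hudson's result as a black box.

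Your sketch is in the spirit of how such relative embedding theorems are proved---a handle-by-handle induction using the connectivity hypothesis on $(P,\partial_0P)$ to bound handle indices and the connectivity hypothesis on $(M,\partial_0M)$ to remove intersections---but a couple of points are loose. First, the version of the half-Whitney trick you invoke (Theorem~\ref{theorem: half whitney trick}) is stated in the paper only for two $n$-dimensional submanifolds of a $2n$-dimensional manifold, whereas you need to remove \emph{self}-intersections of a $k$-dimensional core in $M^m$ with $2k$ possibly unequal to $m$; the actual mechanism in Hudson's proof is a piping argument into $\partial_0M$ that is close in spirit but not literally an application of that theorem. Second, the handle-index bound $k\le m-p-1$ via handle trading requires its own dimension/connectivity checks that you have not spelled out. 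None of this is fatal---Hudson's paper carries these details out---but as written your sketch relies on a theorem from the paper whose hypotheses do not match your situation, so it would not stand alone without going back to \cite{H 69}.
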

We will apply the above theorem in the case when $m = 2n+1$ and $p = n+1$ for $n \geq 3$.
The main place in this paper where we use this theorem is in the proof of Lemma \ref{proposition: nonempty surgery data}.


\begin{thebibliography}{9}
\bibitem{BF 81} E. Binz and H. R. Fischer, \textit{The manifold of
  embeddings of a closed manifold}, Differential geometric methods in
  mathematical physics (Proc. Internat. Conf., Tech. Univ. Clausthal,
  Clausthal-Zellerfeld, 1978), Lecture Notes in Phys., vol. 139,
  Springer, Berlin, 1981, With an appendix by P. Michor, pp. 310-329.
\bibitem{R 12} M. Borodzik, A. N\'{e}methi, A. Ranicki, \textit{Morse
  theory for manifolds with boundary,} to appear in Alg. Geom. Top. 
  \href{http://arxiv.org/abs/1207.3066}{arXiv:1207.3066}.
  \bibitem{G 08} R, Geoghegan, \textit{Topological Methods in Group Theory,} Graduate Texts in Mathematics, 243. Springer, New York, 2008
\bibitem{GMTW 08} S. Galatius, I. Madsen, U. Tillmann, M. Weiss,
  \textit{The homotopy type of the cobordism category,} Acta Math. 202
  (2009) 195-239.
\bibitem{GRW 09} S. Galatius, O. Randal-Williams, \textit{Monoids of
  moduli spaces of manifolds}, Geom. Topol. 14 (2010)
  1243-1302.
\bibitem{GRW 14} \underline{ \ \ \ \ \ \ \ \ }, \textit{Stable
  moduli spaces of high-dimensional manifolds,} Acta Mathematica, 212
  (2014) 257-377.
\bibitem{GRW 14b} \underline{ \ \ \ \ \ \ \ }, \textit{Erratum
  to: \! stable \! moduli spaces of high-dimensional manifolds,}
  \href{https://www.dpmms.cam.ac.uk/~or257/publications.htm}{https://www.dpmms.cam.ac.uk/~or257}
\bibitem{GT 11} J. Giansiracusa, U. Tillmann, \textit{Vanishing of the
  universal characteristic characteristic classes for handlebody
  groups and boundary bundles,} J. Homotopy. Relat. Stru. 6 (2011)
  no. 1, 103-112.
\bibitem{G 12} J. Genauer, \textit{Cobordism categories of manifolds
  with corners}, Trans. Amer. Math. Soc., Vol. 362, Number 1, (2012)
  519-550.
\bibitem{HQ 74} A. Hatcher, F. Quinn, \textit{Bordism Invariants of
  Intersections of Submanifolds,} Trans. Amer. Math. Soc., Vol. 200
  (1974) 327-344.
\bibitem{H 12} A. Hatcher, \textit{Stable homology of spaces and
  graphs,}
  \href{http://www.math.cornell.edu/~hatcher/Papers/BanffTalk.pdf}{http://www.math.cornell.edu/~hatcher/Papers/BanffTalk.pdf}
\bibitem{H 69} J. Hudson, \textit{Embeddings of bounded manifolds}, Proc. Camb. Math. Soc., Vol. 72, Issue 1 (1969) 11-20.
\bibitem{L 00} G. Laures, \textit{On cobordism of manifolds with
  corners,} Trans. Amer. Math. Soc., Vol. 352, Number 12 (2000)
  5667-5688.
\bibitem{M 72} P. May, \textit{The Geometry of Iterated Loop Spaces,}
  Lecture Notes in Math., 271. Springer, Berlin Heidelberg (1972)
\bibitem{MW 07} I. Madsen, M. Weiss, \textit{The stable moduli space
  of Riemann surfaces: Mumford's conjecture,} Ann. of Math., 165
  (2007) 843-941.
\bibitem{M 65} J. Milnor, \textit{Lectures On The h-Cobordism
  Theorem}, Princeton University Press (1965).
\bibitem{M 65a} \underline{ \ \ \ \ \ \ \ \ \ \ \ \ }, \textit{Topology from the differentiable
  viewpoint,} Princeton University Press (1965).
\bibitem{MS 75} D. McDuff, G. Segal, \textit{Homology fibrations and the ``group completion theorem'',} Inven. Math. 31 (1975/76) 279-284.
\bibitem{P 15} N. Perlmutter, \!\textit{Homological stability for diffeomorphism groups of high dimensional handlebodies.} \href{http://arxiv.org/abs/1510.02571}{ArXiv:1510.02571.}   
  \bibitem{RW 11} O. Randal-Williams, \textit{Embedded cobordism categories and spaces of submanifolds,} Inter. Res. Not. Math. (2011) 572-608
\bibitem{S 62} S. Smale, \textit{On the structure of $5$-Manifolds,} Ann. of Math, Second Series, Vol. 75, No. 1 (1962) 38-46
\bibitem{W 67} C.T.C. Wall, \textit{Classification Problems in
  Differential Topology-I}, Topology Vol. 2, pp. 253- 261. (1963)
 \bibitem{W 63} \underline{ \ \ \ \ \ \ \ \ \ }, \textit{Classification Problems in
  Differential Topology-II}, Topology Vol. 2, pp. 263- 272. (1963)
\bibitem{W 70} C.T.C. Wall, A. Ranicki, ed., \textit{Surgery on
  compact manifolds,} Mathematical Surveys and Monographs 69 (2nd
  ed.), Providence R.I.: American Mathematical Society (1970).
\bibitem{We 67} R. Wells, \textit{Modifying Intersections}, Illinois
  J. Math., Vol. 11, Issue 3 (1967) 389-403.
\end{thebibliography}
\end{document}